\def\longbox#1{\parbox{0.85\textwidth}{\textit{#1}}}
\def\stmt#1{\vspace{0cm}\begin{equation}\longbox{#1}\end{equation}}
\declaretheorem[name=Lemma, numberwithin = section]{lemma}
\declaretheorem[name=Theorem,sibling = lemma]{theorem}
\declaretheorem[name=Proposition, sibling=lemma]{proposition}
\declaretheorem[name=Observation, sibling=lemma]{obs}
\declaretheorem[name=Corollary, sibling=lemma]{corr}
\declaretheorem[name=Conjecture, sibling=lemma]{conjecture}
\declaretheorem[name=Problem, sibling=lemma]{problem}
\declaretheorem[name=Claim]{claim}
\crefname{claim}{Claim}{Claims}
\crefname{lemma}{Lemma}{Lemmas}
\crefname{theorem}{Theorem}{Theorems}
\crefname{proposition}{Proposition}{Propositions}
\crefname{question}{Question}{Questions}
\crefname{definition}{Definition}{Definitions}
\crefname{conjecture}{Conjecture}{Conjectures}
\crefname{obs}{Observation}{Observations}
\crefname{corr}{Corollary}{Corollaries}
\crefname{remark}{Remark}{Remarks}
\crefname{scenario}{Scenario}{Scenarios}
\crefname{problem}{Problem}{Problems}
\def\cqedsymbol{\ifmmode$\lrcorner$\else{\unskip\nobreak\hfil
\penalty50\hskip1em\null\nobreak\hfil$\lrcorner$
\parfillskip=0pt\finalhyphendemerits=0\endgraf}\fi}
\newcommand{\leqnomode}{\tagsleft@true}
\newcommand{\reqnomode}{\tagsleft@false}
\newcommand{\F}{\mathcal{F}}
\newcommand{\C}{\mathcal{C}}
\newcommand{\G}{\mathcal{G}}
\newtheoremstyle{named}{}{}{\itshape}{}{\bfseries}{.}{.5em}{\thmnote{#3}#1}
\theoremstyle{named}
\newtheoremstyle{named}{}{}{\itshape}{}{\bfseries}{.}{.5em}{\thmnote{#3}#1}
\theoremstyle{named}
\newenvironment{subproof}[1][\proofname]{\begin{proof}[#1]}{\end{proof}}
\newcommand{\say}[1]{``#1''} \newcommand\dd{\hbox{-}} 
\newcommand{\T}{\mathcal{T}}
\newcommand\abs[1]{\lvert#1\rvert}
\setlist[itemize]{topsep=0ex,itemsep=0ex,parsep=0.25ex}
\setlist[enumerate]{topsep=0ex,itemsep=0ex,parsep=0.25ex}
\definecolor{CornflowerBlue}{rgb}{0.39, 0.58, 0.93}
\definecolor{DarkGoldenrod}{rgb}{0.72, 0.53, 0.04}
\definecolor{BritishRacingGreen}{rgb}{0.0, 0.26, 0.15}
\definecolor{DarkMagenta}{rgb}{0.55, 0.0, 0.55}
\definecolor{VeryDarkMagenta}{rgb}{0.28, 0.0, 0.28}
\definecolor{AO}{rgb}{0.0, 0.5, 0.0}
\definecolor{BostonUniversityRed}{rgb}{0.8, 0.0, 0.0}
\definecolor{myRed}{rgb}{0.8, 0.0, 0.0}
\definecolor{DarkMidnightBlue}{rgb}{0.0, 0.2, 0.4}
\definecolor{DarkTangerine}{rgb}{1.0, 0.66, 0.07}
\definecolor{AppleGreen}{rgb}{0.55, 0.71, 0.0}
\definecolor{BrightUbe}{rgb}{0.82, 0.62, 0.91}
\definecolor{Amethyst}{rgb}{0.6, 0.4, 0.8}
\definecolor{DarkGray}{rgb}{0.52, 0.52, 0.51}
\definecolor{Gray}{rgb}{0.66, 0.66, 0.66}
\definecolor{BananaYellow}{rgb}{1.0, 0.88, 0.21}
\definecolor{Amber}{rgb}{1.0, 0.75, 0.0}
\definecolor{LightGray}{rgb}{0.83, 0.83, 0.83}
\definecolor{PrOrange}{rgb}{1.0, 0.56, 0.0}
\definecolor{DeepCarrotOrange}{rgb}{0.91, 0.41, 0.17}
\definecolor{cobalt}{RGB}{0,71,171}
\definecolor{brightpink}{RGB}{255,0,204} 
\definecolor{bordeaux}{RGB}{100,0,50}
\definecolor{olivegreen}{RGB}{107, 142, 35}
\definecolor{forestgreen}{RGB}{34, 139, 34}
\definecolor{darkgreen}{RGB}{0, 100, 0}
\definecolor{darkblue}{RGB}{25, 25, 112}
\definecolor{grun}{rgb}{0, 0.5, 0.5}
\definecolor{violet}{RGB}{177, 0.5, 255}
\title{Reuniting $\chi$-boundedness with polynomial~$\chi$-boundedness}
\author[1]{Maria Chudnovsky\thanks{Supported by NSF-EPSRC Grant DMS-2120644 and by AFOSR grant FA9550-22-1-0083.}}
\author[2]{Linda Cook\thanks{Supported by the Institute for Basic Science (IBS-R029-C1).}\thanks{Supported by the Gravitation programme NETWORKS (NWO grant no. 024.002.003) of the Dutch Ministry of Education, Culture and Science (OCW) and a Marie Skłodowska-Curie Action of the European Commission (COFUND grant no. 945045).
}}
\author[3]{James Davies}
\author[4]{Sang-il Oum\textsuperscript{\textdagger}}
\affil[1]{Department of Mathematics, Princeton University, Princeton, USA}
\affil[2]{Korteweg-de Vries Institute for Mathematics, University of Amsterdam, Amsterdam, the Netherlands}
\affil[3]{Department of Pure Mathematics and Mathematical Statistics, University of Cambridge, Cambridge, UK}
\affil[4]{Discrete Mathematics Group, Institute for Basic Science (IBS), Daejeon, South~Korea}
\affil[ ]{\small \textit{Email addresses:} \texttt{mchudnov@math.princeton.edu},
\texttt{l.j.cook@uva.nl},
\texttt{jgd37@cam.ac.uk},
\texttt{sangil@ibs.re.kr}}
\date{January 16, 2026}
\begin{document}
% \linenumbers
\maketitle

\begin{abstract}
    A class $\mathcal F$ of graphs is $\chi$-bounded if there is a function $f$ such that $\chi(H)\le f(\omega(H))$ for all induced subgraphs $H$ of a graph in $\mathcal F$. If $f$ can be chosen to be a polynomial, we say that $\mathcal F$ is polynomially $\chi$-bounded.
    Esperet proposed a conjecture that every $\chi$-bounded class of graphs is polynomially $\chi$-bounded. This conjecture has been disproved; it has been shown that there are classes of graphs that are $\chi$-bounded but not polynomially $\chi$-bounded.
    Nevertheless, inspired by Esperet's conjecture,
    we introduce Pollyanna classes of graphs. A class $\mathcal C$ of graphs is Pollyanna if $\mathcal C\cap \mathcal F$ is polynomially $\chi$-bounded for every 
$\chi$-bounded class $\mathcal F$ of graphs. 
    We prove that several classes of graphs are Pollyanna and also present some proper classes of graphs that are not Pollyanna.
\end{abstract}

\section{Introduction}
The \emph{chromatic number} of a graph~$G$, denoted by $\chi(G)$, is the minimum number of colors needed to color the vertices of $G$ such that adjacent vertices always receive distinct colors. 
A \emph{clique} of a graph is a set of pairwise adjacent vertices. 
We write $\omega(G)$ to denote the maximum size of a clique in a graph $G$, called the \emph{clique number} of~$G$.
For a graph $H$, we say $G$ is \emph{$H$-free} if $G$ has no induced subgraph isomorphic to~$H$. 

Obviously $\chi(G)\ge \omega(G)$.
In general, $\chi(G)$ is not bounded from above by any function of $\omega(G)$; there are constructions for triangle-free graphs with arbitrary large~$\chi(G)$ \cite{descartes1947three,Descartes1954, Mycielski1955, zykov1949}.
The strong perfect graph theorem~\cite{CRST2006} states that $\chi(H)=\omega(H)$ for all induced subgraphs $H$ of a graph $G$ if and only if $G$ has no odd cycles or their complements as an induced subgraph.
Such graphs are called perfect. 

Motivated by perfect graphs, Gy\'arf\'as~\cite{Gyarfas1975} initiated the study of graph classes on which $\chi(G)$ is bounded from above by a function of $\omega(G)$.
A class $\mathcal F$ of graphs is \emph{$\chi$-bounded} if there exists a function~$f$ such that $\chi(H)\le f(\omega(H))$ for all induced subgraphs~$H$ of a graph in~$\mathcal F$.\footnote{Sometimes, unlike Gy\'arf\'as~~\cite{Gyarfas1975}, some papers define $\chi$-boundedness to mean that $\chi(G)\le f(\omega(G))$ for every graph $G$ in the class and then use such a definition only for hereditary classes of graphs. For us, it is necessary to deal with $\chi$-bounded classes of graphs that are not necessarily hereditary, as when we define Pollyanna, we allow taking the intersection of our class with any $\chi$-bounded class, which is not necessarily hereditary.}

Such a function~$f$ is called a \emph{$\chi$-bounding function} for $\mathcal F$.
It is a well-known result of Erd\H{o}s that for every $g \geq 3$ there exist graphs arbitrarily large chromatic number and with no cycle of length less than $g$.
Hence, if $H$ contains a cycle, then the class of $H$-free graphs is not $\chi$-bounded.
(The converse is the well-known Gy\'arf\'as-Sumner conjecture \cite{Gyarfas1975, sumnerConjecture}).

A class of graphs is \emph{polynomially $\chi$-bounded} if it has a polynomial $\chi$-bounding function.
Examples of polynomially $\chi$-bounded classes of graphs include, perfect graphs \cite{CRST2006}, even-hole-free graphs \cite{CHUDNOVSKY2023evenhole}, circle graphs \cite{davies2021circle,davies2022improved}, rectangle intersection graphs \cite{asplund1960,chalermsook2021}, bounded twin-width graphs \cite{bourneuf2023bounded}, and $H$-free graphs for certain small forests $H$ \cite{scott2022stars,scott2022doublestar,CHUDNOVSKY2023fourpath}.
Note that for every graph~$H$, if the class of $H$-free graphs is polynomially $\chi$-bounded, then $H$ satisfies the celebrated Erd\H{o}s-Hajnal conjecture \cite{ERDOS198937}, which is largely open (see also \cite{EHsurvey}). A major open problem is whether the class of $P_5$-free graphs is polynomially $\chi$-bounded, since this would imply the smallest open case of the Erd\H{o}s-Hajnal conjecture. The best known $\chi$-bounding function for $P_5$-free graphs is quasi-polynomial \cite{scott2021polynomial}.

Esperet~\cite{Esperet2017} conjectured that every $\chi$-bounded class of graphs is polynomially $\chi$-bounded.
Recently, this conjecture was disproved by Bria\'nski, Davies, and Walczak~\cite{BDW2022} by extending ideas from a paper of Carbonero, Hompe, Moore, and Spirkl \cite{CARBONERO202363}.
In particular, Bria\'nski, Davies, and Walczak constructed classes of graphs that are $\chi$-bounded but not polynomially $\chi$-bounded.
Nevertheless, inspired by Esperet's conjecture, we consider its analog for proper classes of graphs.
We say that a class $\mathcal C$ of graphs is \emph{Pollyanna} if $\mathcal C\cap\mathcal F$ is polynomially $\chi$-bounded for every $\chi$-bounded class~$\mathcal F$ of graphs.
In other words, a class~$\mathcal C$ is Pollyanna if and only if the conjecture of Esperet holds inside~$\mathcal C$.
Note that every polynomially $\chi$-bounded class of graphs is Pollyanna, so Pollyanna classes of graphs generalize polynomially $\chi$-bounded classes.

\begin{figure}
    \begin{subfigure}{.30\textwidth}
        \centering
        \tikzstyle{v}=[circle, draw, solid, fill=black, inner sep=0pt, minimum width=3pt]
        \begin{tikzpicture}
            \foreach \x in {0,1,2,3,4,5,6} 
            { 
                \node [v] at (360*\x/7:1) (v\x) {};
            }
            \foreach \x in {0,1,2,3,4,5}
            {
                \pgfmathtruncatemacro{\lb}{\x+1}
                \foreach \y in {\lb,...,6}
                {
                    \draw (v\x)--(v\y);
                }
            }
            \foreach \y in {-2,-1,0,1,2} { 
                \draw (v0)--+(20*\y:1) node[v]{} ;
            }
        \end{tikzpicture}
        \caption{A $(7,5)$-pineapple.}\label{fig:pineapple}
    \end{subfigure}
    \begin{subfigure}{.30\textwidth}
        \centering
        \tikzstyle{v}=[circle, draw, solid, fill=black, inner sep=0pt, minimum width=3pt]
        \begin{tikzpicture}
            \foreach \x in {0,1,2,3,4} 
            { 
                \node [v] at (72*\x:1) (v\x) {};
            }
            \foreach \x in {0,1,2,3}
            {
                \pgfmathtruncatemacro{\lb}{\x+1}
                \foreach \y in {\lb,...,4}
                {
                    \draw (v\x)--(v\y);
                }
            }
            \draw (v0)--+(1,0) node[v]{} --+(2,0) node[v]{};
        \end{tikzpicture}
        \caption{A $5$-lollipop.}\label{fig:lollipop}
    \end{subfigure}
    \begin{subfigure}{.19\textwidth}
        \centering
        \tikzstyle{v}=[circle, draw, solid, fill=black, inner sep=0pt, minimum width=3pt]
        \begin{tikzpicture}
            \node [v] at (0,0) (v0){};
            \foreach \x in {-1,1}
            { 
                \node [v] at (30*\x:1) (v\x) {};
                \node [v] at (180+30*\x:1) (w\x) {}; 
                \draw (v0)--(v\x);
                \draw (v0)--(w\x);           
            }
            \draw (v-1)--(v1);
            \draw (w-1)--(w1);
        \end{tikzpicture}
        \caption{A bowtie.}\label{fig:bowtie}
    \end{subfigure}
    \begin{subfigure}{.19\textwidth}
        \centering
        \tikzstyle{v}=[circle, draw, solid, fill=black, inner sep=0pt, minimum width=3pt]
        \begin{tikzpicture}
            \node [v] at (0,0) (v0){};
            \node [v] at (60:1) (v1){};
            \node [v] at (120:1) (v2){};
            \draw (v1)--+(0,.5) node [v]{};
            \draw (v2)--+(0,.5) node [v]{};
            \draw (v0)--(v1)--(v2)--(v0);
        \end{tikzpicture}
        \caption{A bull.}\label{fig:bull}
    \end{subfigure}
    \caption{Forbidding any of these graphs makes a Pollyanna class of graphs.}\label{fig:smallgraph}
\end{figure}
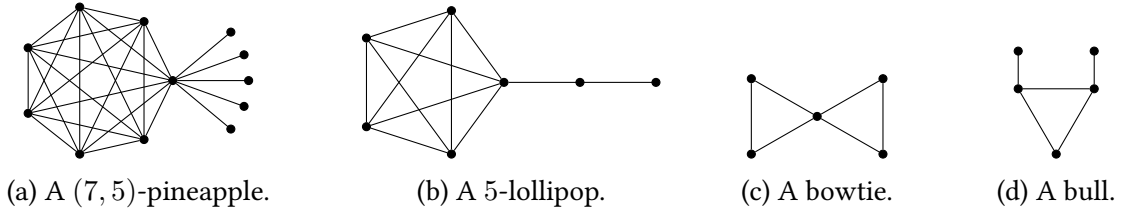

Here is our first main theorem. See \cref{fig:smallgraph} for an illustration of forbidden graphs; precise definitions are given in each corresponding section.
\begin{theorem}\label{thm:main}
Let $m$, $k$, $t$ be positive integers. 
The following graph classes are all Pollyanna.
    \begin{enumerate}[label=(\roman*)]
        \item The class of $mK_t$-free graphs.
        \item The class of $(t,k)$-pineapple-free graphs.
        \item The class of $t$-lollipop-free graphs.
        \item The class of bowtie-free graphs.
        \item The class of bull-free graphs.
    \end{enumerate}
\end{theorem}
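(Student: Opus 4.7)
The plan is to treat each of the five items separately. In every case we fix an arbitrary $\chi$-bounded class $\mathcal{F}$ with $\chi$-bounding function $f$ and show that every $G \in \mathcal{C} \cap \mathcal{F}$ with $\omega(G) = \omega$ satisfies $\chi(G) \leq p(\omega)$ for some polynomial $p$ depending on $f$ and on the parameters of~$\mathcal{C}$. The common template is to use the forbidden induced subgraph to decompose $G$ around a clique, and to apply $f$ only to subgraphs of constantly-bounded clique number, so that $f$ contributes only a constant factor to the final bound.

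For (i) $mK_t$-free, I would induct on $m$. The base $m = 1$ gives $\omega < t$ and $\chi(G) \leq f(t-1) = O(1)$. For $m \geq 2$, extract an induced $K_t$ called $K$ and split $V(G) \setminus V(K)$ into the anticomplete set $A$, which is $(m-1)K_t$-free and hence polynomially $\chi$-bounded by induction, and its complement $B$, covered by $\bigcup_{v \in V(K)} N(v)$. For (ii) $(t,k)$-pineapple-free, I would fix a maximum clique $K$ of size $\omega \geq t$ and a $K_t$-subclique $K'$ of $K$; pineapple-freeness forces, for each $v \in V(K')$, the set of vertices outside $V(K')$ adjacent in $V(K')$ only to $v$ to have independence number less than $k$; since this set lies in $N(v)$ its clique number is at most $\omega - 1$, so Ramsey's theorem bounds its size polynomially in~$\omega$, and the remaining vertices are handled by induction on $\omega$ or by iteration. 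Items (iii) $t$-lollipop-free and (v) bull-free follow the same philosophy: use the forbidden pendant structure to restrict second-neighborhood behavior around cliques and then induct; for (v), Chudnovsky's structure theorem for bull-free graphs provides the decomposition needed to keep the recursion polynomial. For (iv) bowtie-free, the key equivalence is that $G$ is bowtie-free iff $G[N(v)]$ is $2K_2$-free for every $v$; Wagon's bound $\chi \leq \binom{\omega+1}{2}$ for $2K_2$-free graphs then yields $\chi(G[N(v)]) \leq \binom{\omega(G)}{2}$, making every neighborhood polynomially $\chi$-bounded, after which iterative peeling of closed neighborhoods combined with $\mathcal{F}$-boundedness gives the required bound on $\chi(G)$.

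The principal obstacle throughout is preventing naive recursions from becoming multiplicative in~$\omega$: for example in (i), a crude union bound on $B$ obtained by splitting by first neighbor in $V(K)$ gives $\chi(G[B]) \leq t \cdot p(\omega-1)$, producing $p(\omega) \sim t^\omega$ rather than a polynomial. The polynomial bound requires a more careful extraction, for instance iterating the search for pairwise anticomplete $K_t$'s so that the number of neighborhood applications is controlled by~$m$ rather than by the depth of the recursion in~$\omega$. I expect item (i) to be the hardest, because $mK_t$-free for $t \geq 3$ gives only loose structural control around a clique, while (iv) and (v) benefit from strong local characterizations or existing structural theorems.
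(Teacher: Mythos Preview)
Your template is right, but the two items you work out in most detail both have genuine gaps. For (i), the fix you propose---iterating pairwise anti-complete copies of $K_t$---does not escape the exponential blow-up you correctly flag. With $s\le m-1$ such copies and $U$ their union, the vertices anti-complete to $U$ are indeed $K_t$-free, but the rest are only covered by the $(m-1)t$ neighbourhoods $N(u)$ for $u\in U$; each $N(u)$ is still $mK_t$-free with clique number $\omega-1$, so you are back to $p_m(\omega)\le (m-1)t\cdot p_m(\omega-1)+O(1)$, which is exponential. The missing idea is to take $K$ to be a \emph{maximum} clique of size $\omega$ rather than a $K_t$: then for each $(t-1)$-subset $M\subseteq K$ the vertices complete to $K\setminus M$ have clique number at most $t-1$ (by maximality of $K$), while every vertex with at least $t$ non-neighbours in $K$ is anti-complete to some $t$-subset $N\subseteq K$ and hence lies in an $(m-1)K_t$-free set. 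This gives $\chi(G)\le \binom{\omega}{t-1}f(t-1)+\binom{\omega}{t}p_{m-1}(\omega)$ with no recursion in $\omega$ at all; it is exactly Wagon's argument, and contrary to your expectation it makes (i) the \emph{easiest} of the five cases, not the hardest.

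For (iv), your observation that $G$ is bowtie-free iff every $G[N(v)]$ is $2K_2$-free is correct, and Wagon does give $\chi(G[N(v)])\le\binom{\omega}{2}$, but ``iterative peeling of closed neighbourhoods'' does not produce a polynomial bound on $\chi(G)$: a uniform bound on $\chi(G[N(v)])$ says nothing about $\chi(G)$ in general (triangle-free graphs have $\chi(N(v))\le 1$), and $\mathcal F$-boundedness only helps on pieces of \emph{constant} clique number, which peeling neighbourhoods does not manufacture. The paper instead proves directly that $V(G)$ partitions into $O(\omega^3)$ sets, one $K_4$-free and the rest triangle-free, via an edge-minimal counterexample in which every edge lies in a triangle, forcing every vertex within distance $2$ of a fixed maximum clique.

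Your sketches for (iii) and (v) point in the right direction but undersell the work. For (iii) the paper does not stay within the ``decompose around a clique'' template: it passes through homogeneous sets and the substitution-closure theorem of Chudnovsky--Penev--Scott--Trotignon, first reducing to graphs that additionally forbid $2K_{2t-3}$ plus a universal vertex. For (v), invoking Chudnovsky's structure theorem is the route, but turning it into a polynomial bound requires the full trigraph and homogeneous-pair machinery and occupies the longest section of the paper.
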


None of the classes mentioned in \cref{thm:main} are $\chi$-bounded, 
because if a graph $H$ contains a cycle, then $H$-free graphs contain all graphs of large girth and therefore the chromatic number of $H$-free graphs is not bounded by the theorem of Erd\H{o}s~\cite{Erdos1959}.

We will actually prove something stronger than the statement in \cref{thm:main}.
For an integer~$n$, let $\chi^{(n)}(G)$ be the maximum chromatic number of an induced subgraph~$H$ of~$G$ such that $\omega(H)\leq n$.
For an integer $n$, we say a class $\F$ of graphs is \emph{$n$-good} if it is hereditary and there is some constant $m$ such that every $G \in \F$ satisfies $\chi^{(n)}(G) \leq m$.
Note that $n$-goodness is a strictly weaker condition than $\chi$-boundedness \cite{CARBONERO202363,BDW2022,GIPSSTT2022}.
We say a class~$\C$ of graphs is \emph{$n$-strongly Pollyanna} if $\C \cap \F$ is polynomially $\chi$-bounded for every $n$-good class $\F$ of graphs.
By considering the subclass~$\F$ of $\C$ consisting of graphs~$G$ with $\chi^{(n)}(G)\le c$, 
we can observe that 
\begin{quote}
$\mathcal C$ is $n$-strongly Pollyanna if and only if 
there is a function~$f$ such that 
\[ \chi(G)\le f(\chi^{(n)}(G),\omega(G))\]  
for every graph~$G\in \C$, where $f(c,\cdot)$ is a polynomial for every constant~$c$.
\end{quote}
We say that $\C$ is \emph{strongly Pollyanna} if it is $n$-strongly Pollyanna for some integer~$n$.
Note that for each $n\le 1$, a class $\C$ of graphs is $n$-strongly Pollyanna if and only if 
it is polynomially $\chi$-bounded.
We will show the following:
\begin{theorem}\label{thm:technical main}
    Let $m$, $k$, $t$ be positive integers. 
    The following statements hold.
    \begin{enumerate}[label=(\roman*)]
        \item The class of $mK_t$-free graphs is $(t-1)$-strongly Pollyanna.
        \item The class of $(t,k)$-pineapple-free graphs is $(2t-4)$-strongly Pollyanna.
        \item  The class of $t$-lollipop-free graphs is $(3t-6)$-strongly Pollyanna.
        \item\label{item:bowtie} The class of bowtie-free graphs is $3$-strongly Pollyanna.
        \item \label{item:bullfree} The class of bull-free graphs is $2$-strongly Pollyanna. 
    \end{enumerate}
\end{theorem}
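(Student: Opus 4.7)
The uniform aim is: for each class and the corresponding integer $n$ from the statement, show that if $G$ is in the class with $c = \chi^{(n)}(G)$ and $\omega = \omega(G) > n$, then $\chi(G) \le f(c, \omega)$ with $f(c, \cdot)$ polynomial in $\omega$; if $\omega \le n$ then $\chi(G) \le c$ directly.

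For part~(i), I would induct on $m$. The base $m=1$ is immediate since $K_t$-free gives $\chi(G) = \chi^{(t-1)}(G) \le c$. For $m \ge 2$, pick a maximal induced $(m-1)K_t$ in $G$, with vertex set $X$ of size $(m-1)t$; then $G[V(G) \setminus N_G[X]]$ is $K_t$-free and uses $\le c$ colors, while $N_G[X]$ is covered by the $(m-1)t$ neighborhoods $N_G(v)$ for $v \in X$, each of clique number $\le \omega - 1$ and inducing an $mK_t$-free subgraph, so colorable by induction on $\omega$. A naive recursion on $\omega$ gives only an exponential bound; the main obstacle is refining this into a polynomial recursion, which I expect requires a Ramsey-style density argument generalising Wagon's $O(\omega^2)$ bound for $2K_2$-free graphs.

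For parts~(ii) and~(iii), the pineapple and lollipop are each a $K_t$ with a short specified attachment. Being $(t,k)$-pineapple-free restricts, for every induced $K_t$ in $G$ with a distinguished vertex $v$, the set of \emph{private neighbors} of $v$ (vertices adjacent to $v$ but to no other vertex of the $K_t$) to have independence number less than $k$; $t$-lollipop-freeness imposes a similar but different structural restriction on private neighborhoods. I would take a max clique $K$ of $G$ and peel vertices of $K$ off one by one, using the private-neighborhood structure to polynomially color the discarded portion at each step; the iteration halts once the residual clique has size at most $2t-4$ (resp.~$3t-6$), at which point $\chi^{(n)}$ provides the final $c$-coloring.

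Part~(iv) hinges on a local observation: for every vertex $v$ in a bowtie-free graph $G$, the graph $G[N_G(v)]$ is $2K_2$-free (otherwise an induced $2K_2$ in $N_G(v)$ together with $v$ induces a bowtie). By part~(i) with $m=t=2$, $2K_2$-free graphs are polynomially $\chi$-bounded, giving $\chi(G[N_G(v)]) \le q(\omega - 1)$ for some polynomial $q$; combining this with $\chi^{(3)}(G) \le c$ via a Gy\'arf\'as-style BFS-layering argument yields a global polynomial bound. Part~(v) is the most delicate: I would invoke Chudnovsky's structural decomposition for bull-free graphs together with the local constraint that bull-freeness places on how each vertex attaches to a triangle of $G$. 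Using $\chi^{(2)}(G) \le c$ to handle the triangle-free pieces, an induction on $\omega$ through the decomposition should yield the required bound; the main obstacle is ensuring that the polynomial degree does not compound across the decomposition, which I expect requires a careful simultaneous induction on $\omega$ and decomposition depth.
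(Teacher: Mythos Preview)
Your proposal has genuine gaps in several parts, most concretely in~(i).

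\textbf{Part (i).} You correctly identify that covering $N_G[X]$ by the $(m-1)t$ neighborhoods and recursing on $\omega$ blows up exponentially, but you do not supply the fix. The paper's argument (which \emph{is} the Wagon-style generalisation you are hoping for) avoids recursion on $\omega$ entirely: take a maximum clique $K$, and for each $(t-1)$-subset $M\subseteq K$ let $A_M$ be the vertices complete to $K\setminus M$; each $A_M$ has $\omega\le t-1$, so $\chi(G[A_M])\le c$. For each $t$-subset $N\subseteq K$ let $A'_N$ be the vertices anti-complete to $N$; each $G[A'_N]$ is $(m-1)K_t$-free. Every vertex lies in some $A_M$ or some $A'_N$, so
\[
\chi(G)\le \binom{\omega}{t-1}c+\binom{\omega}{t}g(\omega),
\]
where $g$ is the bound from the induction on $m$ (not on $\omega$). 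This is polynomial in $\omega$ directly.

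\textbf{Part (iii).} Your ``peel vertices of $K$ one by one'' sketch does not reflect what is actually needed. The paper's route is structurally quite different: it shows that in a $t$-lollipop-free graph, any induced copy of $2K_{2t-3}$ inside a single neighborhood forces a \emph{homogeneous set}. Hence one may reduce (via the Chudnovsky--Penev--Scott--Trotignon substitution theorem) to the subclass that is additionally $2K_{2t-3}^*$-free, and \emph{that} class is handled by a max-clique partition argument in the style of~(i). Without the homogeneous-set/substitution reduction there is no evident way to control the vertices far from the max clique, so your outline has a real missing idea here.

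\textbf{Part (iv).} Your local observation that $G[N(v)]$ is $2K_2$-free is correct, but ``Gy\'arf\'as-style BFS layering'' does not turn a per-neighborhood bound into a global polynomial bound: layers at distance $\ge 2$ from the root are not controlled by the neighborhood of any single vertex. The paper instead proves a partition theorem: one may delete edges lying in no triangle (preserving bowtie-freeness), after which every vertex is within distance $2$ of a fixed maximum clique $K$; the distance-$2$ part is triangle-free, and the rest is covered by $O(\omega^3)$ independent sets indexed by small subsets of $K$. This gives a partition into one $K_4$-free piece and $O(\omega^3)$ triangle-free pieces, from which $3$-strong Pollyanna is immediate.

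\textbf{Part (v).} You are pointed in the right direction (Chudnovsky's structure theory), and the paper does exactly this, but the depth-compounding worry you flag is real and is handled by first reducing to \emph{locally perfect basic} bull-free graphs and then proving these are $c_\F$-perfect for a constant $c_\F$ depending only on $\chi^{(2)}$; the polynomial degree then does not compound. Your outline does not yet contain the mechanism for this.
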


The most difficult case of \cref{thm:technical main} is showing that bull-free graphs are $2$-strongly Pollyanna.
Bull-free graphs are of particular interest because of their complex structure, which was characterized by Chudnovsky~\cite{Chudnovsky2012, Chudnovsky2012a}, and have been widely studied.
Chudnovsky and Safra~\cite{ErdosHajnalBulls} showed that the bull satisfies the celebrated Erd\H{o}s-Hajnal Conjecture.
Bull-free graphs also have strong algorithmic properties \cite{TTV2017, Cray, coloringBulls}.
By using results of Chudnovsky~\cite{Chudnovsky2012, Chudnovsky2012a},
Thomass\'e, Trotignon, and Vu\v{s}kovi\'c~\cite{TTV2017} showed that there is a function $f$ such that 
$ \text{every bull-free $G$ satisfies }\chi(G) \leq f(\chi^{(2)}(G), \omega(G))
$.
Note that their function $f$ is far from being polynomial in $\omega(G)$ and
\cref{thm:technical main} shows $f$ can be taken to be a polynomial in $\omega(G)$.

\begin{figure}
    \centering
    \begin{tikzpicture}
        \tikzstyle{v}=[circle, draw, solid, fill=black, inner sep=0pt, minimum width=3pt]
        \tikzstyle{w}=[circle, draw=red, thick,solid, fill=white, inner sep=0pt, minimum width=3pt]
        \foreach \x in {1,2,3,4,5} {
            \draw (72*\x+90:.75) node [v] (v\x) {}
            --(72*\x+90:1.5) node [w] (w\x) {};
        }
        \draw (v1)--(v2)--(v3)--(v4)--(v5)--(v1);
        \draw (v1)--(v3)--(v5)--(v2)--(v4)--(v1);
        \begin{scope}[on background layer]
\draw [line width=.5cm,draw=gray,opacity=.2,line cap=round] (w1)--(w2)--(w3)--(w4)--(w5)--(w1);
        \end{scope}
        \node at (0,-2) {pentagram spider};
        \begin{scope}[xshift=5cm]
            \draw (90:.5) node[v] (x1){};
            \draw (90+120:.5)node[v] (x2){};
            \draw (90-120:.5)node[v] (x3){};
            \draw (x1)--(x2)--(x3)--(x1);
            \draw (x1) -- +(45:1) node[w] (x11){} -- +(90:1.414214) node[w] (x12) {}
            -- +(180-45:1)node[w](x13){}--(x1);
            \draw (x2) -- +(120+45:1) node[w] (x21){} -- +(120+90:1.414214) node[w] (x22) {}
            -- +(120+180-45:1)node[w](x23){}--(x2);
            \draw (x3) -- +(-120+45:1) node[w] (x31){} -- +(-120+90:1.414214) node[w] (x32) {}
            -- +(-120+180-45:1)node[w](x33){}--(x3);
            \foreach \x in {1,2,3}{
                \draw (x\x 1)--(x\x 3);
                \draw (x\x 2)--(x\x);
            }
            \begin{scope}[on background layer]
\draw [line width=.8cm,draw=gray,opacity=.2,line cap=round] (x12)--(x13)--(x21)--(x22)--(x23)--(x31)--(x32)--(x33)--(x11)--(x12);
            \end{scope}
            \node at (0,-2) {tall strider};
        \end{scope}
        \begin{scope}[xshift=10cm]
            \draw (0,0) node[v] (y){};
            \draw (90:.5) node[v] (y1){};
            \draw (90+120:.5)node[v] (y2){};
            \draw (90-120:.5)node[v] (y3){};
            \draw (y1)--(y2)--(y3)--(y1);
            \draw (y1) -- +(60:1) node[w] (y11){} 
            -- +(120:1)node[w](y13){}--(y1);
            \draw (y2) -- +(120+60:1) node[w] (y21){} 
            -- +(120+120:1)node[w](y23){}--(y2);
            \draw (y3) -- +(-120+60:1) node[w] (y31){} 
            -- +(-120+120:1)node[w](y33){}--(y3);
            \foreach \x in {1,2,3}{
                \draw (y)--(y\x);
            }
            \begin{scope}[on background layer]
\draw [line width=.8cm,draw=gray,opacity=.2,line cap=round] (y11)--(y13)--(y21)--(y23)--(y31)--(y33)--(y11);
            \end{scope}
            \node at (0,-2) {short strider};
        \end{scope}
    \end{tikzpicture}
\caption{A pentagram spider, a tall strider, and a short strider are graphs obtained from the above figure by adding any additional edges between two red hollow vertices.
}
    \label{fig:willow-forbids}
\end{figure}
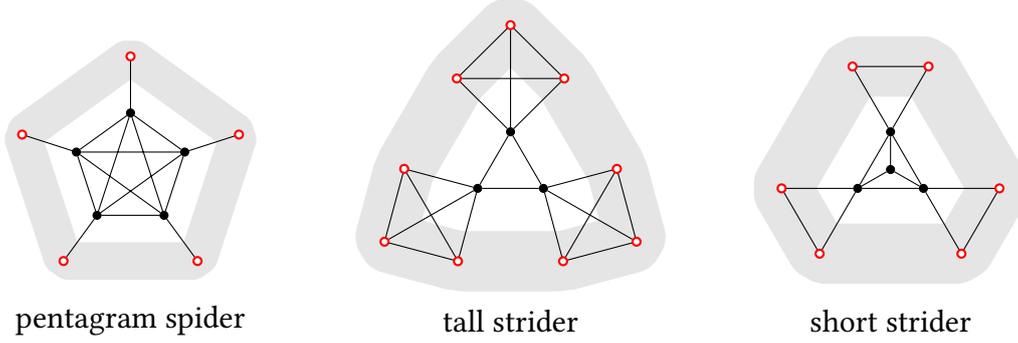
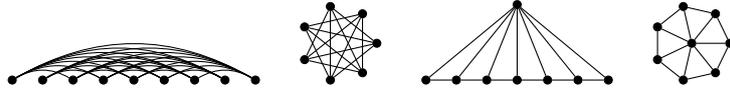
\begin{figure}
    \centering
    \begin{tikzpicture}
        \tikzstyle{v}=[circle, draw, solid, fill=black, inner sep=0pt, minimum width=3pt]
        \foreach \i in {1,2,...,9} {
            \node at (0.4*\i,0) [v](v\i) {};
        }
        
        \foreach \x in {1,2,...,7}
        {
            \pgfmathtruncatemacro{\lb}{\x+2}
            \foreach \y in {\lb,...,9}
            {
                \draw [bend left] (v\x) to(v\y);
            }
        }
    \end{tikzpicture}
    \quad
    \begin{tikzpicture}
        \tikzstyle{v}=[circle, draw, solid, fill=black, inner sep=0pt, minimum width=3pt]
        \foreach \i in {1,2,...,7} {
            \node at (\i*360/7:.5) [v](v\i) {};
        }
        \foreach \y in {3,...,6}
        {
                \draw (v1)--(v\y);
        }
        \foreach \x in {2,3,4,5}
        {
            \pgfmathtruncatemacro{\lb}{\x+2}
            \foreach \y in {\lb,...,7}
            {
                \draw (v\x)--(v\y);
            }
        }
    \end{tikzpicture}    
    \quad
    \begin{tikzpicture}
        \tikzstyle{v}=[circle, draw, solid, fill=black, inner sep=0pt, minimum width=3pt]
        \node at (0,0) [v](c) {};
        \foreach \i in {1,2,...,7} {
            \node at (\i*0.4-4*0.4,-1) [v](v\i) {};
            \draw (c)--(v\i);
        }
        \foreach \x in {1,2,...,6}
        {
            \pgfmathtruncatemacro{\lb}{\x+1}
            \draw (v\x)--(v\lb);
        }
    \end{tikzpicture}
    \quad
    \begin{tikzpicture}
        \tikzstyle{v}=[circle, draw, solid, fill=black, inner sep=0pt, minimum width=3pt]
        \node at (0,0) [v](c) {};
        \foreach \i in {1,2,...,7} {
            \node at (\i*360/7:.5) [v](v\i) {};
            \draw (c)--(v\i);
        }
        \foreach \x in {1,2,...,6}
        {
            \pgfmathtruncatemacro{\lb}{\x+1}
            \draw (v\x)--(v\lb);
        }
        \draw (v7)--(v1);
    \end{tikzpicture}
    \caption{Graphs $\overline{P_9}$, $\overline{C_7}$, $F_7$, and $W_7$. The class of ($\overline{P_9}$, $\overline{C_7}$, $F_7$, $W_7$)-free graphs is not Pollyanna.}\label{fig:nonwillows}

\end{figure}

Our second main theorem shows that a certain proper class of graphs is not Pollyanna, which generalizes the theorem of Bria\'nski, Davies, and Walczak~\cite{BDW2022} that the class of all graphs is not Pollyanna.
See \cref{fig:willow-forbids,fig:nonwillows} for an illustration of pentagram spiders, tall striders, short striders, $F_7$, $W_7$, the complement $\overline{P_9}$ of $P_9$, and the complement $\overline{C_7}$ of $C_7$; precise definitions are given in \cref{sec:willows}.

\begin{theorem}
    \label{thm:nonpollyana}
    Let $\mathcal F$ be 
    the set of all pentagram spiders, all tall striders, all short striders, 
    $\overline{P_{9}}$, 
    $\overline{C_n}$,
    $F_n$, and $W_n$
    for all $n\ge 7$.
    Then the class of $\F$-free graphs is not Pollyanna.
\end{theorem}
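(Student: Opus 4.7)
The plan is to find, inside the class of $\F$-free graphs, a $\chi$-bounded subclass $\mathcal C_0$ that is not polynomially $\chi$-bounded; any such $\mathcal C_0$ witnesses that the class of $\F$-free graphs is not Pollyanna. The natural source for $\mathcal C_0$ is the construction of Bria\'nski, Davies, and Walczak~\cite{BDW2022} (or a variant thereof) of graphs $G_k$ with $\chi(G_k)$ super-polynomial in $\omega(G_k)$, obtained by iteratively substituting triangle-free graphs of large chromatic number into the vertices of a small base graph such as $C_5$. Taking $\mathcal C_0$ to be the hereditary closure of $\{G_k : k \ge 1\}$, both the $\chi$-boundedness of $\mathcal C_0$ and the super-polynomial growth of $\chi$ on $\mathcal C_0$ are inherited directly from~\cite{BDW2022}, so the entire content of the proof reduces to verifying that every $G_k$ is $\F$-free.

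The list $\F$ has evidently been engineered to make this verification possible. Every graph in $\F$ contains a triangle, so no member of $\F$ embeds into a single triangle-free substituted piece, and any hypothetical induced copy of $H \in \F$ must meet at least two such pieces. I would then proceed graph by graph. The pentagram spider, tall strider, and short strider describe precisely the clique arrangements that appear across a $C_5$-substitution (with the gray ``rim'' in \cref{fig:willow-forbids} playing the role of a long induced cycle whose vertices are spread across different base-graph positions), so ruling them out amounts to checking that the substitution never produces these specific patterns. The denser forbidden graphs $\overline{P_9}$ and $\overline{C_n}$ have very small independent sets, forcing any induced copy to be concentrated in only a few pieces whose base-graph positions are pairwise adjacent, which is incompatible with the triangle-freeness of the base. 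The fan $F_n$ and wheel $W_n$ are handled similarly, using their dominating vertex together with the limited independent-set structure of their rims.

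The main obstacle is the case analysis for those $H \in \F$ whose vertex set spans several substituted pieces: one has to track the bipartite patterns that appear between distinct pieces and verify that none of them matches the bipartite patterns internal to the various members of $\F$. The natural way to organise this is to coordinatise each vertex of $G_k$ by its base-graph location together with its depth in the substitution recursion, and to translate each $H \in \F$ into a finite list of forbidden coordinate configurations, which can then be ruled out by an induction on the depth of the recursion. Once all such configurations are eliminated, $\mathcal C_0$ is a $\chi$-bounded class of $\F$-free graphs that is not polynomially $\chi$-bounded, and the theorem follows.
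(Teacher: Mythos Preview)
Your proposal rests on a mischaracterisation of the construction in~\cite{BDW2022}. That construction is not a substitution scheme: one starts with an acyclically oriented graph $Y$ of large chromatic number (Tutte's construction, or the large-girth variant of Ne\v{s}et\v{r}il and R\"odl used here), fixes a prime $p$, and adds an edge between $u$ and $v$ whenever $Y$ contains a directed path between them whose length is not a multiple of $p$. The resulting graph $Z_p$ has $\omega(Z_p)=p$ and $\chi(Z_p)=\chi(Y)$, and the family $\{Z_p\}$ is $\chi$-bounded while $\chi(Z_p)$ grows super-polynomially in $p$. By contrast, a class built by iteratively substituting triangle-free graphs of large chromatic number into $C_5$ is not $\chi$-bounded at all: the substituted pieces are themselves triangle-free induced subgraphs of unbounded chromatic number, so no $\chi$-bounding function can exist for the hereditary closure. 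Nothing about your $\mathcal C_0$ can therefore be ``inherited from~\cite{BDW2022}'', and the very first requirement on $\mathcal C_0$ already fails.

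The paper bypasses any embedding case analysis by introducing \emph{$n$-willows}: graphs $G$ for which there is an oriented tree $T$ with $V(G)\subseteq V(T)$ such that $uv\in E(G)$ exactly when $T$ contains a directed $u$--$v$ path of length $\not\equiv 0 \pmod n$. When the oriented graph $Y$ above is chosen so that every cycle has many changes of direction (\cref{girth2}), every bounded-size induced subgraph of $Z_p$ is a $p$-willow (\cref{lem:is-willow}). This yields the general \cref{non-pollyanna}: forbidding any finite set of non-willows gives a non-Pollyanna class. \cref{thm:nonpollyana} then reduces to proving that each listed graph is not a willow, which is done in \cref{sec:willows} via short arguments using the observation that vertices lying on a single directed path of $T$ must induce a complete multipartite subgraph. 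No study of how members of $\F$ might embed into the particular constructed graphs is needed.
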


We will actually prove something significantly more general than \cref{thm:nonpollyana} (see \cref{non-pollyanna,non-nearchi}), where $\F$ can be any finite collection of graphs that are not willows. We will introduce willows in \cref{sec:nonpollyanna}.

The paper is organized as follows.
\cref{sec:prelim} reviews basic definitions and properties.
\cref{sec:mKt,sec:pineapple,sec:lollipop,sec:bowtie,sec:bull} each deal with the proof of a different case of \cref{thm:main} in order, and we remark that each of these sections can be read independently of each other.
\cref{sec:willows,sec:nonpollyanna} deal with the proof of \cref{thm:nonpollyana}.
\cref{sec:conclusions} ends the paper with a discussion of further work and several open problems.

\section{Preliminaries}
\label{sec:prelim}
We denote the complement of a graph $G$ by $\overline{G}$.
For a graph $H$, a graph $G$ is \emph{$H$-free} if $G$ has no induced subgraph isomorphic to $H$.
For a set $\F$ of graphs, a graph $G$ is \emph{$\F$-free} if $G$ is $H$-free for every $H\in\F$.
For a vertex $v$ of a graph $G$, we write $N_G(v)$ to denote the set of all neighbors of $v$.
For a set $S \subseteq V(G)$, we will denote $\cup_{s \in S} N_G(s) \setminus S$ by $N(S)$.
In situations where it is not ambiguous, we will denote $N_G(v)$ by $N(v)$ and $N_G(S)$ by $N(S)$.
For two disjoint sets $A$ and~$B$ of vertices, we say that $A$ is \emph{anti-complete} to $B$ if there are no edges between $A$ and~$B$, and \emph{complete} to $B$ if every vertex in~$A$ is adjacent to every vertex in~$B$. 
If $A$ is neither complete nor anti-complete to $B$, then we say $A$ is \emph{mixed} on $B$.
We let $P_t$ denote the path on $t$ vertices.
The length of a path or a cycle is the number of its edges.
For $S, T \subseteq V(G)$ the distance between~$S$ and $T$ is the length of a shortest path with one end in $S$ and the other end in~$T$.

In the rest of this section, we detail further preliminaries that we require to show that the class of $t$-lollipop-free and the class of bull-free graphs are Pollyanna.

A \emph{homogeneous set} of a graph $G$ is a set $X$ of vertices such that $1<\abs{X}<\abs{V(G)}$ and every vertex in $V(G)\setminus X$ is either complete or anti-complete to $X$.
\emph{Substituting} a vertex $v$ of a graph~$G$ by a graph $H$ is an operation that creates a graph obtained from the disjoint union of $H$ and $G-v$ by
adding an edge between every vertex of $H$ and every neighbor of $v$ in $G$.
Notice that if $|V(G)|,|V(H)|>1$, then $V(H)$ is a homogeneous set in this new graph.
We require a theorem of Chudnovsky, Penev, Scott, and Trotignon~\cite{CPST2013} that substitution preservers polynomial $\chi$-boundedness.
Given a class $\mathcal{C}$ of graphs, we let $\mathcal{C}^*$ denote the closure of $\mathcal{C}$ under substitutions and disjoint unions.
\begin{theorem}[Chudnovsky, Penev, Scott, and Trotignon~\cite{CPST2013}]\label{thm:substitution-orig}
    Let $\mathcal C$ be a class of graphs.
    If $\mathcal{C}$ is polynomially $\chi$-bounded, then so is $\mathcal{C}^*$.
\end{theorem}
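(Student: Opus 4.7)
The plan is to prove by strong induction on $\abs{V(G)}$ that every $G \in \C^*$ satisfies $\chi(G) \leq g(\omega(G))$ for a polynomial $g$ constructed from $f$, the polynomial $\chi$-bounding function of $\C$. The base case is when $\abs{V(G)} \leq 1$ or $G \in \C$, in which case the inequality follows directly from $f$ (after arranging $g$ to dominate $f$ pointwise).

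For the inductive step, two cases arise from the construction of $\C^*$. If $G$ is a disjoint union of smaller graphs in $\C^*$, then both $\chi$ and $\omega$ take the maximum over components, so the inductive bound transfers immediately. If instead $G$ is a substitution $G = H[G_1, \ldots, G_k]$ with $H, G_1, \ldots, G_k \in \C^*$ of strictly smaller order, I would use the standard product-coloring bound
\[
  \chi(G) \leq \chi(H) \cdot \max_{v} \chi(G_v)
\]
together with the identity $\omega(G) = \max_K \sum_{v \in K} \omega(G_v)$, where $K$ ranges over cliques of $H$, and then plug in the inductive bounds $\chi(H) \leq g(\omega(H))$ and $\chi(G_v) \leq g(\omega(G_v))$.

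The crux is choosing $g$ so that this multiplicative composition remains bounded by $g(\omega(G))$. My approach leverages the modular decomposition of $G$: by passing to the hereditary closure of $\C$ (which remains polynomially $\chi$-bounded), every prime quotient of a graph in $\C^*$ lies in $\C$, so $f$ applies at each prime factor of the decomposition tree. Convexity inequalities such as $\sum_i g(x_i) \leq g\bigl(\sum_i x_i\bigr)$ for $g(x) = C x^D$ with $D \geq 1$ let us aggregate the inductive bounds on the $G_v$ into a single bound in terms of $\omega(G) = \max_K \sum_{v \in K} \omega(G_v)$. With $D$ and $C$ chosen as sufficiently large functions of the degree and leading coefficient of $f$, the induction closes.

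The hardest part will be verifying that $g$ does not suffer unbounded degree growth through iterated substitutions. At each prime substitution level the bound from $f$ contributes a multiplicative factor that must be absorbed by the simultaneous growth of $\omega$ under substitution, and this is delicate precisely in the regime where one factor $G_v$ dominates so that $\omega(G) \approx \omega(G_v)$ (forcing the weak lower bound $\omega(G) \geq \omega(H) + \max_v \omega(G_v) - 1$ to do most of the work). I would treat this subcase by reducing to the dominant factor via the inductive hypothesis, arguing that the remaining factors contribute only a lower-order polynomial correction that can be absorbed into the leading coefficient of $g$.
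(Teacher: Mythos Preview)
This theorem is not proved in the paper; it is quoted from \cite{CPST2013} and used as a black box. So there is no in-paper argument to compare against, and the question is whether your sketch stands on its own.

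It does not, and the gap is the product bound itself. Applying $\chi(G) \leq \chi(H)\cdot \max_v \chi(G_v)$ at each prime node of the modular decomposition compounds multiplicatively in the depth of the tree, and that depth is not controlled by $\omega(G)$. Concretely, take $\C$ to be the class of all graphs (the theorem must cover this case), let $H$ be any prime triangle-free graph with $\chi(H)=c\geq 2$ (e.g.\ $H=P_4$ with $c=2$, or $H=C_5$ with $c=3$), fix a vertex $v_0$ of $H$ lying in an edge, and define $G^{(0)}=K_1$ and $G^{(i+1)}$ as the substitution of $G^{(i)}$ into $v_0$ with single vertices elsewhere. Then $\omega(G^{(n)})=n+1$, but your induction gives only $\chi(G^{(n)})\leq c^{\,n}$. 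This is precisely your ``dominant factor'' regime: one $G_v$ carries essentially all of $\omega(G)$, yet each prime level still contributes a multiplicative $c$ while $\omega$ grows additively by~$1$, so no fixed polynomial $g$ can absorb the accumulation.

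Two auxiliary claims also fail. The inequality $\omega(G)\geq \omega(H)+\max_v\omega(G_v)-1$ is false even for prime $H$: with $H$ the bull, $v^*$ a degree-$1$ vertex, $G_{v^*}=K_m$, and all other $G_v=K_1$, one gets $\omega(H)=3$ but $\omega(G)=m+1$. And the superadditivity $\sum_i g(x_i)\leq g(\sum_i x_i)$, while correct for $g(x)=Cx^D$, controls sums rather than the product $f(\omega(H))\cdot g(\max_v\omega(G_v))$ that your recursion actually produces. The argument in \cite{CPST2013} (and the sharper version of Bourneuf and Thomass\'e mentioned in the paper's footnote, which obtains $\omega^{2k+3}$ from $\omega^k$) does not iterate the product bound; a genuinely different coloring idea is required.
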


We further require some results on perfect graphs.
A \emph{hole} is an induced cycle of length at least four.
The \emph{parity} of a hole (or path) is the parity of its length.
An induced subgraph $A$ of a graph $G$ is an \emph{antihole} if $V(A)$ induces a hole in $\overline{G}$.
A graph $G$ is called \emph{perfect} if every induced subgraph $H$ of $G$ satisfies $\omega(H) = \chi(H)$.
The \say{Strong Perfect Graph Theorem} of Chudnovsky, Robertson, Seymour, and Thomas~\cite{CRST2006} states that a graph is perfect if and only if it does not contain an odd hole or an odd antihole.

We do not require the full force of the strong perfect graph theorem and so, we will instead use the following three results. They are easy corollaries of the strong perfect graph theorem, but they were proven several years earlier and have much shorter proofs.

\begin{theorem}[Seinsche~\cite{Seinsche1974}] \label{cographs}
    Every $P_4$-free graph is perfect.
\end{theorem}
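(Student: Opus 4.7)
The plan is to prove by induction on $|V(G)|$ that every $P_4$-free graph $G$ satisfies $\chi(G)=\omega(G)$; since $P_4$-freeness is preserved under taking induced subgraphs, this establishes perfection. The base case $|V(G)|\le 1$ is trivial, and the inequality $\chi(G)\ge \omega(G)$ is automatic, so the content lies in the upper bound.

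The engine of the induction is the following \emph{structural dichotomy}, which I take as the main lemma: every $P_4$-free graph $G$ with $|V(G)|\ge 2$ has either $G$ or $\overline{G}$ disconnected. Granting this, the inductive step closes easily in two cases. If $G$ is disconnected with components $G_1,\dots,G_k$, then $\omega(G)=\max_i\omega(G_i)$, and proper colorings of the components can reuse colors across components, so $\chi(G)=\max_i\chi(G_i)$; the induction hypothesis applied to each $G_i$ then gives $\chi(G)=\omega(G)$. If instead $\overline{G}$ is disconnected, then $V(G)$ partitions into nonempty sets $A$ and $B$ with $A$ complete to $B$ in $G$; every clique of $G$ is the disjoint union of a clique in $G[A]$ and a clique in $G[B]$, giving $\omega(G)=\omega(G[A])+\omega(G[B])$, while coloring $G[A]$ and $G[B]$ with disjoint palettes yields $\chi(G)\le \chi(G[A])+\chi(G[B])$, which equals $\omega(G)$ by induction.

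The main obstacle is proving the structural dichotomy. My plan is to argue by contradiction: assume both $G$ and $\overline{G}$ are connected. Because $P_4$ is self-complementary, $\overline{G}$ is also $P_4$-free, and both $G$ and $\overline{G}$ then have diameter at most $2$, since any induced shortest path on at least four vertices would contain an induced $P_4$. Fix a vertex $v$ and set $N=N_G(v)$ and $R=V(G)\setminus(N\cup\{v\})$; the connectivity of $G$ and of $\overline{G}$ forces $N,R\ne\emptyset$. The diameter-$2$ condition in $G$ makes every $r\in R$ have a neighbor in $N$, and the diameter-$2$ condition in $\overline{G}$ makes every $y\in N$ have some $r\in R$ with $ry\notin E(G)$. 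Choosing such a pair $r,y$ and any $x\in N_G(r)\cap N$, if $xy\notin E(G)$ then $r,x,v,y$ induces a $P_4$ — contradiction. The remaining configurations, where $y$ is forced to be adjacent to every element of $N_G(r)\cap N$ for every such pair $(r,y)$, can be ruled out by an extremal choice of $v$ (for instance, maximizing $|N_G(v)|$) and tracking how the forced adjacencies propagate between $N$ and $R$ until another induced $P_4$ appears. I expect this bookkeeping in the extremal case to be the most delicate part of the proof.
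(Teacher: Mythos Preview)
The paper does not prove this statement; it is cited as a known result of Seinsche, so there is no in-paper argument to compare against. Your global strategy is correct and standard: the dichotomy ``for every $P_4$-free $G$ on at least two vertices, $G$ or $\overline G$ is disconnected'' is precisely the right lemma, and your derivation of $\chi=\omega$ from it is clean and complete.

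The genuine gap is in your proof of the dichotomy. After you force $y$ to be complete to $N_G(r)\cap N$, the configuration on $\{r,x,v,y\}$ is a paw (the triangle on $x,v,y$ with pendant $r$ at $x$), not a $P_4$, and one can keep adjoining vertices --- a twin of $v$ in $R$, a non-neighbour of $x$ inside $N_G(r)\cap N$, and so on --- without producing an induced $P_4$ on the running vertex set; the ``propagation under an extremal choice of $v$'' that you gesture at is genuinely non-trivial and you have not carried it out. A much cleaner route is to prove the dichotomy itself by induction on $|V(G)|$. Delete any vertex $v$ and apply the hypothesis to $G-v$. If $G-v$ is disconnected into components $C_1,\dots,C_k$ and $G$ is connected, then $v$ has a neighbour in every $C_i$; if $v$ also has a non-neighbour in some $C_j$, connectivity of $C_j$ gives an edge $xy$ of $C_j$ with $vx\in E(G)$ and $vy\notin E(G)$, and together with any neighbour $z$ of $v$ in a different component, the vertices $z,v,x,y$ induce a $P_4$, a contradiction. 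Hence $v$ is universal and $\overline G$ is disconnected. The case where $\overline G - v$ is disconnected is symmetric, since $P_4$ is self-complementary. This replaces your extremal argument with a short case analysis and closes the lemma.
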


\begin{theorem}[Chv{\'a}tal and Sbihi~\cite{chvatal1987bull}] \label{bullperfect}
    A bull-free graph is perfect if and only if it does not contain an odd hole or odd antihole.
\end{theorem}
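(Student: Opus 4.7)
The \emph{only if} direction is immediate: every odd hole $C_{2k+1}$ with $k\ge 2$ satisfies $\chi=3>2=\omega$, and every odd antihole $\overline{C_{2k+1}}$ with $k\ge 2$ satisfies $\chi=k+1>k=\omega$, so a graph containing either as an induced subgraph is not perfect.

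For the converse, the plan is a minimum-counterexample argument. Suppose $G$ is a bull-free graph containing no odd hole or odd antihole that is not perfect, and choose $G$ with $\abs{V(G)}$ minimum. Then $G$ is \emph{minimally imperfect}: every proper induced subgraph is perfect. A direct check shows the bull is self-complementary, so $\overline{G}$ is also bull-free and also contains no odd hole or odd antihole, and is likewise minimally imperfect. Classical results on minimally imperfect graphs then give several handles: $G$ is \emph{partitionable}, meaning $\abs{V(G)}=\alpha\omega+1$ with strong regularity on maximum cliques and stable sets; both $G$ and $\overline{G}$ are connected; $G$ contains no proper non-trivial homogeneous set (otherwise contracting it would yield a smaller minimally imperfect graph); and by Chvátal's star cutset lemma there is no vertex $v$ for which $\{v\}\cup N(v)$ separates $V(G)$.

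The next step is to exploit bull-freeness structurally. By \cref{cographs} and the imperfectness of $G$, there is an induced $P_4$ on some vertices $a\dd b\dd c\dd d$. Bull-freeness gives the following local rigidity: whenever $uv$ is an edge and $w$ is adjacent to exactly one of $u,v$, further neighbors and non-neighbors of $\{u,v,w\}$ are forced into very specific patterns, since otherwise a five-vertex bull appears. The aim is to iterate this observation along the $P_4$ and its extensions, switching between $G$ and $\overline{G}$ as convenient, in order to locate either a star cutset, a homogeneous set, or a non-trivial \emph{homogeneous pair} $(A,B)$ of $G$. Each of these is forbidden in a minimally imperfect graph; the homogeneous-pair version is the key additional lemma that Chvátal and Sbihi prove along the way, so formally the plan relies on establishing it first, by showing that contracting a minimal homogeneous pair in any would-be minimally imperfect graph produces a smaller such graph.

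The hard part will be the structural case analysis yielding one of these decompositions. Bull-freeness is individually a very permissive constraint, so the argument must branch on which small configurations appear around the chosen $P_4$: for instance, the number of attachments a common neighbor of $b,c$ has to $\{a,b,c,d\}$, the presence of an induced $C_5$ (excluded by the absence of odd holes), a $C_4$, a $2K_2$, or a house, and then analyse how the maximum cliques through $b$ and $c$ interact. Each case should close out by exhibiting either a bull, an odd hole, an odd antihole, or one of the forbidden separating structures listed above. The bookkeeping is substantial but each individual local step is forced; this is the technical heart of the Chvátal--Sbihi argument.
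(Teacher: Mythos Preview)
The paper does not prove this theorem; it is stated in Section~\ref{sec:prelim} as a preliminary result quoted from Chv\'atal and Sbihi~\cite{chvatal1987bull} without proof, alongside the theorems of Seinsche and Lov\'asz. So there is no proof in the present paper to compare your proposal against.

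Your sketch is a faithful high-level outline of the original Chv\'atal--Sbihi argument: the minimum-counterexample reduction to minimally imperfect graphs, the self-complementarity of the bull, the partitionable structure, absence of star cutsets and homogeneous sets, and the new lemma that minimally imperfect graphs have no homogeneous pair. That said, what you have written is a plan rather than a proof. The entire technical content lies in the sentence ``the hard part will be the structural case analysis yielding one of these decompositions,'' and you have not carried out any of it. In the actual paper of Chv\'atal and Sbihi, producing a homogeneous pair (or star cutset) from bull-freeness in the absence of odd holes and antiholes is genuinely delicate and occupies most of the work; nothing you have written would let a reader reconstruct it. If you intend this as a proof rather than a reference, you would need to actually execute the case analysis.
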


\begin{lemma}[Lov\'asz~\cite{Lovasz1972}]\label{lovaszreplacement}
    The class of perfect graphs is closed under taking substitutions. 
\end{lemma}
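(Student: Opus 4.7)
The plan is to proceed by induction on $|V(G')|$, where $G' := G[v \to H]$ denotes the substitution of a vertex $v$ of the perfect graph $G$ by the perfect graph $H$. The base case $|V(H)| = 1$ gives $G' \cong G$, which is perfect by assumption. For the inductive step, I first observe that every proper induced subgraph $G''$ of $G'$ is perfect: either $G''$ is an induced subgraph of $G - v$, or of $H$, or is of the form $G_0[u \to H_0]$ for induced subgraphs $G_0$ of $G$ and $H_0$ of $H$ with fewer vertices in total, to which the inductive hypothesis applies. Hence it remains only to show $\chi(G') = \omega(G')$, and for this it is enough to exhibit an independent set $I \subseteq V(G')$ meeting every maximum clique of $G'$: then $\omega(G' - I) = \omega(G') - 1$, and since $G' - I$ is a proper induced subgraph (hence perfect), $\chi(G') \le \chi(G' - I) + 1 = \omega(G')$.

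To construct $I$, I fix an optimal $\omega(G)$-coloring of $G$ and let $C$ be the color class containing $v$, and I fix an optimal $\omega(H)$-coloring of $H$ and let $J$ be any one of its color classes. Setting $I := (C \setminus \{v\}) \cup J$, independence of $I$ in $G'$ follows because $C \setminus \{v\}$ is independent in $G - v$, $J$ is independent in $H$, and $C \setminus \{v\}$ is anti-complete to $N_G(v)$ (as $v \in C$ and $C$ is independent in $G$), and therefore anti-complete to all of $V(H)$ in $G'$. To verify the meeting condition, let $K'$ be a maximum clique of $G'$. If $K' \subseteq V(G - v)$, then $|K'| = \omega(G-v) \le \omega(G) \le \omega(G') = |K'|$, which forces $K'$ to be a maximum clique of $G$; pigeonhole gives $C \cap K' \ne \emptyset$, and since $v \notin K'$, this intersection lies in $C \setminus \{v\}$. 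Otherwise $K' = (K \setminus \{v\}) \cup L$ with $v \in K$ a clique in $G$ and $L$ a nonempty clique in $H$, and maximality of $|K'|$ forces $|L| = \omega(H)$, so $J \cap L \ne \emptyset$ by perfection of $H$.

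The main obstacle is choosing $I$ so that it handles maximum cliques of both types simultaneously. The key observation is that a maximum clique of $G'$ lying entirely in $G - v$ must itself be maximum in $G$ (since substituting $H$ for $v$ cannot decrease the clique number), so any color class of an optimal coloring of $G$ meets it; having $v$ sit inside our chosen class $C$ allows us to safely delete $v$ from $C$ while preserving this hitting property. A maximum clique of $G'$ that passes through the copy of $H$ is then hit by a color class of $H$, by the perfection of $H$. Combining these two contributions across the substitution, together with the hereditary perfection of proper induced subgraphs of $G'$ established inductively, is what closes the argument.
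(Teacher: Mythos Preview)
The paper does not give its own proof of this lemma; it is stated with attribution to Lov\'asz~\cite{Lovasz1972} and used as a black box. Your argument is correct and is essentially the classical one: reduce to finding an independent set meeting every maximum clique of $G'$, and build that set by splicing the colour class through $v$ in an optimal colouring of $G$ with any colour class of an optimal colouring of $H$.

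Two minor remarks on presentation. First, your ``base case $|V(H)|=1$'' is not really a base of the induction on $|V(G')|$ but rather a trivial subcase within the inductive step; the actual base is $|V(G')|=1$. Second, in the chain ``$|K'| = \omega(G-v) \le \omega(G) \le \omega(G') = |K'|$'', the first equality deserves a word of justification: it holds because $K'$ is a maximum clique of $G'$ lying inside $V(G-v)$, and every clique of $G-v$ is a clique of $G'$, so $K'$ is also maximum in $G-v$. Neither point affects correctness.
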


\section{Adding a clique}
\label{sec:mKt}

We write $H\cup F$ to denote the disjoint union of two graphs $H$ and $F$. We prove that if the class of $H$-free graphs is Pollyanna, then so is the class of $(K_t \cup H)$-free graphs.
Our proof is very similar to Wagon's proof~\cite{Wagon1980} that the class of $mK_2$-free graphs is polynomially $\chi$-bounded for each positive integer $m$.

\begin{proposition}\label{thm:K_t-union-H} 
    Let $t\ge 1$ be an integer.
    If the class of $H$-free graphs is Pollyanna, then 
    the class of $(K_{t}\cup H)$-free graphs is Pollyanna.
\end{proposition}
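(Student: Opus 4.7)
The plan is to mimic Wagon's argument for $mK_2$-free graphs, proceeding by induction on $\omega = \omega(G)$. Let $\mathcal{F}$ be the given $\chi$-bounded class with bounding function $f$, and let $\mathcal{F}'$ denote its hereditary closure (still $\chi$-bounded with the same function). The Pollyanna hypothesis for $H$-free applied to $\mathcal{F}'$ furnishes a polynomial $p$ such that every $H$-free graph $G' \in \mathcal{F}'$ satisfies $\chi(G') \le p(\omega(G'))$. The goal is to show that every $(K_t \cup H)$-free graph $G \in \mathcal{F}$ satisfies $\chi(G) \le q(\omega(G))$ for some polynomial $q$ depending on $t$, $p$, and $f(t-1)$.

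The base case $\omega(G) < t$ is immediate from $\chi$-boundedness: $\chi(G) \le f(t-1)$, a constant. For the inductive step with $\omega \ge t$, I would pick a maximum clique $K = \{v_1, \ldots, v_\omega\}$ of $G$ and set $K_t = \{v_1, \ldots, v_t\} \subseteq K$. Let $A = V(G) \setminus N[K]$ be the vertices anti-complete to $K$. The key observation is that $G[A]$ is $H$-free: an induced $H$-copy inside $G[A]$, together with $K_t$, would form an induced $K_t \cup H$ in $G$, contradicting the hypothesis. Hence $\chi(G[A]) \le p(\omega)$.

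I would then partition $V(G) \setminus (A \cup K)$ by first neighbor in the ordered clique $K$: set $B_i = \{v : \min\{j : v \sim v_j\} = i\}$ for $i = 1, \ldots, \omega$. Since $B_i \subseteq N(v_i)$, we have $\omega(G[B_i]) \le \omega - 1$, and $B_i$ is anti-complete to $\{v_1, \ldots, v_{i-1}\}$. For $i \ge t+1$, the anti-complete clique contains $K_t$, so the very same argument as for $A$ shows that $G[B_i]$ is $H$-free, giving $\chi(G[B_i]) \le p(\omega-1)$. For $i \le t$, $G[B_i]$ is still $(K_t \cup H)$-free with strictly smaller clique number, so the inductive hypothesis yields $\chi(G[B_i]) \le q(\omega-1)$. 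Summing over the partition produces a recurrence of the form $q(\omega) \le p(\omega) + \omega + t\,q(\omega-1) + (\omega-t)\,p(\omega-1)$.

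The hard part is controlling the coefficient $t$ multiplying $q(\omega-1)$, since treating each of $B_1, \ldots, B_t$ independently would yield an exponential $q$ whenever $t \ge 2$. Following Wagon, the resolution is to analyse the ``bad'' pieces $B_1, \ldots, B_t$ collectively rather than summing their inductive bounds: the anti-completeness of each $B_i$ to the clique $\{v_1, \ldots, v_{i-1}\}$ together with the ambient $(K_t \cup H)$-free condition should be combined, so that $\chi(B_1 \cup \cdots \cup B_t)$ is controlled by a polynomial in $\omega$ rather than by $t$ recursive copies. This is the main structural obstacle; once it is handled (for example via a secondary induction that essentially trades the forbidden clique $K_t$ for a smaller one, or via a refined fingerprint partition absorbing the factor $t$), the recurrence collapses to one of the form $q(\omega) \le q(\omega-1) + \mathrm{poly}(\omega)$, which telescopes to the desired polynomial bound.
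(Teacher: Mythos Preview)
Your setup is sound through the point where you obtain the recurrence $q(\omega) \le p(\omega) + \omega + t\,q(\omega-1) + (\omega-t)\,p(\omega-1)$, and you correctly diagnose that the coefficient $t$ on $q(\omega-1)$ is fatal: it forces $q$ to grow like $t^{\omega}$, not polynomially. The gap is that you never actually close this. ``Analyse $B_1,\ldots,B_t$ collectively'', ``trade $K_t$ for a smaller one'', and ``a refined fingerprint partition'' are not arguments; none of them, as stated, produces a bound on $\chi(B_1\cup\cdots\cup B_t)$ that avoids the recursive blow-up. In particular, the ``trade down to $K_{t-1}$'' idea fails: every vertex of $B_1$ is adjacent to $v_1$, so an induced $K_{t-1}\cup H$ inside $B_1$ does \emph{not} yield an induced $K_t\cup H$ in $G$ (the $H$-part is not anti-complete to $v_1$).

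The fix, and what the paper does, is to abandon the first-neighbor partition and the induction on $\omega$ altogether. Partition $V(G)\setminus K$ instead by \emph{co-degree into $K$}. A vertex with at most $t-1$ non-neighbours in $K$ lies in some set $A_M=\{v: v \text{ complete to }K\setminus M\}$ for a $(t-1)$-subset $M\subseteq K$; since $K$ is maximum, $\omega(G[A_M])\le t-1$, so $\chi(G[A_M])\le f(t-1)$, a constant. A vertex with at least $t$ non-neighbours in $K$ lies in some set $A'_N=\{v: v\text{ anti-complete to }N\}$ for a $t$-subset $N\subseteq K$; then $G[A'_N]$ is $H$-free, so $\chi(G[A'_N])\le p(\omega)$. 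Summing over all $\binom{\omega}{t-1}$ choices of $M$ and $\binom{\omega}{t}$ choices of $N$ gives
\[
\chi(G)\le \binom{\omega}{t-1}f(t-1)+\binom{\omega}{t}p(\omega),
\]
which is a polynomial in $\omega$ with no recursion at all. This is exactly the partition that handles your ``bad'' set $B_1\cup\cdots\cup B_t$: the vertices in it with many non-neighbours in $K$ get absorbed into $H$-free pieces, and those with few non-neighbours have bounded clique number.
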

\begin{proof}
    Let $\mathcal C$ be the class of $(K_t\cup H)$-free graphs.
    Let $\mathcal D$ be the class of $H$-free graphs.
    Let $\mathcal F$ be a 
    $\chi$-bounded hereditary class of graphs with a $\chi$-bounding function~$f$.  
    We may assume that $f$ is an increasing function.
    Assume that $\mathcal F\cap \mathcal D$ is $\chi$-bounded by a $\chi$-bounding polynomial $g$. 
    We may also assume that $g$ is an increasing function.

    Let $G$ be a graph in $\mathcal F\cap \mathcal C$.
    To prove that $\mathcal F \cap \mathcal C$ is $\chi$-bounded, we claim that 
    \begin{equation}
        \label{eq:mKt}
        \chi(G)\le    
        \binom{\omega(G)}{t-1} f(t-1)
        +\binom{\omega(G)}{t}g(\omega(G)). 
    \end{equation}
    We may assume that $\omega(G)\ge t$ because otherwise $\chi(G)\le f(t-1)$.
    Let $K$ be a clique of $G$ with $\abs{K}=\omega(G)$.

    Now, for each subset $M$ of $K$ with $\abs{M}=t-1$, 
    let $A_M$ be the set of all vertices in $V(G)\setminus K$ that are complete to $K\setminus M$. 
    Since $K \setminus M$ is complete to $A_M$, we have that $\omega(G[A_M]) \le \omega(G)-\omega(G[K \setminus M]) = \omega(G) - (\omega(G)-(t-1)) = t-1$.
    Therefore, $\chi (G[A_M])\le f(\omega(G[A_M])) \le f(t-1)$.

    For each subset $N$ of $K$ with $\abs{N}=t$, 
    let $A'_{N}$ be the set of all vertices in $V(G)\setminus K$ 
    that are anti-complete to $N$. 
    Since $G$ has no induced subgraph isomorphic to $K_{t}\cup H$,     
    $G[A'_{N}]\in \mathcal D$.
    This implies that $\chi(G[A'_{N}])\le g(\omega(G))$.
    Observe that every vertex in $V(G)$ is in $M\cup A_M$ for some $M\subseteq K$ with $\abs{M}=t-1$, 
    or in $A'_{N}$ for some $N\subseteq K$ with $\abs{N}=t$.
    Thus we deduce that \eqref{eq:mKt} holds since there are $\binom{\omega(G)}{\omega(G) -(t-1)} = \binom{\omega(G)}{t-1}$ such choices for $M$, and $\binom{\omega(G)}{t}$ choices for~$N$.
\end{proof}
We can use the almost same proof to prove the following. 
\begin{proposition}
    If the class of $H$-free graphs is $(t-1)$-strongly Pollyanna, 
    then the class of $(K_t\cup H)$-free is $(t-1)$-strongly Pollyanna.
    \qed
\end{proposition}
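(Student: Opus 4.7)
The plan is to run the same calculation as in \cref{thm:K_t-union-H} with a single substitution: instead of invoking the $\chi$-bounding function of $\F$ to bound the chromatic number on the sets $A_M$, we exploit the $(t-1)$-goodness hypothesis, which is exactly what makes those pieces (of clique number at most $t-1$) have bounded chromatic number.

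Concretely, let $\mathcal D$ be the class of $H$-free graphs, let $\C$ be the class of $(K_t\cup H)$-free graphs, and let $\F$ be a $(t-1)$-good class witnessed by a constant $m$, so that $\chi^{(t-1)}(G')\le m$ for every $G'\in\F$. Since $\F$ is hereditary, $\F\cap\mathcal D$ is itself $(t-1)$-good, so by the assumption that $\mathcal D$ is $(t-1)$-strongly Pollyanna, $\F\cap\mathcal D$ is polynomially $\chi$-bounded by some polynomial $g$, which we may take to be increasing.

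Now fix $G\in\F\cap\C$ and assume $\omega(G)\ge t$ (otherwise $\chi(G)\le\chi^{(t-1)}(G)\le m$). Let $K$ be a clique of $G$ with $\abs{K}=\omega(G)$, and for each $M\subseteq K$ with $\abs{M}=t-1$ and each $N\subseteq K$ with $\abs{N}=t$, define $A_M$ and $A'_N$ exactly as in the proof of \cref{thm:K_t-union-H}. The covering argument, that every vertex of $G$ lies in some $M\cup A_M$ or in some $A'_N$, is unchanged. For the $A_M$ term, $K\setminus M$ is complete to $A_M$, so $\omega(G[A_M])\le t-1$; since $G[A_M]$ is an induced subgraph of $G\in\F$, we directly obtain $\chi(G[A_M])\le\chi^{(t-1)}(G)\le m$, which replaces the use of the $\chi$-bounding function of $\F$. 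For the $A'_N$ term, $G[A'_N]$ is $H$-free by $(K_t\cup H)$-freeness and lies in $\F$ by heredity, so $\chi(G[A'_N])\le g(\omega(G))$. Summing yields
\[\chi(G)\le\binom{\omega(G)}{t-1}m+\binom{\omega(G)}{t}g(\omega(G)),\]
which is a polynomial in $\omega(G)$. Since $\F$ was arbitrary, $\C$ is $(t-1)$-strongly Pollyanna.

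There is no genuine obstacle: the only insight needed is that $(t-1)$-goodness of $\F$ is precisely what is required to handle the pieces $G[A_M]$, while the treatment of $G[A'_N]$ goes through unchanged because $\F\cap\mathcal D$ is again $(t-1)$-good. The rest of the proof of \cref{thm:K_t-union-H} transfers verbatim.
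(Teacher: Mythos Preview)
Your proof is correct and follows exactly the approach the paper intends: the paper explicitly states ``We can use the almost same proof to prove the following'' and closes the statement with \qed, and your argument makes the single necessary substitution, replacing the $\chi$-bounding function value $f(t-1)$ by the constant $m$ coming from $(t-1)$-goodness of~$\F$, while the treatment of the $A'_N$ pieces via the hypothesis on $\mathcal D$ is unchanged.
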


Since the class of $K_t$-free graphs is trivially $(t-1)$-strongly Pollyanna, we deduce the following corollary.
\begin{corr}\label{mKt}
    The class of $mK_t$-free graphs is $(t-1)$-strongly Pollyanna.\qed 
\end{corr}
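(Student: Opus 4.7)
The plan is to derive the corollary by a straightforward induction on $m$, with the preceding proposition doing all of the real work.

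For the base case $m = 1$, observe that every $K_t$-free graph $G$ satisfies $\omega(G) \le t-1$, so $G$ itself is an induced subgraph of $G$ with $\omega(G) \le t-1$, and hence $\chi(G) = \chi^{(t-1)}(G)$. In particular, taking $f(c, n) = c$ (which is a polynomial in $n$ for every fixed $c$) verifies the criterion from the \emph{iff} reformulation of $(t-1)$-strongly Pollyanna stated in the excerpt. Thus the class of $K_t$-free graphs is $(t-1)$-strongly Pollyanna.

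For the inductive step, suppose that the class of $(m-1)K_t$-free graphs is $(t-1)$-strongly Pollyanna, with $m \ge 2$. Set $H = (m-1)K_t$; then $mK_t = K_t \cup H$, and applying the preceding proposition (the $(t-1)$-strongly Pollyanna analogue of \cref{thm:K_t-union-H}) immediately yields that the class of $mK_t$-free graphs is $(t-1)$-strongly Pollyanna. This completes the induction.

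The only step requiring any thought is the base case, but it collapses trivially because the very definition of $\chi^{(t-1)}$ already captures all of $\chi$ when $\omega(G) \le t-1$. The inductive step is then a one-line invocation of the proposition, so there is no substantive obstacle — the corollary is essentially a packaging of the proposition together with the trivial observation about $K_t$-free graphs.
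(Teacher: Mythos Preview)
Your proof is correct and follows exactly the approach the paper intends: the paper notes that the class of $K_t$-free graphs is trivially $(t-1)$-strongly Pollyanna (your base case) and then deduces the corollary by repeated application of the preceding proposition (your inductive step). You have simply spelled out the induction that the paper leaves implicit.
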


\cref{mKt} implies the aforementioned result of Wagon~\cite{Wagon1980} that the class of $mK_2$-free graphs is polynomially $\chi$-bounded for each positive integer $m$.

\section{Pineapple-free graphs}
\label{sec:pineapple}

For positive integers $t$ and $k$, a \emph{$(t,k)$-pineapple} is a graph 
obtained 
by attaching $k$ pendant edges to a vertex of a complete graph $K_t$, see \cref{fig:pineapple}.
In this section, we will show that the class of $(t,k)$-pineapple-free graphs is Pollyanna. First, we need to introduce Ramsey's theorem with some explicit bounds.

For positive integers $s$ and $t$, let $R(s,t)$ be the minimum positive integer $N$ such that every graph on $N$ vertices contains a clique of size $s$ or an independent of size $t$. Ramsey's theorem~\cite{Ramsey1930} states that $R(s,t)$ exists. 
Erd\H{o}s and Szekeres~\cite{ES1935} proved the following upper bound. 
\begin{proposition}[Erd\H{o}s and Szekeres~\cite{ES1935}]\label{prop:ramsey}
    For positive integers $s$ and $t$, we have 
    $R(s,t)\le \binom{s+t-2}{t-1}$.
\end{proposition}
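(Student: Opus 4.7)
The plan is to prove this by induction on $s+t$, following the classical Erd\H{o}s--Szekeres argument. The base cases are $s=1$ or $t=1$: in either case $R(s,t)=1$ (a single vertex is both a clique of size $1$ and an independent set of size $1$), and the right-hand side $\binom{s+t-2}{t-1}$ equals $\binom{t-1}{t-1}=1$ or $\binom{s-1}{0}=1$, matching the bound.

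For the inductive step with $s,t\ge 2$, I would first establish the Pascal-style recurrence
\[
R(s,t)\le R(s-1,t)+R(s,t-1).
\]
To prove this, consider any graph $G$ on $N=R(s-1,t)+R(s,t-1)$ vertices. Fix an arbitrary vertex $v$ and partition $V(G)\setminus\{v\}$ into the neighborhood $N(v)$ and the non-neighborhood $M(v)=V(G)\setminus(N(v)\cup\{v\})$. Since $\abs{N(v)}+\abs{M(v)}=N-1$, by pigeonhole either $\abs{N(v)}\ge R(s-1,t)$ or $\abs{M(v)}\ge R(s,t-1)$. In the first case, $G[N(v)]$ contains either a clique of size $s-1$ (which together with $v$ gives a clique of size $s$) or an independent set of size $t$. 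In the second case, $G[M(v)]$ contains either a clique of size $s$ or an independent set of size $t-1$ (which together with $v$ gives an independent set of size $t$). Either way, $G$ contains the desired substructure.

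Combining the recurrence with the inductive hypothesis and Pascal's identity,
\[
R(s,t)\le R(s-1,t)+R(s,t-1)\le \binom{s+t-3}{t-1}+\binom{s+t-3}{t-2}=\binom{s+t-2}{t-1},
\]
which completes the induction. There is no genuine obstacle here; the only mildly delicate point is checking that the base cases align with the binomial coefficient expression, which is straightforward once written out.
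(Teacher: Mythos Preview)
Your proof is correct and is exactly the classical Erd\H{o}s--Szekeres argument. The paper itself does not supply a proof of this proposition; it is stated as a cited result from~\cite{ES1935} and used as a black box, so there is nothing to compare against beyond noting that your argument matches the original.
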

Because of \cref{prop:ramsey}, if $t$ is a fixed constant, then $R(s,t)$ is bounded from above by a degree-$(t-1)$ polynomial in $s$.

We are now ready to prove that the class of pineapple-free graphs is Pollyanna.

\begin{proposition}\label{prop:pineapple}
    Let $t$, $k$ be positive integers.
The class of $(t,k)$-pineapple-free graphs is $(2t-4)$-strongly Pollyanna.
\end{proposition}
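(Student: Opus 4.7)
The plan is to proceed by induction on $\omega(G)$, in the spirit of the $mK_t$ argument (\cref{thm:K_t-union-H}). Set $\omega := \omega(G)$ and $c := \chi^{(2t-4)}(G)$. The base case $\omega \le 2t-4$ is immediate: $G$ itself is an induced subgraph with $\omega(G) \le 2t-4$, so $\chi(G) \le c$. For the inductive step $\omega \ge 2t-3$, fix a maximum clique $Q$ of size $\omega$ in $G$ and partition $V(G) \setminus Q$ according to each vertex's interaction with $Q$.

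For \emph{close} vertices $v$ with $|Q \setminus N(v)| \le t-2$, sub-partition by the non-neighbor set $T := Q \setminus N(v)$. Each part $V_T$ is complete to the clique $Q \setminus T$ of size $\omega - |T|$, so any clique inside $V_T$ combined with $Q \setminus T$ must fit inside $\omega$; this forces $\omega(G[V_T]) \le |T| \le t-2 \le 2t-4$, so $\chi(G[V_T]) \le c$. Summing over the $O(\omega^{t-2})$ admissible choices of $T$ contributes $O(\omega^{t-2})\, c$ colors.

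For \emph{medium} vertices $v$ with $|N(v) \cap Q| \ge 1$ and $|Q \setminus N(v)| \ge t-1$, sub-partition by a pair $(u, T)$ where $u \in N(v) \cap Q$ and $T \subseteq Q \setminus N(v)$ has size $t-1$. Then $T \cup \{u\}$ is a $K_t$ contained in $Q$, and every vertex of $V^{u,T}$ is adjacent to $u$ and non-adjacent to $T$, so is an admissible pendant at $u$ in a pineapple rooted at this $K_t$. Pineapple-freeness therefore forces $\alpha(G[V^{u,T}]) < k$, and \cref{prop:ramsey} gives $|V^{u,T}| < R(\omega+1, k) = O(\omega^{k-1})$. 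Summing over the $O(\omega^{t})$ pairs $(u, T)$ contributes $O(\omega^{t+k-1})$ colors.

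The remaining set $V^{\emptyset}$ consists of the vertices anti-complete to $Q$, and this is the main obstacle I anticipate: $\omega(G[V^{\emptyset}])$ need not strictly decrease from $\omega$ (for example, $G$ could contain a disjoint $K_\omega$ anti-complete to $Q$), so simple induction on $\omega$ is circular. I expect this to be resolved by an auxiliary strong induction on $|V(G)|$: iteratively reapply the decomposition inside $G[V^{\emptyset}]$, producing a sequence of maximum cliques $Q_1 = Q, Q_2, Q_3, \dots$ that are pairwise anti-complete by construction and so may reuse the same $\omega$ colors, while each iteration strictly reduces the ambient vertex set. A careful accounting of the $O(\omega^{t-2})c + O(\omega^{t+k-1})$ polynomial contribution at each level, combined with this shared clique-coloring, should yield a bound of the form $\chi(G) \le P(c, \omega)$ with $P$ polynomial in $\omega$.
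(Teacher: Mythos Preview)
Your decomposition into close and medium vertices matches the paper's, and your bounds on $\chi(G[V_T])$ and $|V^{u,T}|$ are correct. The gap is in the handling of $V^{\emptyset}$. The iterative scheme you sketch does produce pairwise anti-complete cliques $Q_1, Q_2, \ldots$, which can indeed share $\omega$ colors, but the close and medium sets at different levels are \emph{not} anti-complete to one another: a vertex in $S_2$ lies in $V^{\emptyset}$ and so is anti-complete to $Q_1$, but nothing prevents it from having neighbors in $S_1$ or $T_1$. Hence the $O(\omega^{t-2})\,c + O(\omega^{t+k-1})$ budget cannot be reused across levels, and since the number of levels is bounded only by $|V(G)|/(2t-3)$ (take, e.g., many pairwise anti-complete copies of $K_\omega$ with extra structure attached), your total grows with $|V(G)|$ rather than being polynomial in $\omega$.

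The idea you are missing is a second application of pineapple-freeness, this time to bound \emph{degrees} rather than set sizes. Since $|Q| \ge 2t-3$, each close vertex $v$ has at least $|Q|-(t-2)\ge t-1$ neighbors in $Q$; together with $v$ these form a $K_t$, and every neighbor of $v$ in $V^{\emptyset}$ is a pendant at $v$ for this $K_t$ (being anti-complete to $Q$). So pineapple-freeness plus \cref{prop:ramsey} give $|N(v) \cap V^{\emptyset}| < R(\omega-1,k)$. Combined with your bound $|T| < t\binom{\omega}{t}R(\omega-1,k)$ on the medium set, every vertex of $Q \cup S$ has fewer than $\alpha := \bigl(t\binom{\omega}{t}+1\bigr)R(\omega-1,k)$ neighbors in $V(G) \setminus (Q \cup S)$. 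Now induct on $|V(G)|$ (not on $\omega$) with target $g(\omega):=\alpha\cdot m(\omega)$ where $m(\omega)=O(\omega^{t-2})\,c$ bounds $\chi(G[Q\cup S])$: color $G \setminus (Q \cup S)$ with $g(\omega)$ colors by induction, and extend greedily to $Q\cup S$ by assigning to the $j$th color class of $G[Q\cup S]$ the block $\{(j-1)\alpha+1,\ldots,j\alpha\}$, within which each vertex avoids the fewer than $\alpha$ colors seen on its outside neighbors.
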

\begin{proof}
    We may assume that $t>2$, because otherwise the class of $(t,k)$-pineapple-free graphs is polynomially $\chi$-bounded by \cref{prop:ramsey}.
    Let $\F$ be a hereditary class of graphs 
    and let $C$ be a positive integer 
    such that $\chi(G)\le C$ whenever $G\in \mathcal F$ and $\omega(G)\le 2t-4$.
    Let $\G$ be the class of $(t,k)$-pineapple-free graphs.
    Let $G\in\mathcal F\cap \G$.  
Let 
    \[ m(x) =C\sum_{i=1}^{t-2} \binom{x}{i}, \quad
         g(x)=\left(t \binom{x}{t}+1\right)
        m(x)  
        \binom{x+k-2}{k-1}.           
    \] 
    Let $\omega$ be a positive integer.
    We claim that if $\omega(G)\le \omega$, then 
    $\chi(G)\le g(\omega)$.
    We proceed by induction on $\abs{V(G)}$.
    We may assume that $\omega(G)\ge 2t-3$ because otherwise $\chi(G)\le C\le g(\omega)$.

    Let $K$ be a clique of size $\omega(G)$.
    For a nonempty subset $M$ of $K$ with $\abs{M}<t-1$, 
    let $A_M$ be the set of vertices in $V(G)\setminus K$ that are complete to $K\setminus M$ and anti-complete to $M$.
    Then $\omega(G[A_M\cup M])= \abs{M}$ and therefore 
    $\chi(G[A_M\cup M])\le C$.
    Let $S$ be the union of all $A_M$ for every choice of $M \subseteq K$ satisfying $1 \leq \abs{M} < t-1$.
    Then, 
    \begin{equation}\label{eq:-Chi-S}
        \begin{aligned}
    \chi(G[K\cup S]) 
    &\leq \sum_{v\in K} \chi(G[A_{\{v\}}\cup \{v\}])
    + \sum_{M\subseteq K,~ 2\le \abs{M}<t-1} \chi(G[A_M])\\    
    &\leq C\sum_{i=1}^{t-2} \binom{\omega}{i} = m(\omega).
        \end{aligned}
    \end{equation}

    \begin{figure}
        \centering
        \begin{tikzpicture}
            \node (1) [fill=white,draw, rounded rectangle,minimum width=4cm] at (0,0) {A maximum clique $K$};
            \node (2) [fill=white,draw, rounded rectangle,minimum width=2cm] at (-1.5,-1) {$S$};
            \node (3) [fill=white,draw, rounded rectangle,minimum width=2cm] at (1.5,-1) {$T$};
            \node (4) [fill=white,draw, rounded rectangle,minimum width=5cm] at (0,-2) {Non-neighbors of $K$};
            \begin{scope}[on background layer]
                \draw [draw=red,fill=red,opacity=0.3,fill opacity=0.3](-2,0)--(-2,-1)--(1.5,0)--(-1.5,0);
                \draw [draw=blue,fill=blue,opacity=0.3,fill opacity=0.3](-1.5,0)--(2,-1)--(1.5,0)--(1.5,0);
                \draw [line width=5pt,opacity=.3,draw=gray,decorate,decoration=coil] (-1.5,-1)--(1.5,-1)--(1,-2);
                \draw [line width=5pt,opacity=.3,draw=gray,decorate,decoration=coil](-1,-2)--(-1.5,-1);

            \end{scope}
        \end{tikzpicture}
        \caption{An illustration for the proof of \cref{prop:pineapple}.}\label{fig:pineappleproof}
    \end{figure}
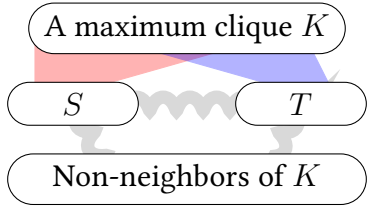
    For a subset~$N$ of $K$ with $\abs{N}=t-1$ and a vertex $v$ of $K\setminus N$, 
    let $A'_{N,v}$ be the set of vertices in $N(v)\setminus K$ that are anti-complete to $N$.
    Clearly, $\omega(A'_{N,v}) \le \omega-1$. 
    As $G$ is $(t,k)$-pineapple-free, $G[A'_{N,v}]$ has no independent set of size $k$. Thus, by Ramsey's theorem, $\abs{A'_{N,v}}< R(\omega,k)$.
    
Note that, by definition, every vertex $u \in N(K)$ with at least $t-1$ non-neighbors in $K$ is in $A'_{N, v}$
    for some $N \subseteq K \setminus N(u)$ and $v \in K$ with $\abs{N}=t-1$.
Let $T$ be the union of all $A'_{N,v}$ for every choice of $N \subseteq K$ and $v \in K \setminus N$ such that $\abs{N} = t-1$.
Then, 
    \begin{equation}\label{eq:size-of-T}
        \abs{T}<   t \binom{\omega}{t} R(\omega,k).
    \end{equation}
 
    It follows from the definition of $S$ and $T$ that $S$ is the set of all vertices in $N(K)$ with fewer than $t-1$ non-neighbors in $K$ and $T$ is the set of all vertices in $N(K)$ with at least $t-1$ non-neighbors in $K$, see \cref{fig:pineappleproof}.
    Hence, $N(K) = S \cup T$.
   
    Since $\abs{K}\ge 2t-3$, 
    each vertex $v\in S$ has at least $t-1$ neighbors in $K$ and therefore 
    $\abs{N(v)\setminus (K \cup N(K))} < R(\omega,k)$ because $G$ is $(t,k)$-pineapple-free.
    Then by \cref{eq:size-of-T}, each vertex $v\in K\cup S$ has fewer than   
    $\alpha:=\left(t \binom{\omega}{t}+1\right)R(\omega,k)$
    neighbors in $V(G)\setminus (K\cup S)$.
    Let $c_1:V(G\setminus (K\cup S))\to \{1,2,\ldots,g(\omega)\}$ be a coloring of $G\setminus (K\cup S)$ obtained by the induction hypothesis.
By \eqref{eq:-Chi-S}, there is a coloring $c_2:K\cup S\to\{1,2,\ldots,m(\omega)\}$ of $G[K\cup S]$.
    We define a coloring $c:V(G)\to\{1,2,\ldots,g(\omega)\}$ of $G$ as follows.
    For $v\in V(G\setminus (K\cup S))$, define $c(v):=c_1(v)$.
    Since every $v \in K\cup S$ has fewer than $\alpha$ neighbors in $V(G) \setminus (K \cup S)$, there is some choice of $c(v) \in \{\alpha(c_2(v)-1) +1, \alpha(c_2(v) -1) +2, \dots, \alpha c_2(v)\}$ that is not present in~$N(v) \setminus S$.
    Since $c_2$ was a proper coloring of $G[K\cup S]$, it follows that $c$ is a proper coloring for~$G$ with at most $\max(\alpha m(\omega),g(\omega))$ colors.
Note that $R(\omega,k)\le \binom{\omega+k-2}{k-1}$ by \cref{prop:ramsey}.
    This completes the proof.
\end{proof}

\section{Lollipop-free graphs}
\label{sec:lollipop}
Let $t \geq 1$ be a fixed integer.
The \emph{$t$-lollipop} is a graph obtained from the disjoint union of the complete graph $K_t$ on $t$ vertices and the path graph $P_2$ on $2$ vertices by adding an edge, see \cref{fig:lollipop}.
Note that a $t$-lollipop is a $(t,1)$-pineapple whose pendant edge is subdivided once.
In this section, we aim to show that the class of $t$-lollipop-free graphs is Pollyanna.

We say that a graph $H$ is \emph{tidy} if 
    $\abs{V(H)}\ge 2$ and 
    for any partition of $V(H)$ into two nonempty subsets $M$ and $N$,  one of the following holds.
    \begin{enumerate}[label=(U\arabic*)]
        \item \label{item:U1}
        $H[M]$ contains a clique $K$ of size $t-1$ 
        and $N$ has a vertex  anti-complete to $K$ in $H$.
\item \label{item:U2}
        $H[N]$ contains  a clique $K$ of size $t-1$ and $H$ has adjacent vertices $x\in M$ and $y\in N\setminus K$ such that both $x$ and $y$ are anti-complete to $K$ in $H$.
    \end{enumerate}

\begin{lemma}\label{2k2t-is-austere}
    Let $t\ge 3$ be an integer.
    The disjoint union of two copies of $K_{2t-3}$ is tidy. 
\end{lemma}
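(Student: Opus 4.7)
The plan is to label the two components of $H$ as $A$ and $B$, each a copy of $K_{2t-3}$, and for an arbitrary partition $\{M,N\}$ of $V(H)$ into nonempty parts, track the four intersections $M_A,M_B,N_A,N_B$ (writing $X_Y := X\cap Y$). Since $|A|=|B|=2t-3$, we have $|M_Y|+|N_Y|=2t-3$ for each $Y\in\{A,B\}$, so $|M_Y|\le t-2$ forces $|N_Y|\ge t-1$, while $|M_Y|\ge t-1$ makes $M_Y$ large enough to contain a $(t-1)$-clique. The dichotomy driving the proof is whether some $M_Y$ is large enough to house such a clique: this is what selects between \ref{item:U1} and \ref{item:U2}.

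First I would handle the case $\max(|M_A|,|M_B|)\ge t-1$. By swapping the labels of $A$ and $B$ if necessary, I may assume $|M_A|\ge t-1$ and pick a $(t-1)$-clique $K\subseteq M_A$. If $N_B\ne\emptyset$, then any vertex of $N_B$ lies in $B$ and is therefore anti-complete to $K\subseteq A$, so \ref{item:U1} holds. Otherwise $N_B=\emptyset$, which forces $M_B=B$ and hence $|M_B|=2t-3\ge t-1$; then $N$ is nonempty and entirely contained in $A$, so taking instead $K\subseteq M_B$ of size $t-1$, any vertex of $N_A$ is anti-complete to $K$, and again \ref{item:U1} holds.

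In the remaining case $|M_A|,|M_B|\le t-2$, the bound $|M_Y|+|N_Y|=2t-3$ gives $|N_A|,|N_B|\ge t-1$. Since $M$ is nonempty and this case is symmetric in $A,B$, I may assume $M_B\ne\emptyset$. I would then take $K$ to be any $(t-1)$-clique inside $N_A$ and choose $x\in M_B$ together with $y\in N_B$ (the latter exists because $|N_B|\ge t-1\ge 2$ for $t\ge 3$). Then $x,y\in B$ are adjacent in $H$, both are anti-complete to $K\subseteq A$, and $y\in N\setminus K$ since $K\subseteq A$ while $y\in B$, which is exactly the configuration required by \ref{item:U2}. The whole argument is elementary casework driven by the size bounds $|M_Y|+|N_Y|=2t-3$, so there is no real obstacle; the only step needing care is the switch to $K\subseteq M_B$ in the \ref{item:U1} branch when the obvious candidate vertex in $N_B$ fails to exist.
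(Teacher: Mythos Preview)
Your proof is correct and follows essentially the same approach as the paper's. Both arguments hinge on the dichotomy of whether $M$ meets one of the two cliques in at least $t-1$ vertices; you organize this as an explicit case split, while the paper assumes \ref{item:U1} fails and packages the resulting size bounds into a short claim, but the underlying logic and the verification of \ref{item:U2} are identical.
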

\begin{proof}
    Let $S_1$, $S_2$ be the two cliques of cardinality $2t-3$ and let $H$ be the disjoint union of $S_1$ and $S_2$. 
    Let $M$, $N$ be nonempty disjoint subsets of $V(H)$ such that $M\cup N=V(H)$.
    We may assume \ref{item:U1} does not hold for $M, N$.

    \begin{claim}\label{lollipop:N-big}
        For each $i\in\{1,2\}$, if $S_i \cap N \neq \emptyset$, then $\abs{S_{3-i}\cap N}\ge t-1$.
    \end{claim}
    \begin{subproof}
Since \ref{item:U1} does not hold for $S_{3-i}$, we deduce that $\abs{S_{3-i}\cap M}<t-1$.
    Therefore $\abs{S_{3-i}\cap N}\ge t-1$.
    \end{subproof}

    We may assume $S_1 \cap N \neq \emptyset$. By \cref{lollipop:N-big}, we obtain $|S_2 \cap N| \geq t-1$.
    Since $t \geq 2$, this implies $S_2 \cap N \neq \emptyset$ and therefore by \cref{lollipop:N-big}, we have $|S_1 \cap N| \geq t-1$.

    Let $x\in M$. Then $x\in S_i$ for some $i\in\{1,2\}$.
    By the previous paragraph, there is some $y \in S_{i} \cap N$ and
    some subset $K \subseteq S_{3-i} \cap N$ of cardinality $t-1$.
    Now, $K$, $x$, and $y$ satisfy  \ref{item:U2}.
\end{proof}

A set $S$ of vertices is a \emph{split} if it has the property that for every $v, u \notin S$ where $v$ is complete to $S$ and $u$ is mixed on $S$, the vertices $u$ and $v$ are adjacent.
A set $S$ of vertices of a graph $G$ is \emph{fair} if for every $v \in N(S)$, either $v$ is complete to $S$ 
    or $\omega(G[S \setminus N(v)]) \ge t-1$.

\begin{lemma}\label{austere-is-fair-split}
    Let $t\ge 3$ be an integer.
    If $G$ is a $t$-lollipop-free graph 
    and $G[S]$ is tidy for $S\subseteq V(G)$, 
    then $S$ is a fair split.
\end{lemma}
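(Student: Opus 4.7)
The plan is to verify that $S$ is fair and that $S$ is a split separately, and in each case to apply the tidy property of $G[S]$ to a partition of $V(G[S])$ with the roles of the two parts swapped, so that the $(t-1)$-clique promised by \ref{item:U1} or \ref{item:U2} lives in the part that is complete to the relevant outside vertex. This outside vertex will then extend the clique to a $K_t$, and the extra vertices supplied by \ref{item:U2} will furnish the pendant edge that completes a $t$-lollipop.

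For fairness I would take $v \in N(S)$ not complete to $S$ and set $M = S \cap N(v)$ and $N = S \setminus N(v)$, both of which are nonempty. I then apply tidiness to the partition of $V(G[S])$ obtained by putting $N$ in the role of $M$ and $M$ in the role of $N$. If \ref{item:U1} holds, then $G[N]$ contains a clique of size $t-1$, which directly gives $\omega(G[S \setminus N(v)]) \geq t-1$. If \ref{item:U2} holds, then $G[M]$ contains a clique $K$ of size $t-1$ together with adjacent vertices $x \in N$ and $y \in M \setminus K$ both anti-complete to $K$; since $v$ is complete to $K \cup \{y\}$ and non-adjacent to $x$, the set $K \cup \{v, y, x\}$ induces a $t$-lollipop with core $K \cup \{v\}$, contradicting $t$-lollipop-freeness.

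For the split property I would argue by contradiction: suppose $v, u \notin S$ with $v$ complete to $S$, $u$ mixed on $S$, and $uv \notin E(G)$. Setting $M = S \cap N(u)$ and $N = S \setminus N(u)$ (both nonempty), I again apply tidiness to the swapped partition. In case \ref{item:U1}, the clique $K$ of size $t-1$ lies in $G[N]$ with some $y \in M$ anti-complete to $K$; then $K \cup \{v\}$ is a $K_t$ in which $y$ has exactly one neighbor ($v$) and $u$ has none, while $uy \in E(G)$, producing an induced $t$-lollipop. In case \ref{item:U2}, the clique $K$ lies in $G[M]$ with adjacent $x \in N$ and $y \in M \setminus K$ both anti-complete to $K$; then $K \cup \{u\}$ is a $K_t$ in which $y$ has exactly one neighbor ($u$) and $x$ has none, with $xy \in E(G)$, again an induced $t$-lollipop. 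Both outcomes contradict the hypothesis.

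The hard part will be committing to the right orientation of the partition when invoking tidiness: the naive choice (putting $N(v)$ or $N(u)$ in the role of $M$) leaves the $(t-1)$-clique on the side anti-complete to the outside vertex, where it cannot be extended to a $K_t$. Once the swapped orientation is used, each of \ref{item:U1} and \ref{item:U2} delivers either the required clique in $S \setminus N(v)$ or a forbidden $t$-lollipop almost immediately.
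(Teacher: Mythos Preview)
Your proof is correct and is essentially the same as the paper's. The only cosmetic difference is that the paper defines $M$ and $N$ with the opposite convention from the start (for the split part it sets $N=N_G(u)\cap S$ and $M=S\setminus N$, and for fairness $N=N(v)\cap S$ and $M=S\setminus N$), so no explicit ``swap'' step is needed; the resulting $t$-lollipops in each case are exactly the ones you identify.
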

\begin{proof}
    Let us first show that $S$ is a split.
    Suppose that a vertex $v\in V(G)\setminus S$ is complete to~$S$, a vertex $u\in V(G)\setminus S$ is mixed on $S$, and $u$ is non-adjacent to~$v$.
    Let $N=N_G(u)\cap S$ and $M=S\setminus N$.
    As $M,N\neq\emptyset$, 
    \ref{item:U1} or \ref{item:U2} holds.
    If \ref{item:U1} holds with the clique $K\subseteq M$ and the vertex $w\in N$, 
    then $G[K\cup \{w,u,v\}]$ induces a $t$-lollipop.
    If \ref{item:U2} holds with the clique $K\subseteq N$ and two adjacent vertices $x\in M$, $y\in N$, 
    then $G[K\cup \{x,y,u\}]$ induces a $t$-lollipop. 
    This proves that $S$ is a split.

    Now let us show that $S$ is fair. Suppose that $v$ is not complete to $S$ and $\omega(G[S\setminus N(v)])<t-1$.
    Let $N=N(v)\cap S$ and $M=S\setminus N$.
    By the assumption on $\omega(G[S\setminus N(v)])$, \ref{item:U1} does not hold and therefore \ref{item:U2} holds with the clique $K\subseteq N$ and two adjacent vertices $x\in M$, $y\in N\setminus K$. 
    This implies that $G[K\cup \{x,y,v\}]$ induces a $t$-lollipop, a contradiction.
\end{proof}

The following lemma is an immediate consequence of \cref{2k2t-is-austere,austere-is-fair-split}.
 For brevity, we will denote the disjoint union of two copies of $K_{2t-3}$ by $2K_{2t-3}$.

\begin{lemma}\label{2k2t}
    Let $t\ge 3$ be an integer.
    Let $G$ be a $t$-lollipop-free graph and let $S \subseteq V(G)$ induce a copy of $2K_{2t-3}$.
    Then $S$ is a fair split.
    \qed
\end{lemma}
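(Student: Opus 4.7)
The plan is to observe that this lemma follows directly by chaining the two preceding lemmas, so no new ideas are required. First I would invoke \cref{2k2t-is-austere}, which tells us that the disjoint union of two copies of $K_{2t-3}$ is tidy. Since $G[S]$ is isomorphic to $2K_{2t-3}$ by hypothesis, the induced subgraph $G[S]$ is tidy.

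Next I would apply \cref{austere-is-fair-split} to this tidy set $S$. That lemma requires exactly the two hypotheses we have in hand: $G$ is $t$-lollipop-free, and $G[S]$ is tidy. Its conclusion is precisely that $S$ is a fair split, which is what we want.

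There is no real obstacle here; the content of the proof is entirely packed into the two auxiliary lemmas. The only thing worth double-checking is that the notion of \emph{tidy} is invariant under the natural isomorphism between $2K_{2t-3}$ and $G[S]$, but this is immediate because conditions \ref{item:U1} and \ref{item:U2} are purely structural and refer only to the adjacencies within $G[S]$. Thus the proof is one line: $G[S]\cong 2K_{2t-3}$ is tidy by \cref{2k2t-is-austere}, hence $S$ is a fair split by \cref{austere-is-fair-split}.
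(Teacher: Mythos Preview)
Your proposal is correct and matches the paper's own treatment exactly: the paper states that this lemma is an immediate consequence of \cref{2k2t-is-austere,austere-is-fair-split} and gives no separate proof.
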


Next, we show that if some fair split is contained in the neighborhood of a vertex, then $G$ has a homogeneous set.

\begin{lemma}\label{fair-split-hom}
    Let $t\ge 3$ be an integer.
    Let $G$ be a $t$-lollipop-free graph and $v$ be a vertex.
    If some $S\subseteq N(v)$ is a fair split in~$G$, 
    then $G$ has a homogeneous set.
\end{lemma}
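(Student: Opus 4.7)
The plan is to identify $W = \{u \in V(G)\setminus S : u \text{ is complete to } S\}$ (which contains $v$) as the candidate for a homogeneous set, and to partition the remaining vertices outside $S$ into $B$, the vertices mixed on $S$, and $C$, the vertices anti-complete to $S$. Every vertex of $S$ is complete to $W$ by the definition of $W$, and by the split property every vertex of $B$ is complete to $W$; so the only potential obstruction to $W$ being homogeneous is a vertex of $C$ mixed on $W$. First I would handle the easy case: if $B = \emptyset$ then every vertex outside $S$ is complete or anti-complete to $S$, and $S$ itself is a homogeneous set. So I may assume $B \neq \emptyset$, and then fairness applied to any $u \in B$ produces a clique $K \subseteq S \setminus N(u)$ of size $t-1$.

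For the main argument, suppose for contradiction that some $c \in C$ is mixed on $W$, with $c w_1 \in E(G)$ and $c w_2 \notin E(G)$. The key observation is that $K \cup \{w_2\}$ is a $K_t$ (since $w_2$ is complete to $S \supseteq K$), the vertex $u$ is adjacent to exactly $w_2$ in this $K_t$ (adjacent to $w_2$ by the split property, and not to $K$ by the choice of $K$), and $c$ is anti-complete to $K \cup \{w_2\}$ (as $c$ is anti-complete to $S$ and $c w_2 \notin E(G)$). So if $uc \in E(G)$ then $K \cup \{w_2\} \cup \{u, c\}$ induces a $t$-lollipop, a contradiction. Hence $c$ must be anti-complete to every $u \in B$. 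The same strategy applied with a vertex $c' \in N(c) \cap C$ in place of $u$ (using either $K \cup \{w_2\}$ with $c'$ as pendant when $c' w_2 \in E(G)$, or $K \cup \{w_1\}$ with $c$ as pendant and $c'$ as tail when $c' w_1 \notin E(G)$) then forces every such $c'$ to satisfy $c' w_1 \in E(G)$ and $c' w_2 \notin E(G)$.

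The hard part will be the remaining rigid case, in which $c$ is anti-complete to $B$ and every neighbor of $c$ in $C$ shares the exact adjacency pattern of $c$ on $\{w_1, w_2\}$, together with the trivial-size corner case where $|W| = 1$. In the rigid case I expect to conclude by examining the set of all vertices anti-complete to $S \cup B$ that mimic $c$ on $\{w_1, w_2\}$, and showing via iterated lollipop arguments (applied with each $w \in W$ playing the role of $w_1$ or $w_2$ in turn) that this set is uniform with respect to every vertex of $W$ and hence forms a homogeneous set. In the corner case $|W| = 1$ with $B \neq \emptyset$, I would either find a module inside $S$ itself (possible when $\omega(G[S])$ is large enough) or appeal to a different structural argument inside $G[S \cup B]$; these tight cases are where I expect the main technical difficulty to lie.
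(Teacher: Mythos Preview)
Your candidate $W$ is the wrong set, and the gap you flag as the ``hard part'' is not a matter of extra technical work but a genuine obstruction. Here is a concrete $3$-lollipop-free example in which $W$ is not homogeneous and your fallback plan also fails. Take $S=\{s_1,s_2,s_3\}$ inducing the path $s_1 s_2 s_3$; take $W=\{v,w_1,w_2\}$ pairwise non-adjacent and each complete to $S$; take $u\in B$ with $N(u)=\{s_1,v,w_1,w_2\}$; take $c\in C$ with $N(c)=\{w_1\}$. One checks directly that $S$ is a fair split and that $G$ is $3$-lollipop-free. Now $c$ is mixed on $W$, so $W$ is not homogeneous. Your ``rigid case'' set of vertices anti-complete to $S\cup B$ mimicking $c$ on $\{w_1,w_2\}$ is the singleton $\{c\}$, which cannot be a homogeneous set; and $|W|=3$, so the corner case does not apply either. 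The issue is structural: a vertex of $C$ can be isolated in $G[C]$, anti-complete to $B$, yet still mixed on $W$, and nothing in your framework produces a module from such a vertex.

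The paper takes the complementary viewpoint: rather than trying to make $W$ homogeneous, it grows a homogeneous set from $S$. With $X=W$, $Y=B$, and $Z$ the set of vertices of $C$ reachable from $S$ in $G\setminus X$, the candidate is $S\cup Y\cup Z$. The only thing to prove is that $Z$ is complete to $X$. For this one takes a shortest counterexample path $P$ from some $z\in Z$ to $S$ in $G\setminus X$ together with an $x\in X$ non-adjacent to $z$; minimality forces $V(P)\setminus\{z\}$ to be complete to $x$ and $P$ to meet $Y$ in exactly one vertex $y$, and then fairness at $y$ gives a $(t-1)$-clique $K\subseteq S\setminus N(y)$ so that $K\cup\{x,z',z\}$ (with $z'$ the neighbour of $z$ on $P$) is a $t$-lollipop. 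Your lollipop argument with $K\cup\{w_2,u,c\}$ is essentially the length-two case of this; what you are missing is the inductive path argument that handles vertices of $C$ arbitrarily far from $S$, and the realisation that the homogeneous set should contain $S$ rather than $W$. In the example above the paper's set is $\{s_1,s_2,s_3,u\}$, which is indeed homogeneous.
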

\begin{proof}
    Let $X$ be the set of all vertices in $V(G)\setminus S$ complete to $S$. As $v\in X$, the set $X$ is nonempty.
    Let $Y$ be the set of all vertices in $V(G)\setminus S$ mixed on $S$. 
    Since $S$ is a split, $X$ is complete to $Y$.
    
    Let $Z$ be the set of vertices in $V(G)\setminus (S\cup X\cup Y)$ that have a path to $S$ in $G\setminus X$.
    We claim that $Z$ is complete to $X$. 
    Suppose not. Then there are $x\in X$ and $z\in Z$ such that $x$ is non-adjacent to $z$. 
    Let $P$ be a path from $z$ to $S$ in $G\setminus X$. 
    We choose $x$, $z$, and $P$ such that the length of $P$ is minimized.
    By such a choice, $V(P)\setminus \{z\}$ is complete to $x$
    and $V(P)\cap Y$ has a unique vertex, say $y$.
    Because $S$ is fair, 
    $\omega(G[S\setminus N(y)])\ge t-1$.
    Let $K$ be a clique of size~$t-1$ in $G[S\setminus N(y)]$.
    Let $z'$ be the vertex on $P$ adjacent to $z$. 
    Then $z'$ is anti-complete to~$K$ so
    $G[K\cup \{x,z',z\}]$ is a $t$-lollipop, a contradiction.
    This proves that $Z$ is complete to $X$.
    Since $V(G)\setminus (S\cup X\cup Y\cup Z)$ is anti-complete to $S\cup Y\cup Z$ in~$G$, it follows that 
    $S\cup Y\cup Z$ is a homogeneous set in~$G$.
\end{proof}

Let $2K_{2t-3}^*$ be the graph obtained from $2K_{2t-3}$ by adding a new vertex adjacent to all other vertices.
Before showing that the class of $t$-lollipop-free graphs is Pollyanna, as an intermediate step, we first show that the class of {($t$-lollipop, $2K_{2t-3}^*$)}-free graphs is Pollyanna.
\begin{lemma}\label{lem:lollipop-sub}
    For every integer $t\ge 3$,
    the class of {($t$-lollipop, $2K_{2t-3}^*$)}-free graphs is {$(3t-6)$-strongly} Pollyanna.
\end{lemma}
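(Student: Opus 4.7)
The plan is to mimic the structure of the pineapple-free proof (\cref{prop:pineapple}) and induct on $|V(G)|$. Fix a $(3t-6)$-good hereditary class $\F$ with constant $C$, and let $G\in\F$ be $(t\text{-lollipop},\,2K_{2t-3}^*)$-free. If $\omega(G)\le 3t-6$, then $\chi(G)\le C$. Otherwise, pick a maximum clique $K$ of size $\omega\ge 3t-5$, and partition $V(G)\setminus K$ into $S$ (vertices with at most $t-2$ non-neighbors in $K$), $T$ (vertices with at least one neighbor in $K$ and at least $t-1$ non-neighbors in $K$), and $U$ (vertices anti-complete to $K$).

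The subgraph $G[K\cup S]$ is handled exactly as in the pineapple proof: for each $M\subseteq K$ with $1\le|M|\le t-2$, the set $A_M=\{v\in S:K\setminus N(v)=M\}$ satisfies $\omega(G[A_M\cup M])\le 2(t-2)\le 3t-6$ (since $A_M$ is complete to $K\setminus M$ and anti-complete to $M$), so $(3t-6)$-goodness gives $\chi(G[A_M\cup M])\le C$. Summing over the $\sum_{i=1}^{t-2}\binom{\omega}{i}$ choices of $M$ with disjoint palettes yields $\chi(G[K\cup S])\le m(\omega)$ for a polynomial $m$. Next, the $t$-lollipop-free assumption yields that $T$ is anti-complete to $U$: if $v\in T$ had a neighbor $u\in U$, picking $w\in N(v)\cap K$ and a $(t-1)$-subset $L\subseteq(K\setminus N(v))\setminus\{w\}$ would produce an induced $t$-lollipop on $\{u,v,w\}\cup L$.

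The essential use of $2K_{2t-3}^*$-freeness is to bound $|N(v)\cap(T\cup U)|$ by a polynomial in $\omega$ for every $v\in K\cup S$, playing the role of the Ramsey step in the pineapple proof. The key observation is that $G[N(v)]$ is $2K_{2t-3}$-free with clique number at most $\omega-1$ for every $v\in V(G)$; applying the inductive hypothesis to $G[N(v)]$, which has strictly fewer vertices, gives $\chi(G[N(v)])\le g(\omega-1)$. Combined with the $t$-lollipop constraint (which controls how $T$-vertices sit over $K_t$'s in $K$) and a Ramsey-type count over $(t-1)$-subsets of $K\cap N(v)$ (analogous to the sets $A'_{N,v}$ in \cref{prop:pineapple}), this should yield a polynomial bound $\alpha(\omega)$ on $|N(v)\cap(T\cup U)|$.

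The proof closes as in \cref{prop:pineapple}: color $T\cup U=V(G)\setminus(K\cup S)$ by the inductive hypothesis on $|V(G)|$ with a palette of size $g(\omega)$, and then extend to $K\cup S$ via a coloring of $G[K\cup S]$ with $m(\omega)$ classes and $\alpha(\omega)$ colors reserved per class; the extension is possible because every $v\in K\cup S$ has fewer than $\alpha(\omega)$ neighbors in $T\cup U$. Setting $g(\omega)=\alpha(\omega)\cdot m(\omega)$ yields a polynomial $g$ that closes the induction. The main obstacle will be carrying out the Ramsey-type count for $\alpha(\omega)$: it requires combining $t$-lollipop-freeness (which constrains stalks off a $K_t$) with $2K_{2t-3}^*$-freeness (which controls neighborhood complexity) to obtain a genuine polynomial bound, rather than simply invoking either constraint in isolation.
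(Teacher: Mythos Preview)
There is a genuine gap: the degree bound $|N(v)\cap(T\cup U)|\le\alpha(\omega)$ that your step~5 relies on is simply false. Take a maximum clique $K$ of size $\omega\ge 3t-5$, fix $v\in K$, and add arbitrarily many pairwise non-adjacent vertices $u_1,\dots,u_k$, each adjacent only to $v$. This graph is $t$-lollipop-free (every $K_t$ lies inside $K$; the only candidate for the pendant $P_2$ is some $v\dd u_i$, but $u_i$ has no further neighbor) and $2K_{2t-3}^*$-free (all $(2t-3)$-cliques lie inside $K$ and are therefore pairwise complete). Yet every $u_i$ lies in $T$, so $|N(v)\cap T|=k$ is unbounded. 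The pineapple argument you are imitating worked because pineapple-freeness forbids large \emph{independent} sets in each $A'_{N,v}$, which feeds Ramsey; lollipop-freeness provides no such independent-set obstruction, so the ``Ramsey-type count'' you defer cannot be carried out.

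The paper's proof takes a different and simpler route, with no induction on $|V(G)|$. The key new ingredient is a direct clique bound $\omega(G[U])\le 3t-6$ on the part anti-complete to $K$: if $U$ contained a clique $L$ of size $3t-5$, then along a shortest $K$--$L$ path $v_0\dd\cdots\dd v_\ell$ both $v_1$ and $v_{\ell-1}$ are forced to have at least $2t-3$ neighbours in $K$ and $L$ respectively (otherwise a $t$-lollipop appears); hence $\ell=2$ would place an induced $2K_{2t-3}$ inside $N(v_1)$, contradicting $2K_{2t-3}^*$-freeness, while $\ell\ge 3$ yields a $t$-lollipop on $v_{\ell-3},v_{\ell-2},v_{\ell-1}$ together with $t-1$ neighbours of $v_{\ell-1}$ in $L$. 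Thus $\chi(G[U])\le C$ by $(3t-6)$-goodness. The set $A=N(K)$ is then covered by polynomially many pieces, each of clique number at most $3t-6$: sets $A_M$ for $|M|\le 2t-4$ (your step~3, but with threshold $2t-4$ rather than $t-2$) together with sets $A'_{N,v}$ for $|N|=2t-3$ and $v\in K\setminus N$, where $\omega(G[A'_{N,v}])\le 2t-4$ because $G[N(v)]$ is $2K_{2t-3}$-free. This gives $\chi(G)\le\omega(G)+\chi(G[A])+\chi(G[U])$ directly as a polynomial in $\omega(G)$, with no recursive coloring extension needed.
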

\begin{proof}
Let $\mathcal C$ be the class of $t$-lollipop-free $2K_{2t-3}^*$-free graphs.
    Let $\mathcal F$ be a 
    hereditary class of graphs 
    and let $m$ be a positive integer
    such that $\chi(G)\le m$ whenever $G\in \mathcal F$ and $\omega(G)\le 3t-6$.

    Let $G$ be a graph in $\mathcal F\cap \mathcal C$.
    For every vertex $v$ of $G$, 
    $G[N(v)]$ has no induced subgraph isomorphic to $2K_{2t-3}$ because $G$ is $2K_{2t-3}^*$-free. 
    We may assume that $\omega(G)> 3t-6$ because otherwise $\chi(G)\le m$.
    Let $K$ be a clique of $G$ with $\abs{K}=\omega(G)$.
    Let $A=N(K)$ and $B=V(G)\setminus (K\cup N(K))$.

    \begin{claim}
    $\omega(G[B])\le 3t-6$.
    \end{claim}
    \begin{subproof}
    Suppose that $G[B]$ has a clique $L$ of size $3t-5$. Let $P$ be a shortest path $v_0 \dd v_1 \dd \cdots \dd v_\ell$ from $K$ to $L$ where $v_0\in K$ and $v_\ell\in L$.
    By definition, $\ell \geq 2$.
    
    If $v_{\ell-1}$ has at least $t-1$ non-neighbors in $L$, then the graph induced by
    $(L \setminus N(v_{\ell -1}))\cup \{v_{\ell},v_{\ell-1},v_{\ell-2} \}$ contains a $t$-lollipop, a contradiction. 
    Therefore, $v_{\ell-1}$ has at least $2t-3$ neighbors in $L$.

    If $v_1$ has at least $t-1$ non-neighbors in $K$, then the graph induced by 
    $(K\setminus N(v_1))\cup\{v_0,v_1,v_2\}$ contains a $t$-lollipop, a contradiction. 
    Therefore $v_1$ has at least $2t-3$ neighbors in~$L$.
    So, $\ell > 2$ for otherwise, the graph on $N(v_1)$ contains an induced subgraph isomorphic to $2K_{2t-3}$. 
    
    As $t\geq 3$, we have $2t-3> t-1$.
    Then $t-1$ neighbors of $v_{\ell-1}$ in $L$ with $v_{\ell-1},v_{\ell-2},v_{\ell-3}$ 
    induce a $t$-lollipop, a contradiction.
    \end{subproof}

    For each subset $M$ of $K$ with $\abs{M}<2t-3$, 
    let $A_M$ denote the set of all vertices in $A$ that are anti-complete to $M$ and complete to $K\setminus M$. 
    Then,     
    $\omega(G[A_M])\le \abs{M}$, implying that 
    $\chi (G[A_M])\le m$.

    For each subset $N$ of $K$ with $\abs{N}=2t-3$ and each vertex $v\in K\setminus N$, 
    let $A'_{N,v}$ be the set of all vertices in $A$ 
    that are anti-complete to $N$
    and are adjacent to~$v$. 
    Since $G[N(v)]$ is $2K_{2t-3}$-free, 
    $\omega(G[A'_{N,v}])\le 2t-4$.
    This implies that $\chi(G[A'_{N,v}])\le m$.
    Observe that every vertex of $A$ is in $A_M$ or $A'_{N,v}$ for some choice of $M$, $N$, $v$.
    By the definition and the claim, $\chi(G) \leq \omega(G) + \chi(A) + \chi(B)\le \omega(G)+\chi(A)+m$, so we obtain
    \begin{equation}\label{eq:lollipop-bound}
    \chi(G)\le 
    \omega(G)+
    m\sum_{i=1}^{2t-4} \binom{\omega(G)}{i} 
    +m\binom{\omega(G)}{2t-3}(\omega(G)-(2t-3)) 
    +m,
    \end{equation}
    which is a polynomial in $\omega(G)$.
\end{proof}

We are now ready to show that the class of $t$-lollipop-free graphs is Pollyanna.

\begin{theorem}\label{lollipop-free2}
    For every integer $t \geq 1$,
    the class of $t$-lollipop-free graphs is $(3t-6)$-strongly Pollyanna.
\end{theorem}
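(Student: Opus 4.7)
The plan is to reduce to the $(t\text{-lollipop}, 2K_{2t-3}^*)$-free setting of \cref{lem:lollipop-sub} via a substitution decomposition powered by \cref{2k2t,fair-split-hom}, and then lift polynomial $\chi$-boundedness from the ``prime'' base case to the whole class using \cref{thm:substitution-orig}. For $t \le 2$, the $t$-lollipop is $P_3$ (if $t=1$) or $P_4$ (if $t=2$); the former gives disjoint unions of cliques and the latter gives perfect graphs by \cref{cographs}, so in both cases the class is polynomially $\chi$-bounded, hence $n$-strongly Pollyanna for every $n$ and in particular for $n=3t-6$.

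For $t\ge 3$, fix a $(3t-6)$-good hereditary class $\mathcal F$ and set
\[ \mathcal D := \{G \in \mathcal F : G \text{ is } t\text{-lollipop-free and } 2K_{2t-3}^*\text{-free}\}. \]
By \cref{lem:lollipop-sub}, $\mathcal D$ is polynomially $\chi$-bounded, so by \cref{thm:substitution-orig} so is $\mathcal D^*$. It therefore suffices to prove that every $t$-lollipop-free $G \in \mathcal F$ belongs to $\mathcal D^*$, and we would do this by induction on $|V(G)|$. If $G$ has a homogeneous set $X$, pick any $v \in X$; because $X$ is homogeneous, $G$ is obtained by substituting $G[X]$ for $v$ in $G-(X\setminus\{v\})$. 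Both ingredient graphs are proper induced subgraphs of $G$, so they are $t$-lollipop-free and, since $\mathcal F$ is hereditary, belong to $\mathcal F$; by the induction hypothesis they lie in $\mathcal D^*$, hence so does $G$. If $G$ has no homogeneous set, we claim $G\in\mathcal D$: otherwise some vertex $v$ has a $2K_{2t-3}$ on vertex set $S\subseteq N(v)$, and then \cref{2k2t} gives that $S$ is a fair split of $G$, whereupon \cref{fair-split-hom} (applied to $v$ and $S\subseteq N(v)$) produces a homogeneous set, a contradiction.

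The main obstacle has effectively already been handled in the preceding lemmas: \cref{2k2t,fair-split-hom} are exactly what make ``$G$ has no homogeneous set'' imply ``$G$ is $2K_{2t-3}^*$-free,'' which is what allows us to invoke \cref{lem:lollipop-sub} on the prime pieces, and \cref{thm:substitution-orig} is what transports polynomial $\chi$-boundedness from those prime pieces to their substitution closure. Beyond the small-$t$ base cases and a brief verification that the homogeneous-set decomposition of $G$ genuinely assembles $G$ from two smaller induced subgraphs via one substitution, no additional technical work is required.
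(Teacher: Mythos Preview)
Your proposal is correct and follows essentially the same approach as the paper: reduce to the $2K_{2t-3}^*$-free subclass via \cref{2k2t,fair-split-hom} (so that graphs without a homogeneous set are automatically $2K_{2t-3}^*$-free), apply \cref{lem:lollipop-sub} to that subclass, and close under substitution using \cref{thm:substitution-orig}. The paper compresses your explicit induction on $|V(G)|$ into the single sentence that every graph in the class lies in the substitution closure of the $2K_{2t-3}^*$-free subclass, but the content is identical.
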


\begin{proof}
    By \cref{cographs}, we may assume $t\geq3$.
    Let $\mathcal C$ be the class of $t$-lollipop-free graphs.
    Let $\mathcal C'$ be the class of ($t$-lollipop, $2K_{2t-3}^*$)-free graphs.
    Let $\mathcal F$ be a 
    hereditary class of graphs 
    and let $m$ be a positive integer
    such that $\chi(G)\le m$ whenever $G\in \mathcal F$ and $\omega(G)\le 3t-6$.  
    By~\cref{2k2t,fair-split-hom}, every graph in $\C \cap \F$ is either $2K_{2t-3}^*$-free 
    or has a homogeneous set.
Therefore, every graph in $\C\cap \F$ belongs to the closure of $\C'\cap \F$ under substitutions and disjoint unions.
    By~\cref{lem:lollipop-sub}, $\C'\cap \F$ is polynomially $\chi$-bounded and therefore 
    \cref{thm:substitution-orig} implies that $\C\cap \F$ is polynomially $\chi$-bounded.
\end{proof}

\section{Bowtie-free graphs}
\label{sec:bowtie}

A \emph{bowtie} is the graph on five vertices obtained from two copies of $K_2$ by adding a new vertex~$v$ and making it adjacent to all other vertices, see~\cref{fig:bowtie}.
In this section, we will show that bowtie-free graphs are $3$-strongly Pollyanna.

\begin{theorem}\label{bowtie:main}
    The class of bowtie-free graphs is $3$-strongly Pollyanna.
\end{theorem}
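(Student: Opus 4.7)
The starting point is the observation that $G$ is bowtie-free if and only if $G[N(v)]$ is $2K_2$-free for every vertex $v$, since a bowtie in $G$ exhibits an induced $2K_2$ in the open neighbourhood of its degree-$4$ centre, and conversely any induced $2K_2$ in $G[N(v)]$ extends with $v$ to a bowtie. Combining this with \cref{mKt} applied with $m=t=2$ yields a polynomial $q$ such that $\chi(G[N(v)]) \leq q(\omega(G)-1)$ for every bowtie-free $G$ and every $v \in V(G)$.

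Let $\mathcal F$ be a $3$-good class with constant $m$; I will prove by induction on $\omega(G)$ that $\chi(G) \leq p(\omega(G))$ for some polynomial $p$, for every bowtie-free $G \in \mathcal F$. The base case $\omega(G) \leq 3$ gives $\chi(G) \leq m$. For the inductive step $\omega(G) \geq 4$, fix a maximum clique $K$ and partition $V(G)$ into $K$, $A := N(K) \setminus K$, and $B := V(G) \setminus N[K]$. Every $v \in A$ lies in $N(k) \setminus K$ for some $k \in K$ and each $G[N(k)]$ is $2K_2$-free, so $\chi(G[K \cup A]) \leq \omega(G) + \omega(G) \cdot q(\omega(G)-1)$, which is polynomial in $\omega(G)$.

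The remaining task is to bound $\chi(G[B])$. The key tool I will use is that whenever $K, K'$ are two disjoint anti-complete cliques of size at least $2$ in a bowtie-free graph, every vertex $v$ satisfies $|N(v) \cap K| \leq 1$ or $|N(v) \cap K'| \leq 1$; otherwise, two neighbours of $v$ in each of $K$ and $K'$ would induce a $2K_2$ in $N(v)$, so $v$ would be the centre of a bowtie. Applied to $K$ together with maximum cliques $K' \subseteq B$, this strongly constrains the bipartite structure between $A$ and $K'$. The plan is to exploit this constraint, together with the $3$-goodness assumption, to locate a polynomially-chromatic subset $X \subseteq V(G)$ meeting every maximum clique of $G$; then $\omega(G-X) \leq \omega(G)-1$, and the induction hypothesis bounds $\chi(G-X) \leq p(\omega(G)-1)$, so $\chi(G) \leq \chi(G[X]) + \chi(G-X)$ is polynomial in $\omega(G)$.

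I expect the hardest step to be the construction of $X$: the naive choice $X = N[K]$ does not suffice, since examples such as $G = 2K_{\omega(G)}$ show that $\omega(G[B])$ can equal $\omega(G)$. A more delicate structural or recursive argument will be needed, likely exploiting the fact that in a bowtie-free graph two maximum cliques are either vertex-disjoint or share at least $\omega(G)-1$ vertices, and combining this with the anti-complete cliques lemma to control the contribution of $B$.
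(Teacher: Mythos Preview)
Your proposal is incomplete, and you acknowledge this yourself: the construction of a polynomially-chromatic set $X$ hitting every maximum clique is left as a plan with no concrete argument. Your observation about anti-complete cliques is correct, and your induction-on-$\omega$ framework is sensible, but the central difficulty---that $\omega(G[B])$ need not drop---remains unaddressed. Your suggested ingredients (the structure of intersecting maximum cliques, the anti-complete-cliques lemma) do not obviously assemble into a construction of $X$, and you give no indication of how the $3$-goodness hypothesis would enter this step.

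The paper sidesteps this difficulty entirely by proving a stronger structural statement (\cref{prop:bowtie}): every bowtie-free graph admits a vertex partition into $O(\omega^3)$ parts, one inducing a $K_4$-free graph and the rest inducing triangle-free graphs. This immediately yields $3$-strong Pollyanna, since in a $3$-good class each part has bounded chromatic number. The proof is by induction on $|E(G)|$, not on $\omega(G)$: the key reduction is that if some edge $e$ lies in no triangle, then $G\setminus e$ is still bowtie-free with the same $\omega$, so one may assume every edge lies in a triangle. Under that assumption, two structural lemmas (\cref{lem:bowtie-triangles-at-distance-at-least-2,bowtie-triangles-at-distance-at-least-3}) force every vertex to be within distance $2$ of a fixed maximum clique $K$ and, crucially, force $G[B]$ to be triangle-free. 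The set $A=N(K)$ is then partitioned into $O(\omega^3)$ explicit independent sets. Thus the paper never needs a clique-hitting set, and the troublesome $2K_\omega$ example is harmless because its connecting edge lies in no triangle and is deleted in the reduction.
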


We do this by proving the following strengthening of \cref{bowtie:main}.

\begin{proposition}\label{prop:bowtie}
    Every bowtie-free graph $G$ 
    admits a partition of its vertex set into at most $f(\omega(G)) = \lceil\frac{1}{2}(\omega(G) + 3 \binom{\omega(G)}{3})\rceil +1 = \mathcal{O}(\omega(G)^3)$  sets 
    such that 
    one of the sets induces a $K_4$-free graph 
    and all other sets induce triangle-free graphs.
\end{proposition}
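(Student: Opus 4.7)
The plan is to prove \cref{prop:bowtie} by induction on $\omega(G)$. The base case $\omega(G)\le 3$ is immediate: $G$ itself is $K_4$-free, so a single part suffices, and $f(\omega)\ge 1$ for all $\omega$.

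For the inductive step, let $\omega:=\omega(G)\ge 4$ and fix a maximum clique $K=\{v_1,\dots,v_\omega\}$ of $G$. The key structural fact I would use throughout is the equivalent reformulation of bowtie-freeness: for every vertex $u\in V(G)$, the induced subgraph $G[N(u)]$ contains no induced $2K_2$, because an induced $2K_2$ on $\{a,b,c,d\}\subseteq N(u)$ together with $u$ is an induced bowtie.

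First, I would pair up the vertices of $K$ into $\lceil\omega/2\rceil$ triangle-free classes, each pair inducing an edge (with possibly one singleton when $\omega$ is odd). This handles $K$.

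Second, for each $v\notin K$ write $S_v:=N(v)\cap K$, so $|S_v|\le\omega-1$ by maximality of $K$. Call $v$ \emph{heavy} if $|S_v|\ge 3$ and \emph{light} otherwise. Each heavy $v$ lies in a $K_4$ together with three vertices of $K$, so must be placed in a triangle-free class. For each triple $T\subseteq K$ I would allocate triangle-free color classes indexed by the three edges of~$T$, and assign each heavy $v$ according to a canonical triple $T(v)\subseteq S_v$ (say the lexicographically smallest triple in $S_v$) together with a refinement coloring of those $v$ sharing that canonical triple; the refinement uses that heavy vertices sharing a triple all lie inside $G[N(v_i)]$ for any $v_i\in T$ and so induce a $2K_2$-free graph, whose triangle-free chromatic number is polynomially bounded in $\omega$ via Wagon's bound. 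By pairing triples so that two triples in a pair cannot simultaneously contribute vertices to a monochromatic triangle, the three color slots per triple can be shared with its partner, reducing $3\binom{\omega}{3}$ classes to $\lceil\tfrac{3}{2}\binom{\omega}{3}\rceil$.

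Third, the single $K_4$-free part would absorb every light vertex. A $K_4$ inside this part cannot use three vertices of $K$ (for then the fourth would be heavy, and is already routed into a triangle-free class), and I would rule out a $K_4$ entirely among light vertices by a short structural argument: a $K_4$ on light vertices, viewed inside $G[N(v_i)]$ for a suitable $v_i\in K$ adjacent to at least one of its vertices, would (in combination with the clique $K\setminus\{v_i\}$) force an induced $2K_2$ in $G[N(v_i)]$; or else iterate by reapplying the construction to the induced subgraph on the light vertices.

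Totaling the classes gives the stated bound $\lceil\tfrac{1}{2}(\omega+3\binom{\omega}{3})\rceil+1$. The main obstacle I anticipate is twofold: first, designing the triple-pairing so that color-sharing across paired triples does not create monochromatic triangles, using the $2K_2$-free structure of neighborhoods in a delicate way; and second, rigorously verifying that the light vertices induce a $K_4$-free subgraph, which is where the finer consequences of bowtie-freeness combined with maximality of $K$ must be exploited.
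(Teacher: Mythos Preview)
Your plan has a genuine gap at the ``light vertices are $K_4$-free'' step. Consider $G=2K_4$, the disjoint union of two copies of $K_4$: it is bowtie-free with $\omega(G)=4$, and if $K$ is one copy then the other copy consists entirely of light vertices (zero neighbours in $K$) and is a $K_4$. Your proposed structural argument presupposes some $v_i\in K$ adjacent to the offending $K_4$, which need not exist; and your fallback of ``iterating on the light vertices'' fails because $\omega$ does not decrease, so your induction on $\omega(G)$ does not terminate. The paper fixes exactly this by inducting on $|E(G)|$ instead: if some edge $e$ lies in no triangle then $G\setminus e$ is still bowtie-free with the same clique number, so one may assume every edge lies in a triangle. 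Under that assumption the paper shows (via two short lemmas) that every vertex lies within distance~$2$ of $K$, and that the distance-$2$ vertices $B$ induce a triangle-free graph; this set $B$ is the single special part.

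There is a second gap in your treatment of heavy vertices. Vertices whose canonical triple is $T=\{v_i,v_j,v_k\}$ are complete to $T$, so they can form a clique of size up to $\omega-3$; such a clique cannot be split into three triangle-free classes when $\omega\ge 10$, so ``three slots per triple'' is not enough, and invoking Wagon only gives a $\chi$-bound, not a triangle-free partition with a fixed number of parts. The paper's remedy is to index by \emph{non}-neighbours rather than neighbours: for $x\in K$ set $S_x=\{v:K\setminus N(v)=\{x\}\}$, and for distinct $x,y,z\in K$ set $T_{x,y,z}=\{v\in N(z):x,y\notin N(v)\}$. Each $S_x\cup\{x\}$ is independent by maximality of $K$, and each $T_{x,y,z}$ is independent since two adjacent vertices in it would make $z$ the centre of a bowtie with wings $\{x,y\}$ and that edge. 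Because all these $\omega+3\binom{\omega}{3}$ sets are \emph{independent}, any two of them merge to a triangle-free set, which is what yields the stated count.
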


One of the key observations for the proof is that if $G$ is bowtie-free and has an edge $e$ not in any triangle, 
then $G\setminus e$ is also bowtie-free.
We will show that if $G$ is a counterexample to \cref{prop:bowtie} minimizing $|E(G)|$, then every edge of $G$ is in a triangle.
The following two lemmas show that some induced subgraphs are forbidden in such graphs.

\begin{lemma}\label{lem:bowtie-triangles-at-distance-at-least-2}
    If a graph $G$ has two disjoint cliques $A$ and~$B$ of size $4$ and $3$ respectively with exactly one edge between $A$ and~$B$, 
    then $G$ either has a bowtie as an induced subgraph 
    or has an edge that is not contained in a triangle.
\end{lemma}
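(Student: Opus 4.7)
My plan is to prove the contrapositive: assume every edge of $G$ lies in a triangle, and exhibit an induced bowtie. Write $A=\{a_1,a_2,a_3,a_4\}$ and $B=\{b_1,b_2,b_3\}$ with $a_1 b_1$ the unique edge between $A$ and $B$. Since $a_1 b_1$ must lie in a triangle and is the only cross-edge, any common neighbour of $a_1$ and $b_1$ must lie outside $A \cup B$; fix such a vertex $v$. The strategy is to use the triangle $v a_1 b_1$ as one wing of the bowtie, and then locate a second triangle meeting it at a single vertex, with all remaining pairs among the five chosen vertices being non-edges.

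I would then split into three cases according to $N(v) \cap (A \cup B)$. First, if $v$ is non-adjacent to both $b_2$ and $b_3$, then $\{v,a_1,b_1,b_2,b_3\}$ induces a bowtie at $b_1$: the wings are $v a_1 b_1$ and $b_1 b_2 b_3$, and the remaining pairs $v b_2, v b_3, a_1 b_2, a_1 b_3$ are all non-edges (the last two because $a_1 b_1$ is the only $A$-$B$ edge). Second, if $v$ has two non-neighbours $a_i, a_j$ in $\{a_2,a_3,a_4\}$, then $\{v,a_1,b_1,a_i,a_j\}$ induces a bowtie at $a_1$, using the triangle $a_1 a_i a_j$ inside $A$, together with the automatic non-edges $a_i b_1, a_j b_1$.

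In the remaining case, $v$ has some neighbour $b_k \in \{b_2,b_3\}$ and at least two neighbours $a_i, a_j \in \{a_2,a_3,a_4\}$. Then $v b_1 b_k$ and $v a_i a_j$ are two triangles sharing only $v$, and the four cross pairs $a_i b_1, a_j b_1, a_i b_k, a_j b_k$ are non-edges because $a_i, a_j \in A \setminus \{a_1\}$. Hence $\{v, b_1, b_k, a_i, a_j\}$ induces the desired bowtie. The only care required lies in confirming that the three cases are jointly exhaustive and that in each candidate five-vertex set no ``off-diagonal'' pair slips in and destroys the induced bowtie; both follow immediately from the uniqueness of the cross-edge $a_1 b_1$, so no further structural input is needed.
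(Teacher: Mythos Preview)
Your proof is correct and follows essentially the same approach as the paper: both take the common neighbour $v$ of $a_1$ and $b_1$ outside $A\cup B$ and perform the same three-way case split on $N(v)\cap(A\cup B)$, arriving at the same three induced bowties. The only cosmetic difference is that the paper phrases the first two cases as ``we may assume $x$ is adjacent to $b_2$'' and ``we may assume $x$ is adjacent to $a_2,a_3$'' before exhibiting the final bowtie $\{x,a_2,a_3,b_1,b_2\}$.
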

\begin{proof}
    Suppose that every edge is contained in a triangle and that $G$ is bowtie-free.
    Let $a_1$, $a_2$, $a_3$, $a_4$ be the vertices of $A$ 
    and $b_1$, $b_2$, $b_3$ be the vertices of $B$. We may assume that $e=a_1b_1$ is the unique edge between $A$ and~$B$.
    Since $e$ is contained in a triangle, there is a vertex~$x\notin A\cup B$ adjacent to both $a_1$ and $b_1$. 
    As $\{a_1,x,b_1,b_2,b_3\}$ does not induce a bowtie, 
    we may assume that $x$ is adjacent to $b_2$.
    Similarly, as $\{b_1,x,a_1,a_i,a_j\}$ does not induce a bowtie for all $2\le i<j\le 4$, 
    we may assume that $x$ is adjacent to $a_2$ and $a_3$.
    Then $\{x,a_2,a_3,b_1,b_2\}$ induces a bowtie, a contradiction.
\end{proof}

\begin{lemma}\label{bowtie-triangles-at-distance-at-least-3}
    If a graph $G$ has two disjoint and anti-complete cliques $A$ and~$B$ of size $4$ and $3$ respectively 
    and a vertex~$v$ with at least one neighbor in each of $A$ and~$B$, 
    then 
    $G$ either has a bowtie as an induced subgraph 
    or has an edge that is not contained in a triangle.
\end{lemma}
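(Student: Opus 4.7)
The plan is to proceed by contradiction: assume that $G$ is bowtie-free and that every edge of $G$ lies in a triangle, and derive a contradiction by exhibiting a bowtie. Write $A'=N(v)\cap A$ and $B'=N(v)\cap B$; both are nonempty by hypothesis.

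First I would dispose of the easy case: if $|A'|\ge 2$ and $|B'|\ge 2$, pick $a,a'\in A'$ and $b,b'\in B'$; then $\{v,a,a',b,b'\}$ induces a bowtie centered at $v$, since $aa'$ and $bb'$ are edges of the cliques $A$ and $B$ and the four cross pairs $\{a,b\},\{a,b'\},\{a',b\},\{a',b'\}$ are non-edges by anti-completeness. So we may assume, swapping the roles of $A$ and $B$ if necessary, that $|B'|=1$, say $B'=\{b_1\}$. The triangle hypothesis applied to $vb_1$ yields a vertex $x$ adjacent to both $v$ and $b_1$; arguing as in the proof of \cref{lem:bowtie-triangles-at-distance-at-least-2}, $x\notin A$ (else $xb_1$ would be an edge between $A$ and $B$) and $x\notin B$ (else $x\in B'$).

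Next, examining $\{v,x,b_1,b_2,b_3\}$, the only vertex of degree $4$ in this induced subgraph is $b_1$ (since $|B'|=1$), and the unique pairing of its four neighbors into two disjoint edges of the subgraph is $\{vx,b_2b_3\}$; this produces a bowtie unless one of $vb_2,vb_3,xb_2,xb_3$ is an edge. Since $vb_2$ and $vb_3$ are non-edges, $x$ must be adjacent to $b_2$ or $b_3$; WLOG, $xb_2$ is an edge. The key structural observation driving the remaining cases is that \emph{if $x$ has two neighbors $a,a'$ in $A$, then $\{x,a,a',b_1,b_2\}$ induces a bowtie centered at $x$}, with wings $\{a,a'\}$ and $\{b_1,b_2\}$ and all four cross pairs being non-edges by anti-completeness. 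It therefore remains to force $x$ to have two neighbors in $A$, or to find a suitable substitute.

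The remaining case analysis splits on $|A'|$. When $|A'|\ge 2$, choose $a,a'\in A'$ and consider $\{v,a,a',b_1,x\}$ centered at $v$: the pairing $\{aa',b_1x\}$ forces $x$ to be adjacent to $a$ or to $a'$, and iterating over other pairs in $A'$ (or pairing $a_1\in A'$ against two vertices of $A\setminus A'$ in the $a_1$-centered set $\{a_1,a_j,a_k,v,x\}$) yields either a second $A$-neighbor of $x$ and hence the bowtie of the previous paragraph, or a bowtie directly. When $|A'|=1$, say $A'=\{a_1\}$, the triangle hypothesis applied to $va_1$ produces a vertex $y\notin A\cup B$ adjacent to $v$ and $a_1$; then $\{v,a_1,y,b_1,x\}$ with $v$ central and pairing $\{a_1y,b_1x\}$ is a bowtie unless $y$ is adjacent to $b_1$ or to $x$, in which case $y$ inherits neighbors in both $A$ and $B$ and can be fed back into the easy case in place of $v$. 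The main obstacle is orchestrating this last step: one must verify that the interplay between $x$ and $y$, combined with further triangle completions on edges such as $xa_1$ and $yb_1$, always terminates in an explicit $5$-vertex induced bowtie rather than looping.
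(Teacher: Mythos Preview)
Your proposal has two genuine gaps.

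First, the ``swapping the roles of $A$ and $B$'' step is not valid: the hypothesis is asymmetric, with $|A|=4$ and $|B|=3$, and your argument for the sub-case $|A'|\ge 2$ actually uses $|A|\ge 4$. Concretely, when $|A'|=2$ you invoke the $a_1$-centered set $\{a_1,a_j,a_k,v,x\}$ with $a_j,a_k\in A\setminus A'$, which requires $|A\setminus A'|\ge 2$. After a swap (so that the set now playing the role of $A$ has only three vertices), this fails; e.g.\ if originally $|A'|=1$ and $|B'|=2$, your swapped argument leaves you with a $3$-element ``$A$'' and ``$|A'|=2$'', and neither of your two methods (``other pairs in $A'$'' or ``two vertices of $A\setminus A'$'') applies.

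Second, in the $|A'|=1$ case your candidate bowtie $\{v,a_1,y,b_1,x\}$ (center $v$, pairing $\{a_1y,\,b_1x\}$) also needs $a_1x$ to be a non-edge, which you have not ensured; if $a_1x\in E(G)$ then $x$ itself has a neighbor in $A$ and at least two in $B$, landing you back in the problematic swapped configuration above. You yourself flag that the ``feeding back'' manoeuvre with $x$ and $y$ is where the argument may loop, and indeed it does without further input.

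The paper closes both gaps by invoking \cref{lem:bowtie-triangles-at-distance-at-least-2} rather than trying to find bowties directly. The asymmetry $|A|=4$ is exactly what that lemma needs: any vertex $u$ with two neighbors in $B$ and one in $A$ produces a $4$-clique $A$ and a $3$-clique $\{u\}\cup(N(u)\cap B)$ joined by a single edge, which \cref{lem:bowtie-triangles-at-distance-at-least-2} forbids. This immediately gives $|B'|=1$ (no swap needed) and then handles the $|A'|=1$ case by building a $4$-clique $\{x\}\cup(N(x)\cap A)$ and a $3$-clique $\{y\}\cup(N(y)\cap B)$ with a single edge $xy$ between them. Without using the previous lemma, your direct approach does not seem to terminate.
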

\begin{proof}
    Suppose that $G$ is bowtie-free and that every edge is contained in a triangle and suppose there is some $v \in V(G)$ with at least one neighbor in each of $A$ and~$B$.
\begin{claim}\label{bowtie:A-and-B}
     For every $u \in V(G)$ with at least one neighbor in each of $A$ and~$B$, $u$ has at most one neighbor in $B$.
    \end{claim}
    \begin{subproof}
        If $u$ has at least two neighbors in $B$, 
        then 
        $u$ has exactly one neighbor in $A$ because $G$ is bowtie-free.
        It follows that $A$ and $\{u\}\cup (N(u)\cap B)$ are two cliques of size $4$ and $3$ respectively with exactly one edge between $A$ and $\{u\}\cup (N(u)\cap B)$, contradicting~\cref{lem:bowtie-triangles-at-distance-at-least-2}.
    \end{subproof}
Hence, we may assume $v$ has exactly one neighbor $b \in B$.
\begin{claim}
        $|N(v) \cap A| \geq 2$.
    \end{claim}
    \begin{subproof}
    Suppose that $v$ has exactly one neighbor~$a_1$ in $A$.
    As there is a triangle containing $a_1v$, there is a common neighbor~$x\notin A\cup B$ of $a_1$ and $v$. 
    Since $G[A\cup \{x,v\}]$ is bowtie-free, $x$ is adjacent to at least three vertices $a_1,a_2,a_3$ in $A$.
    Since $G[\{a_2,a_3,x\} \cup B]$ is bowtie-free, it follows that $x$ has at most one neighbor in $B$.
    By~\cref{lem:bowtie-triangles-at-distance-at-least-2},
    $x$ is adjacent to no vertex in $B$.

    There is a common neighbor~$y\notin A\cup B$ of $v$ and $b$
    and $y$ is adjacent to at least two vertices in $B$.
    Hence $y$ cannot be adjacent to two vertices of $A$ for otherwise $G[\{y\} \cup N(y)]$ would contain a bowtie.
    By \cref{lem:bowtie-triangles-at-distance-at-least-2}, $y$ has no neighbor in $A$.
    Note that $y\not=x$ since $x$ is not adjacent to $b_1$.
    
    Since $G[\{v,a_1,x,y,b\}]$ is not a bowtie,
    $x$ is adjacent to $y$.
    Then $G$ has two cliques $\{x\} \cup (N(x)\cap A)$ and $\{y\}\cup (N(y)\cap B)$ of cardinality at least $4$ and $3$ respectively with exactly one edge $xy$ between $\{x\} \cup (N(x)\cap A)$ and $\{y\}\cup (N(y)\cap B)$, contradicting~\cref{lem:bowtie-triangles-at-distance-at-least-2}.
    \end{subproof}

    Now it remains to consider the case where $v$ has at least two neighbors in $A$.    
Let $y$ be a common neighbor of $v$ and $b$. 
    Since $\{v,y\}\cup B$ does not induce a bowtie, 
    $y$ has at least two neighbors in $B$.
    Then by \cref{bowtie:A-and-B}, $y$ has no neighbor in $A$.
    But then the graph induced by $A \cup \{v, y, b\}$ contains a bowtie, a contradiction.
    This completes the proof.
\end{proof}

We are now ready to prove \cref{prop:bowtie} (and thus \cref{bowtie:main}).
\begin{proof}[Proof of \cref{prop:bowtie}]
    We proceed by induction on $\abs{E(G)}$.
    We may assume that $G$ is connected.
    The statement is trivial if $\omega(G)<4$ and so we may assume that $\omega(G)\ge 4$.

    Suppose there is some $e \in E(G)$ such that $e$ is not contained in any triangle.
    Let $G' = G \setminus e$.
    Then, $G'$ is bowtie-free and $\omega(G')=\omega(G)$.
    By the inductive hypothesis, $V(G)$ admits a partition into sets $X_1$, $X_2$, $\ldots$, $X_k$ such that $k\le f(\omega(G))$,~
    $\omega(G'[X_1])\le 3$, 
    and $G'[X_i]$ is triangle-free for all $i\in\{2,3,\ldots,k\}$.
    Since $e$ is not in any triangle of $G$, we deduce that $\omega(G[X_1])\le 3$ and $G[X_i]$ is triangle-free for all $i\in \{2,3,\ldots,k\}$. 
    Therefore, we may assume that every edge is in a triangle. 

    Let $K$ be a maximum clique in~$G$.
    Then $|K|=\omega(G) \ge 4$.    
    Suppose that there is a vertex $v$ such that the distance from $v$ to $K$ is $3$.
    Let $v_0 \dd v_1 \dd v_2 \dd v_3$ be a shortest path  from $K$ to~$v$ where $v_0\in K$ and $v_3=v$.
    There is a common neighbor~$x$ of $v_2$ and $v_3$.
    Since the distance between~$K$ and $v_3$ is equal to $3$, the two cliques $K$ and $\{v_2,v_3,x\}$ are disjoint and anti-complete.
    Then $v_1$ has neighbors in both~$K$ and $\{v_2,v_3,x\}$, contradicting~\cref{bowtie-triangles-at-distance-at-least-3}. 
    Therefore, every vertex of~$G$ is within distance $2$ from~$K$.

    Let $A$ be the set of vertices of distance $1$ from $K$ 
    and $B=V(G)\setminus (K\cup A)$.
    Note that every vertex in $B$ has a neighbor in $A$
    and every vertex in $A$ has at least one non-neighbor in~$K$.
    By~\cref{bowtie-triangles-at-distance-at-least-3}, 
    $G[B]$ is triangle-free.
    For each vertex~$x\in K$, let 
    $S_x$ be the set of vertices in~$A$ complete to $K\setminus\{x\}$.
    Since $K$ is a maximum clique, $S_x\cup\{x\}$ is independent.
    For distinct vertices $x,y,z\in K$, 
    let $T_{x,y,z}=(A\cap N_G(z))\setminus (N_G(x)\cup N_G(y))$.
    Since $G$ is bowtie-free, $T_{x,y,z}$ is independent.
    
    By definition, every $a \in A$ with at least two non-neighbors in $K$ is in $T_{x,y,z}$ for some choice of $x,y, z \in K$ and every $a \in A$ with exactly one non-neighbor $x \in K$ is in $S_x$.
Therefore, we have a partition of $V(G)$
    into $S_x\cup\{x\}$ for $x\in K$, 
    $T_{x,y,z}$ for $x,y,z\in K$,
    and $B$.
Note that every set except $B$ in our partition is stable, so we can merge any other two sets in our partition to obtain another triangle-free set.
    So we obtain a partition of $V(G)$ into 
    at most $\lceil\frac{1}{2}(\omega(G) + 3 \binom{\omega(G)}{3})\rceil +1$ sets.  
\end{proof}

\section{Bull-free graphs}
\label{sec:bull}
In this section, we will show that the class of bull-free graphs is Pollyanna.
We will begin by reducing the problem of showing the class of bull-free graphs is Pollyanna to showing that a simpler subclass of bull-free graphs is Pollyanna using structural results about bull-free graphs by Chudnovsky and Safra~\cite{ErdosHajnalBulls}.
We begin with some definitions.
For a subgraph~$H$ of a graph $G$, we say $v \in V(G) \setminus V(H)$ is a \emph{center} for $H$ if it is complete to $V(H)$.
If $v$ is a center for $H$ in $\overline{G}$, we say $v$ is an \emph{anticenter} for $H$ in $G$.
    We say a bull-free graph $G$ is \emph{basic} if neither $G$ nor $\overline{G}$ contains an odd hole with both a center and an anticenter.
We say a graph $G$ is \emph{locally perfect} if for every $v \in V(G)$, the graph induced by $N_G(v)$ is perfect.

We will show that if the class of locally perfect basic bull-free graphs is Pollyanna, then so is the class of bull-free graphs.
We will require the following theorem by Chudnovsky and Safra~\cite{ErdosHajnalBulls}, which also appears in a paper of Chudnovsky~\cite{Chudnovsky2012a} in greater generality according to \cite{ErdosHajnalBulls}.
\begin{theorem}[Chudnovsky and Safra~{\cite[1.4]{ErdosHajnalBulls}}]\label{every-composite-has-a-hom-set}
    Every bull-free graph can be obtained via substitution from basic bull-free graphs. 
\end{theorem}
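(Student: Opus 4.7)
The plan is to proceed by induction on $|V(G)|$. If $G$ is basic, the conclusion is immediate since $G$ itself is a basic bull-free graph. Otherwise, since the bull is self-complementary (so $\overline{G}$ is also bull-free), we may pass to the complement if needed and assume that $G$ itself contains an odd hole $C = c_1 c_2 \cdots c_{2k+1}$ together with a center $c$ that is complete to $V(C)$ and an anticenter $a$ that is anti-complete to $V(C)$. The goal is to produce a homogeneous set $X$ of $G$ with $1 < |X| < |V(G)|$; once found, the quotient graph $G'$ obtained by collapsing $X$ to a single vertex is bull-free, both $G'$ and $G[X]$ are strictly smaller bull-free graphs, and by induction each is a substitution-composition of basic bull-free graphs, so reversing the substitution exhibits $G$ as such.

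The first step is an attachment lemma describing how an arbitrary vertex $v \notin V(C) \cup \{c, a\}$ may be adjacent to $V(C)$. For each edge $c_i c_{i+1}$ of $C$ the set $\{c, c_i, c_{i+1}\}$ induces a triangle, so any attachment of $v$ that makes $v$ mixed on $\{c_i, c_{i+1}\}$ threatens to create an induced bull once combined with pendants chosen from $\{c_{i-1}, c_{i+2}, a\}$. Iterating such local checks around $C$, while using the fact that $c$ fills in triangles and $a$ supplies non-neighbors, forces every such $v$ to be either complete to $V(C)$, anti-complete to $V(C)$, or attached in one of a short list of very rigid local patterns, each of which itself forces the existence of a homogeneous set in $G$ (typically a set of vertices sharing a common neighborhood on $V(C)$).

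Given the attachment lemma, I would take $X$ to be the set of all vertices of $G$ that are complete to $V(C)$; this contains $c$ and is disjoint from $V(C) \cup \{a\}$, so $|X| \le |V(G)| - |V(C)| - 1 < |V(G)|$. The central claim is that every vertex $u \notin X \cup V(C)$ is either complete or anti-complete to $X$. If some such $u$ were mixed on $X$, adjacent to $c$ but not to another center $c' \in X$, then combining the attachment type of $u$ to $V(C)$ with the edges $c c_i,\ c' c_i,\ c_i c_{i+1}$ would yield an induced bull on a carefully chosen five-vertex subset of $\{c, c', c_{i-1}, c_i, c_{i+1}, u, a\}$, contradicting bull-freeness. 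A symmetric argument using $a$ shows that every vertex of $V(C)$ is also complete or anti-complete to $X$. In the degenerate case $X = \{c\}$, one reruns the argument in $\overline{G}$ with the roles of center and anticenter swapped, or replaces $C$ by a shortest odd hole with a center and anticenter, and the analogous analysis produces a nontrivial homogeneous set there.

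The principal obstacle is the attachment lemma: classifying all possible adjacency patterns between a single vertex and an odd hole of arbitrary length while respecting bull-freeness is a delicate case analysis and is the technical heart of the original Chudnovsky--Safra argument. A secondary obstacle is handling the boundary situation where the natural candidate $X$ turns out to be a singleton, which is where the flexibility of passing to the complement and of choosing a minimum-length witnessing hole is exploited; managing these boundary cases uniformly is where most of the effort in the proof lies.
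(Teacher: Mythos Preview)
The paper does not give its own proof of this theorem; it is quoted verbatim from Chudnovsky and Safra~\cite{ErdosHajnalBulls} and used as a black box. So there is nothing in the present paper to compare your proposal against.

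On the merits of your proposal itself: the global strategy is the right one and matches the Chudnovsky--Safra approach --- reduce the statement to showing that every non-basic bull-free graph has a homogeneous set, and induct. But your execution has a genuine gap. Your candidate set $X=\{\text{all centers of }C\}$ does not obviously work. Suppose $u\notin X\cup V(C)$ is anti-complete to $V(C)$, adjacent to one center $c\in X$, and non-adjacent to another center $c'\in X$. Pick an edge $c_ic_{i+1}$ of $C$. On $\{u,c,c',c_i,c_{i+1}\}$ you know all adjacencies except $cc'$, and a direct check shows that neither possibility yields a bull. Bringing in the anticenter $a$ does not immediately help either, since you have no control over the adjacencies inside $\{u,a,c,c'\}$. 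So the sentence ``combining the attachment type of $u$ to $V(C)$ with the edges $\ldots$ would yield an induced bull'' is not justified, and you yourself concede at the end that the attachment lemma --- the step on which everything rests --- is deferred. What you have written is an outline of where the difficulty lies, not a proof; the actual argument requires a finer classification of attachments and a more careful construction of the homogeneous set than simply taking the set of all centers.
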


\begin{theorem}[Chudnovsky and Safra~{\cite[4.3]{ErdosHajnalBulls}}]\label{thm:basic-nbr-nonnbr-perfect}
    If $G$ is a basic bull-free graph,
    then $G[N(v)]$ or $G \setminus (N(v)\cup \{v\})$ is perfect 
    for every vertex $v$ of~$G$.
\end{theorem}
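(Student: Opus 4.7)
The plan is to argue by contradiction: assume that both $G[N(v)]$ and $G[M(v)]$ are non-perfect, where $M(v):=V(G)\setminus(N(v)\cup\{v\})$, and then combine the basic hypothesis with a careful analysis of bull-free neighborhoods. Since both $G[N(v)]$ and $G[M(v)]$ are bull-free, \cref{bullperfect} forces each to contain an odd hole or an odd antihole.

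My first step is a self-complementary reduction. The bull is self-complementary, so bull-freeness is preserved under taking complements; the basic hypothesis is self-complementary by definition; and under $G\mapsto\overline G$ the sets $N_G(v)$ and $M_G(v)$ swap roles, with their induced subgraphs passing to complements, so odd holes and odd antiholes are interchanged. Hence by replacing $G$ with $\overline G$ if necessary, I may assume that $G[N(v)]$ contains an odd hole $C$. Then $v$ is a center of $C$ in $G$, so the basic hypothesis forces $C$ to have no anticenter; equivalently, every $u\in M(v)$ has at least one neighbor in $V(C)$.

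The second step is to constrain, via bull-freeness, how each $u\in M(v)$ attaches to $V(C)$. The natural test configurations are $5$-vertex sets of the form $\{v,c^-,c,c^+,u\}$ for a length-$2$ arc $c^-cc^+$ of $C$, possibly extended by further consecutive vertices of $C$, searching for an induced bull using the fact that $v$ is complete to $V(C)$ but not adjacent to $u$. I expect to show that $N(u)\cap V(C)$ always contains a large structured piece---typically a full consecutive arc of $C$---pinning down the attachment patterns of $u$ to a short list of types.

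The third and final step is to use the non-perfect subgraph inside $G[M(v)]$---an odd hole $D$, or, after passing to $\overline G$, an odd hole arising from an odd antihole---together with the attachment classification from the previous step, to produce either a vertex of $G$ that is anticomplete to $V(C)$ (an anticenter for $C$, contradicting basic-ness directly) or an odd hole in $G$ or in $\overline G$ admitting both a center and an anticenter (again contradicting basic-ness). The main obstacle will be this last step: precisely pinning down the attachment types in step~2 and then carrying out the combinatorial bookkeeping needed to turn a non-perfect configuration inside $M(v)$ into a forbidden center/anticenter pair for some odd hole, likely with separate treatments of the odd-hole and odd-antihole cases inside $M(v)$ and repeated use of the self-complementary reduction to halve the work.
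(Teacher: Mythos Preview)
The paper does not prove this theorem; it is quoted verbatim as a result of Chudnovsky and Safra~\cite[4.3]{ErdosHajnalBulls} and used as a black box in the proof of \cref{thm:ets-locally-perfect-basic}. So there is no ``paper's own proof'' to compare against.

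As for your proposal on its own terms: the opening moves are sound. The bull is self-complementary, the basic hypothesis is self-complementary, and under complementation $N(v)$ and $M(v)$ swap (with their induced subgraphs complemented), so reducing to the case that $G[N(v)]$ contains an odd hole $C$ is legitimate. Your deduction that every $u\in M(v)$ has a neighbor on $C$ is also correct. But what you have written is a plan, not a proof: your step~2 only \emph{expects} a classification of attachment types, and your step~3 explicitly flags the main obstacle without resolving it. In the actual Chudnovsky--Safra argument the work in those steps is substantial (several pages of case analysis on how vertices of $M(v)$ attach to $C$, together with how an odd hole or antihole inside $M(v)$ interacts with those attachments to manufacture a forbidden center/anticenter pair). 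Your outline is consistent with that argument, but it does not yet contain the ideas needed to close it; in particular you have not identified which configurations in step~3 yield the contradiction, nor handled the asymmetry that after your reduction $G[M(v)]$ may contain an odd antihole rather than an odd hole, where the self-complementary trick cannot simply be reapplied without undoing your first reduction.
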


\begin{corr}\label{thm:ets-locally-perfect-basic}
    Let $\mathcal{F}$ be a hereditary class of graphs.
    If the class of locally perfect basic bull-free graphs in $\F$ is polynomially $\chi$-bounded,
    then so is the class of bull-free graphs in $\F$.
\end{corr}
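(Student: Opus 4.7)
My plan is to first establish the intermediate claim that the class of \emph{basic} bull-free graphs in $\mathcal F$ is polynomially $\chi$-bounded, and then to lift this to all bull-free graphs in $\mathcal F$ via substitution.

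For the lifting step, I would combine \cref{every-composite-has-a-hom-set} with \cref{thm:substitution-orig} as follows. Decomposing a bull-free graph $G$ along a nontrivial homogeneous set expresses it as a substitution of two smaller induced subgraphs of $G$, so iterating realizes $G$ as a substitution of basic bull-free graphs, each of which is an induced subgraph of $G$ and hence lies in $\mathcal F$ by heredity. Thus the class of bull-free graphs in $\mathcal F$ is contained in the substitution closure of the class of basic bull-free graphs in $\mathcal F$, and \cref{thm:substitution-orig} transports polynomial $\chi$-boundedness from the building blocks to the whole class.

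To prove the intermediate claim, I would induct on $\omega(G)$, letting $p$ denote the polynomial $\chi$-bounding function supplied by the hypothesis. The base case $\omega(G) \le 1$ is trivial. For the inductive step, split into two cases. If $G$ is locally perfect, then the hypothesis directly yields $\chi(G) \le p(\omega(G))$. Otherwise, some vertex $v$ has $G[N(v)]$ not perfect; \cref{thm:basic-nbr-nonnbr-perfect} then forces $G \setminus (N(v)\cup\{v\})$ to be perfect, giving $\chi(G \setminus (N(v)\cup\{v\})) \le \omega(G)$. The induced subgraph $G[N(v)]$ is bull-free, basic, and still in $\mathcal F$, and it satisfies $\omega(G[N(v)]) \le \omega(G) - 1$ because $v$ is adjacent to every vertex of $N(v)$. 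Applying the inductive hypothesis to $G[N(v)]$ and combining with the trivial bound
\[
\chi(G) \;\le\; 1 + \chi(G[N(v)]) + \chi(G \setminus (N(v)\cup\{v\}))
\]
closes the induction and produces a polynomial bound of the form $p(\omega)+O(\omega^2)$.

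The only really delicate point is verifying in the inductive step that $G[N(v)]$ inherits the \emph{basic} property: an odd hole with both a center and an anticenter inside $G[N(v)]$ or its complement would persist as such inside $G$ or $\overline{G}$, contradicting that $G$ is basic. Once this induced-subgraph closure is nailed down, the rest of the argument assembles directly from \cref{every-composite-has-a-hom-set,thm:basic-nbr-nonnbr-perfect,thm:substitution-orig}.
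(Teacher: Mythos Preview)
Your proposal is correct and follows essentially the same approach as the paper: reduce to basic bull-free graphs via \cref{every-composite-has-a-hom-set} and \cref{thm:substitution-orig}, then induct on $\omega(G)$ using \cref{thm:basic-nbr-nonnbr-perfect} to handle the non-locally-perfect case. The only cosmetic differences are that the paper absorbs your ``$+1$'' by observing $G\setminus N(v)$ (with the isolated vertex $v$ included) is still perfect, and it records the resulting bound as $\sum_{k=1}^{\omega(G)} f(k)$ rather than your $p(\omega)+O(\omega^2)$; your concern about heredity of the basic property is handled in the paper by the one-line remark that $\mathcal C$ is hereditary, which is exactly the argument you spell out.
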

\begin{proof}
    Let $\C$ denote the class of basic bull-free graphs in $\F$.
    Note that $\C$ is hereditary.
    By \cref{every-composite-has-a-hom-set,thm:substitution-orig}, it is enough to show that $\C$ is polynomially $\chi$-bounded.

    Suppose that there is a polynomial $f$ such that every locally perfect basic bull-free graph~$G$ in~$\F$ satisfies $\chi(G) \leq f(\omega(G))$.
    We may assume that $f(n)\ge n$ for all positive integers~$n$.

    We claim that every $G \in \C$ satisfies $\chi(G) \leq \sum_{k=1}^{\omega(G)}f(k)$. 
    We proceed by the induction on $\omega(G)$.
    The statement is trivial if $\omega(G)\le 1$ and so we assume that $\omega(G)>1$.
    We may assume that $G$ is not locally perfect because otherwise $\chi(G)\le f(\omega(G))$.
    So there is a vertex~$v$ such that $G[N(v)]$ is not perfect.
    By \cref{thm:basic-nbr-nonnbr-perfect}, $G \setminus (N(v)\cup\{v\})$ is perfect and so is $G\setminus N(v) $. Therefore, $\chi(G \setminus N(v))\le \omega(G)\le f(\omega(G))$.
    Since $\omega(G[N(v)])<\omega(G)$,
    by the induction hypothesis, $\chi(G[N(v)])\le \sum_{k=1}^{\omega(G)-1} f(k)$.
    This completes the proof because $\chi(G) \leq \chi(G[N(v)])+\chi(G\setminus N(v))$.
\end{proof}

Hence, we only need to show that the class of locally perfect bull-free graphs is Pollyanna.
We will do so by invoking results by Chudnovsky~\cite{Chudnovsky2012a} about \say{elementary} and \say{non-elementary} bull-free graphs.
A bull-free graph is \emph{elementary} if it does not contain a path of length three with both a center and an anticenter.
For a positive integer $k$, 
we say a graph $G$ is \emph{$k$-perfect} if $V(G)$ can be partitioned into at most $k$ sets each of which induces a perfect graph. 
We will first prove the following proposition on elementary locally perfect bull-free graphs.

\begin{restatable}{proposition}{elementarykperfect}\label{elementary-bull-free-constant-perfect}
For every $2$-good class $\F$ of graphs, 
    there is a positive integer~$\gamma$ such that 
    every elementary locally perfect bull-free graph in $\F$ is $\gamma$-perfect.
\end{restatable}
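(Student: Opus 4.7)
The plan is to combine Chudnovsky's structural description of elementary bull-free graphs with the two local hypotheses on $G$ (local perfection and membership in the $2$-good class $\F$). The $2$-goodness of $\F$ provides a constant $m$ such that every triangle-free induced subgraph of a graph in $\F$ has chromatic number at most~$m$; this single constant is what the eventual bound $\gamma$ should depend on.

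First I would reduce to graphs with no nontrivial homogeneous set. If $X \subseteq V(G)$ is a homogeneous set, then both $G[X]$ and the graph $G'$ obtained by collapsing $X$ to a single representative vertex are induced subgraphs of $G$, and hence lie in $\F$; both also remain bull-free, elementary (the forbidden $P_4$-with-center-and-anticenter configuration lifts through substitution), and locally perfect (the neighborhood of any vertex in $G'$ or $G[X]$ appears as the neighborhood of a vertex of $G$, intersected with an induced subgraph). Given $\gamma$-perfect partitions of $G[X]$ and of $G'$, the vertex of $G'$ representing $X$ lies in some perfect part $P$, and replacing it with the corresponding perfect part of $G[X]$ yields a perfect graph by \cref{lovaszreplacement}; repeating the matching part-by-part produces a $\gamma$-perfect partition of $G$. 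So it suffices to handle the prime case.

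For prime elementary bull-free graphs, the plan is to invoke Chudnovsky's structure theorem, which presents such graphs as thickenings of a restricted family of base structures. Local perfection (forbidding odd holes and odd antiholes inside each neighborhood) together with bull-freeness and primality drastically restricts which base structures can occur. On the triangle-free portions of the thickening, the $2$-good constant~$m$ bounds the chromatic number by~$m$, and each resulting color class lifts to a part whose induced subgraph is perfect (being built from cliques joined in a $P_4$-free fashion on the skeleton, hence perfect by \cref{cographs} or more generally by the interaction of local perfection with the skeleton structure). Together this yields a partition of $V(G)$ into at most $\gamma$ perfect induced subgraphs, with $\gamma$ a function of~$m$ alone.

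The main obstacle is the prime case: extracting a concrete uniformly bounded perfect partition from Chudnovsky's structural decomposition, and checking that every base structure in that decomposition contributes only finitely many parts depending on~$m$. This will likely require a careful case split on the shape of the base structure, separating the parts of $G$ that are forced to be perfect by local perfection from those that are controlled only through the $2$-good assumption, and then verifying that lifting a proper coloring of the triangle-free skeleton to the thickening produces perfect parts rather than merely well-colored ones.
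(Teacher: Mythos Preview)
Your homogeneous-set reduction has a flaw: the ``matching part-by-part'' merging does not give $\gamma$ parts. If $v$ is the representative of $X$ and $v \notin V_i'$, then substituting the $i$th perfect part $W_i$ of $G[X]$ into $V_i'$ requires $G'[V_i' \cup \{v\}]$ to be perfect, which you do not know. The paper avoids this entirely: since $G$ is connected and locally perfect, the homogeneous set $X$ lies in some neighborhood and hence $G[X]$ is already \emph{perfect}, not merely $\gamma$-perfect; one substitutes all of $X$ into the single part of $G'$ containing the representative.

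The larger gap is the prime case, which you acknowledge is ``the main obstacle'' but leave as a vague appeal to thickenings. The paper's key idea, which your plan misses, is to first split off the case where $G$ contains a gem. If $G$ has a gem, the $P_4$ inside it has a center; since $G$ is elementary this $P_4$ has no anticenter, so its four vertices dominate $V(G)$, and local perfection gives a $4$-perfect partition immediately. This reduces the problem to \emph{gem-free} bull-free graphs in $\F$, where local perfection comes for free (neighborhoods are $P_4$-free). The gem-free case is then handled not via a direct thickening description but via the general bull-free decomposition (\cref{thm:bullfree}): either $|V(G)|\le 8$, or $G$ or $\overline G$ lies in $\mathcal T_1$ (handled by \cref{tau1-is-k-perfect} and the $2$-good constant), or $G$ has a homogeneous set, or $G$ has a proper or small homogeneous pair. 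The substantial work is the homogeneous-pair case, which requires a careful analysis (\cref{lem:hompair} and the long case analysis in \cref{prop:gemfree-main}) showing that shrinking the pair and applying induction can be undone without creating odd holes in the perfect parts. None of this is captured by your sketch.
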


We then use \cref{elementary-bull-free-constant-perfect} to prove the following for locally perfect bull-free graphs. Its proof uses trigraphs, which we will introduce in the next subsection.

\begin{restatable}{proposition}{nonelementary}\label{thm:non-elementary-locally-perfect}
For every $2$-good class $\F$ of graphs, 
    there is a positive integer~$c_\F$ such that 
    every locally perfect bull-free graph is $c_\F$-perfect.
\end{restatable}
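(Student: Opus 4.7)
The plan is to reduce the non-elementary case to the elementary case handled by \cref{elementary-bull-free-constant-perfect}, using Chudnovsky's structure theorem for bull-free (tri)graphs. A \emph{trigraph} is a generalization of a graph in which some pairs of vertices are designated \emph{semi-adjacent} (neither a definite edge nor a definite non-edge); each trigraph $T$ represents the family of graphs obtained by resolving its semi-adjacencies into edges or non-edges. The trigraph language is convenient because the decomposition results of Chudnovsky~\cite{Chudnovsky2012, Chudnovsky2012a} are stated most naturally there: every non-elementary bull-free trigraph either has a rigid, essentially elementary structure or admits a homogeneous-pair-type decomposition into strictly smaller bull-free trigraphs.

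Equipped with this, I would proceed by induction on $|V(G)|$ applied to a locally perfect bull-free graph $G \in \F$. If $G$ is elementary, \cref{elementary-bull-free-constant-perfect} gives a partition of $V(G)$ into at most $\gamma$ perfect parts with $\gamma$ depending only on $\F$. Otherwise, view $G$ as a trigraph and invoke Chudnovsky's theorem to obtain a decomposition of $G$ along a homogeneous pair $(A, B)$. Contracting $A$ and $B$ to summary vertices produces a strictly smaller bull-free trigraph whose every realization $G'$ is again locally perfect and lies in $\F$, using that $\F$ is hereditary and that local perfectness interacts cleanly with the contraction (the neighborhood of any vertex of $G'$ outside $A \cup B$ corresponds, up to perfect substitutions, to a neighborhood in $G$). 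By the induction hypothesis $G'$ admits a partition into at most $c_\F$ perfect parts, which I would lift to a partition of $V(G)$ by replacing the summary vertices by $G[A]$ and $G[B]$. Each of $G[A]$ and $G[B]$ is perfect because, by the homogeneous-pair structure, $A$ and $B$ are each contained in the neighborhood of a suitable external vertex of $G$ and $G$ is locally perfect; combining through \cref{lovaszreplacement} preserves the part count.

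The main obstacle is preventing $c_\F$ from blowing up with the recursion depth. To avoid this, I would apply Chudnovsky's structure theorem \emph{globally}, producing a single decomposition of $G$ into finitely many elementary locally perfect bull-free pieces, each lying in $\F$, and then invoke \cref{elementary-bull-free-constant-perfect} once per piece and assemble the partitions using \cref{lovaszreplacement}. The delicate technical point is verifying that every elementary piece arising in this global decomposition is itself locally perfect and inherits the $2$-good property from $\F$, so that the elementary proposition applies uniformly: this requires tracking how neighborhoods of vertices in each piece relate to neighborhoods in $G$ through the (possibly nested) homogeneous pairs, and checking that the $\chi^{(2)}$-bound transports along each contraction.
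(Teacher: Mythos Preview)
Your framework is right---reduce to the elementary case via homogeneous pairs and lift using \cref{lovaszreplacement}---and you correctly identify the central obstacle: keeping $c_\F$ bounded through the recursion. But your proposal does not actually resolve that obstacle, and the ``global decomposition'' suggestion is too vague to count as a fix.

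The concrete problem is this. When you shrink a homogeneous pair $(A,B)$ to summary vertices $a,b$ and obtain a $c_\F$-perfect partition of $G'$, nothing prevents $a$ and $b$ from landing in the \emph{same} perfect part $X_i$. If that happens, lifting requires $G[(X_i\setminus\{a,b\})\cup A\cup B]$ to be perfect, and this is \emph{not} a substitution in the sense of \cref{lovaszreplacement}: the edges between $A$ and $B$ in $G$ do not match whatever single adjacency $a$--$b$ had in the realization $G'$ (indeed $(A,B)$ is tame, so $A$ is neither complete nor anticomplete to $B$). So you cannot conclude perfectness, and the only escape is to put $A$ and $B$ in separate parts---which costs you one extra part per recursion step.

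The paper's solution is precisely a mechanism that forces $a$ and $b$ into different parts at every level. It strengthens the induction to \emph{austere trigraphs} and to partitions whose parts contain \emph{no switchable pair}. Shrinking $(A,B)$ makes $\{a,b\}$ switchable, so any partition satisfying the strengthened hypothesis automatically separates $a$ from $b$, and the lift goes through with no increase in the part count. The base case costs a factor of $2$ (since a monogamous trigraph can be $2$-coloured so that each colour class has no switchable pairs, and \cref{elementary-bull-free-constant-perfect} is applied to each), giving $c_\F=2\gamma$. Two further lemmas (\cref{lem:austere-set,lem:austere-hom}) are needed to show that austerity is preserved under removing homogeneous sets and under shrinking \emph{maximal dominated} tame homogeneous pairs, and \cref{bull-free-with-pentameter} is what guarantees a \emph{dominated} pair exists (which you also need, and do not justify, for $G[A]$ and $G[B]$ to lie in a neighbourhood and hence be perfect).
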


It is now straightforward to prove that the class of bull-free graphs is Pollyanna if we assume~\cref{thm:non-elementary-locally-perfect}.
As we remarked in the introduction, we will actually prove that the class of bull-free graphs is $2$-strongly Pollyanna which is a stronger statement.

\begin{theorem}\label{bulls-main}
    The class of bull-free graphs is $2$-strongly Pollyanna.
\end{theorem}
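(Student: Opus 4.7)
My plan is to deduce \cref{bulls-main} directly from \cref{thm:non-elementary-locally-perfect} combined with \cref{thm:ets-locally-perfect-basic}. The only additional ingredient I need is the elementary observation that any $c$-perfect graph $G$ has a partition of its vertex set into $c$ perfect induced subgraphs, and since a perfect graph has chromatic number equal to its clique number, each part contributes at most $\omega(G)$ colors, yielding $\chi(G)\le c\cdot\omega(G)$.

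First, I would fix an arbitrary $2$-good class $\F$ of graphs and aim to bound $\chi(G)$ polynomially in $\omega(G)$ for every bull-free $G\in\F$. By \cref{thm:non-elementary-locally-perfect}, there exists a constant $c_\F$ such that every locally perfect bull-free graph in $\F$ is $c_\F$-perfect, and the observation above promotes this to a linear bound $\chi(G)\le c_\F\cdot\omega(G)$ for all such graphs. In particular, the subclass consisting of locally perfect \emph{basic} bull-free graphs in $\F$ is polynomially (indeed, linearly) $\chi$-bounded, which is precisely the hypothesis of \cref{thm:ets-locally-perfect-basic}. Applying that corollary then yields a polynomial $\chi$-bound for all bull-free graphs in $\F$, and since $\F$ was an arbitrary $2$-good class this is exactly what is needed to conclude that the class of bull-free graphs is $2$-strongly Pollyanna.

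The substantive difficulty is not in this final theorem but upstream: \cref{thm:ets-locally-perfect-basic} reduces the problem to locally perfect basic bull-free graphs via the Chudnovsky--Safra decomposition (\cref{every-composite-has-a-hom-set}) and the substitution closure result (\cref{thm:substitution-orig}), while \cref{thm:non-elementary-locally-perfect} handles the $c_\F$-perfect partition by extending the elementary case (\cref{elementary-bull-free-constant-perfect}) through a trigraph-based analysis of non-elementary bull-free graphs. Once those two results are in hand, the theorem itself reduces to a short assembly; the only thing I would double-check is that the constant $c_\F$ delivered by \cref{thm:non-elementary-locally-perfect} depends only on $\F$ and not on the individual graph, so that the resulting $\chi$-bound is genuinely uniform in $\F$ and the conclusion is polynomial in $\omega(G)$ as required by the definition of $2$-strong Pollyanna-ness.
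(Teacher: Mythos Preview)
Your proposal is correct and follows essentially the same route as the paper's own proof: invoke \cref{thm:non-elementary-locally-perfect} to get that locally perfect bull-free graphs in any $2$-good $\F$ are $c_\F$-perfect (hence linearly $\chi$-bounded), and then feed this into \cref{thm:ets-locally-perfect-basic} to lift the polynomial bound to all bull-free graphs in $\F$. The only difference is that you spell out explicitly the step $c_\F$-perfect $\Rightarrow$ $\chi(G)\le c_\F\cdot\omega(G)$, which the paper compresses into a single sentence.
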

\begin{proof}[Proof assuming~\cref{thm:non-elementary-locally-perfect}]
    By \cref{thm:non-elementary-locally-perfect}, the class of locally perfect bull-free graphs is $2$-strongly Pollyanna.
    Hence, we obtain that the class of bull-free graphs is $2$-strongly Pollyanna by applying \cref{thm:ets-locally-perfect-basic}.
\end{proof}

\subsection{Trigraphs}
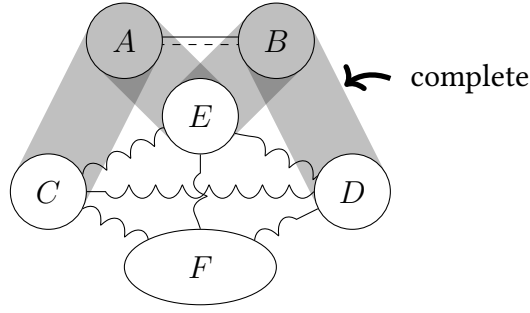
\begin{figure}
    \centering
    \begin{tikzpicture}
        \node [circle,draw,fill=lightgray,minimum size=1cm] (a) at (-1,1) {$A$};
        \node [circle,draw,fill=lightgray,minimum size=1cm] (b) at (1,1) {$B$};
        \node [circle,draw,minimum size=1cm,fill=white] (c) at (-2,-1) {$C$};
        \node [circle,draw,minimum size=1cm,fill=white] (d) at (2,-1) {$D$};
        \node [circle,draw,minimum size=1cm,fill=white] (e) at (0,0) {$E$};
        \node [ellipse,draw,minimum height=1cm,
        minimum width=2cm,fill=white] (f) at (0,-2) {$F$};
        \begin{pgfonlayer}{background}
        \draw [line width=1cm,opacity=0.25]
        (2,-1)--(1,1);
        \draw [line width=1cm,opacity=0.25](1,1)--(0,0);
        \draw [line width=1cm,opacity=0.25](0,0)--(-1,1);
        \draw [line width=1cm,opacity=0.25,](-1,1)--(-2,-1);
        \draw [decorate,decoration=coil] (c)--(e)--(d)--(c);
        \draw (-1,1.05)--(1,1.05);
        \draw [dashed] (-1,0.95)--(1,0.95);
        \draw [decorate,decoration=coil](c)--(f)--(e);
        \draw [decorate,decoration=coil](f)--(d);
        \end{pgfonlayer}
\draw [->,bend right,line width=2pt] (2.5,0.5) node [inner sep=0pt, minimum width=3pt,label=right:{complete}]{} to (1.9,0.4);
    \end{tikzpicture}
\caption{A homogeneous pair.}
    \label{fig:c5leafstar}
\end{figure}

To describe the necessary results from a paper of Chudnovsky~\cite{Chudnovsky2012a}, we will need to use a generalization of graphs called \emph{trigraphs}.
For a set $X$, let us write $\binom{X}{2}$ to denote all $2$-element subsets of $X$. 
A \emph{trigraph}~$G$ is an object consisting of a finite set $V(G)$, called the \emph{vertex set} of~$G$, and the \emph{adjacency function} $\theta:\binom{V(G)}{2}\to\{-1,0,1\}$. 
Two distinct vertices $u$ and $v$ of $G$ are \emph{strongly adjacent} if $\theta(\{u,v\})=1$
\emph{strongly anti-adjacent} if $\theta(\{u,v\})=-1$,
and \emph{semi-adjacent} if $\theta(\{u,v\})=0$.
If $u$ and $v$ are semi-adjacent, we say the pair $\{u,v\}$ is a \emph{switchable} pair.
We regard graphs as trigraphs without semi-adjacent pairs of vertices.

Two vertices of a trigraph are \emph{adjacent}
if they are strongly adjacent or semi-adjacent.
Similarly, two vertices of a trigraph are \emph{anti-adjacent}
if they are strongly anti-adjacent or semi-adjacent.
For two disjoint subsets $A$ and $B$ of vertices of a trigraph, 
$A$ is \emph{strongly complete} to~$B$ 
if every vertex in $A$ is strongly adjacent to every vertex in~$B$,
and 
\emph{strongly anti-complete} 
if every vertex in $A$ is strongly anti-adjacent to every vertex in~$B$.
If a vertex $x$ is adjacent to a vertex $y$, then $y$ is called a \emph{neighbor} of $x$. We write $N_G(x)$ to denote the set of all neighbors of~$x$. We sometimes omit the subscript if it is clear from the context.

The complement $\overline{G}$ of a trigraph $G=(V,\theta)$ is a trigraph on the same vertex set $V(G)$ with the adjacency function $\overline{\theta}=-\theta$. 
For a set $X$ of vertices, we write $G[X]$ to denote the subtrigraph induced by $X$, which has the vertex set $X$ and the adjacency function is the restriction of $\theta$ to $\binom{X}{2}$.
We say that $H$ is an induced subtrigraph of $G$ if $H=G[X]$ for some $X\subseteq V(G)$.
We write $G\setminus X$ to denote the trigraph $G[V(G)\setminus X]$.
Isomorphisms between trigraphs are defined as usual.

A set $X$ of vertices of a trigraph is a \emph{strong clique} if $x$ and $y$ are strongly adjacent for all distinct $x,y\in X$.

For a trigraph $G$, let $\hat G$ be a graph on $V(G)$ such that two vertices of $\hat G$ are adjacent if and only if they are adjacent in $G$. 
We call $\hat G$ the \emph{full realization} of $G$.
We say that $G$ is \emph{connected} if $\hat G$ is connected. A \emph{connected component} of a trigraph is a maximal connected induced subtrigraph.

A graph is a \emph{realization} of a trigraph~$G$ 
if its vertex set is equal to $V(G)$
and its edge set is the set of all strongly adjacent pairs and possibly some switchable pairs of $G$. 
A trigraph~$G$ \emph{contains} a graph $H$
if $G$ has a realization containing an induced subgraph isomorphic to~$H$.

A \emph{homogeneous set} of a trigraph~$G$ is a proper subset $X$ of $V(G)$ with at least two vertices such that every vertex in $V(G)\setminus X$ is either strongly complete or strongly anti-complete to~$X$.

For a trigraph $G$, a pair $(A, B)$ of disjoint nonempty subsets of $V(G)$ is a \emph{homogeneous pair} if 
$V(G) \setminus (A \cup B)$ can be partitioned into four (possibly empty) sets $C$, $D$, $E$, and $F$  such that 
\begin{itemize}
    \item $C$ is strongly complete to $A$ and strongly anti-complete to $B$,
    \item $D$ is strongly complete to $B$ and strongly anti-complete to $A$, 
    \item $E$ is strongly complete to both $A$ and $B$, and
    \item $F$ is strongly anti-complete to both $A$ and $B$.
\end{itemize} 
We say the pair $(A, B)$ is \emph{tame} if 
\begin{itemize}
    \item $|V(G)| -2 > |A| + |B|>2$ and 
    \item $A$ is not strongly complete to $B$ and not strongly anti-complete to $B$.
\end{itemize}
A trigraph $G$ admits a \emph{homogeneous pair decomposition} if it has a tame homogeneous pair.
We say that a homogeneous pair $(A,B)$ is \emph{proper} if it is tame and 
both $C$ and $D$ are nonempty.
We say that a homogeneous pair $(A,B)$ is \emph{small} if it is tame and $\abs{A\cup B}\le 6$.
See \cref{fig:c5leafstar} for an illustration of a homogeneous pair.

We say a tame homogeneous pair $(A,B)$ of a trigraph~$G$ 
is \emph{dominated} if there exist (possibly identical) vertices $v$ and $w$ in $V(G)\setminus (A\cup B)$ such that $v$ is strongly complete to $A$ and $w$ is strongly complete to $B$. In other words, $E\neq \emptyset$ or both $C$ and $D$ are nonempty.

For two homogeneous pairs $(A_1, B_1)$ and $(A_2, B_2)$  of a trigraph, 
we say $(A_2,B_2)$ contains $(A_1,B_1)$, denoted by $(A_2, B_2)\subseteq (A_1, B_1)$, if $A_1 \subseteq A_2$ and $B_1\subseteq B_2$.  
In addition, we say $(A_2,B_2)$ contains $(A_1,B_1)$ \emph{properly} if 
$(A_2, B_2)\subseteq (A_1, B_1)$ and $(A_2, B_2)\neq (A_1, B_1)$.
A tame homogeneous pair of a trigraph is \emph{maximal} if it is not properly contained by any tame homogeneous pair.

We say a trigraph is \emph{monogamous} if every vertex belongs to at most one switchable pair.
\emph{Shrinking} a tame homogeneous pair $(A,B)$ in a trigraph 
is an operation to shrink $A$ into a single vertex~$a$, shrink $B$ into a single vertex $b$, 
and make the pair $\{a, b\}$ a switchable pair.

\subsection{The elementary locally-perfect case} \label{proof-of-elementary-bull-free-kperfect}
In this subsection, we will prove \cref{elementary-bull-free-constant-perfect}.
The class $\mathcal T_1$ of trigraphs is defined in Chudnovsky~\cite{Chudnovsky2012}.
Thomass\'e, Trotignon, and Vu\v{s}kovi\'c~\cite[Subsection 2.2]{TTV2017} observed the following.
\begin{obs}
    Every graph~$G$ in $\mathcal T_1$ has a partition $(X,K_1,K_2,\ldots,K_t)$ of its vertex set into  sets for some $t\ge0$ such that 
    $G[X]$ does not contain a triangle 
    and 
    $K_1,\ldots,K_t$ are cliques that are pairwise anti-complete.
\end{obs}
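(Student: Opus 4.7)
The proposal is to unpack the definition of the class $\mathcal{T}_1$ from \cite{Chudnovsky2012} and read the required partition directly off the construction. According to that definition, trigraphs in $\mathcal{T}_1$ are built from a triangle-free trigraph playing the role of a ``core'', together with a controlled collection of cliques attached in a structured way. My proposed partition is therefore to take $X$ to be the vertex set of the triangle-free core, and to take $K_1,\ldots,K_t$ to be precisely the attached cliques. This choice is made at the trigraph level, and I would then argue it descends to any realization $G$ of a trigraph in $\mathcal{T}_1$.

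First, I would verify that $G[X]$ is triangle-free. Since the core is triangle-free as a trigraph, any triangle in $G[X]$ would have to use at least one switchable pair realized as an edge. The rules defining $\mathcal{T}_1$ forbid the base from containing a triangle even after arbitrarily resolving the switchable pairs contained in $X$ (this is precisely what ``triangle-free'' means at the trigraph level), so no realization can introduce a triangle inside $X$. Next, each $K_i$ is a strong clique in the trigraph, so every pair in $K_i$ is strongly adjacent and therefore remains an edge in $G$; hence $K_i$ is a clique of $G$.

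Finally, I would check that the cliques $K_1,\ldots,K_t$ are pairwise anti-complete in $G$. This is exactly the content of the attachment rules in the definition of $\mathcal{T}_1$: each attached clique interacts with the rest of the trigraph only through the core, and distinct attached cliques must be strongly anti-complete to each other. Strong anti-adjacency is preserved under every realization, so pairwise anti-completeness transfers to~$G$ without loss.

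The main obstacle here is not combinatorial but bookkeeping: the observation is essentially a translation of Chudnovsky's structural definition of $\mathcal{T}_1$ into graph-theoretic language, so the key is to locate the exact attachment rules in \cite{Chudnovsky2012}, match the notation, and verify that nothing pathological happens when switchable pairs inside $X$ or between $X$ and a $K_i$ are realized. Once the definition is in hand, the partition $(X,K_1,\ldots,K_t)$ is immediate, and no further combinatorial argument is required.
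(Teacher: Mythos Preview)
The paper does not give its own proof of this observation; it simply attributes it to Thomass\'e, Trotignon, and Vu\v{s}kovi\'c~\cite[Subsection~2.2]{TTV2017}, who extracted it directly from Chudnovsky's definition of~$\mathcal{T}_1$ in~\cite{Chudnovsky2012}. Your proposal---to unpack that definition and read off the partition $(X,K_1,\ldots,K_t)$---is exactly their approach and the only reasonable one, and you correctly identify that the remaining work is purely the bookkeeping of matching the formal definition rather than any new combinatorial idea.
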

Hence, we immediately deduce the following.
\begin{obs}\label{tau1-is-k-perfect}
    Every graph $G$ in $\mathcal T_1$ admits a partition of its vertex set into two sets $(X,Y)$ such that 
    $G[X]$ is triangle-free 
    and $G[Y]$ is perfect.
\end{obs}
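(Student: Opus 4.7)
The plan is immediate from the preceding observation. I would apply the partition $(X, K_1, K_2, \ldots, K_t)$ guaranteed by the previous observation and set $Y := K_1 \cup K_2 \cup \cdots \cup K_t$. Then $G[X]$ is triangle-free by construction, and it remains only to verify that $G[Y]$ is perfect.

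Since the $K_i$'s are pairwise anti-complete cliques, $G[Y]$ is the disjoint union of the complete graphs $G[K_1], \ldots, G[K_t]$. I would argue that any disjoint union of complete graphs is perfect. The quickest route is to note that such a graph contains no induced $P_4$ (a $P_4$ requires two vertices at distance~$3$, which is impossible within a single clique and also impossible across two anti-complete components), so perfectness follows from \cref{cographs}. Alternatively, one can observe directly that for every induced subgraph $H$ of $G[Y]$, the chromatic number $\chi(H)$ equals the maximum size of its components, which equals $\omega(H)$.

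There is no real obstacle: the content is entirely packaged in the previous observation, and the only step is to recognize that pairwise anti-complete cliques form a perfect graph. The whole argument fits in two or three lines.
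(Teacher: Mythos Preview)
Your proposal is correct and matches the paper's approach exactly: the paper also derives this observation immediately from the preceding one by taking $Y=K_1\cup\cdots\cup K_t$, noting that a disjoint union of pairwise anti-complete cliques is perfect.
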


\begin{lemma}\label{lem:2-path}
    If $G$ is a graph with no homogeneous set and $X$ is a proper subset of $G$ that is not stable, then there is an induced path $x_1\dd x_2\dd y$ such that $x_1,x_2\in X$ and $y\in V(G)\setminus X$.
\end{lemma}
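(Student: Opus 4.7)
My plan is to argue by contrapositive: assuming no such induced path exists, I will produce a homogeneous set in $G$, contradicting the hypothesis.

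Since $X$ is not stable, $G[X]$ contains at least one edge, so I can fix a connected component $C$ of $G[X]$ that contains an edge. In particular $\abs{C}\ge 2$, and since $X$ is a proper subset of $V(G)$, also $\abs{C}\le \abs{X}<\abs{V(G)}$. I then claim $C$ is a homogeneous set, which is the desired contradiction.

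To check homogeneity I handle the two types of outside vertices separately. For $v\in X\setminus C$, the fact that $C$ is a connected component of $G[X]$ immediately forces $v$ to be anti-complete to $C$ (any edge from $v$ to $C$ would merge the two components). For $v\in V(G)\setminus X$, suppose $v$ were mixed on $C$; since $G[C]$ is connected, some path in $G[C]$ joins a neighbor of $v$ to a non-neighbor, so along it I can find consecutive vertices $x_1,x_2\in C\subseteq X$ with $x_1v\notin E(G)$ and $x_2v\in E(G)$. Then $x_1\dd x_2\dd v$ is an induced path of the forbidden form, contradicting the assumption. Hence every $v\in V(G)\setminus C$ is complete or anti-complete to $C$, so $C$ is homogeneous.

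I do not foresee a real obstacle here: the lemma is essentially a packaging of the standard connected-component trick, in which the absence of a cross-component $P_3$ is only needed to control vertices outside $X$, while vertices inside $X\setminus C$ are handled automatically by the definition of a connected component. The only observation that takes a moment to see is that any mixed neighbor outside $C$ can be traced along a connecting path in $G[C]$ down to a single edge realizing the forbidden induced path.
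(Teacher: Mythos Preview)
Your proof is correct and follows essentially the same approach as the paper: both pick a nontrivial connected component $C$ of $G[X]$ and use that vertices of $X\setminus C$ are automatically anti-complete to $C$, while a vertex outside $X$ that is mixed on $C$ would yield the desired induced $P_3$ along some edge of $C$. The paper phrases this as a direct contradiction (since $G$ has no homogeneous set, some $y$ is mixed on $V(C)$, and then $y\notin X$), whereas you phrase it as a contrapositive, but the content is identical.
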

\begin{proof}
    Suppose not. Since $X$ is not stable, $G[X]$ contains a component $C$ with at least two vertices.
    Since $V(C)$ is not homogeneous, there is $y\in  G\setminus V(C)$ such that $y$ is neither complete nor anti-complete to $V(C)$. 
    Clearly $y\notin X$ and since $C$ is connected, there exist an edge $x_1x_2$ of $C$ such that 
    $y$ is adjacent to $x_2$ and non-adjacent to $x_1$.
\end{proof}

A \emph{gem} is the $5$-vertex graph obtained from the path of length $3$ by adding a vertex adjacent to all other vertices.
Note that every gem-free bull-free graph is elementary.
We first aim to show \cref{elementary-bull-free-constant-perfect} restricted to gem-free graphs.

Here is an easy lemma based on \cref{bullperfect}.
\begin{lemma}\label{lem:no-odd-hole}
    Let $G$ be a bull-free gem-free graph.
    Then $G$ is perfect if and only if $G$ has no odd hole. \qed
\end{lemma}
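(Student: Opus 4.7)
The plan is to apply \cref{bullperfect} (Chv\'atal--Sbihi), which says that a bull-free graph is perfect if and only if it has no odd hole and no odd antihole. The forward implication is immediate, since odd holes are not perfect, and the length-$5$ antihole $\overline{C_5}\cong C_5$ is itself an odd hole, so this case is excluded for free. I am left with showing: if $G$ is bull-free and gem-free with no odd hole, then $G$ has no induced $\overline{C_n}$ for any odd $n\ge 7$.

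The main step will be to verify that, for every odd $n\ge 7$, the graph $\overline{C_n}$ already contains an induced gem; gem-freeness of $G$ then rules out all remaining odd antiholes. To exhibit the gem, I label $V(C_n)$ cyclically by $\mathbb{Z}/n\mathbb{Z}$ and consider the four consecutive positions $-1,0,1,2$. The pairs at cyclic distance $1$ in $C_n$ among these are $\{-1,0\},\{0,1\},\{1,2\}$, so the remaining three pairs $\{-1,1\},\{-1,2\},\{0,2\}$ form the edges of $\overline{C_n}$ on this set; a quick check shows this is a $P_4$, in the order $1,-1,2,0$. For the fifth vertex $e$, any position at cyclic distance at least $2$ in $C_n$ from each of $\{-1,0,1,2\}$ works; only the six positions $\{-2,-1,0,1,2,3\}$ are forbidden, so for any $n\ge 7$ there is at least one valid choice (for instance $e=4$ when $n=7$, or any diametrically opposite vertex when $n$ is larger). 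Then $\{e\}\cup\{-1,0,1,2\}$ induces a gem in $\overline{C_n}$.

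I do not foresee a serious obstacle: the argument reduces to a short finite check, and the only delicate point is the boundary case $n=7$, where the choice of $e$ is essentially forced and must be verified by hand. Once the gem is produced in each $\overline{C_n}$ for odd $n\ge 7$, gem-freeness of $G$ kills all odd antiholes, and \cref{bullperfect} applied to the bull-free graph $G$ gives perfection.
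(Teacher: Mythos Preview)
Your argument is correct and is exactly the approach the paper has in mind: the lemma is stated with a bare \qed{} as an ``easy lemma based on \cref{bullperfect}'', and your reduction---ruling out $\overline{C_5}$ as an odd hole and exhibiting an induced gem in every $\overline{C_n}$ with $n\ge 7$---is precisely the intended one-line verification. Your explicit $P_4$ on $\{-1,0,1,2\}$ with center $e\notin\{-2,\dots,3\}$ is a clean way to carry it out, and the boundary check at $n=7$ with $e=4$ is right.
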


\begin{lemma}\label{lem:hompair}
    Let $G$ be a bull-free gem-free graph.
    Let $(A,B)$ be a tame homogeneous pair of $G$ and let $C$, $D$, $E$, $F$ be as in the definition of a homogeneous pair. If $G$ has no homogeneous set, then the following hold.
    \begin{enumerate}[label=\rm(\roman*)]
        \item\label{item:stable} If $A$ is not stable, then $C$ is anti-complete to $F$ and complete to $E$.
        \item\label{item:stable2} If $B$ is not stable, then $D$ is anti-complete to $F$ and complete to $E$.
        \item\label{item:nonedges}If $A$ is not a clique, then $E$ is anti-complete to $C$ and complete to $D$.
        \item\label{item:nonedges2} If $B$ is not a clique, then $E$ is anti-complete to $D$ and complete to $C$.
        \item\label{item:completetoE} $E$ is complete to $C$ or $D$.
\end{enumerate}
\end{lemma}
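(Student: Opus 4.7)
The plan is to assume each stated conclusion fails and, combining the bull-free and gem-free hypotheses with the homogeneous-pair structure, to derive a contradiction — either by exhibiting an induced bull or gem in $G$, or by identifying a homogeneous set in $G$. The unifying device for parts (i)–(iv) is to pick two vertices $a, a' \in A$ with a specified adjacency (an edge for (i) and (ii), a non-edge for (iii) and (iv)), together with a vertex $b \in B$ that is adjacent to exactly one of $\{a, a'\}$; with an ``offending'' witness vertex drawn from $C, D, E$, or $F$, this yields a five-vertex induced subgraph whose edges are almost entirely forced by the partition $(A, B, C, D, E, F)$.

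For part (i), I will check that, assuming $aa'$ is an edge in $G[A]$ and $b$ is mixed on $\{a, a'\}$: a pair $c \in C, f \in F$ with $cf$ an edge produces an induced bull on $\{a, a', b, c, f\}$ (triangle $aa'c$ with pendants $b$ at $a$ and $f$ at $c$), and a non-adjacent pair $c \in C, e \in E$ produces an induced gem on $\{a, a', b, c, e\}$ (with $a$ dominating an induced $P_4$ on $c, a', e, b$). If no such mixed $b$ exists, every edge of $G[A]$ has uniform adjacency from $B$, and so any non-trivial connected component of $G[A]$ has a uniform neighborhood in $B$; combined with the homogeneous-pair uniformity of $C, D, E, F$ on $A$ and the fact that components of $G[A]$ are mutually anti-complete, such a component becomes a homogeneous set in $G$, a contradiction. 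Part (iii) is analogous with a non-edge $aa'$: the corresponding bull and gem configurations arise from the two failure modes, and the ``no mixed $b$'' case yields a homogeneous set via a non-trivial component of $\overline{G[A]}$, which is complete to $A \setminus K$ in $G$. Parts (ii) and (iv) are symmetric to (i) and (iii) under swapping $A \leftrightarrow B$ and $C \leftrightarrow D$.

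For part (v), parts (iii) and (iv) directly give the conclusion unless both $A$ and $B$ are cliques. Assuming both are cliques and (v) fails, tameness combined with the cliqueness forces either some $b \in B$ mixed on $A$ or some $a \in A$ mixed on $B$: otherwise the partition of $A$ into $A^+$ (complete to $B$) and $A^-$ (anti-complete to $B$), and of $B$ into $B^+, B^-$ analogously, together with the nonemptiness of all four sets (which tameness forces), yields a direct contradiction upon selecting an $a \in A^+$ and a $b \in B^-$. In either mixed case, exactly the same gem construction as in (iii) — namely $\{a, a', b, c, e_1\}$ or its mirror $\{a, b, b', d, e_2\}$ — produces an induced gem, contradicting gem-freeness.

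The main obstacle I anticipate is the adjacency bookkeeping: each candidate bull or gem involves up to ten pairs whose status (edge, non-edge, or free) must be tracked correctly, and the cleanest write-up will list all forced adjacencies once at the top of each subcase before verifying the bull or gem structure. A secondary delicate point is the homogeneous-set derivation, where I must verify both that the identified component has at least two vertices and is a proper subset of $V(G)$ (immediate from $B \neq \emptyset$), and that the propagation of uniform adjacency from a single edge (resp.\ non-edge) to all of a component of $G[A]$ (resp.\ $\overline{G[A]}$) is handled carefully.
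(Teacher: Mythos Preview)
Your approach to (i)--(iv) is correct and essentially the same as the paper's. The paper packages the ``no mixed $b$ exists'' step as a separate lemma (\cref{lem:2-path}): if $G$ has no homogeneous set and $A$ is not stable, there is an induced path $a_1\dd a_2\dd y$ with $a_1,a_2\in A$ and $y\notin A$, and the homogeneous-pair structure then forces $y\in B$. You inline this argument via the non-trivial component of $G[A]$ (respectively $\overline{G[A]}$), which is exactly the same content. The bull and gem configurations you describe match the paper's.

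For (v), your argument works but is longer than necessary. You invoke only (iii) and (iv), leaving the case where both $A$ and $B$ are cliques to a separate mixed-vertex and gem analysis. The paper instead uses all four of (i)--(iv): tameness gives $|A|>1$ or $|B|>1$; if $|A|>1$ then $A$ is not stable (so (i) gives $E$ complete to $C$) or $A$ is not a clique (so (iii) gives $E$ complete to $D$), and symmetrically for $|B|>1$. This finishes (v) in one line. Also, a small labeling slip: in your residual case for (v) the set $A$ is a clique, so $aa'$ is an edge, and the gem you build on $\{a,a',b,c,e_1\}$ is the configuration from (i), not (iii).
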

We remark that 7.4 of~\cite{Chudnovsky2012} implies half of each of \ref{item:stable}--\ref{item:nonedges2}.
\begin{proof}
    Suppose $A$ is not stable. By \cref{lem:2-path} and the definition of homogeneous pairs, there exist $a_1, a_2 \in A$ and $b \in B$ such that $b \dd a_1 \dd a_2$ is an induced path of $G$.
    Then, if there is some $c \in C$ adjacent to some $f \in F$, the graph on $\{f, c, a_1, b, a_2 \}$ induces a bull, a contradiction.
    If there is some $c \in C$ non-adjacent to some vertex $x \in E$,
    then $c \dd a_2 \dd x \dd b$ is an induced path of length $3$ with a center~$a_1$, a contradiction. See \cref{fig:nonedges}.
    This proves \ref{item:stable}.
    By symmetry, we also have \ref{item:stable2}.
\begin{figure}
        \centering

        \begin{tikzpicture}
            \tikzstyle{v}=[circle, draw, solid, fill=black, inner sep=0pt, minimum width=3pt]
            \draw (-.5,1) node [v,label=$a_1$](a1) {};
            \draw (0,0) node[v,label=above:$a_2$] (a2) {};
            \draw (1.5,0.5) node[v,label=$b$] (b){};
            \draw (a2)--(b);
\draw (-1,-1) node[v,label=right:$c\in C$] (c){};
            \draw (-1,-2) node[v,label=right:$f\in F$] (f){};
            \draw (c)--(f);
            \draw (a1)--(a2)--(c)--(a1);
            \draw (-1.5,0) node {bull};

            \begin{scope}[xshift=5cm]
            \tikzstyle{v}=[circle, draw, solid, fill=black, inner sep=0pt, minimum width=3pt]
            \draw (-.5,1) node [v,label=$a_1$](a1) {};
            \draw (0,0) node[v,label=above:$a_2$] (a2) {};
            \draw (1.5,0.5) node[v,label=$b$] (b){};
            \draw (a2)--(b);
\draw (-1,-1) node[v,label=below:$c\in C$] (c){};
            \draw (1,-1) node[v,label=right:$e\in E$] (e){};
            \draw [dotted](c)--(e);
            \draw [bend right,thick](a1) to (e);
            \draw (a2)--(e);
            \draw [thick](b)--(e);
            \draw (a1)--(a2)--(c);
            \draw [thick] (c)--(a1);
            \draw (2,0) node {gem};
            \end{scope}
        \end{tikzpicture}
\caption{An illustration of \cref{lem:hompair}\ref{item:stable}.}
        \label{fig:nonedges}
    \end{figure}
    
    Let us now prove \ref{item:nonedges}.
    Suppose $A$ is not a clique.
    By applying \cref{lem:2-path} to $\overline G$, 
    we deduce that there exist $a_1, a_2 \in A$ and $b \in B$ such that $b \dd a_1 \dd a_2$ is an induced path of $\overline{G}$.
    If there is a vertex $x\in E$ adjacent to a vertex $c\in C$, 
    then $a_1\dd c\dd a_2\dd b$ is an induced path with a center $x$, a contradiction.
If some vertex $x \in E$ is non-adjacent to some $d \in D$,
    then $\{a_1, b, x, a_2, d\}$ induces a bull.
    See \cref{fig:nonedges}.
    This proves \ref{item:nonedges}.
    By symmetry between $A$ and $B$, we deduce \ref{item:nonedges2}.

Since $(A,B)$ is tame, $|A| > 1$ or $|B| > 1$.
    Thus, it follows from
    \ref{item:stable}, \ref{item:stable2}, \ref{item:nonedges}, and \ref{item:nonedges2} that $E$ is complete to $C$ or $D$, proving \ref{item:completetoE}.
\end{proof}

Based on papers of Chudnovsky~\cite{Chudnovsky2012a,Chudnovsky2012}, bull-free graphs admit the following decomposition, summarized by Thomass\'e, Trotignon, and Vu\v skovi\'c~\cite{TTV2017}.
We state it for graphs instead of trigraphs.

\begin{theorem}[Chudnovsky~\cite{Chudnovsky2012a,Chudnovsky2012}; see Thomass\'e, Trotignon, and Vu\v skovi\'c~{\cite[Theorem 2.1]{TTV2017}}]
    \label{thm:bullfree}
    Every bull-free graph $G$ satisfies one of the following.
    \begin{enumerate}[label=\rm(\roman*)]
        \item $\abs{V(G)}\le 8$.
        \item $G$ or $\overline G$ belongs to $\mathcal T_1$. 
        \item $G$ has a homogeneous set.
        \item $G$ has a proper homogeneous pair. 
        \item $G$ has a small homogeneous pair.
    \end{enumerate}
\end{theorem}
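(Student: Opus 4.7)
The plan is to proceed by minimal counterexample and exploit the trigraph machinery set up earlier in the paper. Suppose $G$ is a bull-free graph on more than $8$ vertices admitting no homogeneous set, no proper homogeneous pair, and no small homogeneous pair; the goal is to show that $G$ or $\overline{G}$ lies in $\mathcal T_1$. First I would pass to the trigraph obtained by shrinking every maximal tame homogeneous pair, reducing to a monogamous trigraph $G'$ whose vertex set admits no decomposition of any of the types in (iii)--(v), while preserving bull-freeness (up to choosing realizations of the switchable pairs).

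Next I would split on whether $G'$ is elementary, i.e., whether it contains an induced $P_4$ together with both a center and an anticenter. In the non-elementary case, fix such a $P_4$ on vertices $\{a_1,a_2,a_3,a_4\}$ with center $c$ and anticenter $d$. Bull-freeness severely constrains how every other vertex attaches to this $6$-vertex configuration, leaving only a short, explicit list of possible adjacency types to $\{a_1,\dots,a_4,c,d\}$. I would partition the remaining vertices by type and, using the exclusion of a homogeneous set, try to show that either $\abs{V(G')}\le 8$ or two type classes together form a tame homogeneous pair $(A,B)$ with $\abs{A\cup B}\le 6$, i.e., a small homogeneous pair, contradicting our assumption.

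The harder case is when $G'$ is elementary. Here I would lean on \cref{lem:hompair}: in a bull- and gem-free graph with no homogeneous set, the sets $C,D,E,F$ associated with any tame homogeneous pair are rigidly controlled. Starting from a maximum triangle $T$ in $G'$, one tracks how each remaining vertex attaches to $T$; a careful analysis of the attachment types, using the bull exclusion and the $P_4$-center/anticenter exclusion, should produce a partition of $V(G')$ into a triangle-free part $X$ and pairwise anti-complete cliques $K_1,\dots,K_t$, placing $G'$ into $\mathcal T_1$ (or, in the dual direction, placing $\overline{G'}$ into $\mathcal T_1$). I expect this attachment analysis to be the main obstacle: it is essentially the core of Chudnovsky's structure theorem for bull-free graphs, and any honest version of it requires the extensive and delicate casework that fills the original papers, which is why the authors invoke the theorem as a black box rather than reprove it.
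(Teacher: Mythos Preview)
The paper does not prove this theorem; it is quoted from Chudnovsky's structure papers (as summarised by Thomass\'e, Trotignon, and Vu\v skovi\'c) and used as a black box. There is therefore no ``paper's own proof'' to compare against, and your final paragraph is exactly right: any honest proof is essentially the contents of \cite{Chudnovsky2012a,Chudnovsky2012}.

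That said, your sketch has some concrete problems beyond the expected length. First, your ``shrink every maximal tame homogeneous pair'' step does not interact well with the hypotheses: you are assuming $G$ has no \emph{proper} and no \emph{small} tame homogeneous pair, which does not rule out tame pairs with $\abs{A\cup B}>6$ and one of $C,D$ empty; shrinking such a pair changes the graph, and it is not clear how membership of the shrunk trigraph in $\mathcal T_1$ transfers back to $G$ itself (the conclusion is about $G$, not about a reduced object). Second, in the elementary case you invoke \cref{lem:hompair}, but that lemma is stated for bull-free \emph{gem-free} graphs; gem-free implies elementary, not conversely, so the lemma is unavailable at that point. Third, the definition of $\mathcal T_1$ lives in \cite{Chudnovsky2012}, and the paper only records an observation about its members; proving that an elementary bull-free graph with no decomposition lands in $\mathcal T_1$ requires the full definition and the casework of that paper, which is precisely the content being cited.
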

\begin{proposition}\label{prop:gemfree-main}
    For every $2$-good class $\F$ of graphs, there is a positive integer $\gamma$
    such that
    every bull-free gem-free graph in $\F$ is $\gamma$-perfect.
\end{proposition}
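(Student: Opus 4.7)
The plan is to prove the proposition by strong induction on $|V(G)|$, invoking the bull-free decomposition \cref{thm:bullfree} in the inductive step and fixing $\gamma$ to depend only on the $2$-good class $\F$. Let $c$ be the constant coming from $2$-goodness, so that $\chi(H) \le c$ for every triangle-free $H \in \F$; small graphs (case (i) of \cref{thm:bullfree}) are trivially $\gamma$-perfect for any $\gamma \geq 8$, so it suffices to handle the remaining four cases while preserving the uniform bound $\gamma = \gamma(c)$.

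For case (ii), if $G \in \mathcal T_1$ then \cref{tau1-is-k-perfect} yields a partition $V(G) = X \cup Y$ with $G[X]$ triangle-free and $G[Y]$ perfect; since $G[X] \in \F$ has $\omega(G[X]) \le 2$, it admits a proper $c$-coloring whose color classes are stable (hence perfect), and together with $Y$ this gives a $(c+1)$-perfect partition. The dual subcase $\overline G \in \mathcal T_1$ is treated by applying the same partition in $\overline G$ and translating back: here $G[X]$ has independence number at most two, and being bull-free and gem-free, \cref{lem:no-odd-hole} forces every induced non-perfect subgraph of $G[X]$ to contain an odd hole, which must be a $C_5$ since $\alpha(G[X]) \le 2$; covering this restricted $C_5$-structure by a constant number of perfect parts completes the case.

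For case (iii), if $G$ has a homogeneous set $H$, apply the induction hypothesis to both $G[H]$ and the quotient $G/H$ obtained by contracting $H$ to a representative vertex $h$, and let $Q_1, \ldots, Q_\gamma$ and $P_1, \ldots, P_\gamma$ (with $h \in P_1$) be their respective perfect partitions. Then $(P_1 \setminus \{h\}) \cup Q_1,\, P_2 \cup Q_2,\, \ldots,\, P_\gamma \cup Q_\gamma$ is a $\gamma$-perfect partition of $G$: the first part is perfect by \cref{lovaszreplacement} (it is the substitution of $G[Q_1]$ for $h$ into the perfect graph $G/H[P_1]$), and each $P_j \cup Q_j$ for $j \ge 2$ is perfect because $H$ being homogeneous makes $P_j$ either complete or anti-complete to $Q_j$, producing a join or disjoint union of perfect graphs.

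Cases (iv) and (v), dealing with a tame homogeneous pair $(A,B)$, are the main obstacle, because a homogeneous pair is not a single substitution and a naive merge would double $\gamma$. The plan is to shrink $(A,B)$ into a trigraph with $A$ and $B$ collapsed to vertices $a,b$ joined by a switchable pair, apply the induction hypothesis to a suitably chosen realization of this strictly smaller trigraph, and then re-expand by substituting perfect partitions of $G[A]$ and $G[B]$ back in. The detailed structural constraints of \cref{lem:hompair} — particularly that $E$ is complete to $C$ or to $D$, together with the various completeness statements forced by the (non-)stability or (non-)clique behaviour of $A$ and $B$ — are then used to verify that each expanded part remains perfect via repeated applications of \cref{lovaszreplacement} and the closure of perfect graphs under joins and disjoint unions. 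The delicate bookkeeping in this final expansion step, especially ensuring the expansion preserves $\gamma$ rather than increasing it, is where the difficulty lies.
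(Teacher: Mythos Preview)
There is a genuine gap in your case (iii). Your claim that ``$H$ being homogeneous makes $P_j$ either complete or anti-complete to $Q_j$'' is false: homogeneity of $H$ means each \emph{individual} vertex of $V(G)\setminus H$ is complete or anti-complete to $H$, but different vertices of $P_j$ may behave differently, so $G[P_j\cup Q_j]$ is in general neither a join nor a disjoint union. The substitution argument you want would require $G[P_j\cup\{q\}]$ to be perfect for some $q\in Q_j$; this graph is isomorphic to $(G/H)[P_j\cup\{h\}]$, but since $h\in P_1$ and $j\ge 2$ you only know $(G/H)[P_j]$ is perfect.

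The fix, and the key observation you are missing throughout, is that gem-free graphs are \emph{locally perfect}: for every vertex $v$, the neighbourhood $G[N(v)]$ is $P_4$-free (else there is a gem), hence perfect by \cref{cographs}. This immediately gives that any homogeneous set $S$ in a connected gem-free graph induces a perfect subgraph (it lies inside some $N(v)$), so one should apply induction only to the quotient and substitute all of $G[S]$ into the single part containing the representative --- no pairing of the $P_j$ with the $Q_j$ is needed. The same observation drives the homogeneous-pair cases: one first shows $G[A]$ and $G[B]$ are $P_4$-free hence perfect (because $E\neq\emptyset$, or because $(A,B)$ is proper so $C,D\neq\emptyset$), then replaces $(A,B)$ by a single adjacent pair $a\in A$, $b\in B$, applies induction, and substitutes back via \cref{lovaszreplacement}. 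The real work, which your sketch does not address, is the case where $a$ and $b$ land in the \emph{same} perfect part $H_i$: here one must argue directly that $G[V(H_i)\cup A\cup B]$ is perfect, and this is an odd-hole analysis using \cref{lem:hompair} and \cref{lem:no-odd-hole}, not a substitution.
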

\begin{proof}
    By definition of $2$-good, $\F$ is hereditary and 
    there exists a positive integer~$\tau$ such that every triangle-free graph in $\F$ is $\tau$-colorable.
    Let $\gamma = \max\{6, \tau+1\}$.
    Let $G$ be a bull-free gem-free graph in $\F$. 
    
    Suppose that $G$ is not $\gamma$-perfect.
    We choose such a $G$ with the minimum $\abs{V(G)}$.
    Since the disjoint union of perfect graphs is perfect,
    $G$ is connected.
    Since $G$ is gem-free and since $P_4$-free graphs are perfect, 
    for every vertex $v$ of $G$, 
    $G[N_G(v)\cup\{v\}]$ is perfect and therefore 
    \stmt{
        $G$ has no dominating set of at most $\gamma$ vertices
\label{claim:domsets}}
    and $G$ is locally perfect.

    \begin{claim}
        $G$ does not admit a homogeneous set.
    \end{claim}
    \begin{subproof}
Suppose $S \subset V(G)$ is a homogeneous set in $G$.
    Since $G$ is connected, there is some $v \in V(G) \setminus S$ such that $v$ is complete to $S$.
    Hence, $G[S]$ is perfect because $G$ is locally perfect.
    Let $w\in S$ and $G'=G\setminus (S\setminus\{w\})$.
Since $G'$ is an induced subgraph of $G$, $G'$ is also bull-free and gem-free
    and therefore by the minimality of $G$, it follows that $G'$ is $\gamma$-perfect.
    Let $(V_1, V_2, \dots, V_{\gamma})$ be a partition of $V(G')$ such that $G[V_i]$ is perfect for each $i \in \{1,2, \dots, \gamma\}$.
    Without loss of generality, $w \in V_1$.
    Then, since perfect graphs are closed under substitution by \cref{lovaszreplacement} and $G[S]$ is perfect,
    $G[V_1\cup S]$ is perfect.
    Hence, $G$ is $\gamma$-perfect, a contradiction.
\end{subproof}

    By \cref{tau1-is-k-perfect}, every graph in $\mathcal T_1$ is $(\tau+1)$-perfect and so is every graph in $\overline{\mathcal T}_1$.    
    Thus, neither $G$ nor $\overline G$ is in $\mathcal T_1$. 
    Since every graph on at most $4$ vertices is perfect, every graph on at most $8$ vertices is $2$-perfect. Therefore, $\abs{V(G)}>8$.

    By \cref{thm:bullfree}, $G$ admits a proper or small homogeneous pair $(A, B)$.
    Let $C$, $D$, $E$, $F$ be as in the definition of a homogeneous pair.

    \begin{claim}\label{claim:F-nonempty}
        $F\neq\emptyset$.
    \end{claim}
    \begin{subproof} 
        Suppose that $F=\emptyset$. 
    If $C\cup D\neq\emptyset$ or $E\neq\emptyset$, then there is a dominating set of $G$ consisting of at most $4$ vertices made by choosing $1$ vertex from each of $A$ and $B$ and choosing $1$ vertex either from $E$ or from each of $C$ and $D$. Since $\gamma\ge 4$, this contradicts \cref{claim:domsets}. 
    Therefore, 
$E=\emptyset$ and $C$ or $D$ is empty.
By the symmetry between $A$ and $B$, we may assume $D = \emptyset$.
    Then, since $(A,B)$ is a tame homogeneous pair and $F \cup E \cup D = \emptyset$, it follows that $\abs{C}\ge 3$.
    But then $C$ is a homogeneous set, a contradiction. 
    Therefore, we deduce that $F\neq\emptyset$.
    \end{subproof}
    \begin{claim}\label{claim:proper}
        If $E=\emptyset$, then $(A,B)$ is proper. 
    \end{claim}
    \begin{subproof}
        By the assumption, $(A,B)$ is small.
        By symmetry, suppose that $D=E=\emptyset$. 
        By the induction hypothesis, there exists a partition $(V_1,V_2,\ldots,V_\gamma)$ of $A\cup C\cup F$ such that $G[V_i]$ is perfect for all $i\in \{1,2,\ldots,\gamma\}$. 
        We may assume that $A\cap V_i=\emptyset$ for all $i\le \abs{B}$
        because $\gamma\ge \abs{A\cup B}$.
        Let $w_1,w_2,\ldots,w_{\abs{B}}$ be the vertices in $B$.
        For $i\in \{1,2,\ldots,\abs{B}\}$, let $V_i':=V_i\cup \{w_i\}$.
        Since $w_i$ is isolated in $G[V_i']$, $G[V_i']$ is perfect. 
        For $i>\abs{B}$, define $V_i':=V_i$. 
        Then $G[V_i']$ is perfect for every $i\in \{1,2,\ldots,\gamma\}$ and $\bigcup_{i=1}^\gamma V_i'=V(G)$. Thus, $G$ is $\gamma$-perfect, a contradiction.
    \end{subproof}
    \begin{claim}\label{claim:ab-perfect}
        $G[A]$ and $G[B]$ are $P_4$-free, so perfect.
    \end{claim} 
    \begin{subproof}      
        It is trivial if $(A,B)$ is proper because $G$ is gem-free.
        By \cref{claim:proper}, we may assume that $E\neq\emptyset$. 
        This implies that $G[A\cup B]$ is $P_4$-free, because $G$ is gem-free.
    \end{subproof}

    \begin{claim}
        If $E = \emptyset$, then $A$ or $B$ is stable. \label{claim:A-or-B-stable}
    \end{claim}
    \begin{subproof}
          Suppose neither $A$ nor $B$ is stable.
    By \ref{item:stable} and \ref{item:stable2} of \cref{lem:hompair}, $C\cup D$ is anti-complete to $F$.
    However, by \cref{claim:F-nonempty}, $F \neq \emptyset$ and therefore $G$ is disconnected, a contradiction.  
    \end{subproof}

    By the definition of a tame homogeneous pair, there exist some $a \in A$ and $b \in B$ such that $ab$ is an edge of $G$.
    Let $G'$ denote the graph obtained from $G$ by deleting $(A \cup B) \setminus \{a, b\}$.
    By the definition of a tame homogeneous pair, $\abs{V(G')} < \abs{V(G)}$.
    By the choice of $G$, 
    there is a list $H_1, H_2, \dots, H_{\gamma}$ of perfect induced subgraphs of $G'$ that cover the vertex set of $G'$.
    Let $i,j \in \{1,2, \dots, \gamma\}$ be such that $a \in H_i$ and $b \in H_j$.
    If $i \neq j$, then $G[V(H_i) \cup A]$ and $G[V(H_j) \cup B]$ are obtained from $H_i$ and $H_j$ respectively via substitution.
    So by \cref{lovaszreplacement,claim:ab-perfect}, they are both perfect graphs.
    And therefore $G$ is $\gamma$-perfect, a contradiction.
    
    Hence, $i = j$.
    Let $H$ be the graph $G[V(H_i) \cup A \cup B]$.
    To get a contradiction, it is enough to show that $H$ is a perfect graph, because this would imply that $G$ is $\gamma$-perfect.
    Suppose that $H$ is not perfect. 
    Then by \cref{lem:no-odd-hole}, it contains an induced subgraph $X$ that is an odd hole.

    \Needspace{3\baselineskip}
    \begin{claim}
        $X$ contains vertices $a' \in A$ and $b' \in B$ where $a'$ and $b'$ are not adjacent.
        \label{claim:non-adjacent-a-b-prime}
    \end{claim}
    \begin{subproof}
    Since both $H\setminus A$ and $H\setminus B$ are perfect by \cref{lovaszreplacement}, $V(X) \cap A$ and $V(X) \cap B$ are both nonempty.
    Note that $G[(V(X) \setminus (A \cup B)) \cup \{a,b \}]$ is an induced subgraph of $H_i$ and therefore perfect.
    Moreover, $V(X) \cap A$ and $V(X) \cap B$ are not complete to each other, for otherwise $X$ can be obtained from $G[(V(X) \setminus (A \cup B)) \cup \{a,b \}]$ by substituting in $G[V(X) \cap A]$ for $a$ and $G[V(X) \cap B]$ for $b$, and therefore $X$ would be perfect by \cref{lovaszreplacement}, a contradiction.
    Hence, $X$ contains a vertex $a' \in A$ and a vertex $b' \in B$ such that $a'$ and $b'$ are not adjacent.
    \end{subproof}
    Throughout the rest of this proof, we fix $a', b'$ as in \cref{claim:non-adjacent-a-b-prime}.

    \begin{claim}
        $E \neq \emptyset$. \label{claim:E-not-empty}
    \end{claim}
    \begin{subproof}
    Suppose $E = \emptyset$.
    By \cref{claim:proper,claim:ab-perfect}, $(A,B)$ is proper and both $G[A]$ and $G[B]$ are $P_4$-free.

    We claim that each component $Q$ of $X$ induced by vertices in $A$ is a subpath of $X$ of even length. 
    Let $Q$ be a component of the subgraph of $X$ induced by $A$.
    Suppose $Q$ has odd length.
    Then since $G[A]$ is $P_4$-free, $Q$ consists of a single edge.
    Let $a_1$, $a_2$ be the vertices in $Q$.
    Since $N(A) \subseteq B \cup C$, it follows that 
    then there are two vertices $b_1, b_2 \in B \cap V(X)$ such that $a_1b_1$ and~$a_2b_2$ are both edges.
    Then $b_1$ and $b_2$ are non-adjacent because $X$ has length at least $5$. Then, for every $c\in C$, the vertices $c$,
    $a_1$, $a_2$, $b_1$, and $b_2$ induce a bull, a contradiction since $C \neq \emptyset$.
    Hence, every component of $G[V(X) \cap A]$ is a path of even length. By the symmetry between $A$ and $B$, every component of $G[V(X) \cap B]$ is a path of even length. 
        
    Suppose $X$ contains two non-adjacent vertices in~$A$.
    Then since each component of $G[X \cap V(A)]$ is an even-length path and $X$ has odd length, we can choose two non-adjacent $a_1, a_2 \in V(X) \cap A$ such that there exists an odd $a_1a_2$-subpath $P$ of $X$ whose internal vertices are not in~$A$.
    We denote the neighbor of $a_i$ in $P$ by $b_i$ for $i \in \{1,2\}$.
    Since $P$ is an odd path, $V(P) \cap C = \emptyset$ and $b_1, b_2$ are distinct vertices in $B$.
    Hence, $P$ contains an odd induced $b_1b_2$-path $\hat{P}$.
    Then, $\hat{P}$ cannot contain any vertex of $A \cup D$, so $\hat{P}$ is contained in $G[B]$.
    But $\hat{P}$ is a component of $G[V(X) \cap B]$, so it is a path of even length, a contradiction. 
(See \cref{fig:E-empty-easy} for an illustration.)
\begin{figure}
       \centering
       \begin{tikzpicture}
            \tikzstyle{v}=[circle, draw, solid, fill=black, inner sep=0pt, minimum width=3pt]
            \draw (0,1) node [v,label=left:$a_1$](a1){};
            \draw (0,0) node [v,label=left:$a_2$](a2){};
            \draw (1,1) node[v,label=below right:$b_1$](b1){};
            \draw (1,0) node[v,label=below:$b_2$](b2){};
            \draw (3,-1) node[v,label=below:$d\in D$](d){};
\draw (a1) to  (b1) [out=0,in=90] to (d);
            \draw (a2) to  (b2) [out=0,in=90] to (d);
            \draw [dotted] (b1)--(a2)--(a1)--(b2);
            \draw [dotted,out=45,in=60] (a1)to(d);
            \draw [dotted,out=-45,in=180](a2) to (d);
            \draw [decorate,decoration=snake] (b1)to (b2);
       \end{tikzpicture}
\caption{An illustration of the proof of \cref{claim:E-not-empty}.
       Non-edges are drawn as dotted lines. The wavy line between $b_1$ and $b_2$ indicates that $b_1$ and $b_2$ might be adjacent or they might be non-adjacent.
       If $b_1$ and $b_2$ are non-adjacent, $P$ contains some vertex $d \in D$, but then $P$ is not an induced odd path.
       If $b_1$ and $b_2$ are adjacent, $G$ contains a bull.
       }
       \label{fig:E-empty-easy}
   \end{figure}
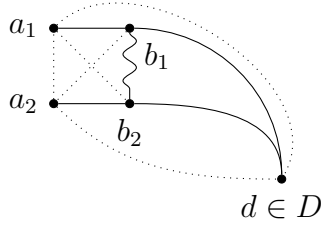
    Hence, $V(X) \cap A$ is a clique and thus $|V(X) \cap A| = 1$.
    By the symmetry between $A$ and $B$, it follows that $|V(X) \cap B| = 1$.
    So in particular, $a'$, $b'$ are the only vertices of $A \cup B$ in $X$.

    By \cref{claim:non-adjacent-a-b-prime}, $a'$ and $b'$ are not adjacent and therefore there is an $a'b'$-path $P$ of $X$ of even length in $H$ with interior in $H \setminus (A \cup B)$.
    Then, $H[V(P \setminus \{a', b'\}) \cup \{a, b\}]$ is an odd induced cycle of $H_i$.
    Hence, since $H_i$ contains no odd hole, $P$ has length two.
    But then $a$ and $b$ have a common neighbor in $V(G) \setminus (A \cup B)$ contrary to the assumption that $E = \emptyset$.
    \end{subproof}

    \Needspace{3\baselineskip}
    \begin{claim}
        One of $A$ and $B$ is a clique and the other is a stable set. \label{claim:E:clique-stable}
    \end{claim}
    \begin{subproof}
By~\cref{claim:E-not-empty}, $E$ is nonempty and therefore 
    $G[A \cup B]$ is perfect.
    Since $A\cup B$ is not a homogeneous set, $C\cup D$ is nonempty.    
    It follows from \ref{item:nonedges} and \ref{item:nonedges2} of \cref{lem:hompair} that $A$ or $B$ is a clique.
    Suppose both $G[A]$ and $G[B]$ contain an edge.
    Then by \ref{item:stable} and \ref{item:stable2} of \cref{lem:hompair}, $F$ is anti-complete to $C \cup D$ and $E$ is complete to $C \cup D$.
    Hence, $A \cup B \cup C \cup D$ is a homogeneous set in~$G$, a contradiction.
    \end{subproof}

    \begin{claim}
$\abs{V(X)\cap A}\le 1$ and $\abs{V(X)\cap B}\le 1$.
\label{claim:E-no-hole-with-two-in-A-or-two-in-B}
    \end{claim}
    \begin{subproof}
    Suppose $X$ contains two distinct vertices $a_1, a_2 \in A$.
    By~\cref{claim:non-adjacent-a-b-prime}, $\abs{V(X)\cap (A\cup B)}\ge3$ and so $V(X)\cap E=\emptyset$. Since the length of $X$ is at least $5$, we have $\abs{V(X)\cap C}\le 1$.
    Let $Q$ be a subpath of $X$ from $a_1$ to $a_2$ not containing any vertex of $C$.
    We choose $a_1$, $a_2$, and $Q$ such that the length of $Q$ is maximized.
    
    If $X$ has a vertex in $C$, then $\abs{E(Q)}=\abs{E(X)}-2\ge 3$.
    If $X$ has no vertex in $C$, then $\abs{E(Q)}\ge (\abs{E(X)}+1)/2\ge 3$.
    So, in both cases, $Q$ has length at least $3$.    

    Let $b_1$, $b_2$ be the neighbors of $a_1$, $a_2$ in $Q$, respectively.
    By~\cref{claim:E:clique-stable}, $b_1,b_2\notin A$ and so $b_1,b_2\in B$.  
    Since $Q$ is an induced path of $G$ with length at least $3$, 
    $b_1$ is non-adjacent to $a_2$
    and $b_2$ is non-adjacent to $a_1$.
    Then 
    $G[\{a_1,a_2,b_1,b_2\}]$ is isomorphic to $P_4$ by \cref{claim:E:clique-stable}, contradicting the assumptions that $G$ is gem-free and $E\neq\emptyset$ by \cref{claim:E-not-empty}.
By the symmetry between $A$ and~$B$, this completes the proof.
    \end{subproof}

Let $P$ be an $a'b'$-path of $X$.
Since each of $a'$ and $b'$ has exactly one neighbor in $V(P)$, 
$P$ does not contain more than one vertex of each of $C$, $D$, and $E$.
Since $X$ is not a hole of length~$4$, 
$X$ contains no more than one vertex of $E$.

\begin{claim}
    $V(X) \cap E = \emptyset$.
    \label{claim:X-no-E}
\end{claim}

\begin{subproof}
Suppose $X$ contains a vertex $v \in E$.
Let $P$ denote the path $X \setminus v$.
Then no interior vertex of $P$ is adjacent to~$v$, so none of the interior vertices of $P$ is complete to $E$.
Hence, no interior vertex of $P$ is in $A \cup B$.
By definition, $N(a') \subseteq A \cup B \cup C \cup E$ and $N(b') \subseteq A \cup B \cup C \cup E$ and $a', b' \in V(P)$.
It follows that $P$ contains a vertex in $C$ and a vertex in $D$.
In particular, neither $C$ nor $D$ can be complete to $E$, contradicting \cref{lem:hompair}\ref{item:completetoE}.
\end{subproof}

By~\cref{claim:E-no-hole-with-two-in-A-or-two-in-B,claim:X-no-E}, both $a'b'$-paths of $X$ have length at least three. 
Since one of the $a'b'$-paths of $X$ has even length,
there is an $a'b'$-path $P$ of $X$ of length at least four
and $P$ contains some vertex $c\in C$ and some vertex $d\in D$ by~\cref{claim:E-no-hole-with-two-in-A-or-two-in-B,claim:X-no-E}.
Now, $(V(P)\setminus\{a',b'\})\cup\{a,b\}$ induces an odd hole in $H_i$, a contradiction to the assumption that $H_i$ is perfect.
This completes the proof.
\end{proof}
Now we are ready to prove the main proposition of this subsection, which we restate here.
\elementarykperfect*
\begin{proof}
    Let $\gamma$ be the constant given by \cref{prop:gemfree-main} for $\F$.
    Note that $\gamma\ge 4$. 
    Let $G$ be an elementary bull-free locally perfect graph in $\F$. 
    By \cref{prop:gemfree-main}, if $G$ is gem-free, then $G$ is $\gamma$-perfect.
    Thus we may assume that $G$ has an induced subgraph $H$ that is a gem.
    Let $P$ be the path of length $3$ in $H$. 
    Then $V(P)$ is a dominating set of $G$ because $G$ is elementary. 
    Since $G$ is locally perfect, 
    $G[N_G(v)\cup \{v\}]$ is perfect for each $v\in V(P)$.
    Therefore, $G$ is $4$-perfect.
\end{proof}

\subsection{Completing the proof for bull-free graphs}

Previously, we defined elementary graphs, but for this subsection, we need to extend this notion to trigraphs. 
A trigraph $G$ is \emph{elementary} if it does not contain any path $P$ of length $3$ such that some vertex~$c$ of $V(G)\setminus V(P)$ is complete to $V(P)$
and some vertex~$a$ of $V(G)\setminus V(P)$ is anti-complete to $V(P)$.
We say $c$ is a \emph{center} for $P$ and $a$ is an \emph{anti-center} for $P$.

A \emph{hole} $H$ of length $5$ in a trigraph~$G$ is a subtrigraph of $G$ induced by $5$ vertices, say $h_1$, $h_2$, $h_3$, $h_4$, $h_5$ such that $h_{i}$ is adjacent to $h_{i+1}$ 
and anti-adjacent to $h_{i+2}$
for each $i\in\{1,2,\ldots,5\}$, assuming that $h_6=h_1$, $h_7=h_2$, $h_8=h_3$, and $h_9=h_4$.
For each $i\in\{1,2,\ldots,5\}$, 
\begin{itemize}
    \item let $L_i$ be the set of all vertices in $V(G)\setminus V(H)$ that are adjacent to $h_i$ and anti-complete to $V(H)\setminus\{h_i\}$,
    \item let $S_i$ be the set of all vertices in $V(G)\setminus V(H)$ that are anti-adjacent to $h_i$ and complete to $V(H)\setminus\{h_i\}$, and 
    \item let $C_i$ be the set of all vertices in $V(G)\setminus V(H)$ that are complete to $\{h_{i+1},h_{i+4}\}$ and anti-complete to $\{h_{i+2},h_{i+3}\}$.
\end{itemize}
A vertex in $L_i$, $S_i$, and $C_i$ is called a \emph{leaf}, a \emph{star}, a \emph{clone}, respectively, at $h_i$. A \emph{leaf}, a \emph{star}, or a \emph{clone} with respect to $H$ is a leaf, a star, or a clone, respectively, at $h_i$ for some $i\in\{1,2,\ldots,5\}$.

In~\cite{Chudnovsky2012a}, $\T_0$ is a precisely defined set of trigraphs and $\T_0$ is one of the base classes of trigraphs in the decomposition theorem of Chudnovsky~\cite{Chudnovsky2012}.
For our proof, we need only the following observation.

\begin{obs}\label{T0-eight-vertices}
    Every trigraph in $\T_0$ contains at most $8$ vertices. 
\end{obs}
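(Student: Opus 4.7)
The plan is essentially to open Chudnovsky~\cite{Chudnovsky2012a} at the place where the class $\T_0$ is introduced and read the bound off the definition. The class $\T_0$ is one of the explicit base classes in Chudnovsky's decomposition theorem for bull-free trigraphs, and it is specified by a concrete, very restrictive recipe: every trigraph in $\T_0$ is built from a $5$-vertex hole $H$ together with a bounded number of additional vertices (described as leaves, stars, or clones with respect to $H$) whose adjacencies to $V(H)$ are prescribed by the recipe. Since the definition caps the number of extra vertices one may attach to $H$, one immediately reads off an upper bound on $|V(G)|$ for every $G \in \T_0$.

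The proof I would write is therefore just one sentence: ``This follows directly from the definition of $\T_0$ given in~\cite{Chudnovsky2012a}.'' If a slightly more informative sentence is wanted for the reader's convenience, I would additionally recall the construction in words — for example, by noting that a trigraph in $\T_0$ consists of a $5$-hole plus at most three prescribed additional vertices — so that the reader can count $5+3=8$ without having to fetch the reference. The notation set up in the paragraph immediately preceding the observation (the sets $L_i$, $S_i$, $C_i$ indexing leaves, stars, and clones at vertices of a $5$-hole) is almost certainly there precisely to make this counting transparent.

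There is no real mathematical obstacle here: the observation has no genuine content beyond bookkeeping, and I would spend no time on a structural argument. The only thing to be careful about is to cite the exact location in~\cite{Chudnovsky2012a} where $\T_0$ is defined, so that a reader who wants to verify the bound can do so without ambiguity. Accordingly, I expect the proof in the paper to be at most one or two lines, likely a bare reference followed by \qed.
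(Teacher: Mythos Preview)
Your proposal is correct and matches the paper's approach: the paper offers no proof whatsoever for this observation, simply stating it as a fact that can be read off from the definition of $\T_0$ in~\cite{Chudnovsky2012a}. One minor correction to your commentary: the $L_i$, $S_i$, $C_i$ notation introduced just before is not there to facilitate this count --- it is used instead in the statements of \cref{bulls1:52} and \cref{bull-free-with-pentameter} that follow.
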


The following theorem is a direct consequence of the proof of \cite[5.2]{Chudnovsky2012a}. The actual statement of \cite[5.2]{Chudnovsky2012a} is weaker in the sense that instead of (ii), \cite[5.2]{Chudnovsky2012a} deduces that one of $G$, $\overline G$ contains a ``homogeneous pair of type zero.'' 
It turns out that the only place in the proof deducing this consequence is the first sentence of the proof, which uses 4.1 of \cite{Chudnovsky2012a} to assume that there is no hole of length $5$ with both a leaf and a star. 
Thus, by removing the first sentence of the proof of 5.2 in Chudnovsky~\cite{Chudnovsky2012a}, we deduce the following slightly stronger statement.
\begin{theorem}[Chudnovsky~{\cite[5.2]{Chudnovsky2012a}}; strengthened form]\label{bulls1:52}
Let $G$ be a bull-free non-elementary trigraph. Then at least one of the following holds. 
\begin{enumerate}[label=\rm(\roman*)]
    \item $G$ or $\overline{G}$ belongs to $\T_0$.    
    \item $G$ has a homogeneous set.
    \item $G$ has a hole of length $5$ with both a leaf and a star.
\end{enumerate}
\end{theorem}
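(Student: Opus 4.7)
The plan is to extract the strengthened conclusion from Chudnovsky's own proof of [5.2] in \cite{Chudnovsky2012a} without reproducing the long structural argument. The point of the remark preceding the statement is that the proof given in \cite{Chudnovsky2012a} really proves a slightly stronger thing than it claims, and my job is to certify this.

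First, I would carefully re-read the proof of [5.2] in \cite{Chudnovsky2012a} and locate every place the hypothesis ``no hole of length $5$ with both a leaf and a star'' is used. The claim in the excerpt is that it is invoked exactly once, in the opening sentence, where 4.1 of \cite{Chudnovsky2012a} is cited to eliminate such holes at the outset. Verifying this claim is the main content of the proof proposal: one has to walk through each subcase, each auxiliary lemma call, and each appeal to other results of \cite{Chudnovsky2012a}, and confirm that the leaf-and-star hypothesis never silently reappears after the first sentence.

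Once that audit is complete, the deduction is almost automatic: split on whether $G$ contains a hole of length $5$ with both a leaf and a star. If it does, conclusion (iii) holds and we are done. If it does not, then the original opening sentence of Chudnovsky's proof is satisfied (trivially, rather than via 4.1), and the rest of Chudnovsky's argument runs verbatim, producing alternative (i) or the alternative in \cite[5.2]{Chudnovsky2012a} about a homogeneous pair of type zero. To match the phrasing of (ii) in the strengthened form, I would additionally check that such a homogeneous pair of type zero yields a homogeneous set of $G$ (either directly from the definitions in \cite{Chudnovsky2012a} or via a short additional reduction within the same paper).

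The main obstacle, and essentially the only nonroutine step, is the exhaustive audit in the first paragraph. Given the length and layered structure of \cite{Chudnovsky2012a}, with many lemmas specifically about holes of length $5$ and their leaves, stars, and clones, the risk is that the hypothesis enters indirectly through an intermediate lemma. So the careful task is to certify, for every cited result inside Chudnovsky's proof of [5.2], that the cited result itself does not presuppose the absence of a hole of length $5$ with a leaf and a star. Once this is done, the strengthened statement follows with no new combinatorics.
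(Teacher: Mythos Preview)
Your overall plan matches the paper's own justification: the strengthened form is obtained by observing that in Chudnovsky's proof of \cite[5.2]{Chudnovsky2012a}, the outcome ``homogeneous pair of type zero'' is produced only in the opening sentence (via 4.1 of \cite{Chudnovsky2012a}), so deleting that sentence and recording the negation of its premise as outcome (iii) yields the stronger trichotomy.

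There is, however, one confusion in your write-up. You say that after the first sentence the rest of Chudnovsky's argument ``runs verbatim, producing alternative (i) or the alternative in \cite[5.2]{Chudnovsky2012a} about a homogeneous pair of type zero,'' and you then propose an extra step converting a homogeneous pair of type zero into a homogeneous set to obtain (ii). This misreads the structure. The paper's point is precisely that the homogeneous-pair-of-type-zero outcome arises \emph{only} from the first sentence; the remainder of Chudnovsky's proof, under the standing assumption that there is no hole of length $5$ with a leaf and a star, concludes (i) or that $G$ has a homogeneous set, which is outcome (ii) directly. So no conversion is needed, and you should not look for one: the audit you describe should confirm that after the first sentence the argument yields (i) or (ii), not (i) or a homogeneous pair of type zero.
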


A trigraph is \emph{perfect} if 
every realization is perfect. 
We say a trigraph is \emph{imperfect} if it is not perfect.
Here is a corollary of \cref{lovaszreplacement} for trigraphs. 
\begin{lemma}\label{lovaszreplacement-trigraph}
    Let $A$ be a homogeneous set of a trigraph~$G$
    and $a\in A$.
    If both $G\setminus (A\setminus\{a\})$ and $G[A]$ are perfect, then $G$ is perfect.
    \qed 
\end{lemma}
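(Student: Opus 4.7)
The plan is to reduce directly to the graph version, Lemma \ref{lovaszreplacement}, by working realization by realization. Since a trigraph is perfect precisely when every realization is, it suffices to fix an arbitrary realization $R$ of $G$ and show that $R$ is a perfect graph.

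First I would observe that $R[A]$ is a realization of $G[A]$: the edges of $R$ inside $\binom{A}{2}$ consist of the strongly adjacent pairs of $G[A]$ together with a subset of its switchable pairs, which is exactly the definition of a realization. Since $G[A]$ is perfect by hypothesis, $R[A]$ is perfect. By the same argument, $R\setminus (A\setminus\{a\})$ is a realization of $G\setminus (A\setminus\{a\})$ and hence perfect.

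Next I would verify that $A$ is a homogeneous set of the graph~$R$. Because $A$ is homogeneous in the trigraph~$G$, each vertex of $V(G)\setminus A$ is either strongly complete or strongly anti-complete to $A$ in $G$; strong adjacency survives as adjacency and strong anti-adjacency survives as non-adjacency in every realization, so each vertex of $V(R)\setminus A$ is either complete or anti-complete to $A$ in $R$. Combined with $2\le|A|<|V(G)|$ from the definition of homogeneous set, this makes $A$ a homogeneous set of~$R$.

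Consequently, $R$ is precisely the graph obtained from the perfect graph $R\setminus (A\setminus\{a\})$ by substituting the vertex~$a$ with the perfect graph $R[A]$: the neighbors of $a$ outside $A$ in $R\setminus (A\setminus\{a\})$ are exactly the vertices complete to $A$ in $R$, which matches how substitution connects $R[A]$ to the rest. Applying Lemma \ref{lovaszreplacement} yields that $R$ is perfect, and since $R$ was arbitrary, $G$ is perfect. There is no real obstacle here; the only point demanding care is the routine check that restriction of a realization is a realization of the corresponding induced subtrigraph, and that homogeneity passes from the trigraph to any realization.
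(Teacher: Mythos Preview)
Your argument is correct and is exactly the intended one: the paper itself treats this lemma as an immediate corollary of Lemma~\ref{lovaszreplacement} and omits the proof entirely (note the \qedsymbol{} in the statement). You have simply spelled out the routine realization-by-realization reduction that the paper leaves implicit.
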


A trigraph is \emph{$k$-perfect} if its vertex set can be partitioned into at most $k$ sets, each inducing a perfect trigraph.
We say a trigraph $G$ is \emph{locally perfect} if $G[N(v)]$ is perfect for every vertex~$v$ of $G$.
Then we obtain the following consequence of \cref{bulls1:52}.

\begin{lemma}\label{thm:neighbor-perfect-with-no-Q}
   Every locally perfect bull-free non-elementary graph is $2$-perfect, unless it has a hole of length $5$ with a leaf and a star.
\end{lemma}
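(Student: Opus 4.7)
The plan is to induct on $|V(G)|$, combining the decomposition theorem \cref{bulls1:52} with Lovász's substitution result (\cref{lovaszreplacement-trigraph}). The base case handles $|V(G)| \le 8$: partition $V(G)$ into two parts of size at most $4$, each of which induces a perfect trigraph since every trigraph on at most $4$ vertices is perfect (any realization has no odd hole or antihole).

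For the inductive step, suppose $|V(G)| > 8$, and assume $G$ is connected (the disconnected case is handled by induction on components together with the reduction below). Applying \cref{bulls1:52} to the non-elementary trigraph $G$, and using that $G$ has no hole of length $5$ with both a leaf and a star, we obtain that either $G$ or $\overline{G}$ belongs to $\T_0$, or $G$ has a homogeneous set. The first option forces $|V(G)| \le 8$ by \cref{T0-eight-vertices}, contradicting $|V(G)|>8$. So $G$ has a homogeneous set $A$. Since $G$ is connected, some $v \in V(G) \setminus A$ is strongly complete to $A$; local perfection then yields that $G[A]$, as an induced subtrigraph of the perfect trigraph $G[N(v)]$, is perfect. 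Fix $a \in A$ and set $G' = G \setminus (A \setminus \{a\})$; the induced subtrigraph $G'$ inherits being locally perfect, bull-free, and having no hole of length $5$ with a leaf and a star.

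The main technical point is that $G'$ is again non-elementary, so that the inductive hypothesis applies. Because $A$ is homogeneous, every vertex of $V(G) \setminus A$ is either strongly complete or strongly anti-complete to $A$; a short case analysis using the adjacency pattern of a $P_4$ shows that no two vertices of any $P_4$ of $G$ can both lie in $A$, since that would force some outside vertex of the $P_4$ to be both complete and anti-complete to some member of $A$. Given a $P_4$ in $G$ with center $c$ and anti-center $a^*$ witnessing non-elementarity, it follows that at most one vertex of the $P_4$ lies in $A$, and if $c$ (resp. $a^*$) lies in $A$ then $V(P) \cap A = \emptyset$. In every case, replacing any member of $A$ by the representative $a$ (using homogeneity to transfer adjacencies) preserves the $P_4$, the property that $c$ is a center, and the property that $a^*$ is an anti-center. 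Hence $G'$ is non-elementary.

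Since $G'$ is non-elementary and $|V(G')| < |V(G)|$, the inductive hypothesis gives a $2$-perfect partition $(V_1, V_2)$ of $G'$, which we may assume satisfies $a \in V_1$. Define $V_1' = (V_1 \setminus \{a\}) \cup A$ and $V_2' = V_2$. Then $G[V_2'] = G'[V_2]$ is perfect. Moreover, $A$ is a homogeneous set of $G[V_1']$, $G[A]$ is perfect, and $G[V_1' \setminus (A \setminus \{a\})]$ is isomorphic to $G'[V_1]$ and hence perfect, so by \cref{lovaszreplacement-trigraph} the trigraph $G[V_1']$ is perfect as well. Therefore $G$ is $2$-perfect. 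The main obstacle is the verification that the homogeneous-set reduction preserves non-elementarity; the substitution step itself is a routine application of \cref{lovaszreplacement-trigraph}.
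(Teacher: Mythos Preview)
Your overall strategy is the same as the paper's: induct on $|V(G)|$, handle $|V(G)|\le 8$ directly, invoke \cref{bulls1:52} to obtain a homogeneous set~$A$, use local perfection to see that $G[A]$ is perfect, apply the induction hypothesis to $G'=G\setminus(A\setminus\{a\})$, and finish with \cref{lovaszreplacement-trigraph}. The paper applies the induction hypothesis to $G'$ without comment; you rightly notice that this requires $G'$ to be non-elementary and attempt to justify it.

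Your justification, however, is incorrect. Homogeneity of $A$ only forces $|V(P)\cap A|\in\{0,1,4\}$ for an induced path $P$ of length three; the case $V(P)\subseteq A$ is not excluded, so the assertion that ``no two vertices of any $P_4$ of $G$ can both lie in $A$'' fails, and your replacement argument produces no witness in $G'$ in that case. Concretely, take $V(G)=\{p_1,p_2,p_3,p_4,c,a^{*},u_1,u_2,u_3\}$ where $p_1\dd p_2\dd p_3\dd p_4$ is a path, $c$ is adjacent to every $p_i$ and to $a^{*}$, and each $u_j$ is adjacent to every $p_i$ and to nothing else. This $G$ is bull-free, locally perfect, connected, has nine vertices, contains no $C_5$ (hence none with a leaf and a star), and is non-elementary via $(P,c,a^{*})$. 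The set $A=\{p_1,p_2,p_3,p_4\}$ is homogeneous; with $a=p_1$, the graph $G'$ on $\{p_1,c,a^{*},u_1,u_2,u_3\}$ is elementary, because the only neighbour of $a^{*}$ in $G'$ is $c$, so no $P_4$ of $G'$ admits a centre. Thus ``$G'$ is non-elementary'' is false in general, and your inductive step does not close. (The same example shows that the paper's silent application of the induction hypothesis to $G'$ is not innocent either; some additional care---e.g.\ in the choice of $A$, or a strengthened inductive statement---is needed.)
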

\begin{proof}
    Suppose that $G$ is a locally perfect bull-free non-elementary graph that has no hole of length $5$ with a leaf and a star.
    We proceed by induction on $\abs{V(G)}$ to show that $G$ is $2$-perfect.
    We may assume that $G$ is connected and has more than $8$ vertices 
    because the disjoint union of two perfect graphs is perfect and every graph with at most four vertices is perfect.
    So by \cref{bulls1:52}, $G$ has a homogeneous set~$A \subseteq V(G)$.
    Moreover, there is some vertex~$v \in V(G) \setminus A$ that is complete to $A$ because $G$ is connected.
    Since $G$ is locally perfect, $G[A]$ is perfect.
    Let $a \in A$ and $G'=G\setminus (A \setminus \{a\})$.
    By the induction hypothesis, there is a partition of $V(G')$ into $X$, $Y$ such that $G'[X]$, $G'[Y]$ are both perfect.
    We may assume $a \in X$.
    We may assume that $X\neq\{a\}$ because otherwise $G[A]$ and $G\setminus A=G[Y]$ are perfect, implying that $G$ is $2$-perfect.

    Let $X'=X\cup A$ and let $G_X=G[X']$.
    Note that both $G_X\setminus (A\setminus\{a\})=G'[X]$ and $G_X[A]=G[A]$ are  perfect and $A$ is a homogeneous set of $G_X$. 
    By \cref{lovaszreplacement}, $G_X$ is perfect.
    So $(X', Y)$ is a partition of $V(G)$ such that both $G[X']$ and $G[Y]$ are perfect.
\end{proof}

The following theorem is a direct consequence of the proof of 4.3 in \cite{Chudnovsky2012a}.

\begin{theorem}[Chudnovsky~{\cite[4.3]{Chudnovsky2012a}}; weaker but more detailed form]\label{bull-free-with-pentameter}
    Let $G$ be a bull-free trigraph satisfying the following properties.
    \begin{itemize}
        \item Neither $G$ nor $\overline G$ belongs to $\mathcal T_0$.
        \item $G$ has a hole $H$ of length $5$ induced by $5$ vertices $h_1,h_2,h_3,h_4,h_5$ in this order and $H$ has both a star at $h_1$ and a leaf at $h_1$.
        \item $G$ has no homogeneous set.
    \end{itemize}
    Then $G$ has a tame homogeneous pair $(A,B)$ with the following properties, where $C_i$ denotes the set of clones at $h_i$ for all $i\in\{1,2,\ldots,5\}$.
\begin{enumerate}[label=\rm(\roman*)]
    \item $A = \{h_2, h_5 \} \cup C_2 \cup C_5$.
    \item $B = \{h_3, h_4 \} \cup C_3 \cup C_4$.
    \item There is a vertex $v \in V(G) \setminus (A \cup B)$ strongly complete to $A \cup B$.
\end{enumerate}
\end{theorem}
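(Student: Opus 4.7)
The plan is to analyze how each vertex outside the hole~$H$ interacts with $V(H)$, using bull-freeness to severely restrict the possible adjacency patterns, and then assemble $A$ and $B$ from the natural ``rotational'' equivalence classes around $H$.

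First, I would classify the vertices of $V(G)\setminus V(H)$ according to their adjacency pattern on the five hole vertices. For a fixed vertex $x\notin V(H)$, let $M(x)=V(H)\cap N(x)$ (using adjacency in the trigraph sense). The standard types that arise are \emph{leaves} (one neighbor in $V(H)$), \emph{stars} (four neighbors), \emph{clones} (two non-adjacent neighbors $h_{i-1},h_{i+1}$, the same pattern as $h_i$), \emph{hats} (two consecutive neighbors $h_i,h_{i+1}$), \emph{centers} (complete to $V(H)$), and \emph{anticenters} (anti-complete to $V(H)$). I would verify, using only that $G$ is bull-free and that $H$ induces a 5-hole together with the hypothesized leaf and star at $h_1$, that no other adjacency pattern on $V(H)$ is possible (for instance, a vertex adjacent to exactly $h_1,h_2,h_3$ would combine with a leaf at $h_1$ to form a bull, and so on); this is routine but somewhat tedious case-checking.

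Second, define $A:=\{h_2,h_5\}\cup C_2\cup C_5$ and $B:=\{h_3,h_4\}\cup C_3\cup C_4$. The goal is to show that every vertex $x\in V(G)\setminus(A\cup B)$ is strongly complete or strongly anti-complete to $A$, and likewise for $B$. For hole vertices this is immediate ($h_1$ is anti-complete to both; $h_2,h_5$ are never in $V(G)\setminus(A\cup B)$, etc.). For each non-hole type, I would check case by case:
\begin{itemize}
\item A clone at $h_1$ is complete to $\{h_2,h_5\}$ and anti-complete to $\{h_3,h_4\}$; bull-freeness combined with an arbitrary $c\in C_2$ forces the clone to be strongly complete to $A$ and strongly anti-complete to $B$.
\item A star (in particular, a star at $h_1$) is complete to $V(H)\setminus\{h_1\}$; bull-freeness against a clone in $C_2$ or $C_3$ forces strong completeness to $A\cup B$.
\item Leaves, hats, centers, and anticenters are each handled similarly: each configuration that would make a vertex \emph{mixed} on $C_i$ produces either a bull or a path of length three with both a center and an anticenter; the former contradicts bull-freeness, and the latter, by the definition of a homogeneous set and the assumption that $G$ has none, can be driven to a global contradiction.
\end{itemize}
Since every vertex outside $A\cup B$ falls into such a strongly-complete/strongly-anti-complete regime for both $A$ and $B$, the partition $(C,D,E,F)$ of $V(G)\setminus(A\cup B)$ exists and $(A,B)$ is a homogeneous pair.

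Third, I would verify tameness. Non-trivial size follows because $A\supseteq\{h_2,h_5\}$ and $B\supseteq\{h_3,h_4\}$, while $V(G)\setminus(A\cup B)$ contains at least $h_1$ together with the promised leaf and star at $h_1$, so $|V(G)|-2>|A|+|B|>2$. The hole edges $h_2h_3$ and $h_4h_5$ witness that $A$ is not strongly anti-complete to $B$, and the hole non-edges $h_2h_4$ and $h_3h_5$ witness that $A$ is not strongly complete to $B$. Finally, the star $s$ at $h_1$ lies in $E$ by the second step, giving the required vertex strongly complete to $A\cup B$.

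The main obstacle, and the part that consumes the bulk of the work in Chudnovsky's proof, is the second step: verifying that no vertex outside $A\cup B$ can be mixed on $C_i$ for any $i\in\{2,3,4,5\}$. Clones can have very flexible adjacencies among themselves (including switchable pairs in the trigraph setting), so one must exploit bull-freeness together with the existing leaf and star at $h_1$ to convert any putative mixed vertex into either a forbidden bull or a homogeneous set, contradicting the third hypothesis. The semi-adjacent case requires extra care, since a switchable pair between a clone and a vertex outside can masquerade as two different adjacency patterns; one has to check that in either realization a bull or a homogeneous set arises.
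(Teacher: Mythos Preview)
The paper does not actually prove this theorem; it states explicitly that the statement is ``a direct consequence of the proof of 4.3 in \cite{Chudnovsky2012a}'' and invokes it as a black box. So there is no in-paper argument to compare against. What I can do is assess whether your outline would work as a stand-alone proof.

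There is a genuine gap in your step~2. You list hats, leaves at arbitrary $h_i$, and stars at arbitrary $h_i$ among the types to be ``handled similarly'', but most of these types are \emph{automatically mixed} on $A$ or $B$ just from their adjacencies to the hole vertices, before one even looks at clones. For instance, a hat on $\{h_2,h_3\}$ is adjacent to $h_2\in A$ and anti-adjacent to $h_5\in A$; a leaf at $h_3$ is adjacent to $h_3\in B$ and anti-adjacent to $h_4\in B$; a star at $h_2$ is adjacent to $h_5\in A$ and anti-adjacent to $h_2\in A$. None of these can be ``made uniform'' on $A$ or $B$ by any argument about clones --- they are mixed on the hole itself. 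Hence such vertices must be shown not to exist at all, and this is precisely the substantive content of Chudnovsky's 4.3: one has to use the presence of both a leaf and a star at $h_1$, bull-freeness, the assumption that $G$ has no homogeneous set, and the exclusion of $\mathcal T_0$ to rule out every offending attachment type around $H$. Your sketch does not attempt this, and the example you give in step~1 (a vertex adjacent to exactly $h_1,h_2,h_3$ forming a bull with a leaf at $h_1$) does not actually produce a bull as stated; the exclusion of such patterns is considerably more delicate.

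In short, your architecture (classify attachments, then check uniformity) is the right one and is indeed how Chudnovsky proceeds, but the heart of the proof is the elimination of bad attachment types, not the uniformity check for the surviving ones, and that elimination is missing from your proposal.
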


We say that a trigraph is \emph{austere} if
\begin{enumerate}[label=\rm(\alph*)]
    \item\label{item:austere-mon} it is monogamous,
    \item\label{item:austere-set} no homogeneous set contains a switchable pair, and 
    \item\label{item:austere-pair} for every dominated tame homogeneous pair $(A,B)$, $A\cup B$ contains no switchable pair.
\end{enumerate} 

\begin{lemma}\label{lem:austere-set}
    Let $G$ be an austere trigraph.
    If $A$ is a homogeneous set of~$G$ and $a\in A$, then
    $G\setminus (A\setminus\{a\})$ is also austere.
\end{lemma}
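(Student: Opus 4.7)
The plan is to verify each of the three austerity conditions \ref{item:austere-mon}--\ref{item:austere-pair} for $G' = G\setminus(A\setminus\{a\})$ by lifting any putative violation in $G'$ to a violation in $G$. Throughout, the key fact I will use is that $A\cap V(G') = \{a\}$, so any subset $S \subseteq V(G')$ intersects $A$ in at most $\{a\}$, and $A$ being homogeneous in $G$ means every vertex outside $A$ relates uniformly (strongly complete or strongly anti-complete) to all of $A$.

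Condition \ref{item:austere-mon} is immediate: switchable pairs of $G'$ are switchable pairs of $G$ inside $V(G')$, so monogamy of $G$ descends to $G'$. For \ref{item:austere-set}, suppose $A'$ is a homogeneous set of $G'$ containing a switchable pair $\{x,y\}$. Define $A'' := A'$ if $a\notin A'$, and $A'' := A'\cup(A\setminus\{a\})$ if $a\in A'$. I will check $A''$ is a homogeneous set of $G$. For any $v\in V(G)\setminus A''$: if $v\in V(G')\setminus A'$, its uniform relationship to $A'$ comes from $A'$ being homogeneous in $G'$, and its relationship to the possibly added $A\setminus\{a\}$ equals its relationship to $a$ by the homogeneity of $A$ in $G$; if $v\in A\setminus\{a\}$ (only possible when $a\notin A'$), then $v$ behaves toward $A'$ exactly as $a$ does, again by the homogeneity of $A$. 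Either way, $A''$ is homogeneous in $G$ and still contains $\{x,y\}$, contradicting \ref{item:austere-set} for $G$.

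For \ref{item:austere-pair}, suppose $(A',B')$ is a dominated tame homogeneous pair of $G'$ with a switchable pair $\{x,y\}\subseteq A'\cup B'$, and let $C',D',E',F'$ be the associated partition of $V(G')\setminus (A'\cup B')$. I split into three cases on where $a$ lies. If $a\in A'$, set $A'' := A'\cup(A\setminus\{a\})$ and $B'':=B'$; if $a\in B'$, set $B'':=B'\cup(A\setminus\{a\})$ and $A'':=A'$; if $a$ lies in one of $C',D',E',F'$, keep $A''=A'$, $B''=B'$ and append $A\setminus\{a\}$ to that same one. In every case the uniformity needed for a homogeneous pair follows from the homogeneity of $A$ in $G$, because any vertex outside $A\cup A''\cup B''$ relates to every element of $A$ identically and hence its relationships to $A''$ and $B''$ remain uniform. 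Tameness transfers because $|V(G)|-2 = |V(G')|+|A\setminus\{a\}|-2$ while $|A''|+|B''|$ grows by at most $|A\setminus\{a\}|$, and the non-strong-complete/non-strong-anti-complete requirement is inherited from $(A',B')$. Dominance transfers directly: the new $E$ is nonempty whenever $E'$ is, and the new $C$ and $D$ are nonempty whenever $C'$ and $D'$ are. The switchable pair $\{x,y\}$ remains in $A''\cup B''$, contradicting \ref{item:austere-pair} for $G$.

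The only part that needs any care is the bookkeeping for tameness in \ref{item:austere-pair}: one has to track that the strict inequality $|V(G)|-2>|A''|+|B''|$ survives when we add the $|A\setminus\{a\}|$ extra vertices on both sides. This is routine because the same $|A\setminus\{a\}|$ is added to both sides of the inequality coming from $|V(G')|-2>|A'|+|B'|$. Aside from this, the argument is purely a verification: in each of the cases the structure of $A$ as a homogeneous set in $G$ forces the lifted objects to have the same form as the originals, so the three austerity axioms descend to $G'$ as required.
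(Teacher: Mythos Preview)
Your proof is correct and follows essentially the same approach as the paper's own proof: both verify conditions \ref{item:austere-mon}--\ref{item:austere-pair} by lifting a putative violating object in $G'$ to one in $G$ via the rule ``replace $a$ by all of $A$ if $a$ lies in the object, otherwise leave it unchanged.'' Your write-up is somewhat more explicit about the case analysis and the tameness/dominance bookkeeping, but the underlying argument is identical.
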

\begin{proof}
    Let $G'=G\setminus (A\setminus\{a\})$.
    Clearly, $G'$ satisfies \ref{item:austere-mon}.

    To prove \ref{item:austere-set}, suppose that $G'$ has a homogeneous set $X$.
    If $a\notin X$, then $X$ is also a homogeneous set of $G$ and so $X$ contains no switchable pair in $G'$. 
    If $a\in X$, then $A\cup (X\setminus\{a\})$ is a homogeneous set of $G$ and so $A\cup (X\setminus \{a\})$ contains no switchable pair in $G$. 
    This means that $X$ contains no switchable pair in $G'$. This proves \ref{item:austere-set}.

    For \ref{item:austere-pair}, suppose that $G'$ has a dominated tame homogeneous pair $(X,Y)$.
    If $a\notin X\cup Y$, then $(X,Y)$ is a dominated tame homogeneous pair of $G$ and therefore $X\cup Y$ has no switchable pair in both $G$ and $G'$. 
    If $a \in X \cup Y$, then 
    we may assume $a \in X$.
By definition of a homogeneous set, $(A \cup (X \setminus \{a\}), Y)$  is a dominated tame homogeneous pair in~$G$. Hence, 
    $A \cup (X \setminus \{a\})\cup Y$ contains no switchable pairs in $G$
    and so $X\cup Y$ contains no switchable pair in $G'$.
\end{proof}
\begin{lemma}\label{lem:austere-hom}
    Let $G$ be an austere trigraph and $(A,B)$ be a maximal dominated tame homogeneous pair of~$G$.
    If $A \cup B$ is not a subset of any homogeneous set of $G$,
    then the trigraph obtained by shrinking $(A,B)$ is also austere.
\end{lemma}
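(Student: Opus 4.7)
The plan is to verify the three austerity conditions \ref{item:austere-mon}, \ref{item:austere-set}, \ref{item:austere-pair} for the trigraph $G'$ obtained by shrinking $(A,B)$. Throughout, observe that since $G$ is austere, property \ref{item:austere-pair} applied to $(A,B)$ gives that $A\cup B$ contains no switchable pair of $G$; combined with the fact that every vertex of $V(G)\setminus (A\cup B)$ is strongly complete or strongly anti-complete to each of $A$ and $B$ by the definition of a homogeneous pair, we conclude that no vertex of $A\cup B$ lies in a switchable pair of $G$. Consequently, $\{a,b\}$ is the only new switchable pair of $G'$, and all other switchable pairs of $G'$ are inherited from $G$ and lie in $V(G)\setminus (A\cup B)$. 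Monogamy \ref{item:austere-mon} of $G'$ then follows from monogamy of $G$.

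For \ref{item:austere-set}, suppose $S$ is a homogeneous set of $G'$ containing a switchable pair. The semi-adjacency of $\{a,b\}$ forces $\{a,b\}\subseteq S$ or $\{a,b\}\cap S=\emptyset$, for otherwise the vertex of $\{a,b\}$ outside $S$ would be semi-adjacent to the one inside, violating the strong-uniform condition. If $\{a,b\}\cap S=\emptyset$, then $S\subseteq V(G)$ is a homogeneous set of $G$ inheriting the switchable pair, contradicting \ref{item:austere-set} of $G$. If $\{a,b\}\subseteq S$, then $(S\setminus\{a,b\})\cup A\cup B$ is easily checked to be a homogeneous set of $G$ containing $A\cup B$, contradicting the hypothesis.

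For \ref{item:austere-pair}, let $(X,Y)$ be a dominated tame homogeneous pair of $G'$ whose union contains a switchable pair. By the same semi-adjacency argument, $\{a,b\}$ either lies entirely outside $X\cup Y$, entirely on one side, or is split between the two sides. In the outside subcase, $(X,Y)$ pulls back directly to a dominated tame homogeneous pair of $G$ containing a switchable pair of $G$, contradicting \ref{item:austere-pair} of $G$. In the split subcase (say $a\in X$, $b\in Y$; the other is symmetric), the pair $(X',Y'):=((X\setminus\{a\})\cup A,(Y\setminus\{b\})\cup B)$ is a tame homogeneous pair of $G$ containing $(A,B)$ (tameness, homogeneity, and dominatedness are routine to verify from the definition of shrinking), so by maximality $(X',Y')=(A,B)$, forcing $X=\{a\}$ and $Y=\{b\}$ and contradicting tameness of $(X,Y)$ in $G'$. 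In the remaining subcase $\{a,b\}\subseteq X$ (the case $\{a,b\}\subseteq Y$ is symmetric), set $X':=(X\setminus\{a,b\})\cup A\cup B$ and $Y':=Y$; the plan is then to show that $X'\cup Y'$ is a homogeneous set of $G$ containing $A\cup B$, contradicting the hypothesis.

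The main obstacle lies in this last subcase. Showing that $X'\cup Y'$ is a homogeneous set of $G$ reduces to verifying that the sets $C$ and $D$ of the $(X,Y)$-decomposition of $G'$ (vertices strongly complete to $X$ and strongly anti-complete to $Y$, or vice versa) are both empty, since only such vertices can break uniformity on $X'\cup Y'$ in $G$. The strategy is a contradiction argument: assuming $C$ or $D$ is nonempty, one uses the dominatedness of $(X,Y)$ together with the structure of $(A,B)$ to produce a tame homogeneous pair of $G$ strictly containing $(A,B)$, obtained by augmenting $A$ and $B$ with carefully chosen vertices from the $(X,Y)$-decomposition, which would violate the maximality of $(A,B)$.
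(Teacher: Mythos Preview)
Your argument for \ref{item:austere-mon}, \ref{item:austere-set}, and the first two subcases of \ref{item:austere-pair} is correct and matches the paper's. The gap is in the final subcase, $\{a,b\}\subseteq X$.

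There you take an unnecessary detour and do not finish. You try to show that $X'\cup Y'$ is a homogeneous set of $G$, correctly observe this would require $C=D=\emptyset$ in the $(X,Y)$-decomposition of $G'$, and then promise to handle $C\neq\emptyset$ or $D\neq\emptyset$ by ``augmenting $A$ and $B$ with carefully chosen vertices'' to produce a pair strictly containing $(A,B)$. But you never specify the pair, and the template you suggest---adding vertices to $A$ and to $B$ separately, keeping them on different sides---does not obviously yield anything: there is no evident homogeneous pair of the form $(A\cup S_1,\,B\cup S_2)$ in $G$ in this situation. The paper's argument bypasses all of this. It directly observes that your $(X',Y')=(A\cup B\cup(X\setminus\{a,b\}),\,Y)$ is itself a dominated tame homogeneous pair of $G$---homogeneity, tameness, and dominatedness transfer from $(X,Y)$ exactly as you verified in the split subcase---and uses this pair to contradict the maximality of $(A,B)$. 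No case split on $C$ and $D$ is needed, and the homogeneous-set route is a red herring.
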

\begin{proof}
    Let $G'$ be the trigraph obtained by shrinking $(A, B)$ and let $a, b$ be the vertices of $G'$ corresponding to $A$ and $B$, respectively.

    By the definition of a homogeneous pair, the only switchable pair containing $a$ or $b$ in $G'$ is the pair $\{a,b\}$. Hence, $G'$ is monogamous because $G$ is monogamous. This proves \ref{item:austere-mon}.
    
    For \ref{item:austere-set}, suppose that $G'$ has a homogeneous set $X$ that contains a switchable pair.
    Then since $G$ is austere, $X$ is not a homogeneous set in $G$.
    Hence, $X$ contains $a$ or $b$ and so by the definition of a homogeneous set, $X$ contains both $a$ and $b$.
    But then $A \cup B\cup (X\setminus \{a, b\})$ is a homogeneous set of $G$, contradicting our choice of $(A, B)$.
    This proves \ref{item:austere-set}.

    For \ref{item:austere-pair}, suppose that $G'$ has a dominated tame homogeneous pair $(X,Y)$ such that $X \cup Y$ contains a switchable pair in~$G'$.
    Then, $X \cup Y$ contains $a$ or $b$.
    Since $\{a,b\}$ is a switchable pair, by definition of a homogeneous pair, $X\cup Y$ contains both $a$ and $b$.
    Then if both $a, b \in X$, the $(A \cup B \cup (X \setminus \{a,b\}), Y)$ is a dominated tame homogeneous pair of $G$ and it properly contains $(A, B)$, a contradiction. 
    Hence, we may assume $a \in X$ and $b \in Y$.
    Then, $(A \cup (X \setminus \{a\}), B\cup  (Y \setminus \{b\}))$ is a dominated tame homogeneous pair of $G$ and it properly contains $(A, B)$, a contradiction.
    This proves \ref{item:austere-pair}.   
\end{proof}
\begin{proposition}\label{prop:austere}
For every $2$-good class~$\F$ of graphs,
    there exists an integer~$c_\F$ satisfying the following. 
    \begin{itemize}
        \item[] 
            For every locally perfect bull-free austere trigraph~$G$ whose every induced subtrigraph without switchable pairs is in $\F$, 
            there exists a partition $(X_1,X_2,\ldots,X_k)$ of $V(G)$ with $k\le c_\F$ such that 
            $G[X_i]$ is a perfect subtrigraph with no switchable pair
            for all $i\in\{1,2,\ldots,k\}$.
    \end{itemize}
\end{proposition}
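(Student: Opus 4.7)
The plan is to proceed by induction on $\abs{V(G)}$, setting $c_\F = \max(8, 2\gamma)$, where $\gamma$ is the constant supplied by \cref{elementary-bull-free-constant-perfect} for the class $\F$. The base case $\abs{V(G)} \le 8$ is handled by the singleton partition, since each singleton induces a perfect trigraph with no switchable pair. For the inductive step I would first reduce to the connected case by applying the induction hypothesis to each component of $G$; note that every induced subtrigraph of $G$ is again austere, bull-free, locally perfect, and satisfies the $\F$-condition.

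For a connected $G$, I would first dispose of the case where $G$ is elementary. Since $G$ is monogamous, the switchable pairs form a matching, so I can choose a bipartition $(V_0, V_1)$ of $V(G)$ placing the two endpoints of each switchable pair on opposite sides. Then $G[V_0]$ and $G[V_1]$ are induced subtrigraphs of $G$ without switchable pairs, hence graphs in $\F$; each is bull-free, elementary as a graph (inherited from $G$), and locally perfect, because for $v \in V_i$ the graph $G[V_i][N_{G[V_i]}(v)]$ is an induced subtrigraph of the perfect trigraph $G[N_G(v)]$. Applying \cref{elementary-bull-free-constant-perfect} to each $G[V_i]$ yields partitions of size at most $\gamma$, which combine into a partition of $V(G)$ into at most $2\gamma \le c_\F$ perfect parts, each still free of switchable pairs.

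For the non-elementary case I would appeal to \cref{bulls1:52}. If $G$ or $\overline{G}$ lies in $\T_0$, then $\abs{V(G)} \le 8$ by \cref{T0-eight-vertices} and the singleton partition suffices. If $G$ admits a homogeneous set $A$, then connectedness of $G$ yields a vertex $v$ strongly complete to $A$, whence $A \subseteq N_G(v)$ and local perfectness gives that $G[A]$ is perfect; austereness~\ref{item:austere-set} ensures $A$ contains no switchable pair. Applying induction to $G \setminus (A \setminus \{a\})$ for some $a \in A$, which is austere by \cref{lem:austere-set}, and lifting by enlarging the part that contains $a$ to include all of $A$, I would invoke \cref{lovaszreplacement-trigraph} to verify that the lifted part remains perfect. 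No new switchable pair appears, because vertices outside $A$ are strongly complete or strongly anti-complete to $A$, so the partition size stays bounded by $c_\F$.

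The remaining sub-case is the most delicate: $G$ is non-elementary, neither $G$ nor $\overline{G}$ lies in $\T_0$, and $G$ has no homogeneous set. Then \cref{bulls1:52} forces a hole of length five with both a leaf and a star, and \cref{bull-free-with-pentameter} supplies a dominated tame homogeneous pair $(A, B)$; I would enlarge this to a maximal dominated tame homogeneous pair. Since $G$ has no homogeneous set containing $A \cup B$, \cref{lem:austere-hom} shows that shrinking $(A, B)$ yields an austere trigraph $G'$ with strictly fewer vertices, and I would verify that $G'$ is bull-free, locally perfect, and satisfies the $\F$-condition by matching realizations of $G'[X]$ with induced subtrigraphs of $G$ obtained by choosing one representative in each of $A$ and $B$ (using the homogeneous-pair adjacency pattern). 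Induction then furnishes a partition of $G'$ into at most $c_\F$ perfect parts with no switchable pair; since $\{a, b\}$ is switchable in $G'$, the vertices $a$ and $b$ land in distinct parts, and lifting by replacing $a$ with $A$ and $b$ with $B$ produces the desired partition of $V(G)$, perfectness of the lifted parts again supplied by \cref{lovaszreplacement-trigraph}. The main obstacle is precisely this verification that shrinking preserves local perfectness and the $\F$-condition, since the newly created semi-adjacent pair $\{a, b\}$ introduces realizations of $G'$ that need to be faithfully represented inside $G$, which is where the assumption that $(A,B)$ is dominated by a common vertex $v$ strongly complete to $A \cup B$ becomes crucial.
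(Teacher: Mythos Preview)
Your proposal is correct and mirrors the paper's proof almost exactly: induction on $|V(G)|$ with $c_{\F} = 2\gamma$ (since $\gamma \ge 4$, your $\max(8,2\gamma)$ equals $2\gamma$), the monogamous bipartition for the elementary case, and \cref{bulls1:52} followed by \cref{bull-free-with-pentameter} for the non-elementary case. Two small corrections are in order. First, the assertion that \emph{every} induced subtrigraph of an austere trigraph is austere is false---a homogeneous set of a subtrigraph need not be one in $G$---but you only invoke it for connected components, and there it does hold: vertices in other components are strongly anti-complete to everything in a given component, so homogeneous sets and dominated tame homogeneous pairs of a component lift to $G$. Second, after enlarging to a \emph{maximal} dominated tame pair, the single vertex strongly complete to $A\cup B$ supplied by \cref{bull-free-with-pentameter} may be lost; what survives and suffices is domination by possibly distinct witnesses (yielding perfectness of $G[A]$ and $G[B]$ via local perfectness) together with tameness and austereness~\ref{item:austere-pair}, which give the embedding of each realization of the shrunk trigraph into a realization of $G$ and hence transfer bull-freeness, local perfectness, and the $\F$-condition.
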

\begin{proof}
    Let $c_\F=2\gamma\ge 2$ where $\gamma$ is defined in \cref{elementary-bull-free-constant-perfect} for $\F$.
    We proceed by the induction on $\abs{V(G)}$.
    As every trigraph on at most $4$ vertices is perfect, we may assume that $\abs{V(G)}>8$
    and therefore neither $G$ nor $\overline G$ belongs to $\mathcal T_0$.
    Since the disjoint union of two perfect trigraphs is perfect, we may assume that $G$ is connected.

    Since $G$ is monogamous, there exists a partition $(S,T)$ of $V(G)$ such that both $G[S]$ and $G[T]$ have no switchable pairs.     
    So both $G[S]$ and $G[T]$ are locally perfect bull-free elementary graphs. 
    Suppose that $G$ is elementary. 
    By applying \cref{elementary-bull-free-constant-perfect} to both $G[S]$ and $G[T]$, we obtain a partition of $V(G)$ into at most $2\gamma$ subsets, each inducing a perfect induced subtrigraph without switchable pairs.
    Therefore we may assume that $G$ is not elementary.

    Suppose that $G$ has a homogeneous set $A$. 
    Let $a\in A$ and $G'=G\setminus (A\setminus\{a\})$.
Then trivially, $G'$ is locally perfect and bull-free.  
    By \cref{lem:austere-set}, $G'$ is austere.
    By the induction hypothesis, $G'$ admits 
    a partition $(X_1,\ldots,X_{k})$ of $V(G')$ 
    with $k\le c_\F$ 
    such that $G'[X_i]$ is perfect and has no switchable pair for each $i\in\{1,2,\ldots,k\}$.
    We may assume that $a\in X_1$. 
Since $G$ is connected and $A$ is a homogeneous set of~$G$, there is a vertex $v\in V(G)$ such that $v$ is strongly complete to $A$. 
    Since $G$ is locally perfect, $G[A]$ is perfect.
    By \cref{lovaszreplacement-trigraph}, 
$G[X_1\cup A]$
    is still perfect.
    Furthermore, $G[X_1\cup A]$ has no switchable pair because both $G[A]$ and $G[X_1]$ have no switchable pair.
Then $(X_1\cup A,X_2,\ldots,X_{k})$ is a desired partition of $V(G)$.
    Thus, we may assume that $G$ has no homogeneous set.

    By \cref{bulls1:52}, $G$ has a hole $H$ of length $5$ with both a star and a leaf.
    By \cref{bull-free-with-pentameter}, $G$ has 
    a dominated tame homogeneous pair.
    Thus, there exists a maximal dominated tame homogeneous pair $(A,B)$.
    Since $G$ is locally perfect and $(A,B)$ is dominated, both $G[A]$ and $G[B]$ are perfect.

    Let $G_0$ be the trigraph obtained from $G$ by shrinking $(A,B)$.
    Observe that 
    every realization of $G_0$ is isomorphic to an induced subgraph of some realization of $G$.
This implies that $G_0$ is bull-free and locally perfect.

    Let $a$, $b$ be the vertices of $G_0$ corresponding to $A$, $B$, respectively. 
    By the induction hypothesis, 
    $G_0$ admits 
    a partition $(X_1,\ldots,X_{k})$ of $V(G_0)$ with $k\le c_\F$  
    such that $G_0[X_i]$ is perfect and has no switchable pair for each $i\in\{1,\ldots,k\}$.
    We may assume that $a\in X_1$ and $b\in X_2$
    because no $X_i$ contains switchable pairs.

    Let $X_1'=(X_1\setminus\{a\})\cup A$ and 
    $X_2'=(X_2\setminus \{b\})\cup B$.
    By \cref{lovaszreplacement-trigraph}, both $G[X_1']$ and $G[X_2']$ are perfect. 
    Furthermore, both $G[X_1']$ and $G[X_2']$ have no switchable pairs because $G$ is austere.
    Observe that for all $i\in\{3,\ldots,k\}$, $G[X_i]=G'[X_i]$. 
    Therefore $(X_1',X_2',X_3,\ldots,X_{k})$ is the desired partition of $V(G)$. 
\end{proof}

Since every graph is also an austere trigraph, we obtain \cref{thm:non-elementary-locally-perfect} as a direct corollary to \cref{prop:austere}.
Recall this implies the class of bull-free graphs is $2$-strongly Pollyanna by \cref{thm:ets-locally-perfect-basic}
\footnote{In fact our proof gives a slightly stronger statement; It is not difficult to see that our proof implies that for any $2$-good class $\mathcal{F}$ every bull-free $G$ in $\mathcal{F}$ satisfies $\chi(G) \leq \omega(G)^{\mathcal{O}(\tau)}$ where $\tau = \chi^{(2)}(\mathcal{F})$ is the minimum integer such that every triangle-free graph in $\F$ is $\tau$-colorable.
The proof of \cref{thm:non-elementary-locally-perfect} shows that $c_{\mathcal{F}} \leq 2\gamma \leq  \max\{12, 2\tau + 2\}$.
The argument of \cref{thm:ets-locally-perfect-basic}, shows that every \emph{basic} bull-free graph in $\mathcal{F}$ is $\omega^{c_{\mathcal{F}} +1}$-colorable.
As shown by Bourneuf and Thomass{\'{e}} \cite{bourneuf2023bounded}, \cref{thm:substitution-orig} can be improved to say that if a hereditary graph class $\mathcal{G}$ is $\chi$-bounded by $\omega^k$ then its closure under substitution is $\chi$-bounded by $\omega^{2k+3}$.
So by applying this to \cref{every-composite-has-a-hom-set}, we obtain that 
every bull-free $G \in \mathcal{F}$ satisfies $\chi(G) \leq \max 
\{\omega(G)^{4\tau + 7}, \omega(G)^{29}\}$.}.
We restate \cref{thm:non-elementary-locally-perfect} for the convenience of the reader.
\nonelementary*

\section{Non-Pollyanna classes}\label{sec:nonpollyanna}

A \emph{oriented tree} is an orientation of a tree.
For a positive integer $n$, a graph $G$ is an \emph{$n$-willow} 
if there exists an oriented tree $T$ with $V(G)\subseteq V(T)$ such that 
for every distinct pair $u$, $v$ of vertices of $G$, the vertices
$u$ and $v$ are adjacent if and only if 
$T$ has a directed path from~$u$ to~$v$ or from~$v$ to~$u$ whose length is not a multiple of~$n$.
In this case, we say $G$ is an $n$-willow defined by~$T$.
We will make extensive use of the following easy observation.
\begin{obs}\label{obs:multipartite}
    Let $n$ be a positive integer and let $T$ be an oriented tree. 
    If $P$ is a directed path in~$T$ and $G$ is an $n$-willow defined by~$T$, 
    then $G[V(P)\cap V(G)]$ is a complete multipartite graph.
\end{obs}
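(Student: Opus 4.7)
The plan is to read off adjacencies on $P$ directly from the positions of the $G$-vertices along the path and observe that this makes non-adjacency into a mod-$n$ equivalence relation.

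First, I would fix a linear order on the vertices of $P$: write $P = v_0 v_1 \cdots v_k$ with each arc oriented from $v_{i-1}$ to $v_i$. For any two distinct $v_i, v_j \in V(P) \cap V(G)$ with $i < j$, the sub-path of $P$ from $v_i$ to $v_j$ is a directed path in $T$ of length $j-i$. Since $T$ is an oriented tree, between any two vertices there is at most one directed path in each direction (it must be the unique path in the underlying tree), so this is the only directed path between $v_i$ and $v_j$ in $T$. Consequently, by the definition of an $n$-willow, $v_iv_j \in E(G)$ if and only if $n \nmid (j-i)$.

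Second, I would partition $V(P) \cap V(G)$ into classes $V_0, V_1, \ldots, V_{n-1}$ according to the residue of the index $i$ modulo $n$. By the previous paragraph, two distinct vertices of $V(P) \cap V(G)$ are adjacent in $G$ exactly when their indices lie in different residue classes. Thus each $V_r$ is an independent set in $G$, and every pair of vertices from distinct classes forms an edge, so $G[V(P)\cap V(G)]$ is complete multipartite with parts $V_0, \ldots, V_{n-1}$ (some of which may be empty).

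There is no real obstacle here: the only point to be careful about is that the adjacency condition in the definition of an $n$-willow quantifies over directed paths, but in an oriented tree the directed path between two comparable vertices is unique, so the condition reduces to a single length check $n \nmid (j-i)$.
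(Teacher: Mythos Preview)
Your argument is correct. The paper does not supply a proof of this observation at all (it is stated as an ``easy observation'' and used freely thereafter), and what you have written is exactly the natural way to see it: index the vertices of $P$ by their position along the path, note that uniqueness of paths in the underlying tree forces the only directed path between $v_i$ and $v_j$ to be the subpath of $P$ of length $|j-i|$, and conclude that adjacency in $G$ is governed by $i\not\equiv j\pmod n$, yielding a complete multipartite graph with residue classes as parts.
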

A graph is a \emph{willow} if it is an $n$-willow for some positive integer $n$. We remark that by subdividing certain edges of the associated oriented tree, one can show that if a graph is an $n$-willow, then it is also an $n'$-willow for all $n'\ge n$.
On the other hand, the clique number of an $n$-willow is at most $n$ and $K_n$ is an $n$-willow, so for every positive integer $n\ge 2$, there are $n$-willows that are not $n'$-willows for any positive integer $n'<n$.

The main result of this section is the following theorem which relates willows and Pollyanna classes of graphs.
\begin{theorem}\label{non-pollyanna}
    If $\mathcal{F}$ is a finite set of graphs, none of which is a willow, then the class of $\F$-free graphs is not Pollyanna.
\end{theorem}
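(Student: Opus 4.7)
The plan is to exhibit, for any finite set $\mathcal{F}$ of non-willows, a $\chi$-bounded subclass of $\mathcal{F}$-free graphs that is not polynomially $\chi$-bounded; this directly witnesses that the class of $\mathcal{F}$-free graphs is not Pollyanna.

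The first step is the routine observation that the class of willows is closed under taking induced subgraphs: if $G$ is an $n$-willow defined by an oriented tree $T$ with $V(G)\subseteq V(T)$, then for every $S\subseteq V(G)$ the induced subgraph $G[S]$ is again an $n$-willow defined by the same oriented tree $T$, now with distinguished subset $S\subseteq V(T)$, since the adjacency criterion depends only on directed paths in $T$ between vertices of $S$. Consequently, if no graph in $\mathcal{F}$ is a willow (i.e.\ is not an $n$-willow for any $n$), then no willow contains any member of $\mathcal{F}$ as an induced subgraph; in particular, every willow is $\mathcal{F}$-free.

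The main work is to construct a class $\mathcal{W}$ of willows that is $\chi$-bounded but not polynomially $\chi$-bounded. I would do this by adapting the Briański--Davies--Walczak construction so that it lives inside an oriented tree. Concretely, the plan is to realize the iterated blow-up and shift operations used in their construction as modifications to an oriented tree $T$ together with a distinguished subset $V(G)\subseteq V(T)$, chosen so that residues of directed-path lengths modulo $n$ encode the desired adjacencies. \cref{obs:multipartite}, which says that along each directed path the induced subgraph on $V(G)$ is complete multipartite, suggests attaching long directed paths to build up cliques of prescribed size, while subdividing edges can be used to tune residues modulo $n$ and thus suppress spurious adjacencies. Once this encoding is in place, the Briański--Davies--Walczak analysis should yield both $\chi$-boundedness (because $\chi$ is controlled by $\omega$ via a fixed, though non-polynomial, function) and the failure of polynomial $\chi$-boundedness (because $\chi$ grows super-polynomially in $\omega$ across $\mathcal{W}$).

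The main obstacle is the encoding step: translating each primitive operation of the Briański--Davies--Walczak construction into modifications of an oriented tree that preserve the willow property, with careful bookkeeping of residues of directed path lengths modulo $n$. Particular care is needed to ensure that no unwanted directed paths of the wrong residue appear, which is where subdivision and the freedom to enlarge $n$ come in. Assuming this encoding goes through, the resulting class $\mathcal{W}$ is contained in the class of $\mathcal{F}$-free graphs (by the first step), is $\chi$-bounded, and is not polynomially $\chi$-bounded, thereby establishing that the class of $\mathcal{F}$-free graphs is not Pollyanna.
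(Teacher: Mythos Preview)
Your proposal has a genuine gap at exactly the point you flag as the ``main obstacle'': you never show that the Bria\'nski--Davies--Walczak graphs can be realized as willows, and this is not a bookkeeping detail but the entire content of the argument. The BDW construction is built on an acyclically oriented graph $Y$ (Tutte's construction or a variant) that has many cycles as an undirected graph; the graph $Z_p$ is then obtained by declaring $u,v$ adjacent whenever $Y$ has a directed $u$--$v$ path of length $\not\equiv 0 \pmod p$. For $Z_p$ itself to be a $p$-willow you would need an oriented \emph{tree} $T$ supporting exactly the same directed-path adjacencies on $V(Z_p)$, and there is no reason to expect such a $T$ exists. Indeed, your plan would implicitly establish that the class of all willows is not polynomially $\chi$-bounded, which is far from clear (note that coloring vertices of an $n$-willow by a potential modulo $n$ already gives $\chi\le n$, so one would need $n$-willows with $\omega$ much smaller than $n$ but $\chi$ close to $n$).

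The paper sidesteps this completely. It does \emph{not} arrange for the graphs in its class to be willows; instead it replaces Tutte's construction by the Ne\v{s}et\v{r}il--R\"odl variant (\cref{girth2}) in which every undirected cycle contains at least $g$ changes of direction. Taking $g>\binom{N}{2}$ where $N=\max_{F\in\mathcal F}|V(F)|$, any set $X$ of at most $N$ vertices in $Z_p$ has the property that the union of the (at most $\binom{N}{2}$) witnessing directed paths is acyclic, since it cannot contain a cycle with $g$ direction changes. That forest, extended to a tree, certifies that $Z_p[X]$ is a $p$-willow (\cref{lem:is-willow}). Since no $F\in\mathcal F$ is a willow while every $N$-vertex induced subgraph of $Z_p$ is one, each $Z_p$ is $\mathcal F$-free; $\chi$-boundedness and the failure of polynomial $\chi$-boundedness then come from \cref{non-poly} exactly as in BDW. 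The idea you are missing is that only the induced subgraphs of size at most $N$ need to be willows, and a local girth condition on the oriented template guarantees this without forcing the whole graph to be a willow.
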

To construct $\chi$-bounded hereditary classes of graphs that are not polynomially $\chi$-bounded, 
we need the following lemmas,  explicitly stated in 
Bria{\'n}ski, Davies, and Walczak~{\cite{BDW2022}},
originated from \cite{CARBONERO202363} and \cite{KT1992}.

\begin{lemma}[Bria{\'n}ski, Davies, and Walczak~{\cite[Lemma 4]{BDW2022}}]\label{non-poly2}
    Let $k$ be a positive integer. Then, there is a graph $G$ with an acyclic orientation of its edges satisfying the following.
    \begin{enumerate}[label=\rm(A\arabic*)]
\item \label{A:colors} $\chi(G)=k$.
\item \label{A:unique} For every pair of vertices $u$ and $v$, there is at most one directed path from $u$ to~$v$ in $G$.
\item \label{A:path} There is a directed path in $G$ on $k$ vertices.
\item \label{A:coloring} There is a $k$-coloring $\phi$ of $G$ such that 
for every directed path in $G$ of non-zero length, their ends $u$ and $v$ satisfy that $\phi(u)\neq\phi(v)$.
\end{enumerate}
\end{lemma}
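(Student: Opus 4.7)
The plan is to prove the lemma by induction on $k$, following the construction of Bria\'nski--Davies--Walczak~\cite{BDW2022}, building on ideas of Kierstead--Trotter~\cite{KT1992}. The base case $k=1$ is trivial: a single vertex with the empty orientation satisfies (A1)--(A4) vacuously.

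For the inductive step, assume the statement for $k-1$ and let $H$ witness it with acyclic orientation $\vec{H}$ and $(k-1)$-coloring $\psi$. I would build $G$ by taking $N$ pairwise disjoint copies $H^{(1)},\dots,H^{(N)}$ of $H$, with $N$ a sufficiently large Ramsey-type integer, and then inserting a carefully restricted pattern of cross-edges, each oriented from $H^{(i)}$ to $H^{(j)}$ for some $i<j$. The pattern is chosen so that each cross-edge points from a distinguished \emph{exit} vertex of $H^{(i)}$ to a distinguished \emph{entry} vertex of $H^{(j)}$, with enough cumulative multiplicity to boost the chromatic number, but parsimoniously enough to preserve the unique-directed-path property.

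A useful conceptual simplification: once the construction yields an acyclic orientation whose longest directed path has exactly $k$ vertices, property (A4) comes for free from Mirsky's theorem applied to the reachability poset, which partitions $V(G)$ into $k$ antichains; the resulting coloring is also a proper coloring of the underlying undirected graph, giving $\chi(G)\le k$. So it suffices to establish (i) acyclicity, (ii) unique directed paths between any ordered pair, (iii) longest directed path of exactly $k$ vertices, and (iv) $\chi(G)\ge k$. Items (i)--(iii) will follow from the cross-edge orientation rule and the inductive path-length bound inside each copy, once one checks that any directed path in $G$ visits copies in strictly increasing index order and within each copy is uniquely determined by the inductive hypothesis. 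The chromatic lower bound (iv) will be obtained through a pigeonhole/Ramsey argument: by induction each copy is forced to use all $k-1$ colors under any proper $(k-1)$-coloring of $G$, and choosing $N$ large enough guarantees two copies whose distinguished exit and entry vertices share a color, producing a monochromatic cross-edge and the required contradiction.

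The main obstacle will be reconciling the conflicting demands on the cross-edge pattern: it must be sparse enough to preserve (ii) and the longest-path bound (iii), yet rich and well-distributed over the $N$ copies to force the chromatic number to grow for (iv). Finding the correct trade-off, together with the right recursive definition of the distinguished ``entry'' and ``exit'' vertices inherited from the inductive copies, will be the key technical step; the Ramsey bound on $N$ must be chosen to exploit the inductive (A1) and (A4) properties on each copy precisely enough that pigeonholing over copies yields a monochromatic cross-edge for every candidate $(k-1)$-coloring.
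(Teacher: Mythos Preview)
The paper does not prove this lemma itself; it cites \cite{BDW2022} and remarks that the natural orientation of Tutte's (Descartes's) construction --- a large independent set $S$ together with, for every $|V(H)|$-subset $T\subseteq S$, a fresh copy of the inductive graph $H$ attached to $T$ by a perfect matching, all matching edges oriented consistently relative to $S$ --- already enjoys (A1)--(A4).

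Your proposal is a plan rather than a proof, and it has a genuine gap. You never specify the cross-edge pattern; you explicitly flag ``finding the correct trade-off'' between sparsity (for (A2) and the longest-path bound) and richness (for $\chi\ge k$) as the unfinished key step. This is not a routine detail: with cross-edges running from $H^{(i)}$ to $H^{(j)}$, directed paths can chain through several copies in increasing index order, and for most natural choices of ``exit'' and ``entry'' vertices either such chaining yields directed paths on far more than $k$ vertices, or the cross-edges are too few to force the chromatic number up (your Ramsey sketch is correspondingly vague about what exactly is being pigeonholed). Tutte's construction sidesteps this tension entirely: the central set $S$ is independent and all matching edges are oriented the same way, so every directed path meets $S$ at most once and otherwise lies inside a single copy of $H$, giving the exact path-length bound immediately; pigeonholing on the colours appearing in $S$ then yields $\chi\ge k$ cleanly. (This \emph{is} the construction used in \cite{BDW2022}, contrary to your description; it does not have the ``cross-edges between copies'' shape.) Your Mirsky observation --- that (A4) and $\chi\le k$ follow from the longest-path bound via the height colouring --- is correct and worth keeping, but the construction establishing (A2), the exact path length, and $\chi\ge k$ is absent.
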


\begin{lemma}[Bria{\'n}ski, Davies, and Walczak~{\cite[Lemmas 5 and 6]{BDW2022}}]\label{non-poly}
Let $p\le k$ be positive integers with $p$ prime, and let $G$ be a graph with an acyclic orientation of its edges satisfying \ref{A:colors}, \ref{A:unique}, \ref{A:path}, and \ref{A:coloring} for $k$.
Let $G_p$ be the graph obtained from $G$ by adding an edge $uv$ 
whenever $G$ has a directed path between $u$ and $v$ whose length is not divisible by $p$.
Then, $\omega(G_p)=p$ and every induced subgraph of $G$ with clique number $m<p$ has chromatic number at most ${m+2 \choose 3}$.
\end{lemma}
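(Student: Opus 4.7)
The plan is to exploit a path-length additivity property along chains in the acyclic orientation of $G$. I would first establish the following key lemma: if $u \prec v \prec w$ in the reachability order of the DAG, and $P_{uv}$, $P_{vw}$, $P_{uw}$ are the unique directed paths guaranteed by A2, then $\ell(P_{uw}) = \ell(P_{uv}) + \ell(P_{vw})$. The reason is that $P_{uv}$ and $P_{vw}$ share no vertex besides $v$, since any common vertex $x \neq v$ would, together with the subpaths between $x$ and $v$ in each of them, form a directed cycle, contradicting acyclicity. Hence the concatenation of $P_{uv}$ and $P_{vw}$ is a directed path from $u$ to $w$, which by A2 must coincide with $P_{uw}$.

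With additivity in hand, the identity $\omega(G_p) = p$ follows cleanly. For the lower bound, A3 supplies a directed path on $k \ge p$ vertices, whose first $p$ vertices form a $K_p$ in $G_p$ since every subpath between two of them has length in $\{1, \ldots, p-1\}$, not a multiple of $p$. For the upper bound, let $K$ be a clique in $G_p$. All pairs of vertices of $K$ are connected by directed paths in $G$, so by acyclicity $K$ is totally ordered by reachability as $v_1 \prec \cdots \prec v_t$. Setting $d_i = \ell(P_{v_1 v_i})$ with $d_1 = 0$, additivity gives $\ell(P_{v_i v_j}) = d_j - d_i$ for $i < j$, which must be nonzero modulo $p$ since $v_i v_j$ is an edge of $G_p$. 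Hence $d_1, \ldots, d_t$ are pairwise distinct modulo $p$, forcing $t \le p$.

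For the chromatic bound, let $H$ be an induced subgraph of $G_p$ with $\omega(H) = m < p$. I would prove $\chi(H) \le \binom{m+2}{3}$ by assigning each vertex $v \in V(H)$ a triple $(a(v), b(v), c(v))$ of non-negative integers with $a + b + c \le m-1$, so that any edge of $H$ forces at least one coordinate to differ; this yields the bound since there are exactly $\binom{m+2}{3}$ such triples. Natural candidates for the coordinates are the size of a largest chain-clique in $H$ ending at $v$, the analogous quantity starting at $v$, and a residue derived from the $d$-values modulo $p$. Equivalently, one could induct on $m$ via the Pascal identity $\binom{m+2}{3} = \binom{m+1}{3} + \binom{m+1}{2}$, by extracting $\binom{m+1}{2}$ stable sets of $H$ whose deletion destroys every $m$-clique and applying the inductive hypothesis to the remainder (which now has clique number at most $m-1$).

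The hard part will be verifying the edge-forcing property of the triples: no single level function strictly increases along every $G_p$-edge, because two $G_p$-adjacent vertices may lie in unrelated chain-cliques whose structures do not nest. So the three coordinates must be chosen so as to interact jointly with both the reachability order and the residues modulo $p$ in a way that handles all types of edges. This is where the primality of $p$ enters nontrivially, through the residue structure of cliques established in the first part.
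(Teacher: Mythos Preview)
The paper does not prove this lemma; it is quoted from \cite{BDW2022} (their Lemmas~5 and~6) without argument, so there is no in-paper proof to compare against. Your proof that $\omega(G_p)=p$ is correct and is the standard one: additivity of the unique path lengths along any chain forces the residues $d_1,\dots,d_t$ to be pairwise distinct modulo~$p$, and the first $p$ vertices of the directed path supplied by \ref{A:path} form a $p$-clique.

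For the chromatic bound you have not given a proof but a sketch of two possible strategies, and both have genuine gaps as stated. The triple $(a(v),b(v),c(v))$ you propose does not satisfy the sum constraint $a+b+c\le m-1$: if $C^-$ is a largest clique-chain in $H$ with top element $v$ and $C^+$ one with bottom element $v$, then $C^-\cup C^+$ is a chain in the reachability order, but for $u\in C^-\setminus\{v\}$ and $w\in C^+\setminus\{v\}$ the length $\ell(P_{uw})=\ell(P_{uv})+\ell(P_{vw})$ may well be $\equiv 0\pmod p$, so $C^-\cup C^+$ need not be a clique of $H$ and nothing bounds $a(v)+b(v)-1$ by $m$. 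Separately, a globally consistent residue function $c:V(H)\to\mathbb{Z}_p$ with $c(v)-c(u)\equiv\ell(P_{uv})$ for all $u\prec v$ need not exist even under \ref{A:unique}: one can have incomparable $u,u'$ with two incomparable common descendants $w,w'$, and then the four path lengths $\ell(P_{uw}),\ell(P_{u'w}),\ell(P_{uw'}),\ell(P_{u'w'})$ are unconstrained, obstructing any potential. (Concretely, take edges $u\to w$, $u\to x\to w'$, $u'\to w$, $u'\to w'$; all intervals are chains yet $\ell(P_{uw'})+\ell(P_{u'w})\ne\ell(P_{uw})+\ell(P_{u'w'})$.) The inductive route via Pascal's identity merely restates the difficulty one level down: producing $\binom{m+1}{2}$ stable sets meeting every $m$-clique is no easier than the original problem. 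You should consult \cite{BDW2022} and \cite{KT1992} for the actual construction.
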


Graphs $G$ as in \cref{non-poly2} exist, and Bria{\'n}ski, Davies, and Walczak~\cite{BDW2022} showed specifically that the natural orientation of Tutte's construction~\cite{descartes1947three,Descartes1954} has these properties.
Note that \ref{A:colors} implies \ref{A:path} by the following well-known lemma
due to Gallai~\cite{Gallai1968}, Hasse~\cite{Hasse1964}, Roy~\cite{Roy1967}, and Vitaver~\cite{Vitaver1962}.
\begin{lemma}[Gallai, Hasse, Roy, and Vitaver \cite{Gallai1968,Hasse1964,Roy1967,Vitaver1962}]\label{lem:path-colors}
Let $k$ be a positive integer.
    If a graph $G$ has an orientation with no directed path of length $k$,  
    then $\chi(G) \leq k$.
\end{lemma}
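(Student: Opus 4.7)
The plan is to establish this classical Gallai--Hasse--Roy--Vitaver theorem by coloring the vertices of $G$ according to lengths of longest directed paths in a suitable acyclic sub-orientation. So assume $G$ has an orientation $D$ in which every directed path has fewer than $k$ edges, and hence at most $k$ vertices.

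First, I would choose a spanning sub-orientation $D'$ of $D$ (obtained by retaining only some of the arcs of $D$) that is acyclic and maximal with respect to inclusion of arcs among such sub-orientations. Such a $D'$ exists since the empty sub-orientation is acyclic, and since every directed path in $D'$ is also a directed path in $D$, every directed path in $D'$ has at most $k$ vertices.

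Next, for each vertex $v \in V(G)$, define $c(v)$ to be the number of vertices on a longest directed path in $D'$ ending at $v$ (with $c(v)=1$ if no arc of $D'$ enters $v$). Since $D'$ is acyclic, $c$ is well-defined, and by the previous step $c(v) \in \{1, 2, \ldots, k\}$. I claim that $c$ is a proper coloring of $G$. Consider any edge $uv$ of $G$; it is oriented one way in $D$, say $u \to v$. If $u \to v$ lies in $D'$, then appending this arc to a longest path in $D'$ ending at $u$ gives a longer path ending at $v$, whence $c(v) \ge c(u)+1$. If $u \to v$ does not lie in $D'$, then by maximality of $D'$ the digraph $D' \cup \{u \to v\}$ contains a directed cycle, which must consist of the arc $u \to v$ together with a directed path in $D'$ from $v$ to $u$; extending a longest path ending at $v$ along this path yields $c(u) \ge c(v)+1$. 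In either case $c(u) \neq c(v)$, so $c$ is a proper coloring with at most $k$ colors.

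There is no serious obstacle here: this is a textbook argument, and the only delicate point is the maximality argument, which is used precisely to convert a non-selected arc $u \to v$ of $D$ into a backward directed path from $v$ to $u$ in $D'$.
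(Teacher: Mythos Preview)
Your argument is correct and is the standard textbook proof of the Gallai--Hasse--Roy--Vitaver theorem. The paper does not supply its own proof of this lemma; it simply cites it as a classical result, so there is nothing to compare against beyond noting that your approach is the canonical one. One minor point of phrasing: when you write ``extending a longest path ending at $v$ along this path yields $c(u) \ge c(v)+1$,'' the concatenation need not literally be a simple path, but the conclusion still follows since in an acyclic digraph every arc $x \to y$ forces $c(y) > c(x)$, and iterating along the $v$-to-$u$ path in $D'$ gives $c(u) > c(v)$.
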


Gir{\~a}o, Illingworth, Powierski, Savery, Scott, Tamitegama, and Tan~\cite{GIPSSTT2022} considered the construction of Nešetřil and Rödl~\cite{NESETRIL1979225}, which is a large-girth variation of the construction of Tutte~\cite{descartes1947three,Descartes1954}. Using the same natural orientation, they obtained the following.

\begin{lemma}[Gir{\~a}o, Illingworth, Powierski, Savery, Scott, Tamitegama, and Tan~{\cite[Lemma 10]{GIPSSTT2022}}]\label{lem:jane-and-friends}
    For every $g \ge 3$ and $k\ge 2$, there is a graph $Y$ with an orientation of its edges such that $\chi(Y)=k$ and every cycle in $Y$ contains at least $g$ changes of direction in the orientation.
\end{lemma}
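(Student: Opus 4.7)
I would prove this by induction on $k$, following the classical Nešetřil--Rödl construction of graphs of high girth and high chromatic number, but equipped at every stage with a carefully chosen orientation. The base case $k=2$ is immediate: a single oriented edge has chromatic number $2$ and no cycles at all, so the direction-change condition is vacuous.

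For the inductive step, suppose we already have an oriented graph $Y_{k-1}$ with $\chi(Y_{k-1})=k-1$ in which every cycle has at least $g$ direction changes, and let $m=|V(Y_{k-1})|$. I would choose an $m$-uniform hypergraph $\mathcal{H}$ with girth (in the Berge sense) strictly greater than $g$ and chromatic number exceeding the number $N$ of proper $(k-1)$-colorings of $Y_{k-1}$; such a hypergraph exists by the Erd\H{o}s--Hajnal / Nešetřil--Rödl theorem. The graph $Y_k$ is then built on vertex set $V(\mathcal{H})$ by placing, for each hyperedge $e\in E(\mathcal{H})$, an oriented copy of $Y_{k-1}$ on $e$ via a bijection between $e$ and $V(Y_{k-1})$. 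Because $\mathcal{H}$ has large girth, any two hyperedges share at most one vertex, and every cycle in $Y_k$ either lies inside a single copy of $Y_{k-1}$ or projects to a Berge cycle of $\mathcal{H}$ of length more than $g$.

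The chromatic-number lower bound $\chi(Y_k)\ge k$ is obtained by the standard pigeonhole argument: a proper $(k-1)$-coloring of $Y_k$ restricts to a proper $(k-1)$-coloring of $Y_{k-1}$ on every copy, giving a map from $E(\mathcal{H})$ to the $N$ proper $(k-1)$-colorings of $Y_{k-1}$; since $\chi(\mathcal{H})>N$, one derives a Gallai-type contradiction by finding a hyperedge on which the induced coloring must collapse. The upper bound $\chi(Y_k)\le k$ is ensured by arranging the construction so that the combined orientation on $Y_k$ is acyclic with longest directed path of length $k-1$, and invoking the Gallai--Hasse--Roy--Vitaver theorem (\cref{lem:path-colors}).

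The direction-change property is the heart of the argument, and the main obstacle. For a cycle $C$ contained in a single copy of $Y_{k-1}$, the inductive hypothesis gives the $g$ direction changes for free. The difficulty is a cycle $C$ that crosses several copies: such a $C$ projects onto a Berge cycle in $\mathcal{H}$ of length at least $g+1$, but one must ensure that the projection actually generates direction changes in $Y_k$. The right way to handle this is to fix an acyclic orientation of $\mathcal{H}$ itself, and then choose the bijection between each hyperedge $e$ and $V(Y_{k-1})$ so that directed paths inside the copy of $Y_{k-1}$ respect the induced linear order on $e$. With this coordination, any traversal of $C$ from one copy to the next at a shared vertex is forced either to reverse its $\mathcal{H}$-direction---producing a direction change---or to continue within the previous copy, which contradicts the assumption that $C$ crosses copies at that vertex. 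Each of the $\ge g+1$ projected crossings then contributes at least one direction change, so $C$ has at least $g$ of them, completing the induction.
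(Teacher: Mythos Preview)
The paper does not prove this lemma; it is quoted from \cite[Lemma~10]{GIPSSTT2022}, with only the remark that the argument there orients the Ne\v{s}et\v{r}il--R\"odl variant of Tutte's construction. So there is no proof in the present paper to compare against beyond that one-line description.

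Your reconstruction has a genuine gap: the construction you describe is not the Ne\v{s}et\v{r}il--R\"odl construction, and the upper bound $\chi(Y_k)\le k$ fails for it. In the actual construction one keeps $V(\mathcal{H})$ as an independent set and, for each hyperedge $e$, adds a \emph{fresh disjoint} copy of $Y_{k-1}$ joined to $e$ by a perfect matching, with matching edges oriented from $V(\mathcal{H})$ into the copy. Then every directed path uses at most one matching edge and is thereafter trapped in a single copy, so has length at most $1+(k-2)=k-1$, and \cref{lem:path-colors} yields $\chi(Y_k)\le k$. In your amalgamated version with $V(Y_k)=V(\mathcal{H})$ and every edge respecting a global linear order, a directed path can thread through many overlapping copies in succession and be arbitrarily long, so \cref{lem:path-colors} gives nothing. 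Your lower-bound argument is also misstated: the map from $E(\mathcal{H})$ to the set of $N$ proper $(k-1)$-colorings of $Y_{k-1}$ has nothing to do with the \emph{vertex}-chromatic number $\chi(\mathcal{H})$, and no copy ever ``collapses'', since by hypothesis $Y_{k-1}$ \emph{is} $(k-1)$-colorable. In the matching construction the correct argument needs only $\chi(\mathcal{H})\ge k$: a monochromatic hyperedge forces its matched copy to avoid one color, contradicting $\chi(Y_{k-1})=k-1$.

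The direction-change argument is also too sketchy even for the correct construction: a cycle that crosses several copies traces out a closed walk in the hyperedge-incidence structure, not automatically a single Berge cycle, so extracting at least $g$ direction changes needs a genuine counting argument rather than the one sentence you give it.
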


The property \ref{A:coloring} also clearly holds for this construction, since the same natural orientation and coloring from the proof of Bria{\'n}ski, Davies, and Walczak~{\cite{BDW2022}} for the construction of Tutte~\cite{descartes1947three,Descartes1954} can be used.
Note that the orientation of $Y$ described in 
\cref{lem:jane-and-friends} is acyclic and satisfies \ref{A:unique} because all of its cycles have at least three changes in direction in the orientation.
By \cref{lem:path-colors}, \ref{A:path} holds for $Y$.
Thus, we obtain the following strengthening of \cref{non-poly2}.

\begin{lemma}\label{girth2}
    Let $g,k$ be positive integers with $g \ge 3$ and $k\ge 2$. Then, there is a graph $G$ with an orientation of its edges satisfying \ref{A:colors}, \ref{A:unique}, \ref{A:path}, and \ref{A:coloring} for $k$ and additionally:
    
\begin{enumerate}[label=\rm(B\arabic*)]
\item every cycle in $G$ contains at least $g$ changes of direction in the orientation.
\label{B:1}
\qed
\end{enumerate}
\end{lemma}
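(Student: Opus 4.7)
The plan is to take $G$ together with the orientation of its edges provided by \cref{lem:jane-and-friends}. This choice immediately delivers \ref{A:colors} (since $\chi(G) = k$) and \ref{B:1} (since every cycle has at least $g$ changes of direction in the orientation). What remains is to verify that the same orientation also satisfies \ref{A:unique}, \ref{A:path}, and \ref{A:coloring}.

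I would obtain \ref{A:unique} directly from \ref{B:1}: if two distinct directed paths from some $u$ to some $v$ existed, then taking the first vertex $x$ at which they diverge and the next vertex $y$ at which they reunite would produce a cycle with exactly two changes of direction (at $x$ and at $y$), contradicting $g \ge 3$. The same reasoning with zero changes of direction rules out directed cycles, so the orientation is acyclic; this acyclicity will be used below. For \ref{A:path}, I would apply the Gallai--Hasse--Roy--Vitaver theorem (\cref{lem:path-colors}) to turn \ref{A:colors} into the guaranteed existence of a directed path on at least $k$ vertices.

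For \ref{A:coloring}, using acyclicity I would define $\phi(v)$ to be the number of vertices on a longest directed path in $G$ ending at $v$. This $\phi$ is a proper coloring and is strictly increasing along every directed path, so the endpoints of any directed path of positive length receive different colors. The only nontrivial check is that $\phi$ uses at most $k$ colors, equivalently that the longest directed path in $G$ has at most $k$ vertices. This is precisely the content of the natural coloring argument that Bria{\'n}ski, Davies, and Walczak~\cite{BDW2022} gave for Tutte's construction, and since the orientation in \cref{lem:jane-and-friends} is the analogous natural orientation of the Ne\v{s}et\v{r}il--R\"odl construction, their argument transfers verbatim. This last transfer is the main obstacle I anticipate, but as noted in the paragraph preceding the statement, it amounts to an inspection of the recursive structure of the construction rather than a new idea.
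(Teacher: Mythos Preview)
Your proposal is correct and follows essentially the same route as the paper's justification (which is the paragraph preceding the lemma, the statement itself carrying only a \qed). You take $Y$ from \cref{lem:jane-and-friends}, read off \ref{A:colors} and \ref{B:1}, derive acyclicity and \ref{A:unique} from the fact that cycles have at least $g\ge 3$ direction changes, obtain \ref{A:path} via \cref{lem:path-colors}, and defer \ref{A:coloring} to the construction-specific argument of~\cite{BDW2022} transported to the Ne\v{s}et\v{r}il--R\"odl setting---exactly as the paper does. The only cosmetic difference is that you recast \ref{A:coloring} through the longest-path coloring and reduce it to bounding the longest directed path by $k$ vertices, whereas the paper simply asserts that the natural coloring from~\cite{BDW2022} applies; both formulations rely on the same inspection of the recursive construction.
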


\begin{lemma}\label{lem:is-willow}
    Let $g$, $k$ be positive integers with $g\ge 3$ and $k\ge 2$. 
    Let $p$ be a prime less than or equal to $k$. 
    Let $G$ be a graph with an orientation of its edges satisfying \ref{A:colors}, \ref{A:unique}, \ref{A:path}, and \ref{A:coloring} for $k$ and \ref{B:1} for~$g$. 
    Let $G'$ be the graph on $V(G)$ such that two vertices $u$, $v$ are adjacent in $G'$ if and only if there is a directed path between $u$ and $v$ whose length is not divisible by $p$.
    If $g> \binom{N}{2}$ for an integer $N$, then every induced subgraph of $G'$ with at most $N$ vertices is a $p$-willow.
\end{lemma}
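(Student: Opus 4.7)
The plan is to realize $G'[S]$ as a $p$-willow by constructing an oriented tree $T$ built from the directed $G$-paths between pairs of vertices in $S$.

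Fix $S \subseteq V(G)$ with $|S| \leq N$, and let $H$ be the subgraph of $G$ formed as the union of $P_{s,t}$ over all ordered pairs $(s,t)$ of distinct vertices of $S$ such that $G$ has a directed path from $s$ to $t$; by \ref{A:unique} each such $P_{s,t}$ is unique. Since every cycle of $G$ has at least $g \geq 3$ direction changes by \ref{B:1}, the orientation of $G$ is acyclic, so for each unordered pair in $S$ at most one direction yields a directed path. Hence $H$ is a union of at most $\binom{|S|}{2} \leq \binom{N}{2}$ directed paths.

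The crux is to show that $H$ is a forest. Suppose toward a contradiction that $H$ contains an undirected cycle $C$. By \ref{B:1}, $C$ has at least $g > \binom{N}{2}$ direction changes, so $C$ decomposes into $c \geq g$ maximal directed arcs $Q_1, \ldots, Q_c$, each going from a source to a sink of $C$; by \ref{A:unique} each $Q_j$ is the unique directed $G$-path between its two endpoints. I would argue that each path $P_{s,t}$ in the collection shares edges with at most one arc $Q_j$. First, $P_{s,t} \cap Q_j$ is either empty or a single connected directed sub-path of $Q_j$, since otherwise $P_{s,t}$ would leave $Q_j$ and return, producing (together with the corresponding sub-path of $Q_j$) two distinct directed $G$-paths between the same pair of vertices, contradicting \ref{A:unique}. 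Second, if $P_{s,t}$ met two distinct arcs $Q_i$ and $Q_j$ in sequence along $P_{s,t}$, then combining the off-$C$ portion of $P_{s,t}$ between $Q_i$ and $Q_j$ with the alternating source-sink structure of $C$ and using \ref{A:unique} together with acyclicity produces a second directed $G$-path between a single pair of vertices, a contradiction. Since every edge of $C$ lies in some path of the collection, this would force $c \leq \binom{N}{2}$, contradicting $c \geq g > \binom{N}{2}$.

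Once $H$ is known to be a forest, I extend it to an oriented tree $T$: fix one component $T_1$ of $H$, and for each other component $T_i$ introduce a new auxiliary vertex $w_i$ with oriented edges $w_i \to r_1$ and $w_i \to r_i$ for some chosen $r_1 \in V(T_1)$ and $r_i \in V(T_i)$. Each $w_i$ is a source of $T$, so no directed $T$-path traverses $w_i$ and no new directed paths between $S$-vertices are introduced. To conclude, I verify that $T$ realizes $G'[S]$ as a $p$-willow: for distinct $u, v \in S$, if $G$ has a directed $uv$-path $P_{u,v}$ of length $\ell$ then $P_{u,v} \subseteq H \subseteq T$ and the unique undirected $uv$-path in $T$ equals $P_{u,v}$, which is directed of length $\ell$, so willow-adjacency matches $G'$-adjacency via $\ell \pmod{p}$; if $G$ has no directed $uv$-path in either direction, then either $u$ and $v$ lie in different components of $H$ (so the $T$-path crosses a source-bridge $w_i$ and fails to be directed) or they lie in the same component with a direction change in the undirected $H$-path, and in either case $T$ admits no directed $uv$-path, matching the non-adjacency in $G'[S]$.

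The main obstacle is the combinatorial argument that a single directed path $P_{s,t}$ meets at most one arc of any cycle of $H$; this requires a delicate interplay between the uniqueness guaranteed by \ref{A:unique} and the alternating source-sink geometry of the cycle.
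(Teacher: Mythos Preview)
Your overall plan matches the paper's: form the union $H$ of the directed $G$-paths between pairs in $S$, argue $H$ is a forest, extend to an oriented tree, and verify the $p$-willow condition. The paper differs only cosmetically (it keeps just the paths for edges of $G'[X]$ rather than all pairs, and connects components via a single new root instead of your bridge vertices); your verification of the willow condition is fine.

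The gap is precisely where you flagged it, and your proposed repair does not work: the claim that a single directed path $P_{s,t}$ can meet at most one maximal directed arc of a cycle $C$ is \emph{false} even under \ref{A:unique}. Take the digraph on $\{x_0,\dots,x_9\}$ with the ten edges $x_{2i+1}\to x_{2i}$ and $x_{2i+1}\to x_{2i+2}$ (indices mod $10$) together with one extra edge $x_0\to x_5$. One checks directly that \ref{A:unique} holds; the $10$-cycle $C$ has ten arcs; yet the directed path $x_9\to x_0\to x_5\to x_6$ meets the two arcs $x_9\to x_0$ and $x_5\to x_6$. No second directed path between any pair is produced here, so the contradiction you sketch (``a second directed $G$-path between a single pair'') does not materialise when the two arcs are far apart on $C$.

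The paper's sentence at this point is terse but does not rely on your arc-by-arc claim. A correct way to complete the argument (and what the paper's one-line deduction is implicitly using) goes through \ref{B:1} and minimality rather than \ref{A:unique} alone. Suppose $H$ contains a cycle and pick $C\subseteq H$ with the \emph{fewest} direction changes. If some $P_{s,t}$ meets two arcs of $C$, take its off-$C$ sub-path $R$ between them (reducing if necessary so that $R$ is internally disjoint from $C$); then $R\cup C_1$ and $R\cup C_2$ are cycles in $H$ whose direction changes sum to at most $c+2$, so one of them has strictly fewer than $c$ direction changes, contradicting minimality. Hence for the minimal $C$ every $P_{s,t}$ meets at most one arc, giving $c\le\binom{|S|}{2}<g$, contrary to \ref{B:1}.
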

\begin{proof}
    Let $X$ be a set of at most $N$ vertices of $G'$. 
    We claim that $G'[X]$ is a $p$-willow. 
    Let $T$ be the union of all directed paths of $G$ between $u$ and $v$ whose length is not divisible by $p$ for all edges $uv$ of $G'[X]$.

    By \ref{A:unique}, we added at most $1$ directed path per every edge of $G'[X]$ and therefore in total $T$ consists of fewer than $g$ directed paths.
    By \ref{B:1}, every cycle in $G$ contains at least $g$ changes of direction 
    and therefore $T$ has no cycles. 
    Let $T'$ be a tree obtained from $T$ by adding a new vertex with an out-edge to one vertex of each component of $T$.
    Then $T'$ is a tree.     

    Observe that for distinct vertices $u$ and $v$ in $X$, if $T'$ has a directed path from $u$ to~$v$ whose length is not a multiple of $p$, 
    then so does $G$ and therefore 
    $G'$ contains the edge $uv$
    by the definition of $G'$.
    Conversely, if $G'[X]$ contains an edge $uv$, then $G$ contains a directed path $P$ between $u$ and $v$ whose length is not a multiple of $p$. 
    By \ref{A:unique}, such a path $P$ is unique and therefore $T'$ contains $P$.
    This proves that $G'[X]$ is a $p$-willow defined by $T'$.
\end{proof}

Now we can prove \cref{non-pollyanna}. We obtain a $\chi$-bounded class that is not polynomially $\chi$-bounded by combining \cref{non-poly} with \cref{girth2} for some suitably large~$g$ instead of \cref{non-poly2} as is done in \cite{BDW2022}. Then, it is just a matter of examining the induced subgraphs.

\begin{proof}[Proof of Theorem \ref{non-pollyanna}.]
    Let $\mathbb{N}$ be the set of positive integers.
    Let $N$ be the maximum number of vertices of a graph in $\mathcal{F}$ and let $g=\max(\binom{N}{2} +1,3)$.
    Choose a function $f:\mathbb N\to\mathbb N$ such that $f(1)=1$, $f(n)\ge \binom{n+2}{3}$ for all $n\in \mathbb{N}$, and $\lim_{n \to \infty} \frac{f(n)}{n^k} = \infty$ for every positive integer~$k$.
    In other words, we choose $f$ to be \say{superpolynomial}.

    Let us first construct a $\chi$-bounded class $\mathcal Z$ of graphs that is not polynomially $\chi$-bounded.
    For each prime $p$, let $Y_p$ be a graph with an orientation of its edges satisfying \ref{A:colors}--\ref{A:coloring} for $k:=f(p)$ and \ref{B:1} for~$g$, given by \cref{girth2}.
    For every prime $p$, we define $E_p$ to be the set consisting of all pairs~$\{u, v\}$ where $u, v \in V(Y_p)$ and $Y_p$ contains a directed path from $u$ to~$v$ or from $v$ to $u$ whose length is not divisible by $p$.
    Let $Z_p$ be the graph $(V(Y_p), E_p)$.
    Note that $E(Y_p)\subseteq E_p$.
    In other words, $Z_p$ can be obtained from $Y_p$ by adding the elements of $E_p$ to the edge set of $Y_p$.

    By \cref{non-poly}, we have that $\omega(Z_p) = p$ and 
    every induced subgraph $Z$ of $Z_p$ with clique number $m<p$ has chromatic number at most $\binom{m+2}{3}$.
    By \ref{A:colors} and \ref{A:coloring}, $\chi(Z_p)=k=f(p)$. 
    Let $\hat{\mathcal{Z}}$ be the set of all graphs $Z_p$ for each prime $p$ and let $\mathcal{Z}$ be the closure of $\hat{\mathcal{Z}}$ under taking induced subgraphs.
    Then $\mathcal{Z}$ is $\chi$-bounded by a $\chi$-bounding function~$f$.
    Since there are infinitely many primes and for every prime $p$ there is a graph $Z\in \mathcal{Z}$ with clique number $p$ and chromatic number $f(p)$, $\mathcal{Z}$ is not polynomially $\chi$-bounded by our choice of $f$.

    Now, suppose that the class $\mathcal C$ of $\mathcal F$-free graphs is Pollyanna. 
    Then $\mathcal Z\not\subseteq \mathcal C$ because $\mathcal Z$ is not polynomially $\chi$-bounded. 
    Then there exist a prime $p$ and a set $X \subseteq V(Z_p)$ such that $Z_p[X]$ is isomorphic to a graph $F\in\mathcal F$.
By \cref{lem:is-willow}, $Z_p[X]$ is a $p$-willow, contradicting the assumption that $\F$ contains no willows.
\end{proof}

We remark that by applying \cref{girth2,non-poly,lem:is-willow}, one can also obtain the following.

\begin{theorem}\label{non-nearchi}
    If $\mathcal{F}$ is a finite set of graphs, none of which is a willow, then for every positive integer~$q$, there is a class $\mathcal{G}$ of $\F$-free graphs that is not $\chi$-bounded, but such that every graph $G\in \mathcal{G}$ with $\omega(G)< q$ has chromatic number at most $\binom{q+1}{3}$.
\end{theorem}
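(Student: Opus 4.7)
The plan is to adapt the construction used in the proof of \cref{non-pollyanna}, but instead of letting the prime range over all primes, to fix a single prime $p \ge q$ and vary the chromatic-number parameter $k$ above it. Let $N$ be the maximum number of vertices of a graph in $\mathcal F$, let $g = \max(\binom{N}{2}+1, 3)$, and fix a prime $p \ge q$. For each integer $k \ge p$, apply \cref{girth2} to obtain an oriented graph $Y_k$ satisfying properties \ref{A:colors}--\ref{A:coloring} for $k$ and property \ref{B:1} for $g$, and let $Z_k$ denote the graph on $V(Y_k)$ whose edges are all pairs $\{u,v\}$ for which $Y_k$ contains a directed path from $u$ to $v$ or from $v$ to $u$ of length not divisible by $p$.

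Three properties of $Z_k$ do the work. First, by \cref{non-poly}, $\omega(Z_k)=p$ and every induced subgraph of $Z_k$ with clique number $m<p$ has chromatic number at most $\binom{m+2}{3}$. Second, since the $k$-coloring $\phi$ of $Y_k$ guaranteed by \ref{A:coloring} already distinguishes the endpoints of every directed path of positive length, $\phi$ remains a proper coloring of $Z_k$; combined with $\chi(Y_k)=k$ from \ref{A:colors}, this gives $\chi(Z_k)=k$. Third, by \cref{lem:is-willow}, every induced subgraph of $Z_k$ on at most $N$ vertices is a $p$-willow, so no member of $\mathcal F$ occurs as an induced subgraph of $Z_k$, and hence $Z_k$ is $\mathcal F$-free.

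Let $\mathcal G$ be the class of all induced subgraphs of $Z_k$ as $k$ ranges over integers at least $p$. Then $\mathcal G$ consists of $\mathcal F$-free graphs. For any $G \in \mathcal G$ with $\omega(G)<q$, we have $\omega(G) \le q-1 < p$, so by \cref{non-poly} applied to $Z_k \supseteq G$, $\chi(G) \le \binom{\omega(G)+2}{3} \le \binom{q+1}{3}$ as required. On the other hand, $\mathcal G$ is not $\chi$-bounded, since the graphs $Z_k \in \mathcal G$ all satisfy $\omega(Z_k)=p$ while $\chi(Z_k)=k$ is unbounded. There is no real difficulty beyond the choice of parameters: all the heavy lifting is done by the results already invoked, and the only new idea beyond \cref{non-pollyanna} is to decouple the clique number from the chromatic number by fixing a single prime $p$, which is what destroys $\chi$-boundedness while the bound $\binom{m+2}{3}$ continues to control chromatic number in the low-clique-number regime.
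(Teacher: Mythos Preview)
Your argument is correct and essentially identical to the paper's own proof: fix a prime $p\ge q$, for each $k\ge p$ take the high-girth oriented graph from \cref{girth2} and form $Z_k$ as in \cref{non-poly}, then let $\mathcal G$ be the hereditary closure of $\{Z_k:k\ge p\}$; the three needed properties ($\mathcal F$-freeness via \cref{lem:is-willow}, failure of $\chi$-boundedness via $\omega(Z_k)=p$ and $\chi(Z_k)=k$, and the low-clique-number chromatic bound via \cref{non-poly}) are exactly those the paper uses. The only cosmetic difference is that the paper invokes Bertrand's postulate to pick $p$ with $q\le p\le 2q$, but the upper bound on $p$ is never used, so your simpler ``fix any prime $p\ge q$'' is just as good.
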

\begin{proof}
    Let $p$ be a prime such that $q\le p\le 2q$ (such a prime exists by Bertrand’s postulate).
    Let $N$ be the maximum number of vertices of a graph in $\F$
    and let $g=\max(\binom{N}{2}+1,3)$.

    For each integer $k\ge p$, we are going to construct a graph $G_k$ as follows.
    By~\cref{girth2}, there is a graph $H_k$ with an orientation of its edges satisfying \ref{A:colors}--\ref{A:coloring} for $k$ and \ref{B:1} for~$g$.
    By~\cref{non-poly}, there is a graph $G_k$ obtained from $H_k$ by adding an edge $uv$ whenever $H_k$ has a directed path between $u$ and $v$ whose length is not divisible by $p$
    such that 
    $\omega(G_k)=p$ 
    and every induced subgraph of $G_k$ with clique number $m<p$ has chromatic number at most $\binom{m+2}{3}$.
    By~\labelcref{A:colors,A:coloring}, $\chi(G_k)=k$.
    Let $\G$ be the class of all induced subgraphs of $G_k$ for all~$k\ge p$.
    So, $\G$ is not $\chi$-bounded but every graph in $\G$ with $\omega(G)=m<q$ has chromatic number at most $\binom{m+2}{3}\le \binom{q+1}{3}$.

    By \cref{lem:is-willow}, every graph in $\G$ with at most $N$ vertices is a $p$-willow and therefore $\G$ is $\F$-free.    
\end{proof}

\section{Forbidden induced subgraphs for willows} \label{sec:willows}

In this section, we describe some forbidden induced subgraphs for the class of willows.
We only aim to sample the forbidden induced subgraphs rather than to find an exhaustive list.
We believe there are many more.
Our main idea is to use \cref{obs:multipartite}, which says that if $G$ is an $n$-willow defined by an oriented tree $T$, then vertices on a directed path on $T$ cannot induce $K_2\cup K_1$ in $G$, because $K_2\cup K_1$ is not a complete multipartite graph.

A $10$-vertex graph $G$ is 
a \emph{pentagram spider} if it has a perfect matching $M$ such that  $G\setminus M$ has a component isomorphic to $K_5$.
Note that vertices not in the component isomorphic to~$K_5$ are allowed to be adjacent to each other.
See \cref{fig:willow-forbids} for an illustration.

\begin{proposition}\label{pentagram spider}
    No pentagram spider is a willow.
\end{proposition}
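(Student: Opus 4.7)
The plan is to suppose a pentagram spider $G$ is an $n$-willow defined by an oriented tree $T$, and to exhibit three vertices of $G$ that induce $K_2\cup K_1$ while simultaneously lying on a single directed path of $T$, contradicting \cref{obs:multipartite} since $K_2\cup K_1$ is not complete multipartite.

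Let $v_1,\ldots,v_5$ be the vertices of the $K_5$-component of $G\setminus M$ and, for each $i$, let $u_i$ denote the $M$-partner of $v_i$; by the definition of a pentagram spider, $u_iv_i\in E(G)$ while $u_iv_j\notin E(G)$ for $j\ne i$. First I would show that the $v_i$'s all lie on a common directed path of $T$. Because $T$ is an oriented tree, reachability is a partial order $\preceq$ on $V(T)$; and because every pair $v_iv_j$ is an edge of $G$, every such pair is $\preceq$-comparable. So the $v_i$'s form a chain, which after reindexing we may take to be $v_1\preceq v_2\preceq\cdots\preceq v_5$. The uniqueness of paths in a tree, combined with the fact that no directed walk in $T$ can traverse an edge in both directions, then forces the directed $v_1v_5$-path $P^*$ in $T$ to pass through each $v_i$.

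Next I would apply the same non-backtracking principle to $v_3$ and its pendant $u_3$. The edge $u_3v_3\in E(G)$ yields $u_3\preceq v_3$ or $v_3\preceq u_3$. In the first case, concatenating the directed $u_3v_3$-path with the directed $v_3v_5$-subpath of $P^*$ produces a directed path of $T$ containing $\{u_3,v_4,v_5\}$; but $G[\{u_3,v_4,v_5\}]\cong K_2\cup K_1$, contradicting \cref{obs:multipartite}. The second case is symmetric: concatenating the directed $v_1v_3$-subpath of $P^*$ with the directed $v_3u_3$-path gives a directed path of $T$ containing $\{v_1,v_2,u_3\}$, and $G[\{v_1,v_2,u_3\}]\cong K_2\cup K_1$ again.

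The only non-trivial ingredient is the non-backtracking observation, used to certify that the concatenation of two directed paths meeting at an endpoint is in fact the unique tree path between their outer endpoints; this should be essentially immediate from the fact that each edge of $T$ has a fixed orientation. The choice of $v_3$ is what makes the argument work: its central position along the chain guarantees at least two $v$'s both strictly above and strictly below it in $\preceq$, so whichever way the comparison between $u_3$ and $v_3$ resolves, a forbidden triple is available.
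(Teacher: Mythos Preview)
Your proof is correct and follows essentially the same approach as the paper: place the $K_5$ on a single directed path of $T$, take the middle vertex $v_3$ and its pendant $u_3$, and use the resulting directed path through either $\{u_3,v_4,v_5\}$ or $\{v_1,v_2,u_3\}$ to contradict \cref{obs:multipartite}. The only cosmetic difference is that the paper disposes of the second case by a WLOG (reversing all edges of $T$), whereas you treat both cases explicitly and make the non-backtracking/partial-order reasoning more explicit.
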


\begin{proof}
    Let $G$ be a pentagram spider
    and $M$ be a perfect matching of $G$ such that $G\setminus M$ has a clique $A$ of size $5$.
    Let $T$ be an oriented tree and suppose that $G$ is a willow defined by $T$.
   Then by definition $V(G) \subseteq V(T)$ and for every edge $uv\in E(G)$, there is a directed path from $u$ to~$v$ or from $v$ to $u$ in~$T$.
    Since $A$ is a clique of $G$, there is a directed path $P$ in~$T$ which contains all vertices of $A$. 
    Let $x_1,x_2,x_3,x_4,x_5$ be the vertices of $A$ 
    in the order of their appearances in $P$.
    Let $y_1,y_2,y_3,y_4,y_5$ be the vertices of $G$ such that $x_iy_i\in M$ for all $i=1,2,\ldots,5$.
    Since $x_3y_3\in E(G)$, there is some directed path $P'$ in~$T$ from $y_3$ to $x_3$ or from $x_3$ to $y_3$.
    By reversing the orientation of all edges of $G$ and $T$ and switching the labels of $x_1$, $x_2$ with $x_5$, $x_4$ if necessary, we may assume that $P'$ is a directed path from $y_3$ to $x_3$.
    Then, there is a directed path $P''$ in~$T$ containing $y_3,x_3,x_4,x_5$ in order.
    Then $G[\{y_3,x_4,x_5\}]$ is not a complete multipartite graph, contradicting \cref{obs:multipartite}.
\end{proof}

A $12$-vertex graph is a \emph{tall strider} if it has a clique $C=\{x_1,x_2,x_3\}$ of size $3$ such that 
$N(x_1)\setminus C$, $N(x_2)\setminus C$, and $N(x_3)\setminus C$ are disjoint cliques of size $3$. 
We remark that there can be edges between $N(x_i)\setminus C$ and $N(x_j)\setminus C$ for distinct $i$, $j$.
See \cref{fig:willow-forbids} for an illustration. 

\begin{proposition}\label{tall strider}
    No tall strider is a willow.
\end{proposition}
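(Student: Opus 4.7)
The plan is to argue by contradiction. Suppose a tall strider $G$ is a willow, defined by some oriented tree $T$. Write $C = \{x_1, x_2, x_3\}$ for the central triangle of $G$ and $T_i = N_G(x_i) \setminus C$ for $i \in \{1,2,3\}$. The disjointness of $T_1, T_2, T_3$ and their disjointness from $C$ imply that every vertex of $T_2$ is non-adjacent in $G$ to both $x_1$ and $x_3$. Hence, for any two distinct $y, y' \in T_2$ and any $z \in \{x_1, x_3\}$, the induced subgraph $G[\{y, y', z\}]$ is a copy of $K_2 \cup K_1$, which is not a complete multipartite graph. By \cref{obs:multipartite}, no such triple can lie on a common directed path of $T$. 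The goal is to exhibit one that does.

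Since $C$ is a clique of size $3$, there is a directed path $Q$ of $T$ containing $x_1, x_2, x_3$, and by reversing orientations if necessary we may assume they appear in the order $x_1, x_2, x_3$ along $Q$. Similarly, since $\{x_2\} \cup T_2$ is a clique of size $4$, there is a directed path $P_2$ of $T$ containing $x_2$ and every vertex of $T_2$.

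The heart of the proof is a case analysis at the vertex $x_2$, organised by two features: how the three vertices of $T_2$ are distributed between the portion of $P_2$ that precedes $x_2$ and the portion that follows $x_2$, and whether the incoming edge of $P_2$ at $x_2$ coincides with the incoming edge $x_1\to x_2$ of $Q$, or the outgoing edge of $P_2$ at $x_2$ coincides with the outgoing edge $x_2\to x_3$ of $Q$. If an edge of $Q$ at $x_2$ coincides with an edge of $P_2$, then $x_1$ or $x_3$ itself lies on $P_2$, and two appropriately chosen vertices of $T_2$ together with this $x_i$ form the desired triple on $P_2$. If no edge coincides, then a suitable initial segment of $Q$ can be concatenated with a suitable tail of $P_2$ (or vice versa) through $x_2$ to yield a new directed path of $T$ containing $x_1$ or $x_3$ together with at least two vertices of $T_2$. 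In every case one exhibits three vertices forming a $K_2 \cup K_1$ on a common directed path of $T$, contradicting \cref{obs:multipartite}.

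The main obstacle is the bookkeeping at $x_2$. One has to verify carefully that each claimed concatenation really produces a directed path rather than a walk with repeated vertices; this uses the uniqueness of directed paths in an oriented tree, so that two directed paths meeting at $x_2$ via distinct edges combine into one longer directed path. One also has to check, for every possible partition of the three vertices of $T_2$ between the two sides of $x_2$ along $P_2$ (including the extreme cases in which all three lie on one side), that the triple singled out on the resulting directed path genuinely contains two vertices of $T_2$ and one of $\{x_1, x_3\}$.
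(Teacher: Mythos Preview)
Your approach is the same as the paper's, and the core idea---pigeonhole on the three vertices of $T_2$ relative to $x_2$ along $P_2$, then concatenate with the relevant half of $Q$ to place two $T_2$-vertices and one of $x_1,x_3$ on a common directed path---is exactly right. However, the case analysis about whether an edge of $P_2$ at $x_2$ coincides with an edge of $Q$ at $x_2$ is unnecessary, and the claim you make in that case is not correct: if, say, the outgoing edge of $P_2$ at $x_2$ equals the outgoing edge of $Q$ at $x_2$, it does \emph{not} follow that $x_3$ lies on $P_2$; the two paths can diverge at any later vertex.

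The reason no such case analysis is needed is that in a tree, concatenating a directed path ending at $x_2$ with a directed path starting at $x_2$ \emph{always} yields a directed path: a repeated vertex would give an undirected cycle in $T$. So the paper simply says: if two vertices $a,b\in T_2$ lie after $x_2$ on $P_2$, then the $x_1$-to-$x_2$ segment of $Q$ followed by the $x_2$-onwards segment of $P_2$ is a directed path containing $x_1,a,b$; otherwise two vertices of $T_2$ lie before $x_2$ on $P_2$, and symmetrically one gets a directed path containing $a,b,x_3$. Either way \cref{obs:multipartite} is contradicted. (Also, a minor point: reversing orientations can only swap the first and last of the three $x_i$ along $Q$; to place $x_2$ in the middle you also use the symmetry among $x_1,x_2,x_3$ to relabel.) Drop the edge-coincidence cases and your proof becomes the paper's.
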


\begin{proof}
    Let $G$ be a tall strider with a clique $C$ of size $3$ such that $N(v)\setminus C$ for all $v\in C$ are disjoint cliques of size $3$.
    Let $T$ be an oriented tree and suppose that $G$ is a willow defined by~$T$.
    Since $C$ is a clique of $G$, there is a directed path $P$ in~$T$ that contains all vertices of~$C$. 
    Let $x_1$, $x_2$, $x_3$ be the vertices in $C$ such that $P$ is a directed path from $x_1$ to~$x_3$.
    Similarly, since $(N(x_2)\setminus C)\cup\{x_2\}$ is a clique, there exists a directed path $P'$ in~$T$ that contains all vertices of 
    $(N(x_2)\setminus C)\cup\{x_2\}$.
    If two vertices, say $a$, $b$ of $N(x_2)\setminus C$ come after~$x_2$ in $P'$, then $T$ contains a directed path containing $x_1$, $x_2$, $a$, and $b$. However, $G[\{x_1,a,b\}]$ is not a complete multipartite graph, contradicting~\cref{obs:multipartite}.
    Thus two vertices, say $a$, $b$ of $N(x_2)\setminus C$ come before~$x_2$ in $P'$. 
    Then $T$ contains a directed path containing $a$, $b$, $x_2$, $x_3$. Again, $G[\{a,b,x_3\}]$ is not a complete multipartite graph, contradicting~\cref{obs:multipartite}.
\end{proof}

A $10$-vertex graph is 
a \emph{short strider} if it has a clique $C=\{x_1,x_2,x_3,x_4\}$ of size $4$ such that 
$N(x_1)\setminus C$, $N(x_2)\setminus C$, and $N(x_3)\setminus C$ are disjoint cliques of size $2$.
We remark that there can be edges between $N(x_i)\setminus C$ and $N(x_j)\setminus C$ for distinct $i$, $j$.
See \cref{fig:willow-forbids} for an illustration.

\Needspace{3\baselineskip}
\begin{proposition}\label{short strider}
    No short strider is a willow.
\end{proposition}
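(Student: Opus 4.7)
The plan closely parallels the proofs of \cref{pentagram spider} and \cref{tall strider}. Assume for contradiction that a short strider $G$ with prescribed clique $C=\{x_1,x_2,x_3,x_4\}$ and attached pairs $A_i=\{a_i,b_i\}=N(x_i)\setminus C$ for $i\in\{1,2,3\}$ is a willow defined by an oriented tree $T$. The crucial structural observation is that $x_j$ is non-adjacent to every vertex of $A_i$ whenever $x_j\in C\setminus\{x_i\}$: for $j\in\{1,2,3\}\setminus\{i\}$ this follows from $A_j\cap A_i=\emptyset$, and for $j=4$ it follows from the definition (as depicted in \cref{fig:willow-forbids}, where $x_4$ is drawn with no edges to any $A_\ell$).

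Since $C$ is a clique of size $4$, $T$ contains a directed path $P$ whose $C$-vertices appear in some order $x_{\sigma(1)},x_{\sigma(2)},x_{\sigma(3)},x_{\sigma(4)}$. Because $x_4$ occupies only one of the four positions, at least one of $x_1,x_2,x_3$ lies in an interior position (position $2$ or $3$); fix such a choice of $i$. The triangle $\{x_i,a_i,b_i\}$ lies on some directed path $P'$ in $T$, and I split into three cases depending on whether both of $a_i,b_i$ come after $x_i$ on $P'$, both come before, or one lies on each side.

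In the first case, take any $C$-vertex $x_j$ preceding $x_i$ on $P$ (available since $x_i$ is interior) and concatenate the subpath of $P$ from $x_j$ to $x_i$ with the subpath of $P'$ from $x_i$ through both $a_i$ and $b_i$. The concatenation is a directed path in $T$, since a common internal vertex $v\neq x_i$ would force two oppositely oriented copies of the unique $T$-path from $v$ to $x_i$. By \cref{obs:multipartite}, $\{x_j,a_i,b_i\}$ must induce a complete multipartite subgraph of $G$; but by the key observation and the fact that $a_i\sim b_i$, this set induces $K_2\cup K_1$, a contradiction. The second case is symmetric, using a $C$-successor $x_k$ of $x_i$ on $P$. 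In the third case, form analogous concatenations $Q_1$ (from $x_{\sigma(1)}$ through $x_i$ to $b_i$) and $Q_2$ (from $a_i$ through $x_i$ to $x_{\sigma(4)}$). Since $x_i$ sits at position $2$ or $3$ of $\sigma$, one of $Q_1,Q_2$ contains at least two $C$-vertices distinct from $x_i$ together with one of $a_i,b_i$; the same $K_2\cup K_1$ argument then yields a contradiction.

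The main technical obstacle, exactly as in \cref{tall strider}, is verifying that each candidate concatenation is genuinely a directed path in $T$; the uniqueness of paths in trees handles this uniformly. Everything else reduces to careful bookkeeping of which vertices land on which directed path, driven entirely by the key non-adjacency of $C\setminus\{x_i\}$ with $A_i$.
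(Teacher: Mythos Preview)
Your proof is correct and follows essentially the same approach as the paper's: locate a directed path in $T$ that contains three vertices inducing $K_2\cup K_1$ (two vertices of $C\setminus\{x_i\}$ together with one vertex of $A_i$, or one vertex of $C\setminus\{x_i\}$ together with both vertices of $A_i$), contradicting \cref{obs:multipartite}. The only cosmetic difference is that the paper first reverses $T$ and relabels so that $x_4$ is not among the first two $C$-vertices on $P$ and $x_2$ sits at position~$2$; this collapses your three cases into two (``some $x\in A_2$ before $x_2$'' versus ``both after $x_2$''), avoiding your Case~3 entirely.
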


\begin{proof}
    Let $G$ be a short strider.
    Let $T$ be an oriented tree and suppose that $G$ is a willow defined by $T$.
    Let $C=\{x_1,x_2,x_3,x_4\}$ be a clique of $G$ such that 
    $N(x_1)\setminus C$, $N(x_2)\setminus C$, and $N(x_3)\setminus C$ are disjoint cliques of size $2$.

    Since $C$ is a clique of $G$, we may assume without loss of generality that $T$ has a directed path $P$ that contains all vertices in $C$. 
    By reversing the direction of all edges in~$T$ if necessary, we may assume $x_4$ is not the first two vertices of $C$ in $P$.
    By the symmetry among $x_1$, $x_2$, and~$x_3$, we may assume that 
    $x_1$ is the first vertex of $C$ appearing on $P$
    and $x_2$ is the second vertex of $C$ appearing on $P$. 
    Since $(N(x_2)\setminus C)\cup \{x_2\}$ is a clique of $G$, there is a directed path~$P'$ in~$T$ that contains all vertices in $(N(x_2)\setminus C)\cup \{x_2\}$.
    
    If some $x\in N(x_2)\setminus C$ appears before $x_2$ on $P'$, then $T$ has a directed path $P''$ containing $x$, $x_2$, $x_3$, and $x_4$. 
    However, $G[\{x,x_3,x_4\}]$ is not a complete multipartite graph, contradicting~\cref{obs:multipartite}.

    We may therefore assume that two vertices in $N(x_2)\setminus C$ appear after $x_1$ on $P'$.
    But then, $T$ has a directed path $P^*$ containing $x_1$, $x_2$ and two vertices in $N(x_2)\setminus C$.
    Then $G[\{x_1\}\cup (N(x_2)\setminus C)]$ is not a complete multipartite graph, contradicting~\cref{obs:multipartite}.
\end{proof}
\begin{figure}
    \centering
    \begin{tikzpicture}[xscale=.8,
        every edge quotes/.style = {auto, font=\footnotesize, sloped
        }, 
        every edge/.style={->,dashed,-{Latex[length=2mm]},draw}]
        \tikzstyle{v}=[circle, draw, solid, fill=black, inner sep=0pt, minimum width=3pt]

        \node at (0,0)[v,label=below:$v_2$] (v5) {};
        \node at (2,0)[v,label=below:$v_4$] (v4) {};
        \node at (4,0)[v,label=below:$v_1$] (v3) {};
        \node at (6,0)[v,label=below:$v_8$] (v2) {};
        \node at (8,0)[v,label=below:$v_5$] (v1) {};
        \node at (10,0)[v,label=below:$v_7$] (v0) {};
        \node at (3,1)[v,label=left:$v_3$] (t) {};
        \node at (7,1)[v,label=right:$v_6$] (b) {};
        \draw (v5) edge["$n-1$"] (v4);
        \draw (v4) edge["$1$"](v3);
        \draw (v3) edge["$n-2$"] (v2);
        \draw (v2) edge["$1$"] (v1); 
        \draw (v1) edge["$n-1$"](v0);
        \draw (t) edge["$3$"](v3);
        \draw (v2) edge["$3$"] (b);
    \end{tikzpicture}
    \caption{The complement $\overline{P_8}$ of $P_8$ is an $n$-willow for every integer $n\ge 5$. Vertices $v_1$, $v_2$, $\ldots$, $v_8$ represent vertices of $\overline{P_8}$ in the order.
    The dashed arc with an integer $k$ means a directed path of length $k$.}\label{fig:P8}
\end{figure}
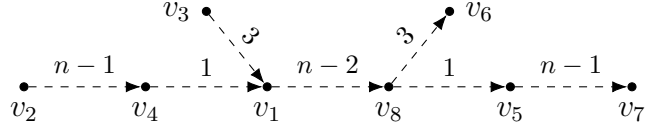

Now we present a lemma on willows, which we will use in later propositions.

\begin{lemma}\label{claim:length-2-path}
    Let $G$ be a graph whose complement $\overline{G}$ is a willow defined by an oriented tree $T$. 
    If $G$ has an induced path $u\dd v\dd w$ of length $2$, then 
    $T$ has no directed path between $u$ and $v$
    \emph{or}
    $T$ has no directed path between $v$ and $w$.
\end{lemma}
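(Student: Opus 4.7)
The plan is to argue by contradiction. Suppose $T$ contains both a directed path $P_1$ between $u$ and $v$ and a directed path $P_2$ between $v$ and $w$. Let $n$ be the integer so that $\overline{G}$ is an $n$-willow defined by $T$. Since $u\dd v\dd w$ is an induced path in $G$, we have $uv, vw \in E(G)$ and $uw \notin E(G)$, so $uv, vw \notin E(\overline{G})$ and $uw \in E(\overline{G})$. The first two conditions force $|P_1|$ and $|P_2|$ to be multiples of $n$, while the third yields a directed path $Q$ in $T$ between $u$ and $w$ whose length is not a multiple of $n$. Thus under my assumption there exist three directed paths $P_1, P_2, Q$ in $T$ simultaneously, and the goal is to show this is impossible.

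Let $m$ denote the median of $\{u, v, w\}$ in the underlying tree of $T$, i.e., the unique vertex that lies on all three pairwise undirected paths among $u, v, w$. I split into two cases according to whether $m \in \{u, v, w\}$ or not.

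In the first case, $m \in \{u, v, w\}$, so one of $u$, $v$, $w$ lies on the undirected path in $T$ between the other two. The directed version of that path, which is one of $P_1, P_2, Q$, is then a directed path in $T$ containing all three of $u, v, w$. Applying \cref{obs:multipartite} to $\overline{G}$ and this directed path, the induced subgraph $\overline{G}[\{u,v,w\}]$ must be complete multipartite. But $\overline{G}[\{u,v,w\}]$ consists of the edge $uw$ together with an isolated vertex $v$, i.e., $K_1 \cup K_2$, which is not complete multipartite --- a contradiction.

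In the second case, $m \notin \{u, v, w\}$, so the undirected paths from $m$ to $u$, $v$, $w$ in $T$ are internally vertex-disjoint, and each of $P_1$, $P_2$, $Q$ is the concatenation of two of these three segments at $m$. Since $P_1, P_2, Q$ are directed, each of the three segments $m\dd u$, $m\dd v$, $m\dd w$ is itself a directed path (being a subpath of a directed path), and so each has a fixed orientation at $m$: either oriented towards $m$ or oriented away from $m$. For each of $P_1, P_2, Q$ to be a directed path, the two segments it uses must be oppositely oriented at $m$; hence the three segments $m\dd u$, $m\dd v$, $m\dd w$ must be pairwise oppositely oriented at $m$. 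But with only two possible orientations, this is impossible by pigeonhole. The main obstacle I anticipate is in this second case: one must carefully verify that the existence of $P_1, P_2, Q$ forces each of the three $m$-to-$x$ segments to be a directed path before the pigeonhole step can be applied.
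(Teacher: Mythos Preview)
Your proof is correct. The median-based case split handles both situations cleanly, and the pigeonhole argument in the second case is a nice, self-contained way to rule out three pairwise directed paths among $u$, $v$, $w$ in a tree. (The divisibility observations about $|P_1|$, $|P_2|$, $|Q|$ are correct but ultimately unused in your contradiction; you could drop them.)

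The paper's argument is shorter and avoids the explicit median case analysis. It orients $T$ so that the $u$--$v$ path is directed $u\to v$, then applies \cref{obs:multipartite} once to conclude that the $v$--$w$ path cannot be directed $v\to w$ (else all three vertices lie on a single directed path), so it must be $w\to v$. With both $u\to v$ and $w\to v$ present, the paper concludes directly from the tree structure that there is no directed path between $u$ and $w$, contradicting $uw\in E(\overline{G})$. So the paper reaches the contradiction by showing the required $u$--$w$ path \emph{cannot} exist, whereas you first produce it as $Q$ and then argue that $P_1$, $P_2$, $Q$ cannot coexist. Your route is a bit more explicit about the tree geometry (via the median), while the paper's route is slicker but leaves the reader to unpack why ``$u\to v$ and $w\to v$ imply no directed $u$--$w$ path'' in a tree.
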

\begin{proof}
    Suppose not. 
    Then, without loss of generality, we may assume that there exists a directed path $P$ between $u$ and $v$ in $T$.
    By reversing all edges of $T$ if necessary, we may assume $P$ is a directed path from $u$ to $v$.
    Observe that $\overline{G}[\{u,v,w\}]$ is isomorphic to $K_2\cup K_1$.
    Since $K_2 \cup K_1$ is not a complete multipartite graph by~\cref{obs:multipartite}, it follows that there is no directed path from $v$ to $w$. Therefore, there exists a directed path from $w$ to~$v$ in~$T$.
    Since $T$ is a tree, it now follows that $T$ has no directed path between $u$ and $w$, contradicting the fact that $uw\in E(\overline{G})$.
\end{proof}

We remark that $\overline{P_8}$ is a willow, see \cref{fig:P8}. 
Next, we show that $\overline{P_{9}}$ is not a willow.
This clearly follows from the following more general proposition.

\begin{proposition}\label{P9}
    Let $G$ be a graph. 
    If $G$ has three vertex-disjoint induced paths $Q_1$, $Q_2$, $Q_3$ of length $2$ 
    such that their interior vertices have degree~$2$ in $G$, 
    then the complement $\overline G$ of $G$ is not a willow.
\end{proposition}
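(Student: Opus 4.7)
My plan is as follows. Label each path $Q_i$ as $a_i \dd b_i \dd c_i$ where $b_i$ is the interior degree-2 vertex, so that $N_G(b_i) = \{a_i, c_i\}$. Suppose for contradiction that $\overline{G}$ is a willow defined by an oriented tree $T$.

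First, I would apply \cref{claim:length-2-path} to each of the three induced paths $a_i \dd b_i \dd c_i$: for each $i$ either $T$ has no directed path between $a_i$ and $b_i$, or none between $b_i$ and $c_i$. By swapping the labels $a_i \leftrightarrow c_i$ where necessary, I assume the former for every $i \in \{1,2,3\}$. Next, I would observe that the three vertices $b_1,b_2,b_3$ form a triangle in $\overline{G}$: the paths are vertex-disjoint so for $i\neq j$, $b_j\notin\{a_i,c_i\}=N_G(b_i)$, hence $b_ib_j\in E(\overline{G})$. By \cref{obs:multipartite}, there is a directed path $P$ in $T$ containing all three; relabel the $Q_i$ so that $b_1,b_2,b_3$ appear in this order on $P$.

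The main step is to derive a contradiction by examining the vertex $a_2$. The same disjointness argument shows $a_2 b_1,a_2 b_3\in E(\overline{G})$, so $T$ contains directed paths between $a_2$ and each of $b_1,b_3$. I would first rule out $a_2 \in V(P)$: otherwise the subpath of $P$ between $a_2$ and $b_2$ would be a directed path, contradicting step one. Since $T$ is a tree and $a_2\notin V(P)$, there is a unique vertex $v\in V(P)$ where the $T$-path from $a_2$ meets $P$, and the portion of $T$ from $a_2$ to $v$ is a directed path in one of the two possible directions. Now I split on that direction:
\begin{itemize}
    \item If the path is directed from $a_2$ to $v$: concatenating with the subpath of $P$ from $v$ forward yields a directed path from $a_2$ to $b_i$ precisely when $v$ weakly precedes $b_i$ on $P$. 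Directed paths to both $b_1$ and $b_3$ therefore force $v$ to weakly precede $b_1$, but then $v$ also precedes $b_2$, producing a directed path from $a_2$ to $b_2$---contradicting the choice made in step one.
    \item If the path is directed from $v$ to $a_2$: the symmetric argument forces both $b_1$ and $b_3$ to weakly follow $v$ going backward (i.e.\ weakly precede $v$ on $P$), but since $b_2$ lies between them on $P$, we again obtain a directed path between $b_2$ and $a_2$, a contradiction.
\end{itemize}

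The essential structural point---and the only step that requires any care---is that $b_2$ sits strictly between $b_1$ and $b_3$ along $P$, so whichever side of $v$ the endpoints $b_1,b_3$ of the clique lie on, the middle vertex $b_2$ is forced to the same side, which is exactly what we forbade. The main obstacle is simply being careful about the case analysis for the orientation of the $a_2$-to-$v$ arc in $T$ and verifying that no directed path between $a_2$ and $b_2$ can exist in either case.
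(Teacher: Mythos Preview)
Your proof is correct and follows essentially the same approach as the paper's. Both arguments identify the triangle $\{b_1,b_2,b_3\}$ in $\overline G$, place it on a directed path $P$ of $T$ with $b_2$ in the middle, invoke \cref{claim:length-2-path} on $Q_2$ to obtain an endpoint ($a_2$ for you, $y_2$ in the paper) with no directed $T$-path to $b_2$, and then derive a contradiction from the fact that this endpoint is $\overline G$-adjacent to both $b_1$ and $b_3$. The only cosmetic differences are that you apply the lemma to all three paths up front (though you only use it for $i=2$), and you phrase the contradiction via the meeting point $v$ and a case split on the orientation of the $a_2$--$v$ segment, whereas the paper argues directly that the two directed paths $a_2\leftrightarrow b_1$ and $a_2\leftrightarrow b_3$ must together coincide with $P$, forcing $a_2\in V(P)$.
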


\begin{proof}
    Suppose that $\overline{G}$ is a willow defined by some oriented tree $T$. 
Let $x_1$, $x_2$, $x_3$ be the interior vertices of $Q_1$, $Q_2$, and $Q_3$, respectively. 
    As $\{x_1,x_2,x_3\}$ is a clique in $\overline{G}$, we may assume without loss of generality that $T$ has a directed path $P$ from $x_1$ to $x_3$ whose interior contains $x_2$.
    By \cref{claim:length-2-path}, 
    there is an end $y_2$ of $Q_2$ such that there is no directed path between $x_2$ and $y_2$ in~$T$.

    Since $x_1y_2\in E(\overline{G})$, there exists a directed path $R_1$ in~$T$ between $x_1$ and $y_2$.
    There is no directed path from~$y_2$ to $x_2$ in~$T$ and therefore $R_1$ is directed from~$x_1$ to~$y_2$.
    Similarly, there is a directed path~$R_2$ in~$T$ from~$y_2$ to~$x_3$. Let $R=R_1\cup R_2$. 
    Then, both $P$ and $R$ are directed paths of~$T$ from $x_1$ to $x_3$.
    Since $T$ is a tree, we deduce that $P=R$, 
    contradicting the assumption that there is no directed path between $x_2$ and~$y_2$.
\end{proof}

\begin{figure}
    \centering
    \begin{tikzpicture}[xscale=.8,
        every edge quotes/.style = {auto, font=\footnotesize , sloped
        }, 
        every edge/.style={->,dashed,-{Latex[length=2mm]},draw}]
        \tikzstyle{v}=[circle, draw, solid, fill=black, inner sep=0pt, minimum width=3pt]

        \node at (0,0)[v,label=$v_1$] (v1) {};
        \node at (2,0)[v] (v){};
        \node at (4,0)[v,label=$v_2$] (v2) {};
        \node at (6,0)[v,label=$v_4$] (v3) {};
        \node at (1,1)[v,label=left:$v_5$] (t) {};
        \node at (3,1)[v,label=right:$v_3$] (b) {};
        \draw (v1) edge["$2$"](v);
        \draw (v) edge["$n-2$"] (v2);
        \draw (v2) edge["$1$"] (v3);
        \draw (t) edge["$1$"] (v); 
        \draw (v) edge["$1$"](b);

        \begin{scope}[xshift=10cm]
            \node at (-2,0)[v,label=$v_3$] (v0) {};
            \node at (0,0)[v,label=$v_5$] (v1) {};
            \node at (2,0)[v] (v){};
            \node at (4,0)[v,label=$v_2$] (v2) {};
            \node at (6,0)[v,label=$v_6$] (v3) {};
            \node at (1,1)[v,label=left:$v_4$] (t) {};
            \node at (3,1)[v,label=right:$v_1$] (b) {};
            \draw (v0) edge["$1$"] (v1);
            \draw (v1) edge["$1$"](v);
            \draw (v) edge["$n-2$"] (v2);
            \draw (v2) edge["$1$"] (v3);
            \draw (t) edge["$3$"] (v); 
            \draw (v) edge["$1$"](b);
        \end{scope}
    \end{tikzpicture}
    \caption{Both $\overline{C_5}$ and $\overline{C_6}$ are $n$-willows for every integer $n\ge 5$. Vertices $v_1$, $v_2$, $\ldots$ represent vertices of the antihole in the cyclic order.
    The dashed arc with an integer $k$ means a directed path of length $k$.
}
    \label{fig:C6}
\end{figure}
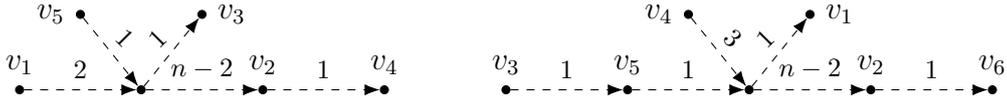
The previous proposition also shows that $\overline{C_n}$ is not a willow for $n\ge 9$. 
It is easy to see that both $\overline{C_5}$ and $\overline{C_6}$ are willows, see \cref{fig:C6}. 
Lastly, we prove that neither $\overline{C_7}$ nor $\overline{C_8}$ is a willow.
We remark that all cycles are willows, see \cref{fig:cycles}.
\begin{proposition}\label{C7}
    The complement $\overline{C_n}$ of $C_n$ is not a willow for all integers $n\ge 7$.
\end{proposition}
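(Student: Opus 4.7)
The plan splits on $n$. For $n\ge 9$, I would apply Proposition~\ref{P9} directly: $C_n$ contains three vertex-disjoint induced paths of length~$2$, for instance $v_1v_2v_3$, $v_4v_5v_6$, and $v_7v_8v_9$, and every vertex of $C_n$ has degree exactly~$2$, so the hypothesis of Proposition~\ref{P9} is satisfied and $\overline{C_n}$ is not a willow.

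For $n\in\{7,8\}$, Proposition~\ref{P9} does not apply since $C_n$ admits at most two vertex-disjoint induced $P_3$'s, so I will argue by contradiction. Suppose $\overline{C_n}$ is a willow defined by an oriented tree~$T$, and consider the two maximum cliques $K_{\mathrm{o}}$ of odd-indexed and $K_{\mathrm{e}}$ of even-indexed vertices of $\overline{C_n}$. Each is a clique, hence lies on a directed path in~$T$; call these $P_{\mathrm{o}}$ and $P_{\mathrm{e}}$. Observation~\ref{obs:multipartite} first constrains which vertices of $\overline{C_n}$ can lie on $P_{\mathrm{o}}$ or $P_{\mathrm{e}}$: three consecutive indices $v_{i-1},v_i,v_{i+1}$ induce $K_2+K_1$ in $\overline{C_n}$, which is not complete multipartite, so no directed path of $T$ may contain all three; for $n=8$ this already pins down $V(P_{\mathrm{o}})\cap V(\overline{C_8})=K_{\mathrm{o}}$ and $V(P_{\mathrm{e}})\cap V(\overline{C_8})=K_{\mathrm{e}}$, while for $n=7$ a few enlarged possibilities remain.

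Next I would analyse how $P_{\mathrm{o}}$ and $P_{\mathrm{e}}$ sit in~$T$: either they are vertex-disjoint and joined by a unique bridge, or they share at least one vertex~$w$. Each cross-edge of $\overline{C_n}$ between $K_{\mathrm{o}}$ and $K_{\mathrm{e}}$ requires a directed path in~$T$, forcing its two endpoints to lie on opposite ``sides'' of $w$ (or of the bridge); this gives a source-versus-sink $2$-colouring of $K_{\mathrm{o}}\cup K_{\mathrm{e}}$ that is highly constrained by the bipartite structure of the cross-edges. In the dominant colouring, where one whole clique lies on the source-side and the other on the sink-side, the non-edges between $K_{\mathrm{o}}$ and $K_{\mathrm{e}}$ give distance congruences modulo~$n$ that force some distance-$2$ pair in the relevant clique (such as $v_2v_4$) to have $T$-distance a multiple of~$n$, contradicting that this pair is an edge of~$\overline{C_n}$. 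The remaining ``mixed'' colourings, together with the placement of the leftover vertex $v_7$ for $n=7$, are ruled out by showing that each placement of $v_7$ in~$T$ (on an extension of $P_{\mathrm{o}}$, of $P_{\mathrm{e}}$, or on a branch off one of them) makes some required directed path from $v_7$ to one of its four neighbours in $\overline{C_7}$ fail to exist. The main obstacle I expect is precisely the case analysis for $n=7$, where $V(P_{\mathrm{o}})\cap V(\overline{C_7})$ need not equal $K_{\mathrm{o}}$, each of the four bipartite $2$-colourings of the cross-edges must be handled individually, and the position of $v_7$ in~$T$ must be tracked in every subcase.
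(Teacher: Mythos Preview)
Your reduction for $n\ge 9$ via Proposition~\ref{P9} is correct and is exactly what the paper notes just before stating the proposition. The issue is entirely with $n\in\{7,8\}$.

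For these two values your plan has a genuine gap. The ``source-versus-sink $2$-colouring'' is only well-defined once you have pinned down how $P_{\mathrm o}$ and $P_{\mathrm e}$ sit inside~$T$: they may be disjoint with a bridge whose orientation you do not control, or they may overlap on a directed subpath of $T$ of positive length (with no vertex of $\overline{C_n}$ on the overlap), and in each configuration the ``before/after $w$'' dichotomy behaves differently. You do not carry out this analysis. More seriously, your congruence step uses the non-edges $v_iv_{i+1}$ of $\overline{C_n}$ to force $T$-distances to be $\equiv 0$ modulo the willow parameter (you write ``modulo $n$'', conflating it with the cycle length), but a non-edge in a willow can also arise from the \emph{absence} of any directed path in~$T$ between the two vertices; you only get a congruence after you have already established that every odd vertex lies on one side of $w$ and every even vertex on the other, which is precisely the hard part. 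Finally, for $n=7$ you openly concede that the placement of $v_7$ spawns an unfinished case analysis.

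The paper avoids all of this with a short uniform argument for every $n\ge 7$ that never looks at the big cliques $K_{\mathrm o},K_{\mathrm e}$. It introduces the set $F$ of \emph{cycle} edges $v_iv_{i+1}$ for which $T$ has a directed path between the endpoints, and uses Lemma~\ref{claim:length-2-path} (if $u\hbox{-}v\hbox{-}w$ is an induced $P_3$ in $C_n$ then at most one of $uv,vw$ lies in $F$). If $F=\emptyset$ one finds a source vertex $v_1$ and, using the $3$-clique $\{v_1,v_3,v_6\}$ of $\overline{C_n}$, obtains a directed path in $T$ through $v_1,v_j,v_k$; choosing $\ell\in\{j-1,j+1\}\cap\{4,5\}$ and the clique $\{v_1,v_\ell,v_k\}$ then forces $v_j$ onto the same path, contradicting $v_jv_\ell\notin F$. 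If $F\neq\emptyset$, say $v_2v_3\in F$, the edges $v_2v_6$ and $v_3v_6$ of $\overline{C_n}$ give a directed path through $v_2,v_3,v_6$; Lemma~\ref{claim:length-2-path} supplies an $\ell\in\{j-1,j+1\}\cap\{1,4,5,7\}$ with $v_jv_\ell\notin F$, and the clique $\{v_i,v_k,v_\ell\}$ again drags $v_j$ onto the path, a contradiction. This two-case argument is under a page and needs no parity split, so I would recommend abandoning the $K_{\mathrm o}/K_{\mathrm e}$ route and using Lemma~\ref{claim:length-2-path} together with carefully chosen $3$-cliques instead.
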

\begin{proof}
    Let $v_1$, $v_2$, $\ldots$, $v_n$ be the vertices of $\overline{C_n}$ in cyclic order.
    Suppose that $\overline{C_n}$ is a willow defined by some oriented tree $T$. 
    Let $F$ be the set of all edges $uv$ of $G$ such that there is a directed path from~$u$ to~$v$ or from~$v$ to~$u$ in~$T$.
    
    Suppose that $F=\emptyset$.
    Then for some $j \in \{1,2, \ldots, n \}$, there is no directed path from $v_j$ to~$v_i$ in $T$ for all $i\in\{1, 2,3,\ldots,n\} \setminus \{j\}$. 
    By symmetry, we may assume that $j =1$.
    
    Since $\{v_1,v_3,v_{6}\}$ is a clique of $G$, there is a directed path $P$ in $T$ containing all of $v_1$, $v_3$, and~$v_6$. 
    Let $(i,j,k)$ be the permutation of $\{1,3,6\}$ such that $P$ contains $v_i, v_j, v_k$ in order.
    Then $i=1$ by the assumption on $v_1$.
    Let $\ell\in \{j-1,j+1\}\cap \{4,5\}$.
    Then $\{v_1,v_\ell,v_k\}$ is a clique in~$G$
    and therefore there is a path~$Q$ containing $v_1$, $v_\ell$, and $v_k$. 
    Since $T$ is a tree, $v_j$ is in $V(Q)$, contradicting the assumption that $v_j v_\ell \notin F$.

    Therefore $F\neq\emptyset$. By symmetry, we may assume that $v_2 v_3\in F$.
    Since $T$ contains directed paths between $v_2$ and $v_6$ and between $v_2$ and $v_3$, it follows that
    $T$ contains a directed path $P$ containing $v_2$, $v_3$, and $v_6$.
    Let $(i,j,k)$ be a permutation of $\{2,3,6\}$ such that 
    $P$ is a directed path containing $v_i, v_j, v_k$, in order. 
    By \cref{claim:length-2-path}, $v_{j-1}v_j\notin F$ or $v_jv_{j+1}\notin F$.
    Thus, there is an $\ell\in \{j-1,j+1\}\cap \{1,4,5,7\}$ such that $v_\ell v_j\notin F$.
    Since $v_\ell$ is complete to $\{v_i,v_k\}$, 
    there is a directed path~$Q$ of~$T$ containing $v_i$, $v_k$, and $v_\ell$. 
    As $T$ is a tree, we conclude that $Q$ contains $P$ and therefore $v_j$, contradicting the assumption that $v_j v_\ell\notin F$.
\end{proof}

\begin{figure}
    \centering
    \begin{tikzpicture}[xscale=.7,yscale=.8,
        every edge quotes/.style = {auto, font=\footnotesize , sloped
        }, 
        every edge/.style={->,dashed,-{Latex[length=2mm]},draw}]
        \tikzstyle{v}=[circle, draw, solid, fill=black, inner sep=0pt, minimum width=3pt]

        \node at (-1,-1) [v,label=below:$v_2$] (v0-1){};
        \node at (1,-1) [v,label=below:$v_{18}$] (v0-2){};
        \node at (3,-1) [v,label=below:$v_4$] (v1-1){};
        \node at (5,-1) [v,label=below:$v_{16}$] (v1-2){};
        \node at (7,-1) [v,label=below:$v_6$] (v2-1){}; 
        \node at (9,-1) [v,label=below:$v_{14}$] (v2-2){};
        \node at (11,-1) [v,label=below:$v_8$] (v3-1){};
        \node at (13,-1) [v,label=below:$v_{12}$] (v3-2){};
        \node at (1,2.5) [v,label=$v_3$] (w0-1){};
        \node at (3,2.5) [v,label=$v_{17}$] (w0-2){};
        \node at (5,2.5) [v,label=$v_5$] (w1-1){};
        \node at (7,2.5) [v,label=$v_{15}$] (w1-2){};
        \node at (9,2.5) [v,label=$v_7$] (w2-1){}; 
        \node at (11,2.5) [v,label=$v_{13}$] (w2-2){};
        \node at (13,2.5) [v,label=$v_9$] (w3-1){};
        \node at (15,2.5) [v,label=$v_{11}$] (w3-2){};
        \foreach \x in {0,1,2,3} {
            \node at (4*\x,0) [v] (y\x) {};
            \node at (4*\x+2,1.5) [v] (x\x){};
            \draw (x\x) edge["$n-3$"] (y\x);
            \draw (y\x) edge["$1$"] (v\x-1);
            \draw (y\x) edge["$2$"] (v\x-2);
            \draw (w\x-1) edge["$1$"] (x\x);
            \draw (w\x-2) edge["$2$"] (x\x);
        }
        \draw (x0) edge["$n-3$"] (y1);
        \draw (x1) edge["$n-3$"] (y2);
        \draw (x2) edge["$n-3$"] (y3);
        \node at (-1,1) [v,label=$v_1$] (v1) {};
        \draw (v1) edge["$1$"] (y0);
        \node at (15,.5) [v,label=below:$v_{10}$] (v10){};
        \draw (x3) edge["$1$"] (v10);
        
        \begin{scope}[yshift=-5.5cm] 

            \node at (-1,-1) [v,label=below:$v_2$] (v0-1){};
            \node at (1,-1) [v,label=below:$v_{19}$] (v0-2){};
            \node at (3,-1) [v,label=below:$v_4$] (v1-1){};
            \node at (5,-1) [v,label=below:$v_{17}$] (v1-2){};
            \node at (7,-1) [v,label=below:$v_6$] (v2-1){}; 
            \node at (9,-1) [v,label=below:$v_{15}$] (v2-2){};
            \node at (11,-1) [v,label=below:$v_8$] (v3-1){};
            \node at (13,-1) [v,label=below:$v_{13}$] (v3-2){};
            \node at (1,2.5) [v,label=$v_3$] (w0-1){};
            \node at (3,2.5) [v,label=$v_{18}$] (w0-2){};
            \node at (5,2.5) [v,label=$v_5$] (w1-1){};
            \node at (7,2.5) [v,label=$v_{16}$] (w1-2){};
            \node at (9,2.5) [v,label=$v_7$] (w2-1){}; 
            \node at (11,2.5) [v,label=$v_{14}$] (w2-2){};
            \node at (13,2.5) [v,label=$v_9$] (w3-1){};
            \node at (15,2.5) [v,label=$v_{12}$] (w3-2){};
            \foreach \x in {0,1,2,3} {
                \node at (4*\x,0) [v] (y\x) {};
                \node at (4*\x+2,1.5) [v] (x\x){};
                \draw (x\x) edge["$n-3$"] (y\x);
                \draw (y\x) edge["$1$"] (v\x-1);
                \draw (y\x) edge["$2$"] (v\x-2);
                \draw (w\x-1) edge["$1$"] (x\x);
                \draw (w\x-2) edge["$2$"] (x\x);
            }
            \draw (x0) edge["$n-3$"] (y1);
            \draw (x1) edge["$n-3$"] (y2);
            \draw (x2) edge["$n-3$"] (y3);
            \node at (-1,1) [v,label=$v_1$] (v1) {};
            \draw (v1) edge["$1$"] (y0);
            \node at (16,0) [v,label=right:$v_{10}$] (v10){}; 

            \node at (16,-1) [v,label=right:$v_{11}$] (v11){};
            \draw (x3) edge["$n-2$"] (v10);
            \draw (v10) edge ["$1$"] (v11);
        \end{scope}

    \end{tikzpicture}
    \caption{
        These oriented trees certify that cycles of length $18$ and $19$ are  $n$-willows for every integer $n\ge 4$ and can be easily modified to show that     
        all cycles are $n$-willows. 
        Vertices $v_1$, $v_2$, $\ldots$ represent vertices in the cyclic order.
        The dashed arc with an integer $k$ means a directed path of length $k$.}\label{fig:cycles}
\end{figure}
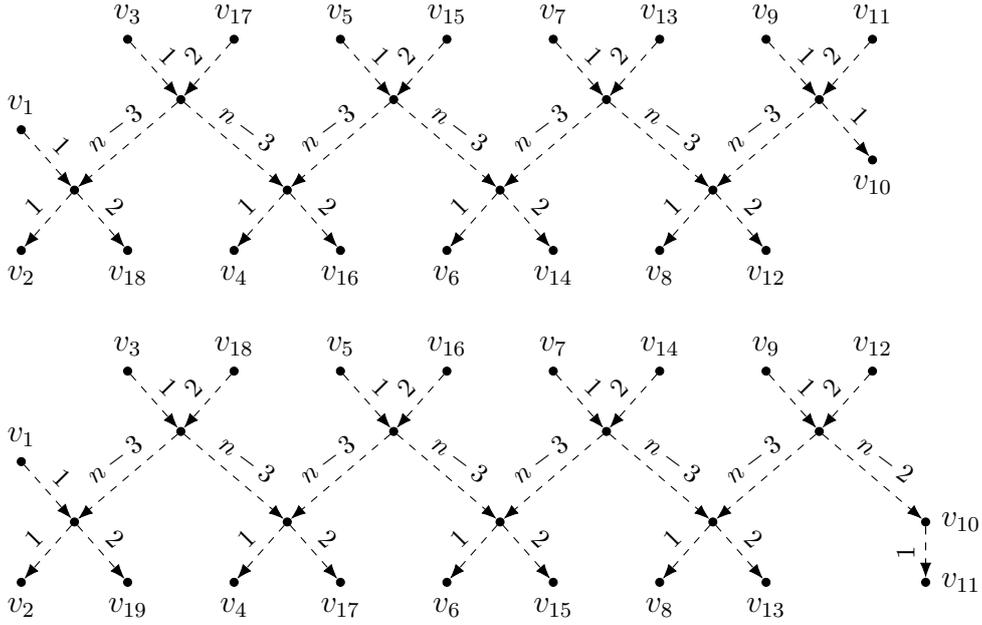

Now we are going to prove that large enough \say{fans} and \say{complete wheels} are not willows.
We define fans as follows. 
Let $n\ge 3$ be an integer. Let $F_n$ be the $(n+1)$-vertex graph with a specified vertex $c$ called the \emph{center} such that $F_n\setminus c$ is the path $P_n$.
A \emph{complete wheel} on $(n+1)$ vertices is the graph $W_n$ obtained from $F_n$ by adding an edge between the two degree\nobreakdash-$1$ vertices of $F_n \setminus c$. 
Hence, $W_n \setminus c$ is the cycle $C_n$.
We will show that $W_n$ and $F_n$ are not willows for each $n \geq 7$.
First, we present a useful lemma.

\begin{lemma}\label{lem:fan-lemma}
    Let $G$ be a copy of $F_4$ with center $c$.
    Let $v_1$ be a vertex of degree one in $G \setminus c$. 
If $G$ is a willow defined by an oriented tree~$T$ 
and $T$ has a directed path from $v$ to $c$ for every $v\in V(G \setminus c)$, 
    then the directed path from $v_1$ to $c$ in $T$ contains at least one vertex in $V(G)\setminus\{v_1,c\}$. 
\end{lemma}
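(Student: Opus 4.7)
The plan is to argue by contradiction, assuming the directed path $P_1$ from $v_1$ to $c$ in $T$ contains no vertex of $V(G) \setminus \{v_1, c\}$. Label the vertices of $G \setminus c$ as $v_1, v_2, v_3, v_4$ along the path $G \setminus c$, so that the edges of $G \setminus c$ are $v_1v_2, v_2v_3, v_3v_4$ and the non-edges among the $v_i$'s are $v_1v_3, v_1v_4, v_2v_4$. For each $i$, let $P_i$ be the directed path from $v_i$ to $c$ in $T$, which exists by hypothesis; recall from the definition of an $n$-willow that for any two vertices $u, v$ of $G$, $uv \in E(G)$ if and only if $T$ has a directed path between $u$ and $v$ whose length is not a multiple of $n$.

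The backbone of the argument is a simple tree fact: if $T$ has a directed path between $u$ and $v$ and both $u$ and $v$ have directed paths to $c$, then one of $u, v$ lies on the other's path to $c$. Applied to the edge $v_1v_2$ together with $v_2 \notin P_1$, this forces $v_1 \in P_2$, so $P_2$ decomposes as a directed path $v_2 \to v_1$ concatenated with $P_1$. Applying the same fact to $v_2v_3$ yields two cases: either $v_3 \in P_2$, in which case $v_3$ must lie strictly between $v_2$ and $v_1$ on $P_2$ (since $v_3 \notin P_1$ by assumption), or $v_2 \in P_3$, so that $P_3 = (v_3 \to v_2) \cup P_2$. Splitting each case further using $v_3v_4$, depending on whether $v_4$ lies above or below $v_3$ in the in-tree rooted at $c$ formed by $P_1, \ldots, P_4$, produces a handful of configurations.

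In every sub-case I will derive a contradiction via modular arithmetic on sub-path lengths: the non-edges $v_1v_3$, $v_1v_4$, $v_2v_4$ force the corresponding directed $T$-paths (whenever they exist) to have length a multiple of $n$, while the edges $v_1v_2$, $v_2v_3$, $v_3v_4$ force lengths not a multiple of $n$. Two templates suffice. When $v_4$ lies above $v_3$ along a directed path reaching $v_1$, the directed path $v_2 \to v_1$ splits as a concatenation of two non-edge sub-paths (for instance $v_2 \to v_4$ and $v_4 \to v_1$), so its length is a multiple of $n$, contradicting $v_1v_2 \in E(G)$. When $v_4$ lies below $v_3$, the directed path $v_4 \to v_3$ arises as the ``difference'' of the non-edge paths $v_4 \to v_1$ and $v_3 \to v_1$, so its length is again a multiple of $n$, contradicting $v_3v_4 \in E(G)$. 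The sub-case in which $v_4$ itself would have to lie on $P_1$ is ruled out directly by the standing assumption.

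The main obstacle is purely bookkeeping: no single reduction handles every configuration uniformly, but each sub-case yields to essentially the same short mod-$n$ length computation once the in-tree structure of $P_1, \ldots, P_4$ inside $T$ has been laid out. It may be useful to record the structural content of the argument in language compatible with \cref{obs:multipartite}, since the same in-tree setup and modular identities are likely to be reused in the subsequent propositions about $F_n$ and $W_n$.
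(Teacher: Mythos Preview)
Your approach is correct and essentially matches the paper's. Both argue by contradiction, use the in-tree structure rooted at $c$ to force containment relations among the paths $P_i$ (the paper calls them $R_i$), and derive a contradiction from the presence of a $K_2\cup K_1$ pattern among $\{v_1,\ldots,v_4\}$ on a single directed path. The only real difference is packaging: where you carry out explicit mod-$n$ arithmetic on sub-path lengths, the paper invokes \cref{obs:multipartite} directly (three vertices on a common directed path cannot induce $K_2\cup K_1$), which lets it collapse your five sub-cases into two short paragraphs using the containment lattice $V(R_i)\subseteq V(R_j)$. Your ``above/below'' labelling of the two templates is slightly imprecise---for instance in your Case~1b ($v_3\in P_4$) you actually need the difference template, not the concatenation one---but each sub-case does fall to one of the two identities you describe, so the plan goes through. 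Since you already note the link to \cref{obs:multipartite}, rewriting in that language would give you exactly the paper's proof and would shorten the write-up.
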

\begin{proof}
    Note $G \setminus c = P_4$.
    Let $v_1$, $v_2$, $v_3$, $v_4$ be the vertices of $P_4$, in order. 
    For each $i\in\{1,2,3,4\}$, let $R_i$ denote the directed path from $v_i$ to $c$ in $T$.
    We may assume that 
    \begin{equation}\label{eq:0}
    V(R_j) \not \subseteq V(R_1) \text{ for each }j \in \{2,3,4\}. 
    \end{equation}
    Since $\{v_1, v_2, c\}$ is a clique there is a directed path $P$ of $T$ containing $v_1, v_2, c$.
    Since $T$ is a tree, $R_1 \cup R_2 = P$.
    Hence, $V(R_1)\subseteq V(R_2)$.
    For $i \in \{2,4\}$, the set $\{v_i,v_3,c\}$ is a clique. Hence,
    \begin{equation}\label{eq:1}
        \text{For every } i \in \{2,4\}, \ V(R_i)\subseteq V(R_3) \text{ or }V(R_i)\subseteq V(R_2).
    \end{equation}
Since $G[\{v_1,v_2,v_4\}]$ is isomorphic to $K_2\cup K_1$, by \cref{obs:multipartite}, 
    \begin{equation}\label{eq:3}
        V(R_4)\not\subseteq V(R_2)\text{ and }V(R_2)\not\subseteq V(R_4).
    \end{equation}

    Suppose that $V(R_2)\subseteq V(R_3)$. 
    By \eqref{eq:1} and \eqref{eq:3}, $V(R_4)\subseteq V(R_3)$
    and therefore $V(R_3)$ contains both $V(R_1)$ and $V(R_4)$. This means that $R_3$ contains $v_1$, $v_3$, $v_4$, contradicting \cref{obs:multipartite}.

    Thus, $V(R_3)\subseteq V(R_2)$.
    Since $V(R_1) \subseteq V(R_2)$ and $R_1, R_2, R_3$ are all directed paths ending at $c$, it follows from \eqref{eq:0} that
    $V(R_1)\subseteq V(R_3)\subseteq V(R_2)$.
    By \eqref{eq:1} and \eqref{eq:3}, $V(R_3)\subseteq V(R_4)$.
    So $R_4$ is a directed path containing each of $v_1, v_3, v_4$ contrary to \cref{obs:multipartite}.
\end{proof}

\begin{figure}
    \centering
    \begin{tikzpicture}[yscale=.8,
        every edge quotes/.style = {auto, font=\footnotesize , sloped
        }, 
        every edge/.style={->,dashed,-{Latex[length=2mm]},draw}]
        \tikzstyle{v}=[circle, draw, solid, fill=black, inner sep=0pt, minimum width=3pt]

        \node at(0,0)[v,label=below:$c$] (c){};
        \node at (0,1)[v,label=$v_1$] (v1){};
        \node at (-2,1)[v,label=$v_3$] (v3){};
        \node at (-3,1.5)[v,label=below:$v_4$] (v4){};
        \node at (2,1)[v,label=$v_6$] (v6){};
        \node at (-1,-1)[v,label=below:$v_2$] (v2){};
        \node at (2,-1)[v,label=below:$v_5$] (v5){};
        \draw (v1)edge["$2$"] (c);
        \draw (v3)edge["$2$"] (c);
        \draw (v6)edge["$3$"] (c);
        \draw (v4)edge["$1$"] (v3);
        \draw (c)edge["$2$"] (v2);
        \draw (c)edge["$3$"] (v5);

        \begin{scope}[xshift=6cm]
                
            \node at(0,0)[v,label=below:$c$] (c){};
            \node at (-2,1.5)[v,label=$v_1$] (v1){};
            \node at (-1,1)[v,label=$v_2$] (v2){};
            \node at (-1,-1)[v,label=below:$v_3$] (v3){};
            \node at (1,1)[v,label=$v_4$] (v4){};
            \node at (2,1.5)[v,label=below:$v_5$] (v5){};
            \node at (1,-1)[v,label=below:$v_6$] (v6){};
            \draw (v1)edge["$1$"] (v2);
            \draw (v2)edge["$2$"] (c);
            \draw (c)edge["$2$"] (v3);
            \draw (c)edge["$3$"] (v6);
            \draw (v5)edge["$1$"] (v4);
            \draw (v4)edge["$2$"] (c);
        \end{scope}
    \end{tikzpicture}
    \caption{
        Both $F_6$ and $W_6$ are $5$-willows. 
        Vertices $v_1$, $v_2$, $\ldots$ represent vertices in the  order in $F_6\setminus c$ or $W_6\setminus c$. 
        The dashed arc with an integer $k$ means a directed path of length $k$.}\label{fig:f6}
\end{figure}
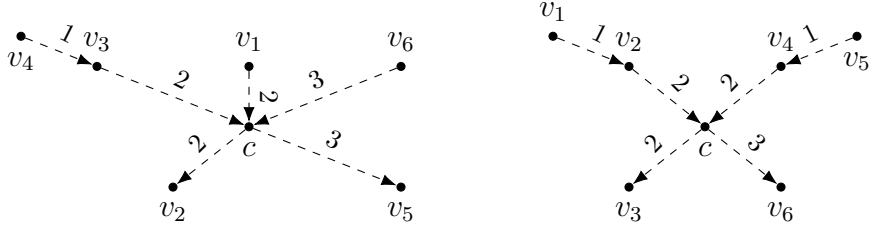
Note that $F_6$ is a willow, see \cref{fig:f6}.
We prove that $F_n$ is not a willow if $n\ge7$.
\begin{proposition}\label{prop:fan}
    For every integer $n\ge 7$, $F_n$ is not a willow.
\end{proposition}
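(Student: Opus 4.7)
The plan is a proof by contradiction: assume $F_n$ is a willow for some $n\geq 7$ defined by an oriented tree $T$ with willow parameter $N$. For each path-vertex $v_i$, the edge $v_ic\in E(F_n)$ forces the (undirected) path between $v_i$ and $c$ in $T$ to be consistently oriented; classify $v_i$ as \emph{up} (class $A$) if it is oriented toward $c$, and \emph{down} (class $B$) otherwise, and let $L_i$ denote this path's length. After possibly reversing every orientation of $T$, assume $|A|\geq |B|$, so that $|A|\geq 4$.

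The critical observation is that for any $v_i\in A$ and $v_j\in B$, the unique undirected $v_iv_j$-path in $T$ must pass through $c$: a common ancestor $m\neq c$ would force some tree-edge near $m$ to be oriented both upward (on $R_i$) and downward (on $R_j$). Hence this path is a directed $T$-path of length $L_i+L_j$, and the $N$-willow rule gives $L_i+L_j\equiv 0\pmod{N}$ if and only if $|i-j|\geq 2$. Moreover, any UU-pair $(v_i,v_{i+1})$ (both in $A$) must be comparable in the in-arborescence at $c$, so the edge $v_iv_{i+1}$ has $T$-length $|L_i-L_{i+1}|\not\equiv 0\pmod N$. Combined with the previous equivalence, this forces $B\subseteq\{v_{i-1},v_{i+2}\}$ for each UU-pair, so $|B|\leq 2$ whenever $A$ contains a UU-pair.

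The proof then splits by $|B|$. When $|B|\geq 3$, no UU-pair is allowed, so $A$ must be independent in $P_n$; for $n=7$ this forces the unique pattern $UDUDUDU$, and combining the non-edge relations $L_4\equiv -L_1$, $L_6\equiv -L_3\equiv -L_1$, $L_2\equiv -L_5\equiv -L_7$, $L_4\equiv -L_7$, quickly yields $L_1+L_2\equiv 0\pmod N$, contradicting the edge $v_1v_2$. The cases $|B|\in\{1,2\}$ are ruled out by enumerating the possible placements of $B$-vertices: each produces some UU-pair whose $B$-vertex lies outside $\{v_{i-1},v_{i+2}\}$, contradicting the structural restriction above. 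The remaining case $|B|=0$ (all up) is the most delicate: here I apply Lemma~\ref{lem:fan-lemma} to every window of four consecutive $v_i$'s and use Observation~\ref{obs:multipartite}, which says that any subset of $\{v_1,\ldots,v_n\}$ lying on a single directed $T$-path must induce a complete multipartite subgraph of $F_n$. Since four consecutive $v_i$'s in $P_n$ induce $P_4$ (which is \emph{not} complete multipartite), a careful tracking of ancestor-descendant relations in the in-arborescence rooted at $c$ shows that any attempt to embed seven pairwise-comparable consecutive vertices there eventually forces four consecutive $v_i$'s to lie on a common directed $T$-path.

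The main obstacle will be this $|B|=0$ case, where the modular arguments are unavailable; one instead needs a structural case analysis of the in-arborescence, guided by iteratively applying Lemma~\ref{lem:fan-lemma} to each four-consecutive window along $P_n$, to rule out every attempted embedding that avoids the forbidden four-chain.
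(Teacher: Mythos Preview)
Your setup (the $A$/$B$ partition) and the modular argument for the alternating case are essentially what the paper does. But you are making the $B=\emptyset$ case much harder than necessary, and your plan there is not yet a proof.

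The paper disposes of $B=\emptyset$ in one stroke: choose $v\in A$ with $d_T(v,c)$ minimal. Since $n\ge 7$, there is a $4$-vertex induced path of $G\setminus c$ having $v$ as an endpoint (go in either direction along $P_n$). Apply Lemma~\ref{lem:fan-lemma} once to this copy of $F_4$: the directed $T$-path from $v$ to $c$ must contain some other $v_j$, whence $d_T(v_j,c)<d_T(v,c)$, contradicting minimality. No iteration, no case analysis of the arborescence. Your proposed ``careful tracking of ancestor--descendant relations'' across all four-windows is replaced by a single extremal choice.

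Two smaller points. First, your step ``UU-pair $(v_i,v_{i+1})\Rightarrow B\subseteq\{v_{i-1},v_{i+2}\}$'' is correct but weaker than needed. The paper bootstraps one UU-pair all the way to $B=\emptyset$: once every non-neighbour of $\{v_i,v_{i+1}\}$ is forced into $A$, you get (using $n\ge 7$) that $\{v_1,v_2\}\subseteq A$ or $\{v_{n-1},v_n\}\subseteq A$; applying the same $K_2\cup K_1$ observation to that new UU-pair forces $\{v_1,v_2,v_{n-1},v_n\}\subseteq A$, and one more pass puts every remaining vertex into $A$. This wipes out your $|B|\in\{1,2\}$ enumeration entirely and leaves only the two clean cases ``$B=\emptyset$'' and ``$A,B$ both independent''.

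Second, in the alternating case you write out only $n=7$. Either note that $F_7$ is an induced subgraph of every $F_n$ and that willows are hereditary, or phrase the relations uniformly: for each $i\le n-5$ the vertex $v_{i+5}$ witnesses $L_i\equiv L_{i+2}\pmod N$, and for each $i\ge 6$ the vertex $v_{i-5}$ does the same, so all $L_i$ of one parity are congruent.
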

\begin{proof}
    Let $G:=F_n$. 
    Suppose that $G$ is an $m$-willow defined by an oriented tree $T$
    for a positive integer $m$.
    Let $A$ be the vertices of $G$ from which $T$ has a directed path to $c$.
    Let $B$ be the vertices of $G$ to which $T$ has a directed path from $c$.
    Since $c$ is complete to $V(G)\setminus \{c\}$, 
    $A\cup B=V(G)\setminus \{c\}$. 
Let $v_1$, $v_2$, $\ldots$, $v_n$ be the vertices of $G\setminus c$ in the order defined by the path~$G \setminus c$.

    \begin{claim}\label{claim:fan-independent}
        Either $A$ is an independent set of $G$ or $B$ is empty.
    \end{claim} 
    \begin{subproof}
        Suppose that $A$ contains an edge $v_iv_{i+1}$.
        There is a directed path of $T$ from $v_i$ or $v_{i+1}$ to~$c$ containing all of $v_i$, $v_{i+1}$, and $c$.
        Let $M = (N_G(x)\cup N_G(y))\setminus\{c\}$.
        Then by definition, $M$ contains at most two vertices of $G \setminus c$, namely $v_{i-1}$ if $i > 1$ and $v_{i+2}$ if $i < n$.
        Let $X=V(G)\setminus (M \cup \{c\}) $.
        For each vertex $z\in X$, 
        $G[\{x,y,z\}]$ induces a graph isomorphic to $K_2\cup K_1$
        and therefore $z\notin B$ by \cref{obs:multipartite}.
        So, $X\subseteq A$.
        Since $n\ge 7$, $v_1,v_2\in X$ or $v_{n-1},v_n\in X$.
        We deduce that $\{v_1,v_2,v_{n-1},v_n\}\subseteq A$ 
        by \cref{obs:multipartite}
        because each of its $3$-vertex subsets induces a subgraph of $G$ isomorphic to $K_2\cup K_1$. 
        For every vertex $w\in V(G)\setminus(X\cup \{c\})$, there are distinct vertices $u,v\in \{v_1,v_2,v_{n-1},v_n\}$ 
        such that $uv$ is an edge of $G$ and $w$ is non-adjacent to both $u$ and $v$. Again by \cref{obs:multipartite}, $w\in A$.
        Hence, $B = \emptyset$.
    \end{subproof}

    Suppose that $B=\emptyset$. 
    Choose a vertex $v$ in $A$ such that $d_T(v,c)$ is minimized. 
    Then $G\setminus c$ has a $4$-vertex induced path  
    starting at $v$ because $n\ge 7$. 
    By \cref{lem:fan-lemma}, the directed path from $v$ to $c$ contains at least one vertex of $V(G)\setminus\{c,v\}$, contradicting the choice of~$v$. 
    Therefore we may assume that $B\neq\emptyset$. 
    By symmetry, $A\neq\emptyset$.
    By \cref{claim:fan-independent}, both $A$ and $B$ are independent sets of~$G$.
    
    We may assume that $A$ contains $v_i$ for each even $i \in \{1,2, \dots, n\}$ and $B$ contains $v_j$  for every odd $j \in \{1,2, \dots, n \}$.
For each $i\in\{1,2,\ldots,n-5\}$, 
    $d_T(v_i,c)\equiv d_T(v_{i+2},c)\pmod m$ 
    because $v_{i+5}$ is non-adjacent to both $v_i$ and $v_{i+2}$.
    Similarly, for each $i\in \{6,7, \ldots,n\}$, 
    $d_T(v_{i-2},c)\equiv d_T(v_{i},c)\pmod m$ 
    because $v_{i-5}$ is non-adjacent to both $v_i$ and $v_{i-2}$.

    So, there are integers $a$ and $b$ such that $d_T(v_i,c)\equiv a\pmod m$ for all even $i\in\{1,2,\ldots,n\}$ 
    and $d_T(v_i,c)\equiv b\pmod m$ for all odd $i\in\{1,2,\ldots,n\}$.  
    This implies that $A$ is complete or anti-complete to $B$, a contradiction.
\end{proof}

Since $F_n$ is an induced subgraph of $W_{n+1}$, 
by \cref{prop:fan}, $W_n$ is not a willow for all $n\ge 8$. 
However, it is easy to see that $W_n$ is a willow for every $n<7$, see \cref{fig:f6}.
We now show that $W_7$ is not a willow. 

\begin{proposition}\label{prop:wheel}
    For every integer $n\ge 7$, $W_n$ is not a willow.
\end{proposition}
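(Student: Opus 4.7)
For $n\ge 8$ the claim follows immediately from \cref{prop:fan}: deleting any outer-cycle vertex of $W_n$ yields an induced copy of $F_{n-1}$, which is not a willow, so $W_n$ is not a willow either. It therefore suffices to handle $W_7$.

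Suppose $W_7$ is an $m$-willow defined by an oriented tree $T$; write $v_1,\dots,v_7$ for the outer cycle (indices mod $7$) and $c$ for the center. Let $A$ (resp.\ $B$) be the set of outer vertices admitting a directed $T$-path to (resp.\ from) $c$. Mimicking the opening of the proof of \cref{prop:fan}, if some edge $v_iv_{i+1}$ has both endpoints in $A$, then the three ``far'' outer vertices $X=\{v_1,\dots,v_7\}\setminus\{v_{i-1},v_i,v_{i+1},v_{i+2}\}$ must also lie in $A$: any $z\in X\cap B$ would allow us to concatenate the $T$-path through $v_i,v_{i+1},c$ with the $T$-path from $c$ to $z$ to obtain a directed $T$-path inducing $K_2\cup K_1$ on $\{v_i,v_{i+1},z\}$, contradicting \cref{obs:multipartite}. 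Since $X$ consists of three consecutive outer vertices, iterating this propagation along its edges forces $A=V(W_7\setminus c)$, whence $B=\emptyset$; symmetrically an edge in $B$ forces $A=\emptyset$. Because $C_7$ is not bipartite, $A$ and $B$ cannot both be nonempty independent sets, so one of them is empty; without loss of generality, $B=\emptyset$.

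For each $j$, let $R_j$ denote the directed $T$-path from $v_j$ to $c$ and set $T_j:=V(R_j)\cap\{v_1,\dots,v_7\}$. The vertex sets $\{v_{j-3},v_{j-2},v_{j-1},v_j,c\}$ and $\{v_j,v_{j+1},v_{j+2},v_{j+3},c\}$ each induce a copy of $F_4$ in $W_7$ (the four outer vertices form an induced $P_4$ because $3\not\equiv\pm 1\pmod 7$). Applying \cref{lem:fan-lemma} to each copy with $v_j$ as the chosen degree-one endpoint forces $R_j$ to contain at least one vertex of $\{v_{j-3},v_{j-2},v_{j-1}\}$ and at least one vertex of $\{v_{j+1},v_{j+2},v_{j+3}\}$, so $|T_j|\ge 3$. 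Conversely, by \cref{obs:multipartite}, $W_7[T_j]$ is a complete multipartite induced subgraph of $C_7$; since every induced subgraph of $C_7$ is a disjoint union of paths and the only disjoint unions of paths that are complete multipartite are $P_1$, $P_2$, $P_3$, and independent sets (whose size in $C_7$ is at most $3$), we get $|T_j|\le 3$. Hence $|T_j|=3$ for every $j$.

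The finishing blow is a poset argument. Define $v_a\succ v_b$ to mean that $v_a$ is an interior vertex of $R_b$. Transitivity holds because whenever $v_a\in V(R_b)$, the path $R_a$ is the directed suffix of $R_b$ starting at $v_a$, so interior vertices of $R_a$ are interior vertices of $R_b$; antisymmetry holds because $v_a\succ v_b\succ v_a$ would simultaneously make $R_a$ a proper suffix of $R_b$ and vice versa. The equality $|T_j|=3$ says that each $v_j$ has exactly two $\succ$-ancestors among the $v_i$'s, so no $v_j$ is $\succ$-maximal, contradicting the existence of a maximal element in any nonempty finite strict partial order. The main obstacle in this plan is establishing the rigid equality $|T_j|=3$; it is exactly the combination of \cref{lem:fan-lemma} (lower bound) with the classification of complete multipartite induced subgraphs of $C_7$ (upper bound) that makes the poset argument click.
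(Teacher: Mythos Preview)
Your proof is correct. The reduction for $n\ge 8$ and the analysis showing that one of $A,B$ must be empty match the paper. The point of departure is the endgame once $B=\emptyset$.

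The paper dispatches this case in one line: pick $v\in A$ minimizing $d_T(v,c)$; since $n\ge 7$, some induced $P_4$ in $W_n\setminus c$ starts at $v$, so \cref{lem:fan-lemma} forces $R_v$ to contain another outer vertex, contradicting minimality of $d_T(v,c)$. Your route instead pins down $|T_j|=3$ for \emph{every} $j$ (via two applications of \cref{lem:fan-lemma} for the lower bound and the classification of complete multipartite induced subgraphs of $C_7$ for the upper bound) and then runs a poset argument on the relation ``$v_a$ lies strictly inside $R_b$''. This is a genuinely different finish: it trades a single minimality contradiction for a global structural statement about all seven paths. The paper's argument is shorter and avoids the case analysis on induced subgraphs of $C_7$; your argument, on the other hand, extracts more information (the exact value $|T_j|=3$) before reaching the contradiction, and the poset idea is a pleasant alternative way to see why the configuration is impossible. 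One minor remark: when you invoke \cref{obs:multipartite} for the upper bound, the directed path $R_j$ also contains $c$, so strictly speaking you first get that $W_7[T_j\cup\{c\}]$ is complete multipartite and then pass to the induced subgraph $W_7[T_j]$; this is immediate but worth a half-sentence.
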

\begin{proof}
    Let $G:=W_n$. 
    Suppose that $G$ is an $m$-willow defined by an oriented tree $T$ 
    for a positive integer $m$.
    Let $A$ be the vertices of $G$ from which $T$ has a directed path to $c$.
    Let $B$ be the vertices of $G$ to which $T$ has a directed path from $c$.
    Since $c$ is complete to $V(G)\setminus \{c\}$, 
    $A\cup B=V(G)\setminus \{c\}$. 

    \begin{claim}\label{claim:wheel-independent}
        Either $A$ is an independent set of $G$ or $B$ is empty.
    \end{claim} 
    \begin{subproof}
        Suppose that $A$ contains an edge $xy$.
        There is a directed path of $T$ from $x$ or $y$ to~$c$ containing all of $x$, $y$, and $c$.
        Let $X=V(G)\setminus (N_G(x)\cup N_G(y)\cup\{c\})$.
        For each vertex $z\in X$, 
        $G[\{x,y,z\}]$ induces a graph isomorphic to $K_2\cup K_1$
        and therefore $z\notin B$ by \cref{obs:multipartite}.
        Since $n\ge 7$, $\abs{X}\ge 3$ and $X\subseteq A$.
        Then for every vertex $w\in V(G)\setminus(X\cup \{c\})$, there are distinct vertices $u,v\in X$
        such that $uv$ is an edge of $G$ and $w$ is non-adjacent to both $u$ and $v$. Again by \cref{obs:multipartite}, $w\in A$. Hence, $B=\emptyset$.
    \end{subproof}

    Suppose that $B=\emptyset$. 
    Choose a vertex $v$ in $A$ such that $d_T(v,c)$ is minimized. 
    By \cref{lem:fan-lemma}, the directed path from $v$ to $c$ contains at least one vertex of $V(G)\setminus\{c,v\}$, contradicting the choice of~$v$. 
    Therefore we may assume that $B\neq\emptyset$. 
    By symmetry, $A\neq\emptyset$.
    By \cref{claim:wheel-independent}, both $A$ and $B$ are independent sets of $G$, so $n$ is even.

    Let $v_1$, $v_2$, $\ldots$, $v_n$ be the vertices of $G\setminus c$ in the cyclic order. We assume that $v_{n+k}=v_k$ for all $k\in\{1,2,\ldots,n\}$.
    We may assume that $v_1,v_3,\ldots,v_{n-1}\in A$
    and $v_2,v_4,\ldots,v_n\in B$ by swapping $A$ and $B$ if necessary.
    For each $i\in\{2,4,\ldots,n\}$, $d_T(v_i,c)\equiv d_T(v_{i+2},c)\pmod m$ 
    because $v_{i+5}\in A$ is non-adjacent to both $v_i$ and $v_{i+2}$.
    So, there is an integer $a$ such that $d_T(v_i,c)\equiv a\pmod m$ for all $i\in\{2,4,\ldots,n\}$.
    Similarly, there is an integer $b$ such that $d_T(c,v_j)\equiv b\pmod m$ for all $j\in\{1,3,\ldots,n-1\}$.
    This implies that $A$ is complete or anti-complete to $B$, a contradiction.
\end{proof}

Now \cref{thm:nonpollyana} follows from \cref{non-pollyanna} and the propositions in this section.

\section{Further work}
\label{sec:conclusions}
We believe that Pollyanna classes of graphs provide a fruitful framework to study the structural distinctions between polynomially $\chi$-bounded classes and $\chi$-bounded classes that are not polynomially $\chi$-bounded.
We conclude our paper by outlining some open problems.

We remark that every Pollyanna graph class discussed in this paper is also strongly Pollyanna, which begs the following question:
\begin{problem}\label{problem:strong-pollyanna}
    Are there Pollyanna graph classes that are not strongly Pollyanna?
\end{problem}
Resolving Problem \ref{problem:strong-pollyanna} would likely require a better understanding of $k$-good graph classes which are not $\chi$-bounded, which have only recently been proven to exist \cite{CARBONERO202363}.
\cref{non-nearchi} gives more examples of $k$-good graph classes which are not $\chi$-bounded.

In a recent paper, Bourneuf and Thomassé \cite{bourneuf2023bounded} introduce an operation called \say{delayed-extension} which preserves polynomial $\chi$-boundedness on a class of graphs. We comment that the delayed-extension of a (strongly) Pollyanna class is also (strongly) Pollyanna, which gives us a slight improvement of \cref{thm:technical main}. 
In \cite{bourneuf2023bounded}, Bourneuf and Thomassé suggest that better understanding the classes which can be obtained from simple graph classes by applying delayed-extension a finite number of times should be helpful in understanding (polynomial) $\chi$-boundedness.
We also point out that this may be a good approach to better understanding Pollyanna graph classes. 

A \emph{wheel} is a graph consisting of an induced cycle of length at least four and a single additional vertex with at least three neighbors on the cycle.
The class of graphs with no induced wheel is not $\chi$-bounded~\cite{davieswheel,pournajafi2020burling,POURNAJAFI2024103849}, however, it may well be Pollyanna.
The fact that the class of (wheel,theta)-free graphs is linearly $\chi$-bounded~\cite{wheelMR4089570} provides some limited evidence that the class of wheel-free graphs might be Pollyanna.
We remark that we showed in \cref{prop:wheel} that for every \emph{finite} set $\mathcal{F}$ of complete wheels of length at least seven, the class of $\mathcal{F}$-free graphs is \emph{not} Pollyanna. However, in our opinion this does not provide evidence that the class of wheel-free graphs is not Pollyanna. 
\begin{problem} 
    Is the class of wheel-free graphs Pollyanna?
\end{problem}

We note that even though Esperet's conjecture was disproved, it is still open whether the Gy\'arf\'as-Sumner Conjecture holds in the following stronger sense:
\begin{problem}[Polynomial Gy\'arf\'as-Sumner]\label{prob:poly-gs}
    Is it true that for every forest $F$ the class of $F$-free graphs is \emph{polynomially} $\chi$-bounded?
\end{problem}

We say a graph $H$ is \emph{Pollyanna-binding} if the class of $H$-free graphs is Pollyanna.
In this language, we may ask if every forest is Pollyanna-binding.
An even more ambitious open problem is to characterize the class of Pollyanna-binding graphs. While we gave some results in this direction, we are quite far from a full characterization. 
We ask about some special cases we believe may be more tractable.

\begin{figure}
    \centering
    \begin{subfigure}{.30\textwidth}
        \centering
        \begin{tikzpicture}
            \tikzstyle{v}=[circle, draw, solid, fill=black, inner sep=0pt, minimum width=3pt]
            \foreach \x in {0,1,2,3,4,5,6} 
            { 
                \node [v] at (360*\x/7:1) (v\x) {};
            }
            \begin{scope}[xshift=2cm]
                \foreach \x in {0,1,2,3,4}
                { 
                    \node [v] at (360*\x/5+180:1) (w\x) {};
                }
            \end{scope}
            \foreach \x in {0,1,2,3,4,5}
            {
                \pgfmathtruncatemacro{\lb}{\x+1}
                \foreach \y in {\lb,...,6}
                {
                    \draw (v\x)--(v\y);
                }
            }
            \foreach \x in {0,1,2,3}
            {
                \pgfmathtruncatemacro{\lb}{\x+1}
                \foreach \y in {\lb,...,4}
                {
                    \draw (w\x)--(w\y);
                }
            }
        \end{tikzpicture}
        \caption{A $(6,4)$-bowtie.}\label{fig:st-bowtie}
    \end{subfigure}
    \begin{subfigure}{.30\textwidth}
        \centering
        \begin{tikzpicture}
            \tikzstyle{v}=[circle, draw, solid, fill=black, inner sep=0pt, minimum width=3pt]
            \foreach \x in {0,1,2,3,4,5,6} 
            { 
                \node [v] at (360*\x/7:1) (v\x) {};
            }
            \begin{scope}[xshift=2.5cm]
                \foreach \x in {0,1,2,3,4}
                { 
                    \node [v] at (360*\x/5+180:1) (w\x) {};
                }
            \end{scope}
            \foreach \x in {0,1,2,3,4,5}
            {
                \pgfmathtruncatemacro{\lb}{\x+1}
                \foreach \y in {\lb,...,6}
                {
                    \draw (v\x)--(v\y);
                }
            }
            \foreach \x in {0,1,2,3}
            {
                \pgfmathtruncatemacro{\lb}{\x+1}
                \foreach \y in {\lb,...,4}
                {
                    \draw (w\x)--(w\y);
                }
            }
            \draw (v0)--(w0);
        \end{tikzpicture}
        \caption{A $(7,5)$-dumbbell.}\label{fig:dumbbell}
    \end{subfigure}
    \begin{subfigure}{.30\textwidth}
        \centering
        \begin{tikzpicture}
            \tikzstyle{v}=[circle, draw, solid, fill=black, inner sep=0pt, minimum width=3pt]
            \node [v] at (0,0.5) (v0){};
            \node [v] at (60:1) (v1){};
            \node [v] at (120:1) (v2){};
            \draw (v1)--+(0,-1) node [v]{};
            \draw (v2)--+(0,-1) node [v]{};
            \draw (v0)--+(0,-1) node [v]{};
            \draw (v0)--(v1)--(v2)--(v0);
        \end{tikzpicture}
        \caption{A tripod.}\label{fig:tripod}
    \end{subfigure}
    \caption{Graphs appearing in the problems.}\label{fig:smallgraph2}
\end{figure}
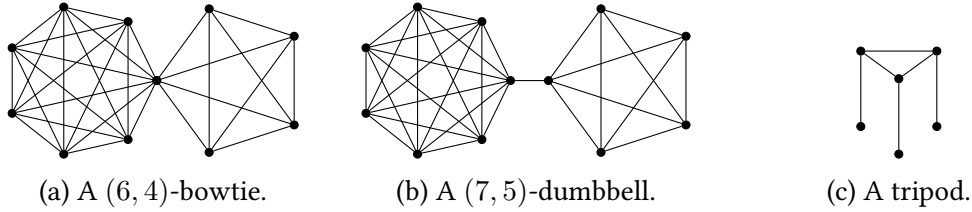

We call a graph an \emph{$(s,t)$-bowtie} if it can be obtained from the disjoint union of $K_s$ and $K_t$ by adding a new vertex complete to everything else, see \cref{fig:st-bowtie}.
In this language, \cref{bowtie:main} states that the $(2,2)$-bowtie is Pollyanna-binding.
\begin{problem}
    Is the class of $(s,t)$-bowtie-free graphs Pollyanna for each $s \geq 3$ and $t \geq 2$? 
\end{problem}
We call a graph an \emph{$(s,t)$-dumbbell} if it can be obtained from the disjoint union of $K_s$ and~$K_t$ by adding a single additional edge between a vertex of the $K_s$ and a vertex of the $K_t$, see \cref{fig:dumbbell}.
Note that a $t$-lollipop is a $(2,t)$-dumbbell, so \cref{lollipop-free2} states that the class of $(2,t)$-dumbbell-free graphs is Pollyanna.
\begin{problem}
    Is the class of $(s,t)$-dumbbell-free graphs Pollyanna for each $s \geq 3$ and $t \geq 3$? 
\end{problem}

Bulls are induced subgraphs of certain pentagram spiders. While the class of bull-free graphs is Pollyanna by \cref{bulls-main}, the class of pentagram spider-free graphs is not by \cref{non-pollyanna,pentagram spider}. The next natural case to consider would be tripod-free graphs. A \emph{tripod} is the 
graph obtained from $K_3$ by adding one pendant vertex to each vertex of the $K_3$, see \cref{fig:tripod}.
\begin{problem}
    Is the class of tripod-free graphs Pollyanna?
\end{problem}

A \emph{$K_{k}$-free coloring} of a graph $G$ is a coloring of the vertices of $G$ such that no monochromatic subgraph of $G$ contains a copy of $K_k$.
The \emph{$K_k$-free chromatic number} $\chi_{k}(G)$ of a graph~$G$~\cite{SS1968,BF1990,KW2017,KP2018} is the minimum $k$ such that $G$ has a $K_k$-free coloring.
Thus $\chi(G)=\chi_2(G)\ge\chi_3(G)\ge \chi_4(G)\ge\cdots\ge \chi_{\omega(G)+1}=1$.
We say that a class~$\C$ of graphs is \emph{polynomially $\chi_k$-bounded} if there is a polynomial~$f$ such that for every induced subgraph~$H$ of a graph~$G\in \C$, we have $\chi_k(G)\le f(\omega(G))$.
It is easy to see that every polynomially $\chi_{k}$-bounded class of graphs is $(k-1)$-strongly Pollyanna.
In fact, \cref{thm:technical main}\ref{item:bowtie} is shown by proving that the class of bowtie-free graphs is polynomially $\chi_4$-bounded.

We now aim to present one class of graphs defined by geometric representations and show that the argument of Krawczyk and Walczak~\cite{KW2017} implies it is Pollyanna. 
Let $\mathcal F$ be a finite family of sets. 
We say $A$ and $B$ \emph{overlap} if $A\cap B\neq \emptyset$ and neither $A$ nor $B$ is a subset of the other.
The \emph{overlap graph} of a family $\mathcal F$ of finite sets is a graph on $\mathcal F$ such that two sets in $\mathcal F$ are adjacent if and only if they overlap.
A rectangle on the plane is a set of the form $I\times J$ where $I$ and $J$ are closed intervals on the real line.
The \emph{clean directed rectangle overlap graphs} are the overlap graphs of a finite family $\mathcal F$ of axis-aligned rectangles in the plane such that 
\begin{enumerate}
    \item (clean) no member of $\mathcal F$ is completely contained in the intersection of two other ovelapping members of $\mathcal F$, 
    \item (directed) if two members $I_1\times J_1$ and $I_2\times J_2$ of $\mathcal F$ overlap, then either $J_2\subseteq J_1$ and $\min(I_1)<\min(I_2)$
    or $J_1\subseteq J_2$ and $\min(I_2)<\min(I_1)$.
\end{enumerate}
Krawczyk, Pawlik, and Walczak~\cite[Lemma 2.3]{KPW2015} showed that every clean directed rectangle overlap graph is a clean interval overlap game graph. (See Krawczyk and Walczak~\cite[Lemma 6.1]{KW2017} which uses slightly different terminology.) Krawczyk and Walczak~\cite[Lemma 7.4]{KW2017} showed that there is an on-line $K_3$-free coloring algorithm that uses at most $O(\omega(G)^3)$ colors for clean interval overlap graphs defined with a natural rule.
Krawczyk and Walczak~\cite[Lemma 2.1]{KW2017} showed that if there is an on-line $K_3$-free coloring algorithm that uses at most $c$ colors for 
a clean interval overlap graph, 
then its game graph has the $K_3$-free chromatic number at most $c$.
By combining these facts, we deduce the following. 
\begin{proposition}\label{prop:clean-rectangle}
    The class of clean directed rectangle overlap graphs is polynomially $\chi_3$-bounded and therefore $2$-strongly Pollyanna.
\end{proposition}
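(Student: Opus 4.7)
The plan is to compose the three lemmas from the Krawczyk--Pawlik--Walczak line of work that the excerpt has already flagged, and then to bootstrap to the Pollyanna conclusion using the general observation made just before the statement.

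First, given a clean directed rectangle overlap graph $G$, I would invoke Lemma 2.3 of Krawczyk, Pawlik, and Walczak to realize $G$ as (an induced subgraph of) the game graph of a clean interval overlap instance. This is the essential geometric step: it reduces the two-dimensional setup (axis-aligned rectangles with the cleanness and directedness conditions) to the one-dimensional on-line coloring framework where the subsequent tools live. I would note that $\omega$ is preserved under this reduction, since both sides describe the same overlap relation.

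Next, I would apply Lemma 7.4 of Krawczyk and Walczak to obtain an on-line $K_3$-free coloring algorithm for clean interval overlap graphs that uses at most $c\cdot \omega(G)^3$ colors, for some absolute constant $c$. I would then feed this algorithm into Lemma 2.1 of Krawczyk and Walczak, which transfers an on-line $K_3$-free coloring bound into an offline upper bound on the $K_3$-free chromatic number of the game graph; combined with the first step this yields $\chi_3(G) \le c\cdot \omega(G)^3$ for every clean directed rectangle overlap graph $G$, so the class is polynomially $\chi_3$-bounded.

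Finally, for the $2$-strongly Pollyanna conclusion, I would apply the general principle recorded earlier in the section: every polynomially $\chi_k$-bounded class is $(k-1)$-strongly Pollyanna. Concretely, if $\F$ is any $2$-good class, there is a constant $c_\F$ so that every triangle-free graph in $\F$ has chromatic number at most $c_\F$. A $K_3$-free coloring of $G \in \F$ with at most $c\cdot\omega(G)^3$ color classes partitions $V(G)$ into that many triangle-free induced subgraphs, each of which inherits membership in $\F$ by heredity and hence can be properly colored with at most $c_\F$ colors. Combining yields a proper coloring of $G$ using at most $c\cdot c_\F\cdot\omega(G)^3$ colors, which is polynomial in $\omega(G)$ with $\F$ fixed. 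I do not expect any serious obstacle here: all of the heavy lifting is done by the cited lemmas, and the task reduces to verifying that the reductions compose cleanly and that the final bootstrap from $\chi_3$ to $\chi$ respects membership in $\F$.
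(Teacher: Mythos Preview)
Your proposal is correct and follows essentially the same approach as the paper: the paper's proof is precisely the composition of Lemma~2.3 of Krawczyk--Pawlik--Walczak with Lemmas~7.4 and~2.1 of Krawczyk--Walczak to obtain the polynomial $\chi_3$-bound, followed by the observation that polynomial $\chi_k$-boundedness implies $(k-1)$-strong Pollyanna. Your final paragraph even spells out this last implication more explicitly than the paper does.
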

Hence, we wonder if Pollyanna graph classes are a fruitful framework in which to examine non-$\chi$-bounded geometric graph classes. 
We ask the following question, which was suggested by one of the anonymous referees of this paper:
\begin{problem}\label{problem:pollyanna-geometric}
    Are there other graph classes defined by geometric representations that are Pollyanna but not $\chi$-bounded?
    In particular, what about string graphs?
\end{problem}

Scott and Seymour \cite{SCOTT201668} proved that the class of odd hole-free graphs is $\chi$-bounded. Their $\chi$-bounding function is doubly exponential and it remains open whether the class of odd-hole-free graphs is polynomially $\chi$-bounded (and so Pollyanna).
We propose the analogous problem for odd antihole-free graphs.
Note that the class of odd antihole-free graphs is not $\chi$-bounded because there are graphs of girth at least $6$ with arbitrarily large chromatic number, shown by Erd\H{o}s~\cite{Erdos1959}.

\begin{problem}
    Is the class of odd antihole-free graphs Pollyanna?
\end{problem}

\cref{C7} shows that no antihole of length at least $7$ is a willow. However, small antiholes such as $\overline{C_5}$ and $\overline{C_6}$ are. It may well be true that the class of $C_5$-free graphs is Pollyanna.
Antihole-free graphs are polynomially $\chi$-bounded since $\overline{C_4}=2K_2$ \cite{Wagon1980}.
But, the class of graphs without antiholes of length at least $5$ is not $\chi$-bounded, again by the result of Erd\H{o}s~\cite{Erdos1959}.
So, as a starting point, we propose the following problem.

\begin{problem}
    Is the class of graphs without any antihole of length at least 5 Pollyanna?
\end{problem}

The simplest willows are those whose underlying oriented tree is a directed path between two vertices. These graphs are exactly the complete multipartite graphs, thus it is natural to consider if a class of graphs with a forbidden complete multipartite graph is Pollyanna. In this direction, the first step would be to determine whether the class of graphs without an induced square $K_{2,2}=C_4$ or an induced diamond $K_{2,1,1}=K_4 \setminus e$ is Pollyanna.

\begin{problem}
    Is the class of $\{ C_4, K_4\setminus e \}$-free graphs Pollyanna?
\end{problem}

In \cref{sec:willows}, we described some forbidden induced subgraphs for willows but did not have a complete list of forbidden induced subgraphs for willows.
\begin{problem}
    Characterize willows by their minimal forbidden induced subgraphs.
\end{problem}

In \cref{sec:nonpollyanna}, we showed that all Pollyanna-binding graphs are willows.
Based on this, we can end our paper with the following extremely optimistic conjecture.
\begin{conjecture}[Pollyanna's Conjecture]
    A graph is Pollyanna-binding if and only if it is a willow.
\end{conjecture}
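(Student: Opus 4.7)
The forward direction is immediate from \cref{non-pollyanna}: setting $\F=\{H\}$, that theorem shows any graph~$H$ that is not a willow fails to be Pollyanna-binding. So the entire content of the conjecture lies in the reverse direction, which is the plan I outline below.

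For the backward direction, I would attempt an induction on $|V(H)|$. The base cases are benign: $K_t$ is Pollyanna-binding trivially, since forbidding $K_t$ caps the clique number and hence the chromatic number inside any $\chi$-bounded class, and known results handle the other small willows in this paper. For a larger willow $H$ with defining oriented tree $T$, one natural inductive move is to select a vertex $\ell \in V(H)$ that appears as a leaf of $T$ (after possibly pruning Steiner vertices from $T$). Then $H\setminus \ell$ is again a willow, so by the inductive hypothesis it is Pollyanna-binding. Given an $H$-free graph $G$ in a $\chi$-bounded class $\F$, partition $V(G)$ according to whether each vertex plays the role of $\ell$ in some copy of $H\setminus\ell$; one part induces an $(H\setminus\ell)$-free subgraph (polynomially $\chi$-bounded by induction), while the other part inherits a strong local constraint from the forbidden attachment of $\ell$, to be exploited by Ramsey-type counting in the spirit of the arguments for pineapples, lollipops, and bowties in \cref{sec:pineapple,sec:lollipop,sec:bowtie}. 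A complementary tool is \cref{obs:multipartite}: in a willow every directed path of $T$ witnesses a complete multipartite subgraph, which suggests trying to force $H$-free graphs in $\F$ either to contain a very regular multipartite substructure (then apply direct counting) or to admit a homogeneous set or pair, permitting a reduction through \cref{thm:substitution-orig}.

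The principal obstacle is that even extremely small willows already pose wide-open problems. The simplest unresolved instance is $H=C_4=K_{2,2}$: to prove that $C_4$-free graphs in every $\chi$-bounded class are polynomially $\chi$-bounded subsumes long-standing questions in the Erd\H{o}s--Hajnal and polynomial Gy\'arf\'as--Sumner programs (cf.\ \cref{prob:poly-gs}). More broadly, being a willow places no restriction on the cycle structure of $H$, so the willow family contains many forbidden graphs for which no polynomial $\chi$-bounding is known even under strong extra hypotheses, and the leaf-removal reduction sketched above gives no leverage for such $H$ because both sides of the partition can carry essentially all the difficulty. A full resolution therefore seems to require tools beyond the substitution, homogeneous-pair, and Ramsey-type machinery developed in this paper, which is why the authors themselves describe the conjecture as \emph{extremely optimistic}.
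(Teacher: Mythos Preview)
The statement is a \emph{conjecture}; the paper does not prove it and explicitly labels it \say{extremely optimistic}. There is consequently no proof in the paper to compare your attempt against.

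You correctly derive the forward implication from \cref{non-pollyanna}, exactly as the paper notes in the sentence preceding the conjecture. Your treatment of the reverse implication is honest rather than a proof: the leaf-deletion induction you sketch is a natural first thought, but as you yourself point out it gives no traction even on tiny willows such as $H=C_4=K_{2,2}$, a case the paper singles out as open immediately before stating the conjecture. So your proposal is not a proof of the backward direction, and you do not claim it is; there is no error to flag beyond the fact that the conjecture remains open.
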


If Pollyanna's conjecture is disproved, then Pollyanna \cite{porter1913pollyanna} would almost certainly immediately make a new equally na{\"i}ve conjecture.

\section*{Acknowledgements}
The initial idea for this paper was developed at the MATRIX-IBS Workshop: Structural Graph Theory Downunder III. 
Much of this work was done while James Davies and Maria Chudnovsky were visiting the Institute for Basic Science (IBS).
We are grateful for the generous support from MATRIX and IBS for making this project possible.
We thank Freddie Illingworth, Robert Hickingbotham, and G\"{u}nter Rote for their helpful discussions.
We also thank the anonymous referees for their careful reading and helpful comments. In particular, \cref{prop:clean-rectangle,problem:pollyanna-geometric} are due to the referee's suggestion.

\newcommand{\etalchar}[1]{$^{#1}$}

\end{document}